\DeclareFontFamily{OT1}{pzc}{}
\DeclareFontShape{OT1}{pzc}{m}{it}%
             {<-> s * [1,150] pzcmi7t}{}
\DeclareMathAlphabet{\mathpzc}{OT1}{pzc}%
                                 {m}{it}
\theoremstyle{plain}
\newtheorem*{thmi}{Theorem \ref{cor:homotcohnervequillenequiv}}
\newtheorem*{propi}{Proposition \ref{prop:embeddingclassical}}
\newtheorem*{cori}{Corollary \ref{cor:homotcohnervequillenequivsimpl}}
\newtheorem{thm}{Theorem}[section]
\newtheorem{prop}[thm]{Proposition}
\newtheorem{lemma}[thm]{Lemma}
\newtheorem{cor}[thm]{Corollary}
\theoremstyle{remark}
\newtheorem{rem}[thm]{Remark}
\theoremstyle{definition}
\newtheorem{definition}[thm]{Definition}
\newtheorem{paragr}[thm]{}
\theoremstyle{plain} 
\numberwithin{equation}{thm}
\renewcommand{\mathcal}{\mathpzc}
\renewcommand{\mathbb}{\mathbbm}
\renewcommand{\leq}{\leqslant}
\renewcommand{\geq}{\geqslant}
\newcommand{\To}{\longrightarrow}
\newcommand{\cat}{\mathpzc{Cat}}
\newcommand{\Hom}{\operatorname{\mathrm{Hom}}}
\newcommand{\sHom}{\operatorname{\mathpzc{Hom}}}
\newcommand{\nerf}{N}
\newcommand{\op}[1]{{#1}^{\mathit{op}}}
\newcommand{\ex}{\operatorname{\mathit{Ex}}}
\newcommand{\ho}{\operatorname{\mathbf{Ho}}}
\newcommand{\ob}{\operatorname{\mathrm{Ob}}}
\newcommand{\smp}[1]{ \Delta[#1]}
\newcommand{\derR}{\mathbf{R}}
\newcommand{\bord}{\partial}
\def\TO#1{\mathrel{\hbox to #1pt{\rightarrocharReedyfibrantsegaloperwfill}}}
\def\OT#1{\mathrel{\hbox to #1pt{\leftarrowfill}}}
\def\limind{\mathop{\oalign{\rm lim\cr
\hidewidth$\longrightarrow$\hidewidth\cr}}}%
\renewcommand{\varinjlim}{\limind}%
\renewcommand{\to}{\To}
\newcommand{\todouble}{\xymatrixcolsep{1pc}\xymatrix{\ar@<.5ex>[r]\ar@<-.5ex>[r]&}}
\newcommand{\todoubleop}{\xymatrixcolsep{1pc}\xymatrix{\ar@<.5ex>[r]&\ar@<.5ex>[l]}}
\renewcommand{\hookrightarrow}{{\hskip -1.5pt\raise 1.5pt\vbox{\xymatrixcolsep{.9pc}\xymatrix{\ar@{^{(}->}[r]&}}\hskip -3.5pt}}
\newcommand{\intcoin}[5]{\raise 12pt\vbox{\xymatrixcolsep{.9pc}\xymatrixrowsep{.7pc}\xymatrix{%
\scriptstyle #1\ar[r]^{\scriptscriptstyle #5}\ar[d]_{\scriptscriptstyle #4}&\scriptstyle #3\\\scriptstyle #2}}}
\newcommand{\set}{\mathpzc{Set}}
\newcommand{\sset}{\mathpzc{sSet}}
\newcommand{\dset}{\mathpzc{dSet}}
\newcommand{\rdset}{\dset_{\ast}}
\newcommand{\sdset}{\mathpzc{sdSet}}
\newcommand{\trees}{\Omega}
\newcommand{\oper}{\mathpzc{Operad}}
\newcommand{\roper}{\mathpzc{sOpe}\mathpzc{r}_\ast}
\newcommand{\soper}{{\mathpzc{sOper}}}
\newcommand{\scat}{\mathpzc{sCat}}
\newcommand{\preoper}{\mathpzc{PreOper}}
\newcommand{\precat}{\mathpzc{PreCat}}
\newcommand{\Map}{\mathit{Map}}
\newcommand{\spine}{\mathrm{Sc}}
\newcommand{\bv}{W_!}
\newcommand{\bvoper}{W}
\newcommand{\hnerf}{\mathit{hcN}_d}
\newcommand{\ta}{\mathbf{A}}
\newcommand{\tc}{\mathbf{C}}
\newcommand{\ota}{\overline{\ta}}
\newcommand{\otc}{\overline{\tc}}
\newcommand{\specialbullet}{\cdot}
\title{Dendroidal sets and simplicial operads}
\author[D.-C. Cisinski]{Denis-Charles Cisinski}
\address{Universit\'e Paul Sabatier\\
Institut de Math\'ematiques de Toulouse\\
118~route de Narbonne\\
31062~Toulouse cedex~9\\France}
\email{denis-charles.cisinski@math.univ-toulouse.fr}
\urladdr{http://www.math.univ-toulouse.fr/~dcisinsk/}
\author[I. Moerdijk]{Ieke Moerdijk}
\address{Radboud Universiteit Nijmegen\\
Institute for Mathematics, Astrophysics, and Particle Physics\\
Heyendaalseweg~135, 6525~AJ~Nijmegen\\
The~Netherlands}
\email{i.moerdijk@math.ru.nl}
\urladdr{http://www.math.uu.nl/people/moerdijk/}
\begin{document}
\begin{abstract}
We establish a Quillen equivalence relating the homotopy theory of Segal operads and the
homotopy theory of simplicial operads, from which we deduce that the
homotopy coherent nerve functor is a right Quillen equivalence from the model category of simplicial operads
to the model category structure for $\infty$-operads on the category of dendroidal sets.
By slicing over the monoidal unit, this also gives the Quillen equivalence between
Segal categories and simplicial categories proved by J.~Bergner, as well as the
Quillen equivalence between quasi-categories and simplicial categories proved by
A.~Joyal and J.~Lurie. We also explain how this
theory applies to the usual notion of operad (i.e. with a single colour) in the category of spaces.
\end{abstract}
\maketitle

\section*{Introduction}

This paper is the last in a series of three, whose main goal is to establish a
homotopy theoretic equivalence between simplicial operads and dendroidal sets.
When we talk about operads in this paper, we will generally mean coloured operads,
i.e. operads controlling algebraic structures in which there are elements of different ``types'',
such as the structure consisting of a ring together with a module over it, or the structure
of an (enriched) category given by the various hom sets, for which the types are given
by pairs of objects, the domain and the codomain. The more classical uncoloured (``monochromatic'')
operads are included as those coloured operads in which there is only one colour.
The category $\dset$ of dendroidal sets is an extension of the category $\sset$ of simplicial sets.
It is related by a nerve functor to the category $\oper$ of operads in exactly the same way as the category
of simplicial sets is related to the category $\cat$ of small categories.  In fact, there is a special
dendroidal set $\eta$, related to the trivial operad, which we also denote $\eta$,
for which there are canonical isomorphisms of categories
\begin{equation}\tag{I}
\dset/\eta  = \sset
\end{equation}
\begin{equation}\tag{II}
\oper/\eta=\cat\, ,
\end{equation} 
while the corresponding  forgetful functors 
$$i_!: \sset\to\dset\quad\text{and}\quad
j_! : \cat\to\oper$$
are fully faithful embeddings.
 
The dendroidal nerve functor $\nerf_d$ extends the classical nerve functor $\nerf$ in the sense
that the diagram  
$$\xymatrix{
\oper\ar[r]^{\nerf_d}&\dset\\
\cat\ar[u]^{j_!}\ar[r]^\nerf&\sset\ar[u]_{i_!}
}$$
commutes. In \cite{dend3}, we proved that the category of dendroidal sets carries a left proper Quillen
model structure, which under the identification (I) induces
the Joyal model structure on $\sset$. In the Joyal model category structure,
the fibrant objects are the $\infty$-categories,
also known as quasi-categories; see \cite{joyal,lurie}.
The fibrant objects in the model structure on dendroidal sets are similarly referred to as $\infty$-operads.
In fact, there is a bit more structure around: there is a symmetric closed monoidal
structure on $\dset$, reminiscent of the Boardman-Vogt tensor
product of operads \cite{BV}, and compatible with this Quillen structure
in the precise sense that it makes $\dset$ into a monoidal model category.
It turns out (see Theorem \ref{thm:cmcsoper}) that the category $\soper$
of simplicial coloured operads also
carries a Quillen model structure, which, under the simplicially enriched version of
the identification (II) relating simplicial operads and simplicial categories, induces the
Bergner model structure \cite{bergnera} on the category $\scat$ of simplicial categories.
The main result of this paper, already announced in \cite{dend3}, can then be stated as follows:
 
\begin{thmi}
There is a Quillen equivalence
$$\bv : \dset \rightleftarrows \soper: \hnerf$$
between dendroidal sets and simplicial operads. 
\end{thmi}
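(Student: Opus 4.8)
The plan is to prove the theorem by factoring the adjunction $\bv\dashv\hnerf$ through an intermediate model category: the category $\preoper$ of \emph{preoperads}, i.e.\ contravariant functors on $\trees\times\cats$ --- equivalently, simplicial objects in dendroidal sets --- equipped with the model structure whose cofibrations are the normal monomorphisms and whose fibrant objects are the \emph{Segal operads}. There is a chain of Quillen adjunctions
$$\dset\;\rightleftarrows\;\preoper\;\rightleftarrows\;\soper$$
whose two left adjoints compose, up to canonical isomorphism, to $\bv$, and whose two right adjoints compose to $\hnerf$: the first right adjoint $\preoper\to\dset$ extracts the underlying dendroidal set (restriction along $\trees\to\trees\times\cats$, $T\mapsto(T,[0])$, with the associated left Kan extension as left adjoint), while the second right adjoint $\soper\to\preoper$ is the Segal nerve of a simplicial operad, whose composite with the first is readily identified with the homotopy coherent nerve $\hnerf$. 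Since a composite of Quillen equivalences is a Quillen equivalence, it is enough to prove that each of the two adjunctions in the chain is one.

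The first adjunction, between $\dset$ with its operadic model structure (\cite{dend3}) and $\preoper$ with the Segal model structure, is the operadic analogue of the Joyal--Tierney comparison between quasi-categories and complete Segal spaces, and has been established in the preceding paper of this series; I take it as known. (Restricting it over the object $\eta$ recovers exactly the Joyal--Tierney theorem for $\sset$.)

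The second adjunction is the technical core. Write $F : \preoper\to\soper$ for its left adjoint --- the rigidification functor, obtained as the left Kan extension along $T\mapsto W(T)$ of the Boardman--Vogt resolution of the operad generated by the tree $T$ --- and write $G$ for the Segal nerve. Three things must be checked. (i) That $(F,G)$ is a Quillen pair: this is verified on the generating cofibrations and generating trivial cofibrations of the Segal model structure on $\preoper$ --- the external pushout-products of the boundary inclusions $\partial T\to T$ with the inclusions $\partial\smp{n}\to\smp{n}$, the Segal core inclusions $\spine(T)\to T$, and the generating trivial cofibrations of the underlying model structure --- using the explicit combinatorics of $W$ and its compatibility with pushout-products; the model structure on $\soper$ of Theorem \ref{thm:cmcsoper} has matching generating classes, so $F$ sends each generator to a (trivial) cofibration. (ii) That $G$ reflects weak equivalences between fibrant objects: a map of fibrant simplicial operads is a weak equivalence precisely when it is a Dwyer--Kan equivalence --- a homotopy equivalence on every multi-mapping space and essentially surjective on homotopy categories --- and $G$ detects both conditions, since the multi-mapping spaces of a fibrant $P$ are recovered from the values of $G(P)$ at corollas and $\pi_0$ of its homotopy category from the values at $\eta$ and the edge. (iii) That the derived unit is a weak equivalence: for every cofibrant (i.e.\ normal) preoperad $X$, the canonical map $X\to G(R\,F(X))$, with $R$ a fibrant replacement in $\soper$, is a Segal equivalence.

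Step (iii) is the crux and the main obstacle --- the dendroidal counterpart of Bergner's rigidification lemma for Segal categories. Since $F$ is not manifestly compatible with the Segal condition, one must compute the multi-mapping spaces of the simplicial operad $F(X)$ and compare them with the values of $X$ on corollas. The plan is to reduce, using the skeletal filtration of the normal preoperad $X$ and left-properness of the Segal model structure, to the case of representable preoperads, and then to analyse $W(T)$ through its filtration by the number of inner edges --- equivalently, by the lattice of subtrees of $T$ --- so that at each stage the resolution is built by pushout along maps which are trivial cofibrations for the Segal structure; the comparison map is then a weak equivalence at each stage, hence in the colimit. This filtration bookkeeping --- identifying the pushout corners and checking that they are Segal-trivial cofibrations --- is the heaviest part of the argument, everything else being either formal or imported from Theorem \ref{thm:cmcsoper} and the companion paper. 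Composing the two Quillen equivalences finally produces a Quillen equivalence $\dset\rightleftarrows\soper$ whose right adjoint is, by the identification of composites above, the homotopy coherent nerve $\hnerf$ and whose left adjoint is therefore canonically isomorphic to $\bv$, which is the theorem; slicing the whole chain over $\eta$ yields, as corollaries, Bergner's equivalence between Segal categories and simplicial categories and the Joyal--Lurie equivalence between quasi-categories and simplicial categories.
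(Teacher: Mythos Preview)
Your high-level idea of routing through $\preoper$ matches the paper, but the factorization you assert does not hold, and this is not a bookkeeping slip.

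The adjunction the paper uses between $\preoper$ and $\soper$ is $(\tau_d,\nerf_d)$, the degreewise dendroidal nerve and its left adjoint. The composite $\dset\xrightarrow{\text{incl}}\preoper\xrightarrow{\tau_d}\soper$ sends $\Omega[T]$ to the discrete operad $T$, \emph{not} to $\bvoper(T)$; dually, evaluating $\nerf_d(\mathcal P)$ at simplicial degree $0$ gives $\nerf_d(\mathcal P_0)$, not $\hnerf(\mathcal P)$. Your ``Segal nerve'' $G$ would therefore have to be a different functor, and you never define it: the phrase ``left Kan extension along $T\mapsto W(T)$'' does not specify a functor out of $\preoper$, which requires values on all of $\Omega[T]\times\Delta[n]$. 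The paper accordingly does \emph{not} factor $(\bv,\hnerf)$ as a composite. It instead (a) proves $(\bv,\hnerf)$ is a Quillen pair directly by computing $\bv$ on boundary and inner-horn inclusions (Proposition~\ref{prop:BVpreservescofandinneranod}), (b) proves $(\tau_d,\nerf_d)$ is a Quillen equivalence separately, and (c) shows the two right adjoints agree \emph{at the level of homotopy categories} via a mapping-space comparison using the natural weak equivalence $\bvoper(T)\to T$ (Theorem~\ref{thm:agreementnerves}). Step (c) is what replaces your composition claim.

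You also miss a genuine obstruction in step (b): $(\tau_d,\nerf_d)$ is \emph{not} a Quillen pair for the normal model structure on $\preoper$, because $\tau_d$ of a normal preoperad is only $\Sigma$-cofibrant, not cofibrant (see the remark in \ref{def:segalnerve}). The paper fixes this by constructing a new \emph{tame} model structure on $\preoper$ (Section~\ref{sect:7}, Theorem~\ref{thm:tamemodelcat}) with the same weak equivalences but fewer cofibrations --- generated by corolla cells $\Omega[\partial\Delta[n],C_m]\to\Omega[\Delta[n],C_m]$ together with Segal-core inclusions --- and only for this structure is $\tau_d$ left Quillen. The unit-is-a-weak-equivalence step (your (iii)) is then Proposition~\ref{prop:unitpreoperwe}, proved for \emph{tamely} cofibrant objects; its engine is not a filtration of $\bvoper(T)$ but the combinatorial fact (Proposition~\ref{prop:addfreecellweakequiv}) that for a $\Sigma$-cofibrant operad $P$ the comparison map $\nerf_d(P)[f]\to\nerf_d(P[f])$ is an inner anodyne extension.
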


The right Quillen functor $\hnerf$ is a homotopy coherent version
of the dendroidal nerve functor, and its left adjoint $\bv$ is closely related to the
Boardman-Vogt resolution of operads \cite{BV,BMOp0,BMOp1}.
In fact, the counit of the adjunction 
$$\bv\hnerf (P) \to P$$
is essentially the Boardman-Vogt resolution of $P$. 
On the other hand, for a cofibrant and fibrant dendroidal set $X$, the unit $X\to \hnerf \bv (X)$
can be viewed as a strictification, or rectification, of the $\infty$-operad $X$ by a ``strict'' operad
$\bv(X)$. The force of this theorem is illustrated by the fact that, by considering the corresponding slice
categories over $\eta$,  it immediately implies one of the cornerstone facts in
the theory of $\infty$-categories, proved by Joyal and Lurie \cite{lurie}:
 
\begin{cori}
There is a Quillen equivalence 
$$\bv: \sset \rightleftarrows \scat: \mathit{hcN}$$
between simplicial sets and simplicial categories.
\end{cori}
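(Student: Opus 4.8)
The plan is to derive this formally from the main theorem, Theorem~\ref{cor:homotcohnervequillenequiv}, by passing to slice categories over $\eta$. Under the identification (I), the model structure on $\dset$ induces on $\sset = \dset/\eta$ precisely the Joyal model structure \cite{dend3}; under the simplicially enriched version of (II), the model structure on $\soper$ (Theorem~\ref{thm:cmcsoper}) induces on $\scat = \soper/\eta$ precisely the Bergner model structure \cite{bergnera}. In both cases the forgetful functor to the ambient category is the fully faithful embedding ($i_!$, respectively the simplicial version of $j_!$), and it creates cofibrations, fibrations and weak equivalences; so these ``induced'' model structures are the usual slice model structures. Hence it is enough to show that the Quillen equivalence $\bv : \dset \rightleftarrows \soper : \hnerf$ restricts, over $\eta$, to the adjunction $\bv : \sset \rightleftarrows \scat : \mathit{hcN}$ of the statement, and that this restriction is again a Quillen equivalence.

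For the second point I would invoke the following general principle, whose verification is routine: if $F : \M \rightleftarrows \N : G$ is a Quillen equivalence and $c$ is a fibrant object of $\N$, then the induced adjunction between the slice categories $\M/G(c)$ and $\N/c$ --- with right adjoint $(Y \to c) \mapsto (G(Y) \to G(c))$ and left adjoint $(X \to G(c)) \mapsto (F(X) \to FG(c) \to c)$ using the counit --- is a Quillen equivalence for the slice model structures. Indeed, the right adjoint preserves fibrations and trivial fibrations since $G$ does and these are created by the forgetful functors, so this is a Quillen pair; a cofibrant object of $\M/G(c)$ has cofibrant underlying object, a fibrant object of $\N/c$ has fibrant underlying object because $c$ is fibrant, and weak equivalences in the slices are detected on underlying objects, so the derived unit and counit of the slice adjunction coincide (up to the choice of fibrant/cofibrant replacements) with those of $F \dashv G$ and are therefore weak equivalences. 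Applying this with $\M = \dset$, $\N = \soper$ and $c = \eta$: the simplicial operad $\eta$ is fibrant (its spaces of operations are points or empty, hence Kan complexes); one has $\hnerf(\eta) = \eta$ and $\bv(\eta) = \eta$, since the homotopy coherent nerve, respectively the Boardman--Vogt resolution, of the trivial operad is again the trivial operad; and by the very construction of $\hnerf$ as a homotopy coherent extension of $\nerf_d$, compatible with the simplicial version of (II), the induced right adjoint $\soper/\eta \to \dset/\eta$ is the classical homotopy coherent nerve $\mathit{hcN} : \scat \to \sset$, with left adjoint the rigidification functor denoted $\bv$ in the statement.

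The substantive inputs are thus exactly the two identifications of slice model structures with the Joyal and Bergner structures (supplied by \cite{dend3} and Theorem~\ref{thm:cmcsoper}) and the matching of the restricted functors with $\mathit{hcN}$ and its left adjoint; the first is genuine work already carried out, and the second is a bookkeeping exercise in unwinding the definition of $\hnerf$. Granting these, the only real obstacle --- proving the Quillen equivalence itself --- is precisely the content of Theorem~\ref{cor:homotcohnervequillenequiv}, and the corollary drops out by the slicing principle above applied to the fibrant object $\eta$ of $\soper$.
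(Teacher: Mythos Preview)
Your proposal is correct and follows essentially the same approach as the paper: the paper simply observes (in the sentence preceding the corollary) that both $\hnerf$ and $\bv$ preserve $\eta$, which is fibrant on both sides, so the Quillen equivalence of Theorem~\ref{cor:homotcohnervequillenequiv} slices over $\eta$ under the identifications $\sset=\dset/\eta$ and $\scat=\soper/\eta$. Your write-up is more explicit about the general slicing principle for Quillen equivalences, but the argument is the same.
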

 
Recall that a reduced simplical set is one with a unique vertex.
There is an analogous notion of reduced dendroidal set, and our  theorem
will also be seen to imply a similar equivalence between classical
(monochromatic) simplicial operads and a related Quillen model
category structure on these reduced dendroidal sets:
 
\begin{propi}
The category of reduced dendroidal sets carries an
induced model structure which is Quillen equivalent to the category of (uncoloured)
simplicial operads (with the model structure of \cite{BMOp0}).
\end{propi}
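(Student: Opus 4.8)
The plan is to obtain the model structure on reduced dendroidal sets by left Bousfield–style transfer (more precisely, by the general machinery for localising the Cisinski model structure along a reflective subcategory) and then to compare it with the model category of uncoloured simplicial operads by slicing the main theorem. First I would recall that the full subcategory $\rdset\subset\dset$ of reduced dendroidal sets — those with a single vertex, i.e. whose restriction along $\eta\to\trees$ is the point — is reflective: the reflector $r\colon\dset\to\rdset$ collapses the set of vertices to a point, and $r$ preserves monomorphisms and is compatible with the generating (trivial) cofibrations. Using this, one gets that $\rdset$ carries a cofibrantly generated model structure whose cofibrations are the monomorphisms and whose fibrant objects are the reduced $\infty$-operads, with weak equivalences created by the inclusion into $\dset$ (or equivalently by $r$); this is the ``induced model structure'' in the statement. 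Here I would lean on the analogous treatment of reduced simplicial sets versus simplicial sets and on the general existence results for Cisinski-type structures already used in \cite{dend3}.

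Next I would set up the adjunction with uncoloured simplicial operads. Write $\soper_1$ for the category of simplicial operads with a single colour (with the model structure of \cite{BMOp0}). The composite $\soper_1\hookrightarrow\soper\xrightarrow{\hnerf}\dset$ lands in $\rdset$, because the homotopy coherent dendroidal nerve of a one-object operad is reduced; call the resulting functor $\hnerf_1\colon\soper_1\to\rdset$, and let $\bv_1$ be the composite $\rdset\hookrightarrow\dset\xrightarrow{\bv}\soper$ followed by the ``collapse colours'' functor $\soper\to\soper_1$ (left adjoint to $\soper_1\hookrightarrow\soper$). One checks $\bv_1\dashv\hnerf_1$ by composing the two adjunctions, and that $\hnerf_1$ is right Quillen: it sends fibrations and trivial fibrations of one-coloured simplicial operads to fibrations and trivial fibrations of dendroidal sets (this is part of Theorem \ref{cor:homotcohnervequillenequiv}, since $\soper_1\hookrightarrow\soper$ and $\dset\to\rdset$ are compatible with the relevant classes), and the image lies in $\rdset$.

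It then remains to see that $\bv_1\dashv\hnerf_1$ is a Quillen \emph{equivalence}. The strategy is to deduce this formally from Theorem \ref{cor:homotcohnervequillenequiv} together with the fact that the model structure of \cite{BMOp0} on $\soper_1$ is the one induced from the Berger–Moerdijk model structure on all coloured simplicial operads by the adjunction $\soper\rightleftarrows\soper_1$ — i.e.\ $\soper_1$ is a (reflective) localisation of $\soper$ at the ``change of colours'' maps, and symmetrically $\rdset$ is the localisation of $\dset$ at the maps collapsing the $\eta$'s. One checks that $\bv$ and $\hnerf$ carry these two classes of maps to one another (up to weak equivalence), so that the Quillen equivalence descends to the localisations; concretely, for a cofibrant reduced $X$ the unit $X\to\hnerf_1\bv_1(X)$ is the composite of the unit $X\to\hnerf\bv(X)$ of the main theorem with the reflection $\hnerf\bv(X)\to\hnerf_1\bv_1(X)$, and both are weak equivalences — the first by Theorem \ref{cor:homotcohnervequillenequiv}, the second because $\bv(X)$ is already weakly one-coloured when $X$ is reduced. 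Dually for the counit on fibrant objects.

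The main obstacle I anticipate is precisely the last point: verifying that $\bv$ sends a reduced dendroidal set to a simplicial operad that is weakly equivalent (in $\soper$) to a one-coloured one, and more generally that the Boardman–Vogt functor and the homotopy coherent nerve are genuinely compatible with the reflections onto the monochromatic/reduced subcategories and not merely with the subcategory inclusions. This requires understanding the set of colours of $\bv(X)$ — which is the set of vertices of $X$, hence a point when $X$ is reduced — and checking that ``collapsing colours'' of $\bv(X)$ does not change its homotopy type, i.e.\ that the relevant reflection functor is a weak equivalence on the cofibrant objects in play. Once that compatibility is in hand, the Quillen-equivalence part is a formal consequence of Theorem \ref{cor:homotcohnervequillenequiv} and the universal property of the localised model structures, exactly as the Bergner and Joyal–Lurie equivalences are deduced by slicing over $\eta$.
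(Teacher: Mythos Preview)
Your approach is broadly on track but substantially more convoluted than necessary, and contains one genuine error.

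The error: you assert that the cofibrations in $\rdset$ are ``the monomorphisms'', but this is false. The terminal dendroidal set (the dendroidal nerve of the commutative operad) is reduced but not normal, so $\eta\to *$ is a monomorphism in $\rdset$ that fails to be a normal monomorphism. The paper's model structure (Proposition~\ref{prop:cmcreduceddset}) takes as cofibrations precisely those maps that are \emph{normal} monomorphisms in $\dset$, and is built directly from Jeff Smith's theorem rather than via a reflective-localisation argument.

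For the Quillen equivalence, you make things much harder than they are by composing $\bv$ with a ``collapse colours'' reflector and then worrying whether that reflection is a weak equivalence. You yourself note that the set of colours of $\bv(X)$ is $X_\eta$, hence a single point when $X$ is reduced --- so there is nothing to collapse, and your anticipated ``main obstacle'' evaporates. The paper exploits exactly this: since neither $\bv$ nor $\hnerf$ affects the set of objects, the adjunction $\bv\dashv\hnerf$ \emph{literally} restricts to an adjunction $\rdset\rightleftarrows\roper$, with no auxiliary reflectors needed. Because weak equivalences, cofibrations, and fibrations in $\rdset$ (respectively $\roper$) are detected by the inclusion into $\dset$ (respectively $\soper$) --- for $\roper$ the essential-surjectivity and $\pi_0$-isofibration conditions being vacuous between one-object operads --- the restricted pair is Quillen and the derived unit/counit conditions coincide with those of Theorem~\ref{cor:homotcohnervequillenequiv}. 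The paper's proof of Proposition~\ref{prop:reducedquillenequivdsetsoper} is accordingly one line.
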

 
The category of dendroidal sets is a category of presheaves of sets. In our earlier
paper \cite{dend6}, we studied the related category of presheaves of simplicial sets: this is the
category of dendroidal spaces, identical to the category of simplicial objects in $\dset$,
and denoted $\sdset$.  It contains as a full subcategory the category of preoperads --- those
dendroidal spaces whose space of vertices (or objects, or colours)  is discrete. In \cite{dend6},
we proved that the category of dendroidal spaces carries a Rezk style Quillen model structure whose
fibrant objects are referred to as dendroidal complete Segal spaces. We  established a Quillen equivalence
between the original model category of dendroidal sets and this model category of dendroidal
complete Segal spaces.  We also proved that this model category  restricts to a Quillen equivalent
model category structure on the category $\preoper$
of preoperads. Thus, together with the main theorem in
this paper, we obtain a square of Quillen equivalent model categories
\begin{equation}\tag{$\ast$}
\begin{split}
\xymatrix{
\preoper\ar[d]_{\gamma^*}&\soper\ar[l]_{\nerf_d}\ar[d]^{\hnerf}\\
\sdset&\dset\ar[l]_{i}
}\end{split}
\end{equation}
in which $\nerf_d$ is the simplicial enrichment of the dendroidal
nerve functor, while $\gamma^*$ and $i$ are the inclusion functors.
 
The way we prove our theorem is based on these earlier results, and proceeds as follows:
We prove that the functors $\bv$ and $\hnerf$ of the main theorem form a Quillen pair
(Prop. \ref{prop:BVleftQuillen}),
and we prove that the square ($\ast$) is commutative in a homotopy theoretic sense, even though the
top horizontal functor is not a right Quillen functor (although it does preserve weak equivalences).
Thus, using the fact, from our
earlier paper \cite{dend6}, that the inclusion functors
relating $\dset$, $\sdset$ and $\preoper$ are left Quillen equivalences,
we see  that, to prove that $\bv$ and $\hnerf$ form a
Quillen equivalence, it  is in fact enough to prove that the adjunction between preoperads and simplicial
operads is a Quillen equivalence.  In order to do this, we change the model structure on
$\preoper$ used in \cite{dend6}  into a ``tame'' model structure, with the same weak equivalences but
considerably fewer cofibrant objects (Section \ref{sect:7}).  With fewer cofibrant objects to deal with, it is
possible to show that the functor $\nerf_d$ on top of diagram ($\ast$) is a right
Quillen equivalence (Theorem \ref{thm:quillenequivnervepreopersoper}).
This last argument is based on a fundamental property of the nerve functor, stating that it
preserves certain pushouts ``up to an inner anodyne extension'' (in particular, up to a trivial
cofibration); see Prop. \ref{prop:addfreecellweakequiv}.
This fundamental property also implies that $\Sigma$-cofibrant operads
behave like cofibrant operads with respect to homotopy pushouts (Theorem \ref{thm:sigmacofleftproper}).
An immediate consequence (Cor. \ref{cor:propersimpcat})
is that the model category structure on the category of simplicial
categories is in fact left proper, a special feature which doesn't seem to have been observed before.
In the same spirit, we see that any reasonable model of the operad $E_\infty$ defines
a proper model category which is Quillen equivalent to the model category of
simplicial operads (Cor. \ref{cor:quillenequivsoperproper}).
 
The plan of this paper, then, is as follows:  In the first section, we give a detailed proof of the
existence of a closed model structure on the category of simplicial operads. We have known this model
structure for quite a while, and in fact already announced it in our first paper \cite{dend3}.
In the meantime, it has been proved and used independently by M. Robertson \cite{Rob}.
The second section reviews basic definitions and
facts concerning dendroidal sets; the reader can find detailed treatments in the
references \cite{dend1,dend2,dend3}. In section \ref{sect:3}, we prove the fundamental property,
already mentioned, that the dendroidal nerve functor from operads to dendroidal sets preserves
certain pushouts up to homotopy. In Section \ref{sect:4}, we take up the construction of a
generalized Boardman-Vogt resolution of operads from \cite{BMOp0,BMOp1}, and use
it to construct the Quillen pair featuring in our main theorem. Section \ref{sect:5} is essentially
a review of those definitions and facts needed in this paper about dendroidal complete
Segal spaces and about preoperads. In Section \ref{sect:6}, we examine weak equivalences
between preoperads which satisfy some Segal type fibrancy conditions.
In Section \ref{sect:7}, we explain how to modify the model structure on preoperads,
in such a way that the nerve functor from simplicial operads to preoperads
becomes right Quillen, while the homotopy category of preoperads doesn't change.
In Section \ref{sect:8}, we collect all the results together, and deduce our main theorem and
some of its variations and consequences. Finally, Section \ref{sect:9} deals with the reduced
case mentioned above.  

\section{The homotopy theory of simplicial operads}\label{sect:1}

\begin{paragr}\label{def:folkcmcoperads}
Let us denote by $\oper$ the category of coloured operads (=symmetric multicategories)
in the category of sets. The objects of $\oper$ will be simply called \emph{operads}.

A morphism of operads $u:P\to Q$ is said to be \emph{fully faithful} if, for any integer $n\geq 0$, and for
any $(n+1)$-tuple of objects $(x_1,\ldots,x_n,x)$ of $P$, the map
$$P(x_1,\ldots,x_n;x)\to Q(u(x_1),\ldots,u(x_n);u(x))$$
is bijective. A morphism of operads $u:P\to Q$ is \emph{essentially surjective} if, for any
object $y$ in $Q$, there exists an object $x$ in $P$ as well as an isomorphism
$u(x)\simeq y$ in $Q$.
A morphism of operads $u:P\to Q$ is an \emph{isofibration} if, for any isomorphism
$b:y\to y'$ in $Q$ and for any object $x$ in $P$ such that $u(x)=y$, there exists an isomorphism
$a:x\to x'$ in $P$ such that $u(a)=b$ (and $u(x')=y'$).

We recall the following basic result, stated in \cite{dend1} (a full proof can be found in
\cite{weissthesis}).
\end{paragr}

\begin{thm}\label{thm:folkcmcoperads}
The category of operads is endowed with a proper cofibrantly generated
model category structure defined as follows.
The weak equivalences are the morphisms of operads which are fully faithful and essentially
surjective, the cofibrations are the morphisms which are injective on objects, while
the fibrations are the isofibrations.
\end{thm}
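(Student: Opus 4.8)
The plan is to verify the model category axioms directly, using the standard recognition theorem for cofibrantly generated model structures (the "small object argument" plus the retract argument), together with the fact that all the relevant classes of maps are stable under the requisite constructions. I will describe the generating sets first, then indicate how the axioms follow.

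First I would write down explicit generating cofibrations and generating trivial cofibrations. For the cofibrations (maps injective on objects), the natural generating set consists of: the map $\emptyset \to \eta$ adding a single colour; the maps $\partial\Omega[T] \to \Omega[T]$ coming from boundaries of trees (equivalently, the free operad maps attaching an operation of arity $n$ to a fixed tuple of colours, for all $n \geq 0$); and one further map identifying two colours. For the generating trivial cofibrations one takes, in addition to whatever attaches operations in a way that is "invertible up to the theory", the single map $\{0\} \to J$, where $J$ is the operad with two objects and a chosen isomorphism between them (equivalently $j_!$ applied to the contractible groupoid on two objects). The key point to isolate is that a map has the right lifting property against the generating cofibrations exactly when it is an isofibration which is fully faithful and essentially surjective — that is, a trivial fibration in the sense we want — and that it has the right lifting property against the generating trivial cofibrations exactly when it is an isofibration. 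Checking these two lifting-property characterizations is the technical heart of the argument: the lift against $\emptyset\to\eta$ and the colour-identification map encodes essential surjectivity and "fullness at the object level", the lifts against $\partial\Omega[T]\to\Omega[T]$ encode full faithfulness on operations, and the lift against $\{0\}\to J$ is precisely the isofibration condition.

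With those characterizations in hand, the axioms are routine. The weak equivalences satisfy two-out-of-three and are closed under retracts because "fully faithful and essentially surjective" is (a straightforward check on the defining bijections and on isomorphism classes of objects). The classes of cofibrations and of isofibrations are closed under retracts for the same formal reason, and closed respectively under pushout, transfinite composition, and the relevant limits, so the small object argument applies: the domains of the generators are finite (finitely presented) operads, hence small. This yields the two functorial factorizations, (cofibration, trivial fibration) and (trivial cofibration, fibration). Finally one must check the matching of classes: a cofibration that is a weak equivalence has the left lifting property against all fibrations, and conversely. The nontrivial direction is that a trivial cofibration lifts against every isofibration; this is where one uses a version of the usual category-theoretic argument — a fully faithful, essentially surjective, injective-on-objects map is a deformation retract inside the "$J$-homotopy" structure, so it is built from pushouts of $\{0\}\to J$ and of the trivial-cofibration generators, hence lifts against isofibrations by adjunction.

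The main obstacle I expect is the verification that right lifting against $\partial\Omega[T]\to\Omega[T]$ for all trees $T$, combined with the object-level generators, really is equivalent to full faithfulness plus essential surjectivity — in other words, controlling operations of all arities simultaneously and checking that the induced maps on the symmetric-group-equivariant operation sets are bijective. This is a bookkeeping argument about free operads on trees and their boundaries, and it is exactly the sort of thing handled in detail in \cite{weissthesis}; I would cite that for the combinatorial core and keep the present account at the level of the model-category skeleton. Properness (left and right) then follows because pushouts along cofibrations and pullbacks along fibrations visibly preserve the explicit weak equivalences, using once more the object-level and operation-level descriptions.
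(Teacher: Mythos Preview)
The paper does not actually prove this theorem; it merely recalls it and cites \cite{weissthesis} for a full proof. So there is no ``paper's proof'' to compare against, and your strategy --- verify the axioms via explicit generating sets and the small object argument --- is the standard one and is the right outline.

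That said, your generating cofibrations are not correct as written. A map ``identifying two colours'' (presumably $\eta\amalg\eta\to\eta$) is not injective on objects, hence is not a cofibration and cannot appear among the generators. Conversely, the right lifting property against the maps $\partial C_n\to C_n$ (which is what your $\partial\Omega[T]\to\Omega[T]$ amounts to, by your own parenthetical) only encodes \emph{surjectivity} of $P(x_1,\ldots,x_n;x)\to Q(u(x_1),\ldots,u(x_n);u(x))$, not injectivity. To force faithfulness you need a further generator such as the fold map $C_n\amalg_{\partial C_n}C_n\to C_n$, which is bijective on objects (hence a legitimate cofibration) and whose right lifting property says exactly that two operations with the same image are equal. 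With the generating set $\{\varnothing\to\eta\}\cup\{\partial C_n\to C_n\}_{n\geq 0}\cup\{C_n\amalg_{\partial C_n}C_n\to C_n\}_{n\geq 0}$ for cofibrations and $\{0\}\to J$ for trivial cofibrations, your lifting-property characterizations become correct (trivial fibrations are then exactly the maps which are surjective on objects and strictly fully faithful), and the rest of your sketch goes through.
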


\begin{paragr}\label{paragr:mainclassesofmapssimpoper}
Now, consider the category $\soper$ of simplicial operads (i.e. of coloured operads
enriched in simplicial sets).

There is an adjunction
\begin{equation}\label{eq:adjsoperoper}
\pi_0:\soper\rightleftarrows\oper:\iota
\end{equation}
where the right adjoint $\iota$ sends an operad $P$ to itself, while the left adjoint $\pi_0$
sends a simplicial operad $\mathcal P$ to the operad $\pi_0(\mathcal P)$ defined by
$\ob \pi_0(\mathcal P)=\ob \mathcal P$, while, for any integer $n\geq 0$ and
any $(n+1)$-tuple of objects, $\pi_0(\mathcal P)(x_1,\ldots,x_n;x)$ is the set
$\pi_0(\mathcal P(x_1,\ldots,x_n;x))$ of connected components of the simplicial set of operations
$\mathcal P(x_1,\ldots,x_n;x)$. We will have to consider the following kinds of morphisms of simplicial
operads.
\end{paragr}

\begin{definition}\label{def:mainclassesofmapssimpoper}
A morphism of simplicial operads $u: \mathcal P\to\mathcal Q$ is said to be
\emph{fully faithul} if, for any integer $n\geq 0$ and
for any $(n+1)$-tuple of objects $(x_1,\ldots,x_n,x)$ of $\mathcal P$, the map
$$\mathcal P(x_1,\ldots,x_n;x)\to \mathcal Q(u(x_1),\ldots,u(x_n);u(x))$$
is a weak equivalence (in the sense of the usual Quillen model structure on the category of simplicial sets).
 A morphism of simplicial operads $u: \mathcal P\to\mathcal Q$ is said to be
\emph{essentially surjective} if the associated morphism of operads $\pi_0(u):\pi_0(\mathcal P)\to\pi_0(\mathcal Q)$
is essentially surjective.

A morphism of simplicial operads $u: \mathcal P\to\mathcal Q$ is a
\emph{local fibration} (a \emph{local trivial fibration}) if, for any non-negative integer $n$
and for any $(n+1)$-tuple of objects $(x_1,\ldots,x_n,x)$ of $\mathcal P$, the map
$$\mathcal P(x_1,\ldots,x_n;x)\to \mathcal Q(u(x_1),\ldots,u(x_n);u(x))$$
is a Kan fibration (a trivial fibration, respectively).
A morphism of simplicial operads $u: \mathcal P\to\mathcal Q$ is
an \emph{isofibration} if it is a local fibration and if the induced map
of operads $\pi_0(u): \pi_0(\mathcal P)\to\pi_0(\mathcal Q)$ is an isofibration
in the sense of \ref{def:folkcmcoperads}.

A simplicial operad $\mathcal P$ is \emph{fibrant} if the morphism from
$\mathcal P$ to the terminal operad is a local fibration (or, equivalenly,
an isofibration). 
\end{definition}

\begin{paragr}\label{def:cmcfixedcoloursoper}
Let $C$ be a set. We denote by $\soper_C$ the category of simplicial operads with $C$
as a fixed set of objects: the objects of $\soper_C$ are the simplicial operads $\mathcal P$
such that $\ob \mathcal P=C$, and the morphisms of $\soper_C$ are the morphisms
of simplicial operads $\mathcal P \to \mathcal Q$ for which the induced map
$C=\ob \mathcal P\to \ob \mathcal Q=C$ is the identity.
Note that any morphism of $\soper_C$ is in particular essentially surjective.
\end{paragr}

\begin{rem}\label{rem:nicefibrantreplacementsoper}
For any simplicial operad $\mathcal P$, there is a functorial map
\begin{equation}\label{eq:soperfibrantreplace}
\mathcal P\to \ex^\infty(\mathcal P)
\end{equation}
which is fully faithful and the identity on objects, while
$\ex^\infty(\mathcal P)$ is fibrant: we define
$\ex^\infty(\mathcal P)$ by $\ob \ex^\infty(\mathcal P)=\ob \mathcal P$
on objects, while, for any $(n+1)$-tuple of objects $(x_1,\ldots,x_n,x)$
of $\mathcal P$, we take
\begin{equation}
 \ex^\infty(\mathcal P)(x_1,\ldots,x_n;x)= \ex^\infty(\mathcal P(x_1,\ldots,x_n;x))\, ,
\end{equation}
where, one the right hand side, we apply Kan's $\ex^\infty$ functor to the
space of operations of $\mathcal P$. This defines a simplicial operad
because Kan's $\ex^\infty$ functor preserves finite products.
In fact, more generally, this functor preserves finite limits as well as filtered
colimits, so that the functor $\mathcal P\longmapsto\ex^\infty(\mathcal P)$
has the same properties. Moreover, Kan's $\ex^\infty$ functor preserves
Kan fibrations, so that the induced functor on simplicial operads
preserves local fibrations as well as isofibrations (for the latter property, note
that $\pi_0(\mathcal P)\simeq\pi_0(\ex^\infty(\mathcal P))$).
\end{rem}

\begin{thm}\label{thm:cmcfixedcoloursoper}
The category $\soper_C$ is endowed with a right proper cofibrantly generated
model category structure whose weak equivalences (fibrations)
are the morphisms of $\soper_C$ which are fully faithful (which are local fibrations, respectively).
\end{thm}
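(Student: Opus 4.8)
The plan is to produce this model structure by transfer, along a forgetful functor, from a levelwise model structure on a suitable diagram category of simplicial sets. Let $\mathcal E$ be the category of $C$-coloured collections in simplicial sets, that is, of families $\bigl(X(x_1,\ldots,x_n;x)\bigr)$ of simplicial sets indexed by the operation types with colours in $C$, equipped with their symmetric group actions; this is a diagram category of simplicial sets over a small groupoid, hence locally presentable, and the forgetful functor $U\colon\soper_C\to\mathcal E$ that remembers only the spaces of operations is a right adjoint preserving filtered colimits, with left adjoint $F$ the free simplicial operad on a collection. In particular $\soper_C$ is bicomplete, and limits in it are computed operation-type-wise. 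I would equip $\mathcal E$ with the projective model structure: its weak equivalences and fibrations are the morphisms which are, at each operation type, a weak equivalence resp.\ a Kan fibration of simplicial sets; it is cofibrantly generated, with generating (trivial) cofibrations $I_{\mathcal E}$ (resp.\ $J_{\mathcal E}$) obtained from the boundary inclusions $\partial\Delta^n\to\Delta^n$ (resp.\ the horn inclusions $\Lambda^n_k\to\Delta^n$) by the left adjoints to evaluation at each operation type, and it is right proper since $\sset$ is. By the very definitions of the classes of maps in Definition~\ref{def:mainclassesofmapssimpoper}, a morphism $f$ of $\soper_C$ is fully faithful, resp.\ a local fibration, exactly when $U(f)$ is a weak equivalence, resp.\ a fibration, of $\mathcal E$. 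It therefore suffices to show that the model structure of $\mathcal E$ transfers along $F\dashv U$: the transferred structure then automatically has as weak equivalences the fully faithful morphisms and as fibrations the local fibrations, and it is right proper because pullbacks, fibrations and weak equivalences in $\soper_C$ are all computed operation-type-wise.

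By the standard recognition theorem for transferred cofibrantly generated model structures, and since $\soper_C$ is bicomplete with $U$ preserving filtered colimits (so that $FI_{\mathcal E}$ and $FJ_{\mathcal E}$ permit the small object argument), everything reduces to the \emph{acyclicity condition}: every relative $FJ_{\mathcal E}$-cell complex must be sent by $U$ to a weak equivalence of $\mathcal E$. Note first that, by adjunction, a morphism $p$ of $\soper_C$ has the right lifting property against $FJ_{\mathcal E}$ if and only if $U(p)$ has the right lifting property against $J_{\mathcal E}$, i.e.\ if and only if $p$ is a local fibration; hence every relative $FJ_{\mathcal E}$-cell complex has the left lifting property against all local fibrations.

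To verify acyclicity I would use Quillen's path-object argument, whose two ingredients are both available here. The first is a functorial fibrant replacement: by Remark~\ref{rem:nicefibrantreplacementsoper} the functor $\mathcal P\mapsto\ex^\infty(\mathcal P)$, together with the fully faithful morphism $\mathcal P\to\ex^\infty(\mathcal P)$ which is the identity on colours, does the job, since the fibrant objects of the prospective model structure are exactly the simplicial operads all of whose spaces of operations are Kan complexes. The second is a functorial path object for fibrant objects: as the cotensor $(-)^{\Delta^1}$ on $\sset$ is a right adjoint and hence preserves finite products, applying it to all the spaces of operations of a simplicial operad $\mathcal P$ yields a simplicial operad $\mathcal P^{\Delta^1}$ with the same colours, and the inclusion of the two vertices $\Delta^0\sqcup\Delta^0\to\Delta^1$ and the projection $\Delta^1\to\Delta^0$ induce morphisms $\mathcal P\to\mathcal P^{\Delta^1}\to\mathcal P\times\mathcal P$ of $\soper_C$ whose composite is the diagonal; when $\mathcal P$ is fibrant the first is fully faithful and the second is a local fibration, because $K^{\Delta^1}\to K\times K$ is a Kan fibration for every Kan complex $K$. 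With these, the argument runs as usual: for a relative $FJ_{\mathcal E}$-cell complex $j\colon\mathcal A\to\mathcal B$, lifting $j$ against $\ex^\infty(\mathcal A)\to\ast$ gives $r\colon\mathcal B\to\ex^\infty(\mathcal A)$ with $rj$ the fibrant-replacement map of $\mathcal A$, and lifting $j$ against the path-object fibration $\ex^\infty(\mathcal B)^{\Delta^1}\to\ex^\infty(\mathcal B)\times\ex^\infty(\mathcal B)$ gives a right homotopy between the fibrant-replacement map of $\mathcal B$ and $\ex^\infty(j)\circ r$; applying $U$ and passing to the homotopy category $\ho(\mathcal E)$, this homotopy makes the image of $U(r)$ a split monomorphism, while the identity $U(r)\circ U(j)=U\bigl(\mathcal A\to\ex^\infty(\mathcal A)\bigr)$ makes it a split epimorphism, so that $U(r)$ is a weak equivalence and then so is $U(j)$ by two-out-of-three. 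This establishes acyclicity, and with it the theorem.

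I expect the acyclicity condition to be the only genuine obstacle: the point is that the path-object argument has to be carried out inside $\soper_C$ with respect to its own notions of local fibration and fully faithful morphism, which is exactly what the preservation of finite products by $\ex^\infty$ and by $(-)^{\Delta^1}$ makes possible. Everything else --- bicompleteness and local presentability of $\soper_C$, applicability of the small object argument, and inheritance of right properness from $\sset$ --- is formal.
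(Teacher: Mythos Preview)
Your proof is correct and follows essentially the same route as the paper. The paper's argument is much shorter only because it invokes \cite[Theorem~2.1]{BMOp1}, which packages precisely the transfer-plus-path-object argument you have written out by hand: the two hypotheses of that theorem are a symmetric monoidal fibrant replacement (your $\ex^\infty$) and a comonoidal interval (your $\Delta^1$, used to build $\mathcal P^{\Delta^1}$), and the conclusion is the transferred model structure on $C$-coloured operads. So what you have done is unfold the cited result in this particular case; nothing is missing, and the right-properness deduction is the same in both versions. The paper also notes, in the remark following its proof, an alternative elementary construction via Quillen's theorem on simplicial objects, which is a genuinely different route you might find interesting.
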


\begin{proof}
As recalled in the preceding remark,
the usual Quillen model category structure on the category of simplicial sets
has a finite limit preserving fibrant replacement functor provided
by Kan's $\ex^\infty$ functor. Moreover the canonical interval $\Delta[1]$
is naturally endowed with a structure of cocommutative co-unital comonoid
(the comultiplication is the diagonal $\Delta[1]\to\Delta[1]\times\Delta[1]$,
while the co-unit is the unique map $\Delta[1]\to\Delta[0]$),
so that the existence of this model category structure on the category
$\soper_C$ follows straight away from \cite[Theorem 2.1]{BMOp1}.
The right properness easily follows from the analogous property
in the usual model category of simplicial sets.
\end{proof}

\begin{rem}
A morphism in $\soper_C$ is a trivial fibration if and only if it is a local trivial
fibration in the sense of \ref{def:mainclassesofmapssimpoper}.
\end{rem}

\begin{rem}
The model structure of Theorem \ref{thm:cmcfixedcoloursoper} can be constructed by
more elementary means: the category \smash{$\soper_C$} is the category of simplicial objects
in the category \smash{$\oper_C$} of operads with fixed set of object $C$, and one can check that
\smash{$\underline A=\oper_C$} satisfies the hypotheses of \cite[Chapter II, Section 4, Theorem 4]{quillen}.
\end{rem}

\begin{paragr}\label{paragr:changeofcolours}
Consider two sets $C$ and $D$ as well as a map $f:C\to D$.
This map induces an adjunction
\begin{equation}\label{eq:changeofcolours}
f_!:\soper_C\rightleftarrows\soper_D:f^*
\end{equation}
in which the right adjoint sends a simplicial operad $\mathcal P$ with set of objects $D$
to the simplicial operad $f^*\mathcal P$ with set of objects $C$ defined by the formula
\begin{equation}
f^*\mathcal P(c_1,\ldots,c_n;c)=\mathcal P(f(c_1),\ldots,f(c_n);f(c))
\end{equation}
for any integer $n\geq 0$ and any $(n+1)$-tuple $(c_1,\ldots,c_n,c)$ in $C^{n+1}$.

The following statement is then obvious.
\end{paragr}

\begin{prop}\label{prop:changeofcolours}
The adjunction \eqref{eq:changeofcolours} is a Quillen pair with respect to the
model category structures of Theorem \ref{thm:cmcfixedcoloursoper}.
Moreover, the functor $f^*$ preserves weak equivalences.
\end{prop}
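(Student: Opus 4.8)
The plan is to check directly that the right adjoint $f^*$ is a right Quillen functor, from which the first assertion follows, and then to observe that the very same computation shows that $f^*$ preserves weak equivalences.

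First I would recall, from Theorem \ref{thm:cmcfixedcoloursoper} together with the remark immediately following it, that in $\soper_C$ (and likewise in $\soper_D$) the fibrations are precisely the local fibrations, the trivial fibrations are precisely the local trivial fibrations, and the weak equivalences are precisely the fully faithful morphisms in the sense of Definition \ref{def:mainclassesofmapssimpoper}. The crucial point is that each of these three classes is defined \emph{objectwise on spaces of operations}: a morphism belongs to the class exactly when, for every $(n+1)$-tuple of colours $(x_1,\ldots,x_n,x)$, the induced map $\mathcal P(x_1,\ldots,x_n;x)\to\mathcal Q(\ldots)$ is a Kan fibration (resp. a trivial fibration, resp. a weak equivalence) of simplicial sets.

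Next I would unwind the definition of $f^*$ recalled in \eqref{eq:changeofcolours}: for a morphism $u\colon\mathcal P\to\mathcal Q$ in $\soper_D$ and an $(n+1)$-tuple $(c_1,\ldots,c_n,c)\in C^{n+1}$, the corresponding component of $f^*(u)$ on spaces of operations is the map
$$ f^*\mathcal P(c_1,\ldots,c_n;c)=\mathcal P(f(c_1),\ldots,f(c_n);f(c))\To\mathcal Q(f(c_1),\ldots,f(c_n);f(c))=f^*\mathcal Q(c_1,\ldots,c_n;c) $$
induced by $u$; that is, it is literally one of the components of $u$ itself. Reading off the objectwise characterisations above, it follows that if $u$ is a local fibration (resp. a local trivial fibration, resp. fully faithful), then so is $f^*(u)$. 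In particular $f^*$ preserves fibrations and trivial fibrations; since an adjunction is a Quillen pair as soon as the right adjoint has this property, $(f_!,f^*)$ is a Quillen pair, and moreover $f^*$ preserves weak equivalences.

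There is essentially no obstacle here: the whole argument rests only on the fact that the relevant classes of maps in the model structure of Theorem \ref{thm:cmcfixedcoloursoper} are detected objectwise on the simplicial sets of operations, so that restricting colours along $f$ --- which merely reindexes and repeats those operation spaces --- cannot destroy membership in any of them. The one thing worth stating carefully is precisely this objectwise nature of fibrations, trivial fibrations and weak equivalences, which is exactly what the cited theorem and its accompanying remark provide.
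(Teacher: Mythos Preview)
Your argument is correct and is exactly what the paper has in mind: the paper states that the proposition is ``obvious'' and gives no further proof, and your write-up spells out precisely why---all three classes (fibrations, trivial fibrations, weak equivalences) in the model structure of Theorem \ref{thm:cmcfixedcoloursoper} are detected on spaces of operations, and $f^*$ merely reindexes these.
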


\begin{rem}
Consider an arbitrary morphism of simplicial operads $u:\mathcal P\to \mathcal Q$.
If, with a slight abuse of notation, we still denote by $u:\ob\mathcal P\to\ob \mathcal Q$
the associated map on objects, we obtain a natural factorization of the morphism of simplicial
operads $u$ of the form
\begin{equation}
\mathcal P\to u^*\mathcal Q \to \mathcal Q
\end{equation}
in which the first map $\mathcal P\to u^*\mathcal Q$ is bijective on objects, while the second
one, $u^*\mathcal Q \to \mathcal Q$, is degreewise fully faithful (in the strong sense of \ref{def:folkcmcoperads}).

We see in particular that the morphism $u$ is fully faithful (a local fibration) in the sense of \ref{def:mainclassesofmapssimpoper}
if and only if the map $\mathcal P\to u^*\mathcal Q$
is a weak equivalence (a fibration, respectively) of the model structure of Theorem \ref{thm:cmcfixedcoloursoper}
(with $C=\ob \mathcal P$).
\end{rem}

\begin{rem}\label{rem:fromoperadstocategories}
The category $\cat$ of small categories can be seen as the full subcategory
of the category of operads whose objects are the operads $\mathcal P$ such that
$\mathcal P(x_1,\ldots,x_n;x)=\varnothing$ for any non-negative integer $n\neq 1$
and any $(n+1)$-tuple of objects $(x_1,\ldots,x_n,x)$ of $\mathcal P$.
We thus get an adjunction
\begin{equation}\label{eq:adjcatoper}
j_!:\cat\rightleftarrows\oper:j^*
\end{equation}
where $j_!$ denotes the inclusion. One can understand this adjunction as follows.
Let $\eta$ be the terminal category, seen as an operad. Then there is a canonical
identification $\cat=\oper/\eta$, so that the inclusion functor $j_!$ corresponds
to the forgetful functor $\oper/\eta\to\oper$, while its right adjoint
is the functor $\mathcal P\longmapsto \eta\times\mathcal P$.
In particular, the model category structure of Theorem \ref{thm:folkcmcoperads}
induces a model category structure on $\oper/\eta=\cat$ which is nothing but
the canonical model category structure (whose weak equivalences are the
equivalences of categories and whose cofibrations are the functors which
are injective on objects). The adjunction \eqref{eq:adjcatoper} extends naturally
to an adjunction relating simplicial categories and simplicial operads.
\begin{equation}\label{eq:adjscatsoper}
j_!:\scat\rightleftarrows\soper:j^*
\end{equation}
A similar pattern appears if we fix a set of objects $C$: the category $\scat_C$
of simplicial categories with a fixed set of objects $C$ can be seen as a full subcategory
of the category \smash{$\soper_C$}. The terminal object of $\scat_C$ is the contractible
groupoid whose set of objects is $C$, which we will denote by $EC$. We have
a natural identification \smash{$\scat_C=\soper_C/j_! EC$}, from which the model
structure of Theorem \ref{thm:cmcfixedcoloursoper} induces the existence
of a right proper model category structure on $\scat_C$. The latter is in fact
the model category structure of Dwyer \& Kan~\cite[Proposition 7.2]{DK}.
\end{rem}

The main purpose of this section is to prove the following theorem, which can be seen
as an extension to operads of the model category structure on $\scat$ constructed by
J.~Bergner~\cite{bergnera}.

\begin{thm}\label{thm:cmcsoper}
The category $\soper$ of simplicial operads is endowed with a cofibrantly generated
right proper
model category structure in which a map
$u:\mathcal P\to \mathcal Q$ is a weak equivalence (a fibration) if and only if it is
fully faithful and essentially surjective (an isofibration, respectively)
in the sense of \ref{def:mainclassesofmapssimpoper}.
\end{thm}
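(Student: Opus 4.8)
The plan is to obtain the model structure on $\soper$ by transferring the "bottom" information from the fixed-object model structures of Theorem~\ref{thm:cmcfixedcoloursoper} along the forgetful functor $\soper\to\set$, $\mathcal P\mapsto\ob\mathcal P$, exactly in the spirit of Bergner's construction of the model structure on $\scat$. The first step is to describe explicit generating cofibrations and generating trivial cofibrations. For the cofibrations one takes, for each set $C$, the image under the left adjoints $f_!$ (change of colours, \eqref{eq:changeofcolours}) of a set of generating cofibrations of $\soper_C$, together with the two maps $\varnothing\to\eta_0$ and $\eta_0\sqcup\eta_0\to\eta_1$ encoding "adding a colour" and "identifying two colours" — i.e.\ the operadic analogues of Bergner's maps $\emptyset\to\{*\}$ and $\{*\}\sqcup\{*\}\to \{0\xrightarrow{\sim}1\}$, with the latter built from the interval object provided by the contractible groupoid on two objects. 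For the trivial cofibrations one takes the images of generating trivial cofibrations of the $\soper_C$, plus the single map $\eta_0\to E_2$ from a point into the free contractible simplicial groupoid on two objects (a "Dwyer--Kan generator"). The candidate weak equivalences and fibrations are the fully faithful and essentially surjective morphisms and the isofibrations of \ref{def:mainclassesofmapssimpoper}.

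Next I would verify the model category axioms by Quillen's recognition theorem (the small object argument applies since $\soper$ is locally presentable and all the generators have small domains). The nontrivial points are: (i) the class of morphisms with the right lifting property against the generating trivial cofibrations coincides with the isofibrations; and (ii) every map having the right lifting property against the generating cofibrations is a trivial fibration, i.e.\ a local trivial fibration which is also surjective on colours and an isofibration on $\pi_0$. Point (ii) is essentially formal: lifting against the $f_!$ of generators of $\soper_C$ gives the local trivial fibration condition, and lifting against $\varnothing\to\eta_0$ gives surjectivity on objects, which together with local triviality forces $\pi_0(u)$ to be a trivial fibration of operads, hence in particular an isofibration. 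For (i) one checks that lifting against the $f_!$ of trivial cofibrations of $\soper_C$ gives that $u$ is a local fibration, and that lifting against $\eta_0\to E_2$ is exactly the path-lifting property that, combined with local fibrancy, makes $\pi_0(u)$ an isofibration of operads; conversely an isofibration in our sense has all these lifting properties because isomorphisms in a simplicial operad can be lifted along a local fibration using that invertible components of a Kan fibration can be lifted (this uses $\pi_0(\mathcal P)\simeq\pi_0(\ex^\infty\mathcal P)$ as in Remark~\ref{rem:nicefibrantreplacementsoper}, plus the fact that the inclusion of an invertible $1$-simplex is anodyne inside its component).

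The heart of the argument — and the step I expect to be the main obstacle — is showing that every generating trivial cofibration is a weak equivalence and, more importantly, that the saturated class it generates consists of weak equivalences, so that the "acyclicity" half of the recognition theorem holds. The $f_!$-images of trivial cofibrations of $\soper_C$ are handled by Proposition~\ref{prop:changeofcolours} (that adjunction is Quillen and $f^*$ preserves and detects fully faithfulness), so these are fully faithful and (being bijective on objects) essentially surjective. The subtle generator is $\eta_0\to E_2$: one must show that pushouts of this map, and transfinite composites of such pushouts, remain fully faithful and essentially surjective. Essential surjectivity is immediate since these maps are bijective on $\pi_0$ of objects. Full faithfulness is the delicate point: pushing out $\eta_0\to E_2$ along a map $\eta_0\to\mathcal P$ picking a colour $x$ freely adjoins a new colour $x'$ together with a contractible space of invertible operations relating $x$ and $x'$, and one must check this does not change the weak homotopy type of any operation space. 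This is proved exactly as in the category case (Dwyer--Kan, Bergner): the new operation spaces $\mathcal Q(y_1,\dots,y_n;y)$ are computed as homotopy colimits of diagrams of old operation spaces indexed by words in the freely adjoined invertible arrows, and contractibility of $E_2$ makes these diagrams homotopy constant; one may also invoke Theorem~\ref{thm:sigmacofleftproper} / the nerve pushout property (Prop.~\ref{prop:addfreecellweakequiv}) once available, but at this stage a direct filtration argument on the pushout, analogous to the one used to analyze free cell attachments, suffices. Right properness then follows, as in Theorem~\ref{thm:cmcfixedcoloursoper}, from right properness of the fixed-colour structures together with the fact that pulling back an isofibration along any morphism can be analyzed colour-set-wise and reduces to right properness in simplicial sets.
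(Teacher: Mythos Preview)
Your overall architecture matches the paper's: explicit generating (trivial) cofibrations, identification of the right-lifting classes, and verification of acyclicity via a recognition theorem. Two small discrepancies in the generators: the paper uses simply $\varnothing\to\eta$ together with $C_n[\partial\Delta[m]]\to C_n[\Delta[m]]$ (no need to range over all colour sets via $f_!$, and no map $\eta\amalg\eta\to\eta_1$), and for the ``interval'' generators it takes a whole \emph{family} of maps $\eta\to H$ with $H$ any countable cofibrant two-object simplicial category weakly equivalent to a point, not a single $E_2$. The proof that having the right lifting property against this family characterizes isofibrations (the paper's Proposition~\ref{prop:fibrationsoper}) reduces to Bergner's Propositions~2.3 and~2.5, which genuinely use the whole family.

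The substantive gap is your treatment of acyclicity for pushouts of the interval generator. The sketch you give (``words in the freely adjoined invertible arrows'', ``homotopy constant diagrams'') is the Dwyer--Kan picture for simplicial \emph{categories}; for operads the new colour can appear in several input slots at once, and the indexing combinatorics is different. The paper does not analyze the pushout of $\eta\to H$ directly. Instead it factors $\eta\to H$ as $\eta\to K\to H$ with $K=a^*H$ the full subcategory on the object $0$, and handles the two pieces separately: the map $\eta\to K$ is bijective on objects and a trivial cofibration in $\soper_{\{0\}}$ (this needs that restriction of objects preserves cofibrancy, Lemma~\ref{lemma:restobjpreservecofibrants}), so its pushouts are weak equivalences by a fixed-colour argument (Lemma~\ref{lemma:injectivepushoutsoper}); the map $K\to H$ is \emph{degreewise fully faithful}, and the paper gives an explicit five-case description of the pushout $Q=P\amalg_K H$ in $\oper$ (Lemma~\ref{lemma:pushoutstrictfullyfaithful}) to show that $P\to Q$ is fully faithful in each simplicial degree. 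That explicit operadic computation is the real work, and nothing in your sketch replaces it. Your fallback suggestion of invoking Theorem~\ref{thm:sigmacofleftproper} or Proposition~\ref{prop:addfreecellweakequiv} is circular or inapplicable: the former is proved in Section~\ref{sect:8} \emph{using} the model structure on $\soper$, and the latter concerns free-operation attachments along $\partial C_m\to C_m$, not adjunction of an invertible $1$-ary arrow.
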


The proof of this theorem will require quite a few steps.
We will now develop the preparatory results needed for the proof,
while the formal proof will be given at the end of this section (page \pageref{proofofcmcsoper}).

\begin{paragr}\label{paragr:defcofandtrivcof}
We introduce several classes of maps
in the category of simplicial operads. For this purpose, we will need the following construction.
Given a simplicial set $X$ and an integer $n\geq 0$, we denote by $C_n[X]$ the
free simplicial operad with $n$-ary operations decorated by the simplicial set $X$.
In other words, the simplicial operad $C_n[X]$ has objects $0,1,\ldots,n$, and, for each
simplicial degree $m$, the only non-identity operations of the operad $C_n[X]_m$ are given by the set
\begin{equation}\label{eq:defreprhomoper1}
C_n[X]_m(1,\ldots,n;0)=X_m\, .
\end{equation}
Another way of defining $C_n[X]$ is by saying that it represents the functor
\begin{equation}\label{eq:defreprhomoper2}
\op{\soper}\to\set \ , \quad \mathcal P\longmapsto
\coprod_{(x_1,\ldots,x_n,x)\in\ob \mathcal P^{n+1}}\Hom_{\sset}(X,\mathcal P(x_1,\ldots,x_n;x))\, .
\end{equation}
This construction is certainly functorial in $X$.

Now we can go back to defining our classes of maps in $\soper$.
The class $\mathcal C$ contains:
\begin{itemize}
\item[(C1)] the map $\varnothing\to\eta$ (where $\eta$ stands for the
terminal category, seen as an operad);
\item[(C2)] the maps $C_n[\partial\Delta[m]]\to C_n[\Delta[m]]$,
for any integer $n\geq 0$
and any boundary inclusion $\partial\Delta[m]\to\Delta[m]$, $m\geq 0$.
\end{itemize}
The class $\overline{\mathcal C}$ is the saturation of $\mathcal C$ (i.e. the closure under
pushout, transfinite composition, and retracts). The class $\mathcal A$ contains
\begin{itemize}
\item[(A1)] the maps of shape $\eta\to H$ which are fully faithful and essentially surjective
(in the sense of \ref{def:mainclassesofmapssimpoper}), while $H$ is a category
with set of objects $\{0,1\}$, with the property that $H_n$ is countable for each integer $n\geq 0$,
and that $H$ is cofibrant, seen as object of $\soper_{\{0,1\}}$ (or equivalently, of $\scat_{\{0,1\}}$).
\item[(A2)] the maps $C_n[\Lambda^k[m]]\to C_n[\Delta[m]]$
by any horn inclusion $\Lambda^k[m]\to\Delta[m]$, $m\geq 1$, $0\leq k \leq m$,
and for any integer $n\geq 0$.
\end{itemize}
As before, the class $\overline{\mathcal A}$ is the saturation of $\mathcal A$.

The following lemma shows that, for any monomorphism (trivial cofibration)
of simplicial sets $X\to Y$, and for any integer $n\geq 0$, the induced morphism
of simplicial operads
$C_n[X]\to C_n[Y]$ is in $\overline{\mathcal C}$ (in $\overline{\mathcal A}$, respectively).
\end{paragr}

\begin{lemma}\label{lemma:localRLP}
Let $i:X\to Y$ be a morphism of simplicial sets, and $u:\mathcal P\to \mathcal Q$ a morphism
of simplicial operads. The following properties are equivalent.
\begin{itemize}
\item[(i)] For any integer $n\geq 0$ and any $(n+1)$-tuple of objects $(x_1,\ldots,x_n,x)$ of $\mathcal P$,
the morphism $\mathcal P(x_1,\ldots,x_n;x)\to \mathcal Q(u(x_1),\ldots,u(x_n);u(x))$
has the right lifting property with respect to $X\to Y$.
\item[(ii)] The morphism $u$ has the right lifting property with respect to any map
of shape $C_n[i]:C_n[X]\to C_n[Y]$, for any integer $n\geq 0$.
\end{itemize}
\end{lemma}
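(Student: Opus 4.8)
The statement is a clean adjunction/lifting argument, and the plan is to unpack both sides against the defining universal property \eqref{eq:defreprhomoper2} of $C_n[-]$. First I would recall what a lifting square for $u$ against $C_n[i]:C_n[X]\to C_n[Y]$ looks like: a commutative square
$$\xymatrix{
C_n[X]\ar[r]\ar[d]_{C_n[i]}&\mathcal P\ar[d]^u\\
C_n[Y]\ar[r]&\mathcal Q
}$$
By \eqref{eq:defreprhomoper2}, a map $C_n[X]\to \mathcal P$ is the same datum as a choice of an $(n+1)$-tuple $(x_1,\ldots,x_n,x)$ of objects of $\mathcal P$ together with a map of simplicial sets $X\to\mathcal P(x_1,\ldots,x_n;x)$; likewise for $C_n[Y]\to\mathcal Q$. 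Since $C_n[i]$ is the identity on objects (it is $C_n$ applied to a map of simplicial sets, and objects of $C_n[Z]$ are always $0,1,\ldots,n$ independently of $Z$), the commutativity of the square forces the tuple chosen in $\mathcal Q$ to be the image under $u$ of the tuple chosen in $\mathcal P$; that is, the data amount to a single tuple $(x_1,\ldots,x_n,x)$ in $\mathcal P$ together with a commutative square of simplicial sets
$$\xymatrix{
X\ar[r]\ar[d]_i&\mathcal P(x_1,\ldots,x_n;x)\ar[d]\\
Y\ar[r]&\mathcal Q(u(x_1),\ldots,u(x_n);u(x))\rlap{\,.}
}$$
A diagonal filler $C_n[Y]\to\mathcal P$ in the first square is, again by \eqref{eq:defreprhomoper2} and the fact that it must be compatible on objects with the given data, precisely a diagonal filler $Y\to\mathcal P(x_1,\ldots,x_n;x)$ in the second square. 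This bijective correspondence between lifting problems (and their solutions) is exactly the equivalence (i)$\Leftrightarrow$(ii): condition (ii) says every square of the first kind has a filler, condition (i) says every square of the second kind (for every tuple) has a filler, and we have just matched them up.

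Concretely, for (ii)$\Rightarrow$(i): given a tuple $(x_1,\ldots,x_n,x)$ and a square of simplicial sets as above, transpose it across the universal property to a square with $C_n[i]$ on the left and $u$ on the right, apply (ii) to get a filler $C_n[Y]\to\mathcal P$, and transpose back to obtain the desired map $Y\to\mathcal P(x_1,\ldots,x_n;x)$. For (i)$\Rightarrow$(ii): given a square with $C_n[i]$ on the left, read off the induced tuple in $\mathcal P$ and the induced square of simplicial sets, apply (i) for that tuple to get $Y\to\mathcal P(x_1,\ldots,x_n;x)$, and transpose to a map $C_n[Y]\to\mathcal P$; the two triangles commute because transposition is natural and the original triangles did. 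One should check the small point that in the upper triangle the composite $C_n[X]\to C_n[Y]\to\mathcal P$ agrees with the given $C_n[X]\to\mathcal P$: this is immediate since transposition is a bijection natural in the simplicial set variable, so precomposition with $C_n[i]$ corresponds to precomposition with $i$, which was built into the square.

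There is no real obstacle here; the only thing that needs a word of care is the bookkeeping of objects, i.e.\ observing that $C_n[-]$ is constant on objects so that a map out of $C_n[Z]$ is a tuple of objects in the target plus a map of the operation spaces, and that this is compatible with the maps $u$ and $C_n[i]$ in the evident way. Once that is said, the equivalence is a formal transposition argument. (In categorical language: $C_n[-]\dashv$ (the functor sending $\mathcal P$ to the coproduct in \eqref{eq:defreprhomoper2}) is an adjunction between $\sset$ and a comma-type category over $\soper$, and the lemma is the standard fact that a right adjoint's lifting properties against $C_n[i]$ match the left adjoint's image's lifting properties; but the elementary argument above is all that is needed.)
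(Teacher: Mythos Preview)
Your proof is correct and follows exactly the approach the paper intends: the paper's proof consists of the single sentence ``This follows by a standard argument, once we noticed that $C_n[X]$ represents the presheaf \eqref{eq:defreprhomoper2},'' and you have simply written out that standard argument in full detail. There is nothing to add or correct.
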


\begin{proof}
This follows by a standard argument, once we noticed that $C_n[X]$ represents the
presheaf \eqref{eq:defreprhomoper2}.
\end{proof}

\begin{paragr}
As will be shown later in the proof of Theorem \ref{thm:cmcsoper},
the class $\overline{\mathcal C}$ is the class of cofibrations in $\soper$,
while $\overline{\mathcal A}$ is the class of trivial cofibrations.
Propositions \ref{prop:trivfibrationsoper} and \ref{prop:fibrationsoper} clarify this point.
\end{paragr}

\begin{prop}\label{prop:trivfibrationsoper}
A morphism of simplicial operads has the right lifting property with respect to
the class $\overline{\mathcal C}$ if and only if it is a local trivial fibration which is
surjective on objects.
\end{prop}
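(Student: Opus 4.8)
The plan is to reduce the statement to the elementary characterisation of Lemma~\ref{lemma:localRLP}. First I would observe that, by the very definition of $\mathcal C$ together with the small object argument, a morphism $u:\mathcal P\to\mathcal Q$ has the right lifting property with respect to $\overline{\mathcal C}$ if and only if it has the right lifting property with respect to the generating set $\mathcal C$ itself, i.e. with respect to (C1) and with respect to all the maps $C_n[\partial\Delta[m]]\to C_n[\Delta[m]]$ of (C2). The point (C1), lifting against $\varnothing\to\eta$, unwinds immediately to the condition that $u$ is surjective on objects, since a lift $\eta\to\mathcal P$ over a given object $y$ of $\mathcal Q$ with $u(\eta)=y$ is exactly the choice of an object $x$ of $\mathcal P$ with $u(x)=y$. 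For (C2), Lemma~\ref{lemma:localRLP} applied to the boundary inclusions $i=\partial\Delta[m]\to\Delta[m]$ shows that $u$ has the right lifting property against all the $C_n[i]$ (for all $n$ simultaneously) precisely when, for every $(n+1)$-tuple of objects of $\mathcal P$, the map of simplicial sets
$$\mathcal P(x_1,\ldots,x_n;x)\to \mathcal Q(u(x_1),\ldots,u(x_n);u(x))$$
has the right lifting property against every boundary inclusion, that is, is a trivial Kan fibration. This is exactly the condition that $u$ be a local trivial fibration in the sense of \ref{def:mainclassesofmapssimpoper}.

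Putting the two pieces together gives both implications: if $u$ has the right lifting property against $\overline{\mathcal C}$, then it has it against (C1) and (C2), hence it is surjective on objects and a local trivial fibration; conversely, if $u$ is a local trivial fibration which is surjective on objects, then it lifts against (C1) by surjectivity on objects and against (C2) by Lemma~\ref{lemma:localRLP}, hence against the saturation $\overline{\mathcal C}$. The only mild subtlety is to make sure that the lifting problems against the family $\{C_n[\partial\Delta[m]]\to C_n[\Delta[m]]\}_{m\geq 0}$, for a fixed $n$, really do transport (via the adjunction realising $C_n[-]$ as in \eqref{eq:defreprhomoper2}) to lifting problems for the single space of operations indexed by a chosen $(n+1)$-tuple; this is the content of the ``standard argument'' invoked in the proof of Lemma~\ref{lemma:localRLP}, and it is where one uses that $C_n[X]$ corepresents the functor \eqref{eq:defreprhomoper2}.

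I do not expect a genuine obstacle here: the argument is a direct combination of the small object argument (to pass from $\overline{\mathcal C}$ to $\mathcal C$), the explicit unwinding of (C1), and Lemma~\ref{lemma:localRLP} for (C2). The one place that deserves a sentence of care is the bookkeeping for (C1): one must check that a morphism lifting against $C_n[\partial\Delta[m]]\to C_n[\Delta[m]]$ for all $n,m$ but not necessarily against $\varnothing\to\eta$ is still \emph{locally} a trivial fibration — which it is, trivially, since the local condition says nothing about objects — so that the surjectivity-on-objects clause is exactly the extra content contributed by (C1) and nothing is double-counted.
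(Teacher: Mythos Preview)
Your proposal is correct and follows exactly the paper's approach: unwrap (C1) as surjectivity on objects, and invoke Lemma~\ref{lemma:localRLP} for (C2). One small remark: the passage from $\overline{\mathcal C}$ to $\mathcal C$ does not require the small object argument, only the elementary fact that right lifting properties are automatically inherited by the saturation of the left class.
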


\begin{proof}
The right lifting property with respect to the maps of shape (C1) is equivalent to the surjectivity on objects,
while, by virtue of Lemma \ref{lemma:localRLP}, the right lifting property with respect to maps
of shape (C2) corresponds to the property of being a local trivial fibration.
\end{proof}

\begin{cor}\label{cor:trivcofintocof}
We have $\overline{\mathcal A}\subset\overline{\mathcal C}$.
\end{cor}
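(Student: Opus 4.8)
We must show $\overline{\mathcal A}\subset\overline{\mathcal C}$, i.e. every trivial cofibration (in the candidate sense, a member of the saturation of $\mathcal A$) is a cofibration (a member of the saturation of $\mathcal C$). Since $\overline{\mathcal C}$ is saturated — closed under pushout, transfinite composition and retracts — and $\overline{\mathcal A}$ is by definition the smallest saturated class containing $\mathcal A$, it suffices to prove the inclusion on generators: $\mathcal A\subset\overline{\mathcal C}$. Equivalently, since the right lifting property against $\overline{\mathcal C}$ is characterized by Proposition~\ref{prop:trivfibrationsoper} (local trivial fibration + surjective on objects), it is enough to check that each generating map of type (A1) and (A2) has the left lifting property with respect to every morphism that is a local trivial fibration and surjective on objects.

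**Type (A2).** The maps $C_n[\Lambda^k[m]]\to C_n[\Delta[m]]$ are induced by horn inclusions $\Lambda^k[m]\to\Delta[m]$, which are in particular monomorphisms of simplicial sets. By the final sentence of paragraph~\ref{paragr:defcofandtrivcof} — the remark justified by Lemma~\ref{lemma:localRLP} — for any monomorphism $X\to Y$ of simplicial sets the induced map $C_n[X]\to C_n[Y]$ lies in $\overline{\mathcal C}$. (Concretely: by Lemma~\ref{lemma:localRLP}, $C_n[i]$ lifts against exactly those $u$ whose operation maps lift against $i$; when $i$ is a mono this is implied by being a local trivial fibration, hence $C_n[i]$ lies in the saturation of the (C2)-generators, which is inside $\overline{\mathcal C}$.) So the (A2) generators are already in $\overline{\mathcal C}$, with nothing further to do.

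**Type (A1).** This is the only real point. A map of type (A1) has the form $\eta\to H$ where $H$ is a category on the object set $\{0,1\}$ which is cofibrant as an object of $\soper_{\{0,1\}}$ (equivalently $\scat_{\{0,1\}}$). The plan is to factor $\eta\to H$ through the operad $D$ on objects $\{0,1\}$ with \emph{no nonidentity operations}, i.e. $D=\{0\}\sqcup\{1\}$ as operads: the first map $\eta\to D$ is obtained by one pushout of the (C1)-generator $\varnothing\to\eta$ along the unique map $\varnothing\to\eta$ (adjoining the second object), so it lies in $\overline{\mathcal C}$; it remains to see that $D\to H$ lies in $\overline{\mathcal C}$. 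But $D\to H$ is a map in $\soper_{\{0,1\}}$, and it is precisely the map from the initial object of $\soper_{\{0,1\}}$ to $H$; since $H$ was assumed cofibrant in $\soper_{\{0,1\}}$ (with the model structure of Theorem~\ref{thm:cmcfixedcoloursoper}), this map is a cofibration there. Now I must transfer a ``cofibration in $\soper_{\{0,1\}}$'' into membership in $\overline{\mathcal C}\subset\soper$. The key is that the (C2)-generators restricted to the fixed object set $\{0,1\}$, namely the maps $C_n[\partial\Delta[m]]\to C_n[\Delta[m]]$ with source and target in $\soper_{\{0,1\}}$, are exactly (a subset of) the generating cofibrations used to build the model structure on $\soper_{\{0,1\}}$ via \cite[Theorem~2.1]{BMOp1}; hence any cofibration between objects of $\soper_{\{0,1\}}$ is a retract of a transfinite composite of pushouts of such maps, computed in $\soper_{\{0,1\}}$. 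Finally, pushouts and transfinite composites along morphisms that are the identity on objects are computed the same way in $\soper_{\{0,1\}}$ as in $\soper$ (the forgetful functor $\soper_{\{0,1\}}\to\soper$ preserves these colimits, since they don't change the object set), so such a cofibration is, viewed in $\soper$, a retract of a transfinite composite of pushouts of (C2)-maps, i.e. an element of $\overline{\mathcal C}$.

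**Where the difficulty sits.** Types (A2) is immediate from Lemma~\ref{lemma:localRLP}. The substance of the argument is the (A1) case, and within it the only subtle point is the bookkeeping that cofibrancy of $H$ in $\soper_{\{0,1\}}$ — a property phrased in terms of the Berger--Moerdijk transferred model structure — really does furnish a presentation of $D\to H$ as a (C2)-cellular map, and that this presentation is preserved when we forget down to $\soper$. Concretely one invokes the explicit generating cofibrations of \cite[Theorem~2.1]{BMOp1} for $\underline A=\oper_{\{0,1\}}$: they are the images under $C_n[-]$ (restricted to the object set $\{0,1\}$) of the generating cofibrations $\partial\Delta[m]\to\Delta[m]$ of $\sset$, which are precisely our (C2)-maps living in $\soper_{\{0,1\}}$. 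Once this identification is made, everything else is formal closure-property manipulation.
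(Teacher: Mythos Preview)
Your proof is correct and follows essentially the same route as the paper: reduce to generators, dispose of (A2) via Lemma~\ref{lemma:localRLP}/Proposition~\ref{prop:trivfibrationsoper}, and for (A1) factor $\eta\to H$ through $\{0\}\amalg\{1\}$ and then argue that cofibrations of $\soper_{\{0,1\}}$ land in $\overline{\mathcal C}$. The paper justifies this last step tersely ``by examination of lifting properties'' (any map with the RLP against $\overline{\mathcal C}$ is a local trivial fibration, hence a trivial fibration in $\soper_C$, so $\soper_C$-cofibrations lift against it), whereas you argue dually via generating cofibrations; these are two sides of the same coin. One small imprecision: the generating cofibrations of $\soper_{\{0,1\}}$ (or $\scat_{\{0,1\}}$) are not literally the (C2)-maps $C_n[\partial\Delta[m]]\to C_n[\Delta[m]]$ with source in $\soper_{\{0,1\}}$, but rather their images $\varphi_! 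C_1[\partial\Delta[m]]\to \varphi_! C_1[\Delta[m]]$ under the various $\varphi:\{0,1\}\to\{0,1\}$ --- which, by Lemma~\ref{lemma:decoratedcorollastrivcof}, are pushouts of (C2)-maps and hence still in $\overline{\mathcal C}$, so your conclusion stands.
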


\begin{proof}
The preceding proposition implies that any morphism with the right
lifting property with respect to $\overline{\mathcal C}$ also has
the right lifting property with respect to maps of type (A2).
It also implies, by examination of lifting properties, that,
for any set $C$, the image of any cofibration of \smash{$\soper_C$} in $\soper$
is in $\overline{\mathcal C}$. As the inclusion
$\{0\}\to \{0,1\}=\{0\}\amalg\{1\}$
is certainly in $\overline{\mathcal C}$, this implies that any map of shape
(A1) is in $\overline{\mathcal C}$. Therefore,
$\mathcal A\subset \overline{\mathcal C}$, whence the result.
\end{proof}

\begin{prop}\label{prop:fibrationsoper}
A morphism of simplicial operads is an isofibration if and only if it has the
right lifting property with respect to the class $\overline{\mathcal A}$.
\end{prop}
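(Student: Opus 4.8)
The plan is to prove the two implications separately, using the characterisation of trivial fibrations in $\soper_C$ together with the small object argument. First I would observe that both classes of maps of type (A1) and (A2) are, in particular, fully faithful and essentially surjective maps of simplicial operads (for (A2) this is Lemma \ref{lemma:localRLP} applied to the fact that horn inclusions are trivial cofibrations of simplicial sets together with the remark that maps of the form $C_n[i]$ are the identity on objects; for (A1) it is part of the definition), and hence so is every map in $\overline{\mathcal A}$, since fully faithful essentially surjective maps are stable under pushout, transfinite composition and retract (the weak equivalences of $\soper_C$ are, and essential surjectivity passes to $\pi_0$). From this and the fact that isofibrations are defined by a lifting-type condition one already sees the easy direction: a trivial cofibration in $\soper_C$ lifts against any local fibration, and the isomorphism-lifting condition defining an isofibration in $\oper$ gives the lift against maps of type (A1) after passing through $\pi_0$; more precisely, one checks that an isofibration $u$ has the right lifting property against (A2) because it is a local fibration (Lemma \ref{lemma:localRLP}), and against (A1) because of the following two facts: $u$ restricted to a fixed set of colours is a fibration of $\soper_C$, against which the cofibrant countable object $H$ admits the required lift since $\eta\to H$ is a trivial cofibration of $\soper_{\{0,1\}}$; and the essential-surjectivity/isofibration data of $\pi_0(u)$ then lets one transport the chosen colours. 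Hence $\overline{\mathcal A}$-injectivity follows.

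For the converse, suppose $u:\mathcal P\to\mathcal Q$ has the right lifting property with respect to $\overline{\mathcal A}$; I must show $u$ is an isofibration, i.e. a local fibration such that $\pi_0(u)$ is an isofibration of operads. Local fibrancy is immediate: by Lemma \ref{lemma:localRLP}, the right lifting property against the maps $C_n[\Lambda^k[m]]\to C_n[\Delta[m]]$ of type (A2), for all $n$, $m\geq 1$ and $0\le k\le m$, says exactly that each map on spaces of operations $\mathcal P(x_1,\dots,x_n;x)\to\mathcal Q(\dots)$ is a Kan fibration. It remains to see that $\pi_0(u)$ is an isofibration of operads, i.e. lifts isomorphisms. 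Given an isomorphism $b:y\to y'$ in $\pi_0(\mathcal Q)$ and an object $x$ of $\pi_0(\mathcal P)=\ob\mathcal P$ with $\pi_0(u)(x)=y$, I must produce an isomorphism $a:x\to x'$ in $\pi_0(\mathcal P)$ over $b$. The idea is to encode the pair (object $y'$, isomorphism $b$) as a map out of the object $H$ appearing in (A1): an isomorphism in $\pi_0(\mathcal Q)$ starting at $y$ is the same as a simplicial functor from a suitable $H$ (with objects $\{0,1\}$, countable, cofibrant, with $\eta\to H$ fully faithful essentially surjective) into $\mathcal Q$ sending $0\mapsto y$, $1\mapsto y'$ and the generating equivalence to (a representative of) $b$; the inclusion $\eta\to H$ together with the object $x$ of $\mathcal P$ gives the commuting square, and a lift against this map of type (A1) produces the desired $x'$ and $a$ — one then checks that the image of $a$ in $\pi_0(\mathcal P)$ is an isomorphism because $a$ becomes an equivalence already in $H$ and $u$ sends it to a map homotopic to $b$, hence an isomorphism in $\pi_0$. (Strictly, one should note that not every isomorphism of $\pi_0(\mathcal Q)$ is literally represented by an honest functor from $H$, but one can always replace $H$ by a slightly larger model, or invoke that such an $H$ exists in the countable-cofibrant family by a direct construction — this is the standard trick, compatible with the cardinality bound built into (A1).)

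The main obstacle, as usual with this kind of statement, is the implication "$\overline{\mathcal A}$-injective $\Rightarrow$ isofibration" on the part concerning the $\pi_0$-isofibration condition: one has to argue carefully that the single countable cofibrant category $H$ (or a small family of such) in clause (A1) suffices to detect lifting of \emph{all} isomorphisms of $\pi_0(\mathcal Q)$, and to reconcile the strict simplicial data (a functor out of $H$) with the up-to-homotopy data (an isomorphism in $\pi_0$). I would handle this by fixing, once and for all, a convenient cofibrant replacement $H$ of the "free-living isomorphism" category $\{0\cong 1\}$ inside $\soper_{\{0,1\}}$ with countable simplicial sets of operations, observing that $\eta\to H$ is then of type (A1), and checking that a map $H\to\mathcal Q$ is the same datum as a vertex of some $\mathcal Q(y;y')$ that is invertible up to homotopy — equivalently an isomorphism of $\pi_0(\mathcal Q)$. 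All remaining verifications (stability of fully faithful essentially surjective maps under the saturation operations, the small object argument producing the lifts, the behaviour of $\pi_0$ on pushouts) are routine and will be quoted from the model structure of Theorem \ref{thm:cmcfixedcoloursoper} and from Theorem \ref{thm:folkcmcoperads}.
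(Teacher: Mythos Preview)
Your outline matches the paper's in one respect: both separate the (A2) part from the (A1) part and dispatch (A2) immediately via Lemma~\ref{lemma:localRLP}. The divergence is in the treatment of (A1), and here your argument has real gaps while the paper takes a much shorter route.

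The paper's key observation, which you miss, is that every $H$ occurring in (A1) is a simplicial \emph{category}. Hence any lifting problem of shape (A1) against $u:\mathcal P\to\mathcal Q$ is really a lifting problem against the underlying simplicial functor $j^*u:j^*\mathcal P\to j^*\mathcal Q$. Since $j^*$ preserves local fibrations and commutes with $\pi_0$, the morphism $u$ is an isofibration if and only if $j^*u$ is. The entire (A1) discussion therefore reduces to the case of simplicial categories, which is precisely Bergner's \cite[Propositions~2.3 and~2.5]{bergnera}. That is the whole proof.

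Your direct approach is essentially an attempt to reprove Bergner's result, and the sketch does not go through as written. Two concrete problems: first, the sentence ``$\eta\to H$ is a trivial cofibration of $\soper_{\{0,1\}}$'' does not type-check, since $\eta$ has a single object and is not an object of $\soper_{\{0,1\}}$; so the appeal to the model structure on $\soper_{\{0,1\}}$ to produce a lift is not well-posed. Second, in the direction ``isofibration $\Rightarrow$ RLP against (A1)'', after using the $\pi_0$-isofibration property to find a candidate object $x_1$ over $y_1$, you still have to build an honest simplicial functor $H\to\mathcal P$ lifting the given $H\to\mathcal Q$; your phrase ``transport the chosen colours'' hides this, and it is precisely the non-trivial step where the cofibrancy of $H$ in $\scat_{\{0,1\}}$ must be exploited (this is the content of Bergner's argument). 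The opening remark that maps in $\overline{\mathcal A}$ are fully faithful and essentially surjective is also not innocent: stability of such maps under pushout in $\soper$ (as opposed to $\soper_C$) is the content of Proposition~\ref{prop:trivcofibsoper} and its four supporting lemmas, and in any case is irrelevant to either implication here.
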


\begin{proof}
Lemma \ref{lemma:localRLP} implies that
the right lifting property with respect to maps of shape (A2) is equivalent to the property of being
a local fibration. Therefore, it is sufficient to prove that a local fibration $u:\mathcal P \to \mathcal Q$
has the right lifting property with respect to maps of shape (A1) if and only if the morphism
of operads $\pi_0(u):\pi_0(\mathcal P)\to\pi_0(\mathcal Q)$ is an isofibration.
Note that, if $u$ is a local fibration, so is the simplicial functor $j^*u:j^*\mathcal P\to j^*\mathcal Q$.
Moreover, the simplicial functor $j^*u$ is an isofibration if and only if $u$ has this property
(just remark that the functor $j^*$ commutes with $\pi_0$).
In other words, it is sufficient to prove this lemma in the case when both $\mathcal P$
and $\mathcal Q$ are in fact simplicial categories. But then, this is already known; see
\cite[Propositions 2.3 and 2.5]{bergnera}.
\end{proof}

\begin{prop}\label{prop:trivfibloctrivfib}
A morphism of simplicial operads is both a weak equivalence and an isofibration
if and only if it has the right lifting property with respect to the class $\overline{\mathcal C}$.
\end{prop}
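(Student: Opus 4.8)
The plan is to characterize the morphisms having the right lifting property with respect to $\overline{\mathcal C}$ as exactly the \emph{local trivial fibrations which are surjective on objects} (already done in Proposition \ref{prop:trivfibrationsoper}), and then to show that this coincides with the class of morphisms that are simultaneously weak equivalences and isofibrations. One inclusion is essentially formal: if $u$ has the right lifting property with respect to $\overline{\mathcal C}$, then by Proposition \ref{prop:trivfibrationsoper} it is a local trivial fibration which is surjective on objects; a local trivial fibration is in particular a local fibration, and being surjective on objects it induces an isofibration (indeed even a surjective-on-objects local trivial fibration) on $\pi_0$, so $u$ is an isofibration in the sense of \ref{def:mainclassesofmapssimpoper}; moreover a local trivial fibration is in particular a local weak equivalence, so $u$ is fully faithful, and surjectivity on objects makes $\pi_0(u)$ surjective on objects, hence essentially surjective. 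Thus $u$ is a weak equivalence and an isofibration.

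For the converse, suppose $u:\mathcal P\to\mathcal Q$ is both a weak equivalence and an isofibration; I must show it has the right lifting property with respect to $\overline{\mathcal C}$, i.e., by Proposition \ref{prop:trivfibrationsoper}, that it is a local trivial fibration and surjective on objects. Since $u$ is an isofibration, it is a local fibration; since it is a weak equivalence, it is fully faithful, so each map $\mathcal P(x_1,\ldots,x_n;x)\to\mathcal Q(u(x_1),\ldots,u(x_n);u(x))$ is a Kan fibration \emph{and} a weak equivalence of simplicial sets, hence a trivial Kan fibration; this is precisely the statement that $u$ is a local trivial fibration. It remains to see that $u$ is surjective on objects. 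Here I would use the isofibration hypothesis on $\pi_0(u)$ together with essential surjectivity: given an object $y$ of $\mathcal Q$, essential surjectivity of $\pi_0(u)$ provides an object $x$ of $\mathcal P$ and an isomorphism $\pi_0(u)(x)\simeq y$ in $\pi_0(\mathcal Q)$; since $\pi_0(u)$ is an isofibration of operads, this isomorphism lifts to an isomorphism $x\to x'$ in $\pi_0(\mathcal P)$ with $\pi_0(u)(x')=y$, and as $\pi_0$ is the identity on objects, $u(x')=y$. Hence $u$ is surjective on objects, and we conclude.

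The only subtle point — and the one I would expect to need the most care — is the lifting of isomorphisms: one must make sure that "isofibration" in Definition \ref{def:mainclassesofmapssimpoper} (a local fibration whose $\pi_0$ is an isofibration of operads) really does give the path-lifting needed to upgrade essential surjectivity to strict surjectivity on objects. This is exactly the content built into the definition of isofibration of operads in \ref{def:folkcmcoperads}, applied to $\pi_0(u)$, so no genuinely new argument is required; it is just a matter of unwinding the definitions carefully and observing that $\pi_0$ is the identity on object sets. Everything else is a direct translation between "right lifting property with respect to $\overline{\mathcal C}$" via Proposition \ref{prop:trivfibrationsoper} and the levelwise characterizations of weak equivalences and fibrations, combined with the elementary fact that a Kan fibration which is a weak equivalence is a trivial fibration.
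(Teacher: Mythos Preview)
Your proof is correct and follows essentially the same route as the paper's. The only cosmetic difference is in the converse direction, where you deduce surjectivity on objects by directly lifting an isomorphism along the isofibration $\pi_0(u)$, whereas the paper phrases the same step as ``$\pi_0(u)$ is a weak equivalence and an isofibration in the model structure of Theorem~\ref{thm:folkcmcoperads}, hence a trivial fibration, hence surjective on objects''; these are the same argument at two levels of abstraction.
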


\begin{proof}
Let $u:\mathcal P \to \mathcal Q$ a morphism of simplicial operads. If $u$ is surjective on objects
and is a local trivial fibration, then it is surely a local fibration. Furthermore, the morphism
of operads $\pi_0(u)$ is then fully faithful and surjective on objects, which implies that it
is a trivial fibration for the model category structure of Theorem \ref{thm:folkcmcoperads}.
In particular, the morphism $\pi_0(u)$ is an isofibration, whence $u$ is an isofibration.
Conversely, if $u$ is an isofibration and a weak equivalence, then $u$ is obviously
a local trivial fibration, while $\pi_0(u)$ is a weak equivalence and an isofibration
for the model category structure of Theorem \ref{thm:folkcmcoperads}. Therefore,
$\pi_0(u)$ must be surjective on objects, which implies that $u$ has the same property.
We conclude the proof with Proposition \ref{prop:trivfibrationsoper}.
\end{proof}

\begin{paragr}\label{paragr:defexponentialsoper}
If $\mathcal P$ is a simplicial operad and if $X$ is a simplicial set, we denote by $\mathcal P^X$
the simplicial operads defined by $\ob\mathcal P^X=\ob\mathcal P$ and by
$$\mathcal P^X (x_1,\ldots,x_n;x)=\mathcal P (x_1,\ldots,x_n;x)^X$$
(where, for two simplicial sets $X$ and $Y$, we denote by $Y^X$ the internal Hom of maps from $X$ to $Y$).
\end{paragr}

\begin{prop}\label{prop:2outof3soper}\label{prop:wesoperretract}
In a commutative triangle of $\soper$, if two of the maps are weak equivalences, then so is
the third. Furthermore, the class of weak equivalences of simplicial operads is closed under retracts.
\end{prop}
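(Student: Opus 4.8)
The plan is to reduce both assertions to the corresponding, already available, facts about weak equivalences of simplicial sets and about the model structure of Theorem~\ref{thm:folkcmcoperads} on $\oper$. Fix a commutative triangle in $\soper$ whose three maps are $u:\mathcal P\to\mathcal Q$, $v:\mathcal Q\to\mathcal R$ and $w=vu:\mathcal P\to\mathcal R$. The essential surjectivity half is immediate: the functor $\pi_0:\soper\to\oper$ carries any weak equivalence of simplicial operads to a weak equivalence of operads, since it preserves essential surjectivity by definition and sends a fully faithful morphism to a fully faithful one (because $\pi_0$ turns weak equivalences of simplicial sets into bijections). As the weak equivalences of $\oper$ satisfy two-out-of-three and are stable under retracts (Theorem~\ref{thm:folkcmcoperads}), and as $\pi_0$ preserves commutative triangles and retract diagrams, it follows that in each of the three cases the remaining map of the triangle is essentially surjective, and that a retract of a weak equivalence of simplicial operads is essentially surjective.

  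For fully faithfulness, fix a tuple of objects $(x_1,\ldots,x_n,x)$ of $\mathcal P$; the map of simplicial sets $\mathcal P(x_1,\ldots,x_n;x)\to\mathcal R(w(x_1),\ldots,w(x_n);w(x))$ is the composite of $\mathcal P(x_1,\ldots,x_n;x)\to\mathcal Q(u(x_1),\ldots,u(x_n);u(x))$ with $\mathcal Q(u(x_1),\ldots,u(x_n);u(x))\to\mathcal R(\ldots)$. Hence, if $u$ and $v$ are weak equivalences then $w$ is fully faithful, and if $v$ and $w$ are weak equivalences then $u$ is fully faithful, in both cases by two-out-of-three for weak equivalences of simplicial sets. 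For retracts, a retract diagram in $\soper$ induces, for each tuple of $\mathcal P$, a retract diagram in the arrow category of $\sset$ between the corresponding maps of hom-objects, and weak equivalences of simplicial sets are stable under retracts; so a retract of a fully faithful morphism is fully faithful.

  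There remains the one genuinely delicate case, which I expect to be the main obstacle: $u$ and $w$ are weak equivalences and one must prove that $v$ is fully faithful. The difficulty is that fully faithfulness of $v$ concerns all tuples of objects of $\mathcal Q$, whereas the factorization above only controls tuples in the image of $u$. First I would reduce to the case where $\mathcal P$, $\mathcal Q$ and $\mathcal R$ are fibrant, using the functor $\mathcal P\mapsto\ex^\infty(\mathcal P)$ of Remark~\ref{rem:nicefibrantreplacementsoper}: it applies Kan's $\ex^\infty$ hom-objectwise, so it preserves and reflects weak equivalences of simplicial operads, since $\ex^\infty$ preserves and reflects weak equivalences of simplicial sets and $\pi_0(\mathcal P)\simeq\pi_0(\ex^\infty\mathcal P)$ naturally. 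So assume $\mathcal P$, $\mathcal Q$, $\mathcal R$ fibrant. On tuples of the form $(u(x_1),\ldots,u(x_n);u(x))$, the map $\mathcal Q(u(x_1),\ldots;u(x))\to\mathcal R(\ldots)$ is a weak equivalence, by two-out-of-three for simplicial sets applied to the above factorization (using that $u$ and $w$ are fully faithful).

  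It then remains to pass from tuples in the image of $u$ to an arbitrary tuple $(y_1,\ldots,y_n,y)$ of $\mathcal Q$. Essential surjectivity of $u$ provides objects $x_1,\ldots,x_n,x$ of $\mathcal P$ together with isomorphisms $u(x_i)\simeq y_i$ and $u(x)\simeq y$ in $\pi_0(\mathcal Q)$; choosing $0$-simplices representing these isomorphisms and their inverses and composing with them in the appropriate input and output slots defines a map $\mathcal Q(u(x_1),\ldots;u(x))\to\mathcal Q(y_1,\ldots;y)$ which is a homotopy equivalence, because $\mathcal Q$ is fibrant, so the relevant hom-objects are Kan complexes, and composition with a $0$-simplex invertible in $\pi_0$ is a homotopy equivalence of Kan complexes. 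Applying $v$ to these very same operations produces a parallel homotopy equivalence $\mathcal R(vu(x_1),\ldots;vu(x))\to\mathcal R(v(y_1),\ldots;v(y))$, and the resulting square, whose horizontal maps are those induced by $v$, commutes because $v$ is a morphism of operads; its top edge is a weak equivalence by the previous paragraph, so two-out-of-three for simplicial sets forces its bottom edge $\mathcal Q(y_1,\ldots;y)\to\mathcal R(v(y_1),\ldots;v(y))$ to be a weak equivalence. Hence $v$ is fully faithful, which finishes the proof.
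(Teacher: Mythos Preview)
Your proof is correct, but it takes a different route from the paper's. The paper also begins by reducing to the fibrant case via $\ex^\infty$, but then it avoids your case analysis entirely by establishing a single uniform characterisation: a morphism $\mathcal P\to\mathcal Q$ between fibrant simplicial operads is a weak equivalence if and only if, for \emph{every} simplicial set $X$, the morphism of ordinary operads $\pi_0(\mathcal P^X)\to\pi_0(\mathcal Q^X)$ is a weak equivalence for the model structure of Theorem~\ref{thm:folkcmcoperads}. (Here $\mathcal P^X$ is the simplicial operad of \ref{paragr:defexponentialsoper}.) Essential surjectivity is insensitive to $X$ since $\ob\mathcal P^X=\ob\mathcal P$, while fully faithfulness translates via Yoneda in the homotopy category of Kan complexes: a map of Kan complexes is a weak equivalence precisely when it becomes a bijection after applying $\pi_0((-)^X)$ for all $X$. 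Once this characterisation is in hand, both two-out-of-three and closure under retracts are inherited wholesale from the corresponding properties in $\oper$, with no need to single out the ``delicate'' case.

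Your approach, by contrast, treats the hom-space side directly and must therefore deal separately with the asymmetric case where $u$ and $w=vu$ are given and $v$ is to be shown fully faithful. Your resolution of that case---using essential surjectivity of $u$ together with the fact that pre/post-composition by a $0$-simplex invertible in $\pi_0$ is a simplicial homotopy equivalence of hom-spaces---is correct and standard (the homotopy inverse is composition with a chosen representative of the inverse, and a $1$-simplex witnessing $ba\sim 1$ furnishes the required homotopy via the operadic composition map). The trade-off is this: the paper's argument is slicker and handles all three cases at once, at the cost of invoking the $\pi_0((-)^X)$ trick; yours is more elementary and self-contained, at the cost of the extra work in the asymmetric case.
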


\begin{proof}
Using the canonical fibrant replacement functor given by Remark \ref{rem:nicefibrantreplacementsoper},
it is sufficient to prove this proposition in the full subcategory of fibrant simplicial operads.
As, for any simplicial set $X$ and any simplicial operad $\mathcal P$, the
morphism $X\to\Delta[0]$ induces a canonical morphism of
simplicial operads $\mathcal P\to\mathcal P^X$
which is bijective on objects, a morphism of simplicial operads $\mathcal P\to\mathcal Q$ is
essentially surjective if and only if, for any simplicial set $X$, the morphism of operads
$\pi_0(\mathcal P^X)\to\pi_0(\mathcal Q^X)$ is essentially surjective.
On the other hand, the Yoneda Lemma applied to the homotopy category of Kan complexes implies that
a morphism of fibrant simplicial operads $\mathcal P\to\mathcal Q$ is fully faithful if and only
if, for any simplicial set $X$, the morphism of operads $\pi_0(\mathcal P^X)\to\pi_0(\mathcal Q^X)$
is fully faithful. In conclusion, a morphism of fibrant simplicial operads $\mathcal P\to\mathcal Q$ is
a weak equivalence if and only if, for any simplicial set $X$, the morphism of operads
$\pi_0(\mathcal P^X)\to\pi_0(\mathcal Q^X)$ is a weak equivalence of the model category given by
Theorem \ref{thm:folkcmcoperads}. This immediately implies the proposition.
\end{proof}

\begin{prop}\label{prop:wesoperfiltcolim}
The class of morphisms of $\soper$ which are  fully faithful (essentially
surjective, respectively) is closed under filtered colimits.
\end{prop}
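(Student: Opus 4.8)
The plan is to prove the two closure statements separately, and in each case to reduce to a statement about simplicial sets, where the analogous facts are classical.

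First consider full faithfulness. Let $(\mathcal P_\alpha \to \mathcal Q_\alpha)_{\alpha}$ be a filtered diagram of fully faithful morphisms of simplicial operads, with colimit $u:\mathcal P\to\mathcal Q$, where $\mathcal P=\varinjlim_\alpha\mathcal P_\alpha$ and $\mathcal Q=\varinjlim_\alpha\mathcal Q_\alpha$. Fix an integer $n\geq 0$ and an $(n+1)$-tuple $(x_1,\ldots,x_n,x)$ of objects of $\mathcal P$. Since the indexing category is filtered, there is an index $\beta$ and a compatible lift of all the $x_i$ and $x$ to objects of $\mathcal P_\beta$; moreover filtered colimits of operads are computed objectwise on the spaces of operations, so that
$$\mathcal P(x_1,\ldots,x_n;x)=\varinjlim_{\alpha\downarrow\beta}\mathcal P_\alpha(\ldots)\, ,\qquad \mathcal Q(u(x_1),\ldots;u(x))=\varinjlim_{\alpha\downarrow\beta}\mathcal Q_\alpha(\ldots)\, ,$$
the colimit being taken over the (filtered) slice category of indices over $\beta$. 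Thus the map $\mathcal P(x_1,\ldots,x_n;x)\to\mathcal Q(u(x_1),\ldots;u(x))$ is a filtered colimit of weak equivalences of simplicial sets. Since weak equivalences of simplicial sets are closed under filtered colimits (filtered colimits of simplicial sets are homotopy colimits, as the Quillen model structure is combinatorial and filtered colimits preserve both monomorphisms and the generating trivial cofibrations; alternatively one invokes that $\pi_n$ and $H_*$ commute with filtered colimits), this map is a weak equivalence, so $u$ is fully faithful.

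Now consider essential surjectivity. Here the subtlety is that the colimit of the diagrams of operads $\pi_0(\mathcal P_\alpha)\to\pi_0(\mathcal Q_\alpha)$ need not a priori be the diagram $\pi_0(\mathcal P)\to\pi_0(\mathcal Q)$ — but in fact it is, because $\pi_0$ is a left adjoint (it is the functor in the adjunction \eqref{eq:adjsoperoper}), hence preserves all colimits; so $\pi_0(\mathcal P)=\varinjlim_\alpha\pi_0(\mathcal P_\alpha)$ and likewise for $\mathcal Q$. It therefore suffices to show that the class of essentially surjective morphisms of (discrete) operads is closed under filtered colimits. So let $y$ be an object of $\varinjlim_\alpha\pi_0(\mathcal Q_\alpha)$; it is the image of some $y_\beta\in\pi_0(\mathcal Q_\beta)$, and by essential surjectivity of $\pi_0(\mathcal P_\beta)\to\pi_0(\mathcal Q_\beta)$ there is an object $x_\beta$ of $\pi_0(\mathcal P_\beta)$ and an isomorphism $\pi_0(u_\beta)(x_\beta)\simeq y_\beta$ in $\pi_0(\mathcal Q_\beta)$; pushing forward to the colimit gives an object $x$ of $\pi_0(\mathcal P)$ and an isomorphism $\pi_0(u)(x)\simeq y$. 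Hence $\pi_0(u)$ is essentially surjective, i.e. $u$ is essentially surjective.

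The only genuine point requiring care — the main (mild) obstacle — is the assertion that weak equivalences of simplicial sets are stable under filtered colimits; everything else is the formal interplay of filtered colimits with the forgetful functors to spaces of operations and with the left adjoint $\pi_0$. I will simply cite the classical fact (e.g. that $\exinf$ and the homotopy-group functors commute with filtered colimits, or that the Kan--Quillen model structure is combinatorial with filtered colimits preserving cofibrations and trivial cofibrations) rather than reprove it.
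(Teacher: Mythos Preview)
Your proposal is correct and follows essentially the same approach as the paper: reduce full faithfulness to the stability of simplicial weak equivalences under filtered colimits (after lifting the tuple of objects to a cofinal subdiagram), and reduce essential surjectivity to the discrete case via the left adjoint $\pi_0$. The only cosmetic difference is that the paper phrases the cofinality step as ``replacing the indexing category by a cofinal subcategory'' rather than passing to the coslice under $\beta$, and treats essential surjectivity first.
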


\begin{proof}
Let us consider first the case of essentially surjective maps.
The functor $\pi_0:\soper\to\oper$ commutes with (filtered) colimits, so that
it is suficient to check the analogous property for operads, which is easy.

Consider now a filtered diagam of fully faithful morphisms of simplicial operads
$u_i:\mathcal P_i\to \mathcal Q_i$, and let us denote by $u:\mathcal P\to\mathcal Q$
the colimit of these. Consider an integer $n\geq 0$ and an $(n+1)$-tuple
$(x_1,\ldots,x_n,x)$ of objects of $\mathcal P$. We may assume that it comes
from a compatible family of $(n+1)$-tuples $(x^i_{1},\ldots,x^i_{n},x^i)$ of objects of
$\mathcal P_i$ for each $i$ (by replacing the indexing category by a cofinal
subcategory). We then have natural identifications of the form
$$\varinjlim_i \mathcal P_i(x^i_{1},\ldots,x^i_{n};x^i)\simeq\mathcal P(x_1,\ldots,x_n;x)\, ,$$
and, similarly,
$$\varinjlim_i \mathcal Q_i(u_i(x^i_{1}),\ldots,u_i(x^i_{n});u_i(x^i))\simeq\mathcal Q(u(x_1),\ldots,u(x_n);u(x))\, .$$
The fact that $u$ is fully faithful thus comes down to the fact that the class of weak equivalences
of simplicial sets is closed under filtered colimits.
\end{proof}

\begin{prop}\label{prop:trivcofibsoper}
The class $\overline{\mathcal A}$ is contained in the class of weak
equivalences.
\end{prop}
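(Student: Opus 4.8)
The plan is to show that every map in $\mathcal A$ is a weak equivalence, and then to check that the class of weak equivalences is stable under the three operations defining the saturation $\overline{\mathcal A}$ (pushout, transfinite composition, retract); the retract case is already covered by Proposition \ref{prop:wesoperretract}. First I would dispose of the generators. The maps of shape (A1) are weak equivalences by definition, since they are required to be fully faithful and essentially surjective. For the maps of shape (A2), $C_n[\Lambda^k[m]]\to C_n[\Delta[m]]$, essential surjectivity is clear (these maps are bijective on objects, hence $\pi_0$ of them is the identity on objects), and full faithfulness amounts, by the explicit description of $C_n[X]$ in \ref{paragr:defcofandtrivcof}, to checking a weak equivalence on each space of operations: the only non-trivial one is $\Lambda^k[m]\to\Delta[m]$ itself, which is a trivial cofibration of simplicial sets. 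So $\mathcal A$ lies in the weak equivalences.

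The real work is stability under pushout and transfinite composition. By Corollary \ref{cor:trivcofintocof} we know $\overline{\mathcal A}\subset\overline{\mathcal C}$, and by Proposition \ref{prop:wesoperfiltcolim} the fully faithful maps and the essentially surjective maps are each closed under filtered colimits, which handles transfinite composition \emph{provided} we already know each map in the transfinite sequence is a weak equivalence; so everything reduces to the pushout case. I would argue this through the small object argument and a retract trick: it suffices to show that a map obtained from a single generator $f\in\mathcal A$ by \emph{one} pushout is a weak equivalence, and then bootstrap. For a pushout of an (A2)-map along an arbitrary map $C_n[\Lambda^k[m]]\to\mathcal P$, one computes the pushout in $\soper$ explicitly: adding a free cell decorated by $\Delta[m]$ over a free cell decorated by $\Lambda^k[m]$ only changes the spaces of operations $\mathcal P(x_1,\dots,x_n;x)$ by gluing on copies built from the trivial cofibration $\Lambda^k[m]\to\Delta[m]$, and since such pushouts (together with the relevant colimits built into the free-operad monad) preserve trivial cofibrations of simplicial sets, the map is fully faithful; it is bijective on objects, hence essentially surjective.

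For pushouts of (A1)-maps $\eta\to H$ along a map $\eta\to\mathcal P$ picking out an object $x$ of $\mathcal P$, I would instead use the reduction already available: the map $\eta\to H$ lies in $\overline{\mathcal C}$ by the proof of Corollary \ref{cor:trivcofintocof}, and more to the point it is, by hypothesis, a weak equivalence between objects with a fixed (finite) set of objects; its pushout along $\eta\to\mathcal P$ can be analysed by the change-of-colours adjunction \eqref{eq:changeofcolours} and the model structure of Theorem \ref{thm:cmcfixedcoloursoper} on $\soper_C$, together with the fact (Proposition \ref{prop:2outof3soper}, via the fibrant replacement $\ex^\infty$) that weak equivalences of $\soper$ are detected after applying $(-)^X$ and $\pi_0$. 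The main obstacle is exactly this (A1) pushout: one must control what happens to the spaces of operations when two objects of $\mathcal P$ get connected by the new ``interval'' $H$, which is a genuinely global phenomenon not visible one hom-space at a time, and this is where the cofibrancy and countability conditions built into (A1), plus Bergner's results on simplicial categories quoted in the proof of Proposition \ref{prop:fibrationsoper}, have to be brought in. Once these pushout cases are settled, transfinite composition follows from Proposition \ref{prop:wesoperfiltcolim} and retracts from Proposition \ref{prop:wesoperretract}, completing the proof that $\overline{\mathcal A}$ consists of weak equivalences.
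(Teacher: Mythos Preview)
Your overall architecture matches the paper's: reduce via the small object argument, Proposition~\ref{prop:wesoperfiltcolim}, and Proposition~\ref{prop:wesoperretract} to showing that a \emph{single} pushout of a map of type (A1) or (A2) is a weak equivalence. That reduction is correct.

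The (A2) case in your proposal is underargued but salvageable. You assert that the pushout ``only changes the spaces of operations by gluing on copies built from $\Lambda^k[m]\to\Delta[m]$''; this is true but not obvious, because the attaching map $C_n[\Lambda^k[m]]\to\mathcal P$ may identify some of the $n+1$ objects, and the free-operad monad is complicated. The paper does not attempt a direct computation: instead it factors the object-map as a surjection $\varphi:\{0,\ldots,n\}\to S$ followed by an injection, proves (Lemma~\ref{lemma:decoratedcorollastrivcof}) that $\varphi_! C_n[\Lambda^k[m]]\to\varphi_! C_n[\Delta[m]]$ is a trivial cofibration in the fixed-colour model structure $\soper_S$, and then invokes a clean pushout lemma (Lemma~\ref{lemma:injectivepushoutsoper}) for maps that are bijective on objects and in $\overline{\mathcal C}$ along maps injective on objects. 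Your sketch could be completed, but the paper's route is both shorter and avoids any operad-monad combinatorics.

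The (A1) case is where your proposal has a genuine gap. You propose to use the change-of-colours adjunction and the model structure on $\soper_C$, but the map $\eta\to H$ is \emph{not} bijective on objects, so no single fixed-colour model category contains it; this is precisely why the argument is delicate. Your own diagnosis (``a genuinely global phenomenon not visible one hom-space at a time'') is accurate, but the tools you then invoke do not address it: the Bergner-type input cited in Proposition~\ref{prop:fibrationsoper} concerns lifting properties for isofibrations, not stability of weak equivalences under pushout. The paper's solution is to factor $\eta\to H$ as $\eta\to K\to H$ with $K=a^*H$ the full sub-simplicial-category on the object $0$. The first map is bijective on objects, so Lemma~\ref{lemma:injectivepushoutsoper} applies (after checking, via Lemma~\ref{lemma:restobjpreservecofibrants}, that $\eta\to K$ lies in $\overline{\mathcal C}$). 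The second map $K\to H$ is a degreewise \emph{strictly} fully faithful inclusion of categories, and the crux is Lemma~\ref{lemma:pushoutstrictfullyfaithful}: an explicit, elementary (but tedious) description of the pushout of such an inclusion along any map to an operad, showing the resulting map $\mathcal R\to\mathcal Q$ is again degreewise fully faithful. This lemma is the missing idea in your sketch; nothing in the change-of-colours or $\soper_C$ machinery produces it, and the countability hypothesis in (A1) plays no role here (it is only there so that $\mathcal A$ is a small set).
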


\noindent The proof of this proposition will go through the following chain
of four lemmas.

\begin{lemma}\label{lemma:injectivepushoutsoper}
Consider the following pushout square in the category of simplicial operads.
$$\xymatrix{
\mathcal P\ar[r]^f\ar[d]_u&\mathcal P'\ar[d]^{u'}\\
\mathcal Q\ar[r]_g&\mathcal Q'
}$$
If $u$ is in $\overline{\mathcal C}$, is bijective on objects, and is
a weak equivalence, and if $f$ is injective on objects, then $u'$
is a weak equivalence.
\end{lemma}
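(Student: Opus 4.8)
The plan is to reduce the statement to a pushout of trivial cofibrations taken inside a category $\soper_C$ of simplicial operads with a fixed set $C$ of colours, where the model structure of Theorem~\ref{thm:cmcfixedcoloursoper} is available. Write $C=\ob\mathcal P=\ob\mathcal Q$ (using that $u$ is bijective on objects) and $\phi=\ob f\colon C\to C'$, with $C'=\ob\mathcal P'$, so that $\phi$ is injective by hypothesis. The forgetful functor $\ob\colon\soper\to\set$ preserves colimits, having a right adjoint (which sends a set $S$ to the operad with colour set $S$ and a single operation of each profile), so applying it to the square shows that $\ob u'$ is a bijection, being the pushout of the bijection $\ob u$, and identifies $\ob\mathcal Q'$ with $C'$. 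In particular $u'$ is essentially surjective, so it suffices to prove that $u'$ is fully faithful, that is, a weak equivalence of $\soper_{C'}$.

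First I claim that $u$, viewed as a morphism of $\soper_C$, is a trivial cofibration for the model structure of Theorem~\ref{thm:cmcfixedcoloursoper}. It is fully faithful, being a weak equivalence of $\soper$ that is bijective on objects, hence a weak equivalence of $\soper_C$. To see it is a cofibration, consider a lifting problem of $u$ against a trivial fibration $p\colon\mathcal X\to\mathcal Y$ of $\soper_C$. Such a $p$ is a local trivial fibration which, being a morphism of $\soper_C$, is the identity on objects and in particular surjective on objects, so by Proposition~\ref{prop:trivfibrationsoper} it has the right lifting property with respect to $\overline{\mathcal C}$. Since $u\in\overline{\mathcal C}$ a lift exists in $\soper$, and composing this lift with the identity-on-objects morphism $p$ shows it too is the identity on objects, hence a morphism of $\soper_C$.

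The heart of the argument is to rewrite the given pushout as one taken inside $\soper_{C'}$. The forgetful functor $\soper\to\set$ is a bifibration whose cartesian and cocartesian transports along a map $\psi$ of colour sets are the functors $\psi^*$ and $\psi_!$ of \ref{paragr:changeofcolours}. The morphism $f$ determines a morphism $\mathcal P\to\phi^*\mathcal P'$ of $\soper_C$, whose transpose is a canonical morphism $\widetilde f\colon\phi_!\mathcal P\to\mathcal P'$ of $\soper_{C'}$. Since the pushout of colour sets is $C'$, with legs $\phi$ and $\mathrm{id}_{C'}$, the standard description of pushouts in a bifibration identifies the pushout $\mathcal Q'$ of $\mathcal Q\xleftarrow{u}\mathcal P\xrightarrow{f}\mathcal P'$ with the pushout, taken inside $\soper_{C'}$, of the span $\phi_!\mathcal Q\xleftarrow{\phi_! u}\phi_!\mathcal P\xrightarrow{\widetilde f}\mathcal P'$, in such a way that the leg $u'\colon\mathcal P'\to\mathcal Q'$ is precisely the pushout of $\phi_! u$ along $\widetilde f$. (If one prefers to avoid bifibrations: a cone on $\mathcal Q\xleftarrow{u}\mathcal P\xrightarrow{f}\mathcal P'$ with vertex $\mathcal R$ has its colour component automatically factoring through $\phi$, and such cones correspond naturally and bijectively to cones on the rewritten span.)

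Now $\phi_!$ is left Quillen by Proposition~\ref{prop:changeofcolours}, so $\phi_! u$ is a trivial cofibration of $\soper_{C'}$; hence its pushout $u'$ is a trivial cofibration of $\soper_{C'}$, and in particular a weak equivalence there, which by the first paragraph is exactly what we need. The delicate point is the third step: identifying a pushout computed in $\soper$ with one computed in $\soper_{C'}$, together with the bookkeeping that pins down the canonical comparison map $\widetilde f$ and shows that $u'$ is the relevant leg. The remaining ingredients are formal or immediate from results already established in this section.
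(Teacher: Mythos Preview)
Your proof is correct and follows essentially the same strategy as the paper's: reduce to a pushout of a trivial cofibration inside $\soper_{C'}$. The paper makes this concrete by writing $C'=C\amalg B$ and observing that $\phi_!\mathcal P=\mathcal P\amalg B$, then decomposing the original pushout as two squares with $\mathcal P\amalg B$ and $\mathcal Q\amalg B$ in the middle column; your $\phi_!\mathcal P$, $\phi_!\mathcal Q$, $\phi_! u$ are exactly the paper's $\mathcal P\amalg B$, $\mathcal Q\amalg B$, $u\amalg B$. Your bifibration argument and the paper's two-square decomposition are two ways of packaging the same identification of the pushout with one taken in $\soper_{C'}$. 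One pleasant side effect of your more abstract formulation is that it never actually uses the injectivity of $\phi=\ob f$ (Proposition~\ref{prop:changeofcolours} holds for arbitrary maps of colour sets), so your argument in fact proves the lemma without that hypothesis; the paper's explicit $C'=C\amalg B$ description does rely on it.
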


\begin{proof}
We may assume that $u$ induces the identity on objects, so that
$u$ must be a trivial cofibration in the model category structure
of Theorem \ref{thm:cmcfixedcoloursoper} for $C=\ob\mathcal P$.
Write $C'=\ob\mathcal P'$, and $C'=B\amalg C$ (in such a way that $f$
is the coproduct inclusion). For a simplicial operad $\mathcal P$,
we define $\mathcal P\amalg B$ to be the simplicial operad
$$\mathcal P\amalg B=\mathcal P\amalg \Big(\coprod_{b\in B}\eta\Big) \, .$$
The pushout of the lemma can then be seen as the composition of two pushouts
of the following shape.
$$\xymatrix{
\mathcal P\ar[r]\ar[d]&\mathcal P\amalg B\ar[r]^{f'}\ar[d]_{u\amalg B}&\mathcal P'\ar[d]^{u'}\\
\mathcal Q\ar[r]&\mathcal Q\amalg B\ar[r]_{g'}&\mathcal Q'
}$$
Now, the right hand square is a pushout in $\soper_{C'}$, and,
as $u$ is a trivial cofibration of \smash{$\soper_C$}, the map $u\amalg B$
must be a trivial cofibration of \smash{$\soper_{C'}$}. Therefore, the morphism $u'$
is a trivial cofibration in \smash{$\soper_{C'}$}.
\end{proof}

\begin{lemma}\label{lemma:decoratedcorollastrivcof}
Let $\varphi:\{0,\ldots,n\}\to S$ be a surjection ($n\geq 0$).
Then, for any morphism of simplicial sets $X\to Y$, the commutative square
$$\xymatrix{
C_n[X]\ar[r]\ar[d]&C_n[Y]\ar[d]\\
\varphi_! C_n[X]\ar[r]&\varphi_! C_n[Y]
}$$
is a pushout in $\soper$. Moreover, if $X\to Y$ is a trivial cofibration of
simplicial sets, then the map $\varphi_! C_n[X]\to \varphi_! C_n[Y]$ is a trivial
cofibration in the model category structure on $\soper_S$ given by Theorem \ref{thm:cmcfixedcoloursoper}.
\end{lemma}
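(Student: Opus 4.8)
The plan is to prove the two assertions separately, writing $N=\{0,1,\dots,n\}$, so that $C_n[X]$ and $C_n[Y]$ are objects of $\soper_N$, the map $C_n[X]\to C_n[Y]$ induced by $X\to Y$ is a morphism of $\soper_N$ (it is the identity on objects), and $\varphi\colon N\to S$. For the pushout statement I would compare corepresented functors; for the trivial-cofibration statement I would reduce to the same statement inside $\soper_N$ and then push along the left Quillen functor $\varphi_!$.

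For the pushout, I would argue as follows. Combining the adjunction $\varphi_!\dashv\varphi^*$ of \eqref{eq:changeofcolours} with the representability \eqref{eq:defreprhomoper2} of $C_n[X]$, one gets, for every simplicial operad $\mathcal R$, a natural bijection
$$\Hom_{\soper}(\varphi_! C_n[X],\mathcal R)\;=\;\coprod_{g\colon S\to\ob\mathcal R}\Hom_{\sset}\bigl(X,\mathcal R(g\varphi(1),\dots,g\varphi(n);g\varphi(0))\bigr)\,,$$
and likewise with $Y$ in place of $X$. A morphism from the pushout $C_n[Y]\amalg_{C_n[X]}\varphi_! C_n[X]$ to $\mathcal R$ is a morphism $C_n[Y]\to\mathcal R$ — that is, an $(n+1)$-tuple $(y_0,\dots,y_n)$ of objects of $\mathcal R$ together with a map $Y\to\mathcal R(y_1,\dots,y_n;y_0)$ — together with a morphism $\varphi_! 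C_n[X]\to\mathcal R$, the two agreeing after restriction along $C_n[X]$. Unwinding the two restrictions (the first uses that $C_n[X]\to C_n[Y]$ is $X\to Y$ on the generating operation, the second the unit of $\varphi_!\dashv\varphi^*$), this agreement forces $y_i=g\varphi(i)$ and forces the $X$-component to be the composite $X\to Y\to\mathcal R(y_1,\dots,y_n;y_0)$; discarding this redundant data yields a natural bijection with $\Hom_{\soper}(\varphi_! C_n[Y],\mathcal R)$, so the square is a pushout. More structurally, one may instead first record that $\varphi_!\mathcal P\cong\mathcal P\amalg_{\underline N}\underline S$ in $\soper$ for every $\mathcal P\in\soper_N$, where $\underline N$ and $\underline S$ denote the discrete simplicial operads on $N$ and $S$, and then obtain the square of the lemma by horizontally pasting this description for $X$ and for $Y$, using the pasting law for pushouts.

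For the second assertion I would first show that $C_n[X]\to C_n[Y]$ is a trivial cofibration in $\soper_N$. It is a weak equivalence of $\soper_N$, i.e.\ fully faithful, because by \eqref{eq:defreprhomoper1} its only non-identity component is $X\to Y$, all remaining components being identities of $\Delta[0]$ or of $\varnothing$; hence every component is a weak equivalence of simplicial sets. It is a cofibration by Lemma \ref{lemma:localRLP}: the trivial fibrations of $\soper_N$ are precisely the local trivial fibrations (in the sense of \ref{def:mainclassesofmapssimpoper}), so their components are trivial fibrations of simplicial sets and therefore — $X\to Y$ being in particular a monomorphism — have the right lifting property with respect to $X\to Y$; by Lemma \ref{lemma:localRLP} this is exactly the assertion that they have the right lifting property with respect to $C_n[X]\to C_n[Y]$. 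Thus $C_n[X]\to C_n[Y]$ is a trivial cofibration of $\soper_N$, and since $\varphi_!\colon\soper_N\to\soper_S$ is left Quillen by Proposition \ref{prop:changeofcolours} it preserves trivial cofibrations; hence $\varphi_! C_n[X]\to\varphi_! C_n[Y]$ is a trivial cofibration of $\soper_S$.

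The one point requiring real care — everything else being a direct appeal to Lemma \ref{lemma:localRLP}, to Proposition \ref{prop:changeofcolours}, to the identification of trivial fibrations in $\soper_C$, and to an inspection of the hom-objects of $C_n[X]$ — is the computation of $\varphi_!$ on a free decorated corolla, equivalently the comparison of corepresented functors used in the pushout step: one has to keep the adjunction $\varphi_!\dashv\varphi^*$ between the fixed-colour categories cleanly separated from the forgetful functors into $\soper$, and track how a morphism of $\soper$ out of $\varphi_! C_n[X]$ decomposes as a function between colour sets followed by an identity-on-objects morphism.
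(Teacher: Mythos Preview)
Your proof is correct and follows essentially the same strategy as the paper: identify the functor corepresented by $\varphi_! C_n[X]$ to get the pushout, then reduce the second assertion to the case $\varphi=\mathrm{id}$ via the left Quillen functor $\varphi_!$ (Proposition \ref{prop:changeofcolours}) and invoke Lemma \ref{lemma:localRLP}. The only noteworthy difference is in how you verify that $C_n[X]\to C_n[Y]$ is a trivial cofibration in $\soper_N$: you check separately that it is a weak equivalence (by inspecting components) and a cofibration (left lifting against local trivial fibrations), whereas the paper argues in one step that it has the left lifting property against all local fibrations, which by Lemma \ref{lemma:localRLP} follows immediately from $X\to Y$ being a trivial cofibration of simplicial sets. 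Both routes are equally valid; the paper's is marginally shorter.
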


\begin{proof}
For a simplicial set $X$,
the object $\varphi_!C_n[X]$ represents the presheaf
$$\op{\soper}\to \set \ , \quad
\mathcal P\longmapsto \coprod_{f\in \Hom(S,\ob\mathcal P)}\mathcal P(f\varphi(1),\ldots,f\varphi(n);f\varphi(0))\, .$$
We deduce from this description of the functor $\varphi_!C_n[-]$ that, for any simplicial operad $\mathcal P$,
the commutative square
$$\xymatrix{
\Hom_{\soper}(\varphi_!C_n[Y],\mathcal P)\ar[r]\ar[d]&\Hom_{\soper}(\varphi_!C_n[X],\mathcal P)\ar[d]\\
\Hom_{\soper}(C_n[Y],\mathcal P)\ar[r]&\Hom_{\soper}(C_n[X],\mathcal P)
}$$
is cartesian; or, in other words, that the square of the lemma is cocartesian.

To prove the last assertion, as the functor $\varphi_!$ is a left Quillen functor (Proposition \ref{prop:changeofcolours}),
we may assume that $\varphi$ is the identity. But then, if $X\to Y$ is a trivial cofibration, it follows from
Lemma \ref{lemma:localRLP} that the morphism
$C_n[X]\to C_n[Y]$ has the right lifting property with respect to the fibrations of the model
structure given by Theorem \ref{thm:cmcfixedcoloursoper} for $C=\{0,\ldots,n\}$. Therefore, it must be a
trivial cofibration.
\end{proof}

\begin{lemma}\label{lemma:restobjpreservecofibrants}
Let $f:C\to D$ be an inclusion of sets. Then the pullback on objects
functor $f^*:\scat_D\to\scat_C$ preserves cofibrant objects.
\end{lemma}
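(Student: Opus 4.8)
The plan is to reduce the statement to a purely combinatorial fact about free categories, after recalling the Dwyer--Kan description of the cofibrant objects. I would first use that, by Remark~\ref{rem:fromoperadstocategories}, the model structure of Theorem~\ref{thm:cmcfixedcoloursoper} restricts on $\scat_C$ to the one of~\cite{DK}, in which a simplicial category $\mathcal A$ with set of objects $C$ is cofibrant if and only if it is a retract, in $\scat_C$, of a \emph{free} simplicial category --- one for which there exist subsets $G_n(x,y)\subseteq\mathcal A_n(x,y)$ (for $n\geq 0$ and $x,y\in C$) such that each category $\mathcal A_n$ is freely generated by the graph with vertex set $C$ and edge set $\coprod_{x,y}G_n(x,y)$, and such that every degeneracy operator $s_i$ sends the $G_n$'s into the $G_{n+1}$'s (there being no condition on the face operators). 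Since $f^*$ is a functor and cofibrant objects are stable under retracts, it will then be enough to check that $f^*$ takes free simplicial categories to free ones.

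So I would take $\mathcal A\in\scat_D$ free, with generating subgraphs $G_n$ as above, and --- assuming, as we may, that $C\subseteq D$ with $f$ the inclusion --- observe that $f^*\mathcal A$ is the full simplicial subcategory of $\mathcal A$ on the objects lying in $C$, so that $(f^*\mathcal A)_n$ is the full subcategory $\mathcal A_n|_C$ of the free category $\mathcal A_n$ on the subset $C$ of its objects. The heart of the proof is the combinatorial observation that, for a category freely generated by a graph $G$ with vertex set $V$ and any subset $C\subseteq V$, the full subcategory on $C$ is again free: it is freely generated by the graph $\widetilde G$ with vertex set $C$ whose edges from $c$ to $c'$ are the \emph{$C$-reduced} paths in $G$ from $c$ to $c'$, namely the non-empty paths none of whose intermediate vertices lies in $C$. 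Indeed, every path in $G$ between two vertices of $C$ decomposes uniquely as a concatenation of $C$-reduced paths, obtained by cutting it precisely at its passages through $C$. Applying this degreewise, $(f^*\mathcal A)_n$ is freely generated by the graph $\widetilde G_n$ of $C$-reduced paths in $G_n$; and since each degeneracy $s_i$ of $\mathcal A$ is a functor fixing all objects which carries $G_n$ into $G_{n+1}$, it carries a $C$-reduced path in $G_n$ to a $C$-reduced path in $G_{n+1}$ (the intermediate vertices are unaffected), so $s_i(\widetilde G_n)\subseteq\widetilde G_{n+1}$. Hence $f^*\mathcal A$ is free; a cofibrant $\mathcal A$, being a retract of a free simplicial category, is then sent by $f^*$ to a retract of a free simplicial category, that is, to a cofibrant object of $\scat_C$.

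The only real obstacle here is the combinatorial lemma about full subcategories of free categories, together with checking its compatibility with the degeneracy operators; everything else is formal. One should keep in mind that injectivity of $f$ is exactly what allows $f^*\mathcal A$ to be identified with a full subcategory of $\mathcal A$, and the argument genuinely relies on it.
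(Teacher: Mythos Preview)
Your proof is correct and follows essentially the same approach as the paper: both invoke the Dwyer--Kan characterization of cofibrant objects as retracts of free simplicial categories, reduce to showing that $f^*$ preserves freeness, and establish this via the same combinatorial fact that the full subcategory of a free category on a subset of objects is freely generated by the $C$-reduced paths (what the paper calls the graph $f^*G$). Your explicit verification that degeneracies preserve $C$-reduced paths is exactly the functoriality statement the paper invokes for morphisms of graphs which are the identity on objects.
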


\begin{proof}
An object of $\scat_C$ is cofibrant if and only if it is a retract
of a simplicial category $A$ such that, for each integer $n\geq 0$,
$A_n$ is isomorphic to the free category on a directed graph (whose set of
objects is $C$), such that the degeneracies preserves generators;
see \cite[7.6]{DK}. Therefore, it is sufficient to prove that the functor $f^*$
preserves free categories on graphs in a functorial way.
Let $G$ be a directed graph
with $D$ as set of objects. Let us denote by $f^*G$
the graph whose set of objects is $C$, and whose set of edges $c\to c'$
consists of the edges $c\to c'$ in $G$ together with the formal strings of edges of $G$ of shape
$$c\to d_1\to \cdots \to d_n\to c'\ , \quad n\geq 1\, ,$$
with $d_i\in D- C$ for $1\leq i \leq n$. This construction is functorial in $G$, at least
for morphisms of graphs which are the identity on objects.
One checks that the image by $f^*$ of the free category generated by $G$ is naturally
isomorphic to the free category generated by $f^*G$.
\end{proof}

\begin{lemma}\label{lemma:pushoutstrictfullyfaithful}
Let $u:K\to H$ be a full embedding of small categories, in which the set of objects of $H$ is $\{0,1\}$,
while $K$ only has one object $0$ (such that $u(0)=0$).
Consider a pushout of shape
$$\xymatrix{
j_!(K)\ar[r]^f\ar[d]_{j_!(u)}& P\ar[d]^v\\
j_!(H)\ar[r]^g&Q
}$$
in the category $\oper$ of operads. Then the map $v$ is fully faithful.
\end{lemma}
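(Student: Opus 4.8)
The plan is to compute the pushout $Q$ explicitly enough to read off its operations. Since $u\colon K\to H$ is a full embedding of categories with $\ob K=\{0\}$ and $\ob H=\{0,1\}$, the operad $j_!(H)$ differs from $j_!(K)$ only by the presence of the extra object $1$ together with the hom-sets $H(0,1)$, $H(1,0)$, $H(1,1)$ (and the induced composites); crucially, $H(0,0)=K(0,0)$ because $u$ is \emph{full} and bijective on the relevant objects. First I would observe that $\ob Q=\ob P\amalg\{1\}$ (the pushout on objects is the pushout of $\{0\}\to\{0,1\}$ along $\{0\}\to\ob P$), so $v$ is the identity on the sub-operad of $P$ spanned by $\ob P$ together with one new colour, and I only need to check that for every $(n+1)$-tuple $(x_1,\dots,x_n;x)$ of objects of $P$ the map $P(x_1,\dots,x_n;x)\to Q(x_1,\dots,x_n;x)$ is bijective.

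The key step is to give a direct description of $Q$. I would argue that $Q$ is obtained from $P$ by freely adjoining the object $1$ and the operations coming from $H$, then quotienting by the relations that make the square commute and the operad axioms hold. Because $K$ and $H$ are \emph{categories} (only unary operations) and $u$ is a \emph{full} functor, the only new unary operations $1\to x$ (resp.\ $x\to 1$, for $x\in\ob P$) in $Q$ are composites of an arrow $1\to 0$ or $0\to 1$ in $H$ with an operation of $P$ having $0$ as an endpoint, and similarly $Q(1,1)$ is spanned by $H(1,1)$ together with such round-trips; no new operation has \emph{all} of its inputs and output among the old objects $\ob P$, since to produce such an operation from the generators one would have to compose through the new colour $1$ and come back, but any operation of $P$ involving $0$ composed with an $H$-arrow into/out of $1$ necessarily has $1$ as an endpoint. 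Hence the ``old'' operation-sets are untouched: $Q(x_1,\dots,x_n;x)=P(x_1,\dots,x_n;x)$ whenever all the $x_i$ and $x$ lie in $\ob P$, which is exactly fully faithfulness of $v$.

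To make the description in the previous paragraph rigorous I would use the representability already exploited in the excerpt: describe $Q$ via the functor it represents on $\soper$ (equivalently $\oper$), namely $R\mapsto$ the set of pairs consisting of a map $P\to R$ and a compatible map $j_!(H)\to R$ agreeing on $j_!(K)$; unwinding this, a map $Q\to R$ is a map $P\to R$ together with a choice of object $r_1=\,$image of $1$ and a choice of arrows in $R$ realising $H(0,1)$, $H(1,0)$, $H(1,1)$ compatibly with the $P$-structure at the object $f(0)$. From this one reads off $\Hom(Q,R)\cong \Hom(P,R)\times_{(\ast)}(\text{data on the new colour})$ with no constraint imposed on the operations of $P$ among old objects, and the identification of $Q$'s operation-sets follows by Yoneda. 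The main obstacle is the bookkeeping in the explicit construction of the pushout in $\oper$: one must be careful that adjoining the arrows of $H$ and imposing associativity/unitality does not, through some zig-zag $x\to 0\to 1\to 0\to x'$, create a genuinely new operation between old colours. The point that rescues us is precisely that $u$ is \emph{full}, so $H(0,0)=K(0,0)$ maps into the arrows of $P$ already present, and any zig-zag through $1$ visibly has $1$ among its endpoints unless it can be shortened using $H(0,0)\subseteq P$; I would isolate this as the one inequality to verify carefully and dispatch the rest as routine.
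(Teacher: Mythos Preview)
Your approach is essentially the same as the paper's: both construct the pushout $Q$ explicitly and read off that the operations among the old colours are unchanged. The paper carries this out in full by listing five types of operations in $Q$ (according to which inputs/output equal the new colour $t$), writing down normal forms with explicit equivalence relations, defining composition, and then checking the universal property directly; the crucial use of fullness of $u$ appears exactly where you predict, namely when composing an operation of type (iii) or (iv) with one whose output is $t$, so that a product $h'_1 h$ in $H(0,0)$ must come from $K$ and hence from $P$.

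Two remarks on your sketch. First, the representability/Yoneda paragraph does not actually shortcut the work: knowing $\Hom(Q,R)$ as pairs $(\alpha\colon P\to R,\ \beta\colon H\to R)$ agreeing on $K$ does not by itself let you read off $Q(x_1,\dots,x_n;x)$ without building a candidate operad and verifying it has this mapping property --- which is precisely the explicit construction. Second, your claim that ``any zig-zag through $1$ visibly has $1$ among its endpoints unless it can be shortened using $H(0,0)\subset P$'' is the entire content of the lemma, and in the operadic (multi-input) setting the bookkeeping is genuinely nontrivial: one must handle operations with several inputs equal to $t$ and show that every composite with all endpoints in $\ob P$ reduces to an operation of $P$. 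The paper's normal forms (the $h\specialbullet p\specialbullet(h_1,\dots,h_b)$ notation) are there precisely to make this reduction well-defined and to show no identifications are forced among type-(i) operations. So your outline is correct, but the step you flag as ``the one inequality to verify carefully'' is in fact the bulk of the argument, and the paper spells it out rather than leaving it as routine.
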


\begin{proof}
The proof of this lemma is elementary but tedious: it consists
of an explicit description of the operad $Q$.

Suppose that $P$ is an operad with set of objects $C$, and let $c_0=f(0)$.
Then the set of objects of the operad $Q$ is the disjoint union of the set $C$ and of an element which we will
denote by $t$. Up to permutations of the input objects, the $n$-ary operations in $Q$ (with $n\geq 0$)
are of five kinds:
\begin{itemize}
\item[(i)] $c_1,\ldots,c_n\to c$ (with $c_i$ and $c$ in $C$);
\item[(ii)] $c_1,\ldots,c_n\to t$ (with $c_i$ in $C$);
\item[(iii)] $c_1,\ldots,c_a,t,\ldots,t\to t$ (with $c_i$ in $C$
and a sequence of $b$ occurrences of $t$, $b\geq 1$, $a+b=n$);
\item[(iv)] $c_1,\ldots,c_a,t,\ldots,t\to c_0$ (with $c_i$ in $C$
and a sequence of $b$ occurrences of $t$, $b\geq 1$, $a+b=n$);
\item[(v)] $t\to t$ (like case (iii), with $n=b=1$ and $a=0$).
\end{itemize}

For (i) we take the same operations as in $P$.

For (ii), we take the operations to be represented by pairs $(h,p)$, written as
$$h\specialbullet p$$
where $h\in H(0,1)$ and $p\in P(c_1,\ldots,c_n;c_0)$; two such operations are identified
by the rule
$$h u(k)\specialbullet p=h\specialbullet f(k)p\quad\text{for any $k\in K(0,0)$.}$$

For (iii), we take operations to be represented by $(b+2)$-tuples, written as
$$h\specialbullet p\specialbullet (h_1,\ldots,h_b)$$
where $h\in H(0,1)$, $(h_1,\ldots,h_b)\in H(0,1)^b$, and
$p\in P(c_1,\ldots,c_a,c_0,\ldots,c_0;c_0)$ (with $b$ occurrences of $c_0$ before the semi-colon);
two such operations are identified according to the rules
$$hu(k)\specialbullet p\specialbullet (h_1,\ldots,h_b)=h\specialbullet f(k)p\specialbullet (h_1,\ldots,h_b)$$
for any $k\in K(0,0)$, and
$$h\specialbullet p\specialbullet (h_1,\ldots,u(k)h_i,\ldots,h_b)=h\specialbullet \, p\, \circ_i f(k)\specialbullet (h_1,\ldots,h_b)$$
for any $k\in K(0,0)$ and $1\leq i\leq b$.

For (iv), we take the operations of the form
$$p\specialbullet (h_1,\ldots,h_b)$$
with $(h_1,\ldots,h_b)\in H(1,0)^b$, and $p\in P(c_1,\ldots,c_a,c_0,\ldots,c_0;c_0)$;
two such operations are identified by the rule
$$p\specialbullet (h_1,\ldots,u(k)h_i,\ldots,h_b)=p\, \circ_i f(k)\specialbullet (h_1,\ldots,h_b)$$
for any $k\in K(0,0)$ and $1\leq i\leq b$.

Finally, for (v), we take the elements of $H(1,1)$ as operations in $Q(t,t)$.

Composition of such operations is defined in the evident way (modulo permutations of arguments),
but uses that the functor $K\to H$ is fully faithful in an essential way. For example, if
$$\varphi=h\specialbullet p\specialbullet (h_1,\ldots,h_b) \ \text{and} \
\psi=p'\specialbullet (h'_1,\ldots,h'_{b'})\, ,$$
are operations of type (iii) and (iv) respectively, with $b'\geq 2$, then
$$\psi\circ_{n+1}\varphi=(p'\circ_{n+1} f(k)p)\specialbullet (h_1,\ldots,h_b,h'_2,\ldots,h'_{b'})$$
where $k$ is the unique operation such that $u(k)=h'_1h$.

Finally the operad $Q$ (defined above) fits into a commutative diagram of operads
$$\xymatrix{
K\ar[r]^f\ar[d]_{u}& P\ar[d]^v\\
H\ar[r]^g&Q
}$$
(where we identify $K$ and $H$ with their images under the functor $j_!$), where
the map $v$ sends operations in $P$ to operations of type (i) in $Q$, and, for $h\in H(i,j)$,
the value of $g(h)$ is defined as follows:
$$g(h)=\begin{cases}
h\in Q(c_0;c_0)&\text{if $i=j=0$ (type (i)),}\\
h\specialbullet 1_{c_0}\in Q(c_0;t)&\text{if $i=0$, $j=1$ (type (ii)),}\\
1_{c_0}\specialbullet h\in Q(t;c_0)&\text{if $i=1$, $j=0$ (type (iv)),}\\
h\in Q(t;t)&\text{if $i=j=1$ (type (v)).}\\
\end{cases}$$
The morphism $v$ is fully faithful by construction.
It remains to check that the commutative square above is a pushout.
Let $R$ be another operad, and consider a commutative square as below.
$$\xymatrix{
K\ar[r]^f\ar[d]_{u}& P\ar[d]^\alpha\\
H\ar[r]^\beta&R
}$$
One defines a morphism $\gamma:Q\to R$ on objects by $\gamma(c)=\alpha(c)$
and $\gamma(t)=\beta(1)$, and on operations of type (i)--(v) respectively by:
\begin{itemize}
\item[(i)] $\gamma(p)=\alpha(p)$;
\item[(ii)] $\gamma(h\specialbullet p)=\beta(h)\circ \alpha(p)$;
\item[(iii)] $\gamma(h\specialbullet p\specialbullet (h_1,\ldots,h_b))=
\beta(h)\circ\alpha(p)\circ (\beta(h_1),\ldots,\beta(h_b))$ (this last circle
denotes the operad composition given by inserting $\beta(h_1),\ldots,\beta(h_b)$
in the entries $a+1,\ldots,n$ of the operation $\alpha(p)$ --- we could
also have written $\beta(h)(\alpha(p)(1_{\alpha(c_1)},\ldots,1_{\alpha(c_a)},\beta(h_1),\ldots,\beta(h_b)))$);
\item[(iv)] $\gamma(p\specialbullet (h_1,\ldots,h_b))=\alpha(p)\circ (\beta(h_1),\ldots,\beta(h_b))$;
\item[(v)] $\gamma(h)=g(h)$.
\end{itemize}
One checks that $\gamma$ is a well defined map of operads, and that it is the unique one
such that $\gamma g=\beta$ and $\gamma v= \alpha$.
\end{proof}

We now are ready to present the

\begin{proof}[Proof of Proposition \ref{prop:trivcofibsoper}]
Note that the class of morphisms of simplicial operads which are fully faithful and
essentially surjective is closed under retracts and under small filtering colimits;
see Propositions \ref{prop:wesoperretract} and \ref{prop:wesoperfiltcolim}.
As the class $\overline{\mathcal A}$ is the saturation
of a small set of maps (up to completion by isomorphisms), namely $\mathcal A$,
we can apply the small object argument and see that any map in \smash{$\overline{\mathcal A}$}
is a retract of a transfinite composition of pushouts of maps of type (A1) or (A2).
Therefore, it is sufficient to prove that any pushout of a map of type (A1) or (A2) is
fully faithful and essentially surjective.

Let us consider first a trivial cofibration of simplicial sets $X\to Y$, a non-negative integer $n$,
as well as a pushout of the following form in the category of simplicial operads.
$$\xymatrix{
C_n[X]\ar[r]^f\ar[d]_a&\mathcal P\ar[d]^b\\
C_n[Y]\ar[r]^g&\mathcal Q
}$$
At the level of objects, the map $f$ factors as a surjection
$\varphi:\{0,\ldots,n\}\to S$ followed by an injection $S\to\ob\mathcal P$.
By virtue of Lemma \ref{lemma:decoratedcorollastrivcof}, the pushout square above
decomposes into two pushout squares
$$\xymatrix{
C_n[X]\ar[r]\ar[d]_a&\varphi_!C_n[X]\ar[r]\ar[d]_{\varphi_!(a)}&\mathcal P\ar[d]^b\\
C_n[Y]\ar[r]&\varphi_!C_n[Y]\ar[r]&\mathcal Q
}$$
in which $\varphi_!(a)$ is a trivial cofibration of $\soper_S$. By applying Lemma \ref{lemma:injectivepushoutsoper}
to the right hand square, we conclude that the map $b$ is a weak equivalence.

Now, consider a weak equivalence of shape $a:\eta=\{0\}\to H$,
where $H$ is a cofibrant object of $\soper_{\{0,1\}}$,
as well as a pushout square of the following form.
$$\xymatrix{
\eta\ar[r]^a\ar[d]_f&H\ar[d]^g\\
\mathcal P\ar[r]^b& \mathcal Q
}$$
Let $K=a^*H$, and decompose the pushout above into two pushouts as below.
$$\xymatrix{
\eta\ar[r]^{a_1}\ar[d]_f&K\ar[r]^{a_2}\ar[d]^h&H\ar[d]^g\\
\mathcal P\ar[r]^{b_1}&\mathcal R\ar[r]^{b_2}& \mathcal Q
}$$
Lemma \ref{lemma:restobjpreservecofibrants} shows that the morphism $a_1$ is in $\overline{\mathcal C}$,
so that we can apply Lemma \ref{lemma:injectivepushoutsoper} to the left hand square,
from which we deduce that the map $b_1$ is a weak equivalence. To finish the proof of Proposition \ref{prop:trivcofibsoper},
it is thus sufficient to prove that the map $b_2$ is a weak equivalence.
Note that the map $a_2$ is a fully faithful morphism of operads in each simplicial degree.
Therefore, by virtue of Lemma \ref{lemma:pushoutstrictfullyfaithful},
the map $b_2$ has the same property. In particular, the map $b_2$ is
fully faithful in the sense of \ref{def:mainclassesofmapssimpoper}.
Moreover, the map $b$ is essentially surjective: $\pi_0(b)$ is a pushout of $\pi_0(a)$,
which implies that it is an equivalence of operads (as the pushout of a trivial
cofibration of the model structure of Theorem \ref{thm:folkcmcoperads}),
whence an essentially surjective map.
But then, the map $b_2$ must be essentially surjective as well, whence a weak equivalence.
\end{proof}

\begin{proof}[Proof of Theorem \ref{thm:cmcsoper}]\label{proofofcmcsoper}
We will prove a slightly more precise result: namely that the class
of cofibrations is $\overline{\mathcal C}$ and the class of trivial
cofibrations is $\overline{\mathcal A}$; see \ref{paragr:defcofandtrivcof}.
We already know that these classes are generated by small sets of maps, so that we can
use the small object argument to prove the existence of factorizations.
By virtue of Propositions \ref{prop:fibrationsoper}, \ref{prop:trivfibloctrivfib}, \ref{prop:2outof3soper}
and \ref{prop:trivcofibsoper}, the conditions
of \cite[Theorem 2.1.19]{Ho} are verified, which gives the existence of the
expected model category structure on $\soper$.
The right properness property comes from the fact that Kan's $\ex^\infty$
functor induces a fibrant replacement functor in $\soper$ which
commutes with finite limits and which preserves fibrations (see Remark \ref{rem:nicefibrantreplacementsoper}).
\end{proof}

\section{Dendroidal sets and $\infty$-operads}\label{sect:2}

In this section, nothing new happens: we just recall, for the convenience of the reader,
the results of \cite{dend3,dend6} on the homotopy theory of $\infty$-operads
needed in this paper.

\begin{paragr}
The category of trees $\trees$ is the full subcategory of the category
of operads whose objects are trees (by which we mean non empty finite rooted trees),
seen as operads: if $T$ is a tree the corresponding operad is the free operad
whose objects are the edges of $T$, generated by an operation
$(e_1,\ldots,e_n)\to e$ for each vertex $v$ of $T$, where $e_1,\ldots,e_n$
are the input edges at $v$ ($n\geq 0$), while $e$ is the output edge at $v$.
One proves that any morphism of $\trees$ can be factored as a split epimorphism
followed by a monomorphism.

The category $\dset$ of \emph{dendroidal sets} is the category of presheaves of sets on the category
of trees $\trees$. If $T$ is a tree, we denote by
$$\Omega[T]=\Hom_\trees(-,T)$$
the corresponding representable presheaf.
The \emph{boundary} inclusion $\partial\Omega[T]\to\Omega[T]$ is defined as the union
of the images of all the monomorphisms $\Omega[S]\to\Omega[T]$ which are not isomorphisms.
If $e$ is an \emph{inner edge} of $T$ (i.e. an edge which connects two vertices in $T$), and if
$T/e$ denotes the tree obtained from $T$ by contracting $e$, then there is a unique
monomorphism $\partial_e:T/e\to T$ in $\trees$, called the \emph{inner face} associated to $e$,
such that $e$ is not in the image of $\partial_e$.
The corresponding \emph{inner horn} inclusion $\Lambda^e[T]\to\Omega[T]$ is the union of the
images of all the monomorphisms $\Omega[S]\to\Omega[T]$ which are not isomorphisms and
which do not factor through $\partial_e$.
\end{paragr}

\begin{paragr}
Let $0$ be the tree without any vertices (as an operad, this is the
terminal category $\eta$). By a slight abuse
of notation, we write $\Omega[0]=\eta$.
Then the comma category $\Omega/0$
is isomorphic to the category $\Delta$ of non-empty totally ordered finite sets
(in $\Omega$, these correspond to linear trees).
In particular, the category $\dset/\eta$ can be identified with the category $\sset$
of simplicial sets, and one has an adjunction
\begin{equation}\label{eq:adjssetdset}
\begin{split}
i_!:\sset\rightleftarrows\dset:i^*
\end{split}
\end{equation}
in which the left adjoint has the property of being fully faithful: one checks that $\eta$
is a subobject of the terminal object of $\dset$.
This adjunction extends the adjunction \eqref{eq:adjcatoper}
in the following sense. The inclusion $\Delta\subset\cat$
extends to an adjunction
\begin{equation}\label{eq:nerfcat}
\begin{split}
\tau:\sset\rightleftarrows\cat:\nerf
\end{split}
\end{equation}
and, similarly the inclusion $\Omega\subset\oper$ extends to an adjunction
\begin{equation}\label{eq:nerfdend}
\begin{split}
\tau_d:\dset\rightleftarrows\oper:\nerf_d
\end{split}
\end{equation}
such that the functors $\nerf$ and $\nerf_d$, called the nerve functors,
are fully faithful. Moreover, for any simplicial set $X$, one has
\begin{equation}
\tau_d \, i_!(X)\simeq j_! \, \tau(X)\, .
\end{equation}
By transposition, for any operad $P$, one has
\begin{equation}\label{eq:dendnervevessimplicialnerve}
\nerf\, j^*(P)\simeq i^* \, \nerf_d(P)\, .
\end{equation}
The essential image of the dendroidal nerve functor $\nerf_d$
can be characterized precisely: a dendroidal set $X$ is isomorphic to the
dendroidal nerve of an operad if and only if, for any tree $T$ with a given inner
edge $e$, the map
$$X_T=\Hom_\dset(\Omega[T],X)\to\Hom_\dset(\Lambda^e[T],X)$$
is bijective. In particular, the functor $\tau_d$ sends inner anodyne extensions
to isomorphisms; see \cite[Proposition 5.3 and Theorem 6.1]{dend2}.
\end{paragr}

\begin{paragr}
We call \emph{normal monomorphisms} (\emph{inner anodyne extensions}, respectively)
the elements of the smallest saturated class of maps in $\dset$ which contains the boundary inclusions
(which contains the inner horn inclusions, respectively). One checks that a morphism of
dendroidal sets $X\to Y$ is a normal monomorphism if and only if it is a monomorphism
such that, for any tree $T$ in $\trees$, the action of the group $\mathrm{Aut}(T)$ on the
set $Y(T)-X(T)$ is free; see \cite[Proposition 1.5]{dend3}.
A dendroidal set $X$ is called \emph{normal} if $\varnothing\to X$
is a normal monomorphism. Note that any monomorphism between normal
dendroidal sets is a normal monomorphism, and that, for any map $X\to Y$, if
$Y$ is a normal dendroidal set, then so is $X$. For instance, for any tree $T$, the
dendroidal set $\Omega[T]$ is normal, but the terminal dendroidal set is not.

A morphism of dendroidal sets is called an {inner Kan fibration} if it has the right lifting property with
respect to the class of inner anodyne extensions. An \emph{$\infty$-operad} is
a dendroidal set $X$ such that the map from $X$ to the terminal dendroidal set is
an inner Kan fibration (such objects are also called \emph{inner Kan complexes}
in \cite{dend1,dend2}).

A morphism of dendroidal sets $p:X\to Y$ is called an \emph{isofibration}
if it is an inner Kan fibration and if the morphism of operads $\tau_d(X)\to\tau_d(Y)$
is an isofibration (\ref{def:folkcmcoperads}).
\end{paragr}

\begin{thm}[{\cite[Theorem 2.4]{dend3}}]\label{thm:cmcdendsets}
The category of dendroidal sets is endowed with a unique left proper cofibrantly
generated model category structure such that the
cofibrations are precisely the normal monomorphisms, and
the fibrant objects are precisely the $\infty$-operads.
A morphism between $\infty$-operads is a fibration of this model
category structure if and only if it is an isofibration.

Moreover, the class of weak equivalences is the smallest class of maps $W$
satisfying the following properties:
\begin{itemize}
\item[(a)] in any commutative triangle of $\dset$, if two
maps are in $W$, then so is the third;
\item[(b)] any trivial fibration between $\infty$-operad belongs to $W$;
\item[(c)] any inner anodyne extension is in $W$.
\end{itemize}
\end{thm}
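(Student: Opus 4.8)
The plan is to produce this model structure by Cisinski's method for building model structures on categories of presheaves, taking the \emph{normal} monomorphisms as cofibrations and building the anodyne extensions from the inner horn inclusions together with a well‑chosen cylinder. The first thing to check is that the normal monomorphisms form an admissible class of cofibrations: by the characterization recalled above (a monomorphism $X\to Y$ is normal precisely when $\mathrm{Aut}(T)$ acts freely on $Y(T)\setminus X(T)$ for every tree $T$), they are exactly the saturation of the set of boundary inclusions $\partial\Omega[T]\to\Omega[T]$, so this class is generated by a set and is stable under pushout, transfinite composition and retract. The crucial extra input, from \cite{dend1,dend2}, is that the normal monomorphisms are stable under pushout–product with the Boardman–Vogt–style tensor product $\otimes$ of $\dset$.

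Next I would exhibit an exact cylinder. Let $J$ be the dendroidal set $i_!(J')$, where $J'$ is the nerve of the groupoid with two objects and a single isomorphism between them. Since every simplicial set is normal and $i_!$ preserves normality and normal monomorphisms, $J$ is a normal dendroidal set, $\eta\amalg\eta\to J$ is a normal monomorphism, and the functor $X\mapsto J\otimes X$, with its two endpoint inclusions $X=\eta\otimes X\to J\otimes X$ and its projection $J\otimes X\to X$, is an exact cylinder (the required properties following from the pushout–product compatibility above). Moreover $J$ is itself an $\infty$-operad, and the map $J\to\eta$ is a trivial fibration — meaning, at this stage, a map with the right lifting property against all normal monomorphisms — both facts pulling back from the corresponding classical statements about the simplicial set $J'$.

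Now let $S$ be the smallest saturated class of normal monomorphisms closed under pushout–product with normal monomorphisms, containing all inner horn inclusions $\Lambda^e[T]\to\Omega[T]$ and all maps $(\{\epsilon\}\otimes Y)\cup_{\{\epsilon\}\otimes X}(J\otimes X)\to J\otimes Y$ for $\epsilon\in\{0,1\}$ and $X\to Y$ a boundary inclusion; the standard reduction shows that $S$ is the saturation of a set. Cisinski's theorem then yields a cofibrantly generated model structure on $\dset$ whose cofibrations are the normal monomorphisms, whose fibrations are the maps with the right lifting property against $S$, and whose fibrant objects are the dendroidal sets with the right lifting property against $S$. To match the statement one must identify these fibrant objects with the $\infty$-operads: one inclusion is immediate since the inner horn inclusions lie in $S$, while for the converse I would prove that every inner Kan complex already has the right lifting property against the cylinder maps $(\{\epsilon\}\otimes Y)\cup(J\otimes X)\to J\otimes Y$ — informally, that in an $\infty$-operad a $J$-homotopy extends along a normal monomorphism — which is the dendroidal analogue of the fact that a quasi-category is a fibrant object of the Joyal model structure, proved by a direct filling argument exploiting that the arrows of $J$ behave like equivalences. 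The same kind of analysis, reducing liftings against $S$-trivial cofibrations to the inner Kan condition together with the lifting of isomorphisms detected by $\tau_d$, shows that a fibration between $\infty$-operads is exactly an isofibration.

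It remains to pin down the weak equivalences, establish left properness, and deduce uniqueness. In any model category the weak equivalences satisfy (a) two-out-of-three and (b) "trivial fibrations between fibrant objects are weak equivalences", and here $S$ contains the inner anodyne extensions, giving (c); the delicate point is the converse, that any class $W$ with (a)–(c) contains all weak equivalences, which I would obtain by a chain of lemmas forcing the generating trivial cofibrations — in particular the cylinder maps built from $J$ — into $W$ using (a) together with the contractibility of $J$ (concretely, that $J\to\eta$ is a trivial fibration between $\infty$-operads), and then applying the functorial fibrant replacement. Left properness does not come for free because not every dendroidal set is cofibrant, only the normal ones; I would deduce it from a functorial normalization — every $Y$ admits a trivial fibration $\widetilde Y\to Y$ with $\widetilde Y$ normal, obtained by tensoring with a contractible normal dendroidal set — after which the usual argument reduces stability of weak equivalences under pushout along a normal monomorphism to the case of cofibrant sources. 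Uniqueness is then formal: the cofibrations determine the trivial fibrations, and the cofibrations together with the class of fibrant objects determine the weak equivalences (a map is a weak equivalence exactly when it induces an equivalence on mapping objects into every fibrant object, which makes sense thanks to the functorial fibrant replacement), hence the whole structure. The step I expect to be the main obstacle is the identification of the fibrant objects with the $\infty$-operads — i.e. checking that the invertible part of the cylinder is automatically "filled" by inner Kan complexes — together with the extra bookkeeping (minimality of $W$, left properness) forced by working with normal rather than arbitrary monomorphisms.
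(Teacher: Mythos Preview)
The paper does not prove this theorem: Section~\ref{sect:2} is explicitly a review section (``nothing new happens''), and Theorem~\ref{thm:cmcdendsets} is stated with a citation to \cite[Theorem~2.4]{dend3} and no proof. So there is no proof in the present paper to compare against.

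That said, your outline is essentially the strategy actually carried out in \cite{dend3}: one applies Cisinski's machinery from \cite{Ci3} with the normal monomorphisms as cofibrations, the interval $J=i_!(\nerf(\{0\simeq 1\}))$, and the inner horn inclusions as the generating set for the localizer; the hard step, as you correctly anticipate, is the identification of the fibrant objects with the $\infty$-operads, which in \cite{dend3} is handled via a careful analysis of $J$-anodyne extensions and the behaviour of equivalences in inner Kan complexes. Your sketch of left properness via a normal replacement and of the minimality of the class of weak equivalences also matches the shape of the arguments there. One small correction: the normalization you allude to is not obtained by tensoring with a contractible normal object (the terminal dendroidal set is not normal, and tensoring does not obviously produce a trivial fibration); in \cite{dend3} one instead uses the small object argument against the boundary inclusions to produce a functorial trivial fibration from a normal object, or equivalently pulls back along a fixed normal resolution of the terminal object such as $\nerf_d(E_\infty)$.
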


The model categories of operads (\ref{thm:folkcmcoperads}) and
the model category of dendroidal sets (\ref{thm:cmcdendsets}) are related
as follows.

\begin{prop}[{\cite[Proposition 2.5]{dend3}}]\label{prop:adjquillendsetoper}
The adjunction \eqref{eq:nerfdend} is a Quillen pair.
Moreover, the dendroidal nerve functor $\nerf_d$ detects and preserves
weak equivalences, while its left adjoint $\tau_d$ preserves
weak equivalences.
\end{prop}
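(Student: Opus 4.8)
The plan is to prove that $\tau_d$ is a left Quillen functor; the preservation and detection statements will then follow from the same circle of ideas. Since the model structure on $\dset$ (Theorem~\ref{thm:cmcdendsets}) is cofibrantly generated by the boundary inclusions $\partial\Omega[T]\to\Omega[T]$ and its cofibrations are precisely the normal monomorphisms, I would first check that $\tau_d$ preserves cofibrations by verifying it on these generators: the set of objects of $\tau_d\Omega[T]$ is the set of edges of $T$, and $\tau_d(\partial\Omega[T]\to\Omega[T])$ is the identity on edges except when $T=\eta$, in which case $\partial\Omega[\eta]=\varnothing$; in all cases it is injective on objects, hence a cofibration of operads (Theorem~\ref{thm:folkcmcoperads}). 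Once I also know that $\tau_d$ preserves weak equivalences, it preserves trivial cofibrations (being the cofibrations that are weak equivalences), so $(\tau_d,\nerf_d)$ is a Quillen pair.

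To see that $\tau_d$ preserves weak equivalences, I would exploit the characterization in Theorem~\ref{thm:cmcdendsets} of the class $W$ of weak equivalences of $\dset$ as the smallest class satisfying (a), (b), (c). It is enough to check that the class $W'$ of morphisms $f$ of $\dset$ with $\tau_d(f)$ an equivalence of operads satisfies these three conditions. Condition (a) is immediate since equivalences of operads satisfy two-out-of-three, and condition (c) holds because $\tau_d$ sends inner anodyne extensions to isomorphisms (\cite[Proposition 5.3 and Theorem 6.1]{dend2}). The real content is (b): one must show that $\tau_d$ carries every trivial fibration $p\colon X\to Y$ between $\infty$-operads to an equivalence of operads. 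That $\tau_d(p)$ is essentially surjective is easy, because $p$ has the right lifting property with respect to the normal monomorphism $\varnothing=\partial\Omega[\eta]\to\Omega[\eta]$ and is therefore surjective on objects. For the rest I would reduce to the case where $X$ and $Y$ are normal: pulling $p$ back along a normalization $\tilde Y\to Y$ (a trivial fibration out of a normal dendroidal set, which here is again an $\infty$-operad) produces a trivial fibration $\tilde X\to\tilde Y$ between dendroidal sets that are normal, hence cofibrant, and fibrant. A trivial fibration between bifibrant objects is a homotopy equivalence, and applying $\tau_d$ together with its compatibility with the interval $J=i_!(\nerf(E))$ — where $E$ is the groupoid on two objects with a single isomorphism between them, so that $\tau_d(J)\cong E$ — turns the witnessing homotopies into natural isomorphisms of operads, exhibiting $\tau_d(\tilde X\to\tilde Y)$ as an equivalence; the passage back to $p$ itself uses the fact, also from \cite{dend2}, that $\tau_d$ respects the homotopy relation with respect to $J$, so that the normalization comparison maps are sent by $\tau_d$ to equivalences as well.

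Finally, for the assertions about $\nerf_d$: full faithfulness of $\nerf_d$ gives $\tau_d\nerf_d\cong\mathrm{id}$, so if $\nerf_d(f)$ is a weak equivalence then so is $f\cong\tau_d\nerf_d(f)$, which together with the preservation statement yields that $\nerf_d$ detects weak equivalences. For preservation I would use that, in the model structure of Theorem~\ref{thm:folkcmcoperads}, every operad is cofibrant (the cofibrations being the morphisms injective on objects) while $\nerf_d(P)$ is always an $\infty$-operad, hence fibrant. Given a weak equivalence $f\colon P\to Q$, i.e. a fully faithful and essentially surjective morphism of operads, I would choose a quasi-inverse $g$ and natural isomorphisms $gf\cong\mathrm{id}$ and $fg\cong\mathrm{id}$; since $\nerf_d$ preserves limits and is compatible with the tensor products (carrying the Boardman--Vogt tensor product to the dendroidal one in the appropriate closed sense) with $\nerf_d(E)\cong J$, these natural isomorphisms are carried to $J$-homotopies in $\dset$, so that $\nerf_d(f)$ and $\nerf_d(g)$ are mutually homotopy inverse; a homotopy equivalence between fibrant objects is a weak equivalence, whence $\nerf_d(f)$ is one. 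I expect the main obstacle to be condition (b) above: because an $\infty$-operad need not be normal, hence need not be cofibrant, one cannot argue with $p$ directly and must pass through a normalization, and it is exactly there that the combinatorial homotopy-invariance properties of $\tau_d$ from \cite{dend2} are genuinely needed.
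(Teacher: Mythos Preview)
The paper does not prove this proposition: it is quoted from \cite[Proposition~2.5]{dend3} and no argument is given here, so there is no ``paper's own proof'' to compare against. I can still comment on your proposal.

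Your overall strategy---check that $\tau_d$ preserves generating cofibrations, then verify that the class $W'=\{f:\tau_d(f)\text{ is an equivalence of operads}\}$ satisfies conditions (a), (b), (c) of Theorem~\ref{thm:cmcdendsets}---is the right one, and your treatment of (a) and (c) is fine. The problem is in (b). Your ``passage back to $p$'' is circular: the normalization maps $\tilde Y\to Y$ and $\tilde X\to X$ are themselves trivial fibrations between $\infty$-operads (you correctly note that $\tilde Y$ is fibrant, being the source of a trivial fibration to a fibrant object), so showing that $\tau_d$ sends them to equivalences is precisely the statement you are trying to establish. You cannot escape by saying they are $J$-homotopy equivalences either, because $X$ and $Y$ need not be cofibrant, so the usual section-plus-homotopy argument for trivial fibrations does not apply to $\tilde Y\to Y$.

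A cleaner route for (b), and the one actually taken in \cite{dend3}, is to use the explicit description of $\tau_d(X)$ for an $\infty$-operad $X$ recalled from \cite{dend2}: its objects are $X_\eta$ and its sets of operations are the $\pi_0$ of the spaces $X(x_1,\ldots,x_n;x)$ of \ref{def:Homininftyoperads}. A trivial fibration $p:X\to Y$ between $\infty$-operads induces trivial fibrations on these spaces of operations (this follows from the monoidal model structure and the pullback description \eqref{eq:def:Homininftyoperads}), hence bijections on $\pi_0$; together with surjectivity on objects this gives that $\tau_d(p)$ is fully faithful and surjective on objects, i.e.\ a trivial fibration in $\oper$. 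No normalization is needed.

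For $\nerf_d$, your argument works but is more elaborate than necessary. Once you know $(\tau_d,\nerf_d)$ is a Quillen pair, the dual Ken Brown lemma says $\nerf_d$ preserves weak equivalences between fibrant objects; since every operad is fibrant in the model structure of Theorem~\ref{thm:folkcmcoperads}, $\nerf_d$ preserves all weak equivalences. Detection then follows exactly as you say, from $\tau_d\nerf_d\cong\mathrm{id}$ and preservation of weak equivalences by $\tau_d$.
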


\begin{rem}
Under the identification $\sset=\dset/\eta$, the model category of Theorem \ref{thm:cmcdendsets}
induces a model category structure on the category of simplicial sets, whose cofibrations
are the monomorphisms, and whose fibrant objects are the \emph{$\infty$-categories} (also known as
\emph{quasi-categories}). In other words, the model category of Theorem \ref{thm:cmcdendsets} contains the Joyal
model category for $\infty$-categories.
\end{rem}

\begin{paragr}\label{def:dendroidaltensorproduct}
There is a closed symmetric monoidal structure on the category
of dendroidal sets, with $\eta$ as the unit object, while we denote by
$X\otimes Y$ and $\sHom(X,Y)$ the corresponding tensor product and
internal Hom functors, respectively.
This monoidal structure is essentially characterized by the fact that the tensor product
functor preserves small colimits in each variable, and by the fact that, for any
trees $S$ and $T$, we have a natural isomorphism
\begin{equation}
\Omega[S]\otimes\Omega[T]\simeq \nerf_d(S\otimes_{\mathit{BV}} T)\, ,
\end{equation}
where $\otimes_{\mathit{BV}}$ denotes the Boardman-Vogt tensor product of
operads; see \cite[Section 5.1]{dend1}. For any two simplicial sets $X$ and $Y$, we have
a natural isomorphism
\begin{equation}
i_!(X)\otimes i_!(Y)\simeq i_!(X)\times i_!(X)=i_!(X\times Y)
\end{equation}
which turns the functor $i_!:\sset\to\dset$ into a symmetric monoidal functor
(if we consider the category of simplicial sets as a symmetric monoidal category, with
the cartesian product as tensor product).
\end{paragr}

\begin{prop}[{\cite[Proposition 2.6]{dend3}}]
The category of dendroidal sets is a symmetric monoidal model category.
\end{prop}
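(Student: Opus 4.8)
The plan is to verify the two conditions defining a symmetric monoidal model category (in the sense of \cite{Ho}): the pushout--product axiom and the unit axiom. The closed symmetric monoidal structure itself, with unit $\eta$, tensor product $\otimes$ and internal Hom $\sHom$, is the one recalled in~\ref{def:dendroidaltensorproduct}. The unit axiom is immediate: since $\eta=\Omega[0]$ is a normal dendroidal set, it is already a cofibrant object, so no cofibrant replacement of the unit is needed and there is nothing to check. Everything therefore reduces to the pushout--product axiom: for normal monomorphisms $i\colon A\to B$ and $j\colon C\to D$, the induced map
$$
i\mathbin{\square}j\colon B\otimes C\amalg_{A\otimes C}A\otimes D\longrightarrow B\otimes D
$$
is a normal monomorphism, which is moreover a weak equivalence as soon as $i$ or $j$ is.

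First I would make the standard reduction to generators. The class of maps $f$ such that $f\mathbin{\square}g$ belongs to a fixed saturated class, for all $g$ in a fixed class of maps, is itself saturated; hence it suffices to treat the case where $i$ and $j$ both run over the boundary inclusions $\partial\Omega[S]\to\Omega[S]$ (for the cofibration part), and the case where one of them is a boundary inclusion while the other runs over a set of generating trivial cofibrations (for the trivial cofibration part). By Theorem~\ref{thm:cmcdendsets}, the trivial cofibrations are generated by the inner horn inclusions $\Lambda^e[T]\to\Omega[T]$ together with one further countable family of maps built from an interval object $J$ (the nerve of the free--living isomorphism), so the trivial part splits into two sub-cases.

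The combinatorial heart of the argument is the analysis of $\Omega[S]\otimes\Omega[T]\cong\nerf_d(S\otimes_{\mathit{BV}}T)$ via its \emph{shuffles}: this dendroidal set carries a finite filtration, indexed by the shuffles of $S$ and $T$, in which each stage is obtained from the previous one by attaching a representable $\Omega[R]$ along a subobject. From this one reads off, formally, that if $X\to Y$ is a normal monomorphism and $Z$ is normal then $X\otimes Z\to Y\otimes Z$ is again a normal monomorphism: it is a monomorphism between normal dendroidal sets (using that $\Omega[S]\otimes\Omega[T]$ is normal and that any monomorphism between normal dendroidal sets is normal), whence normal. The pushout--product of two normal monomorphisms is then a normal monomorphism as well, which settles the cofibration part. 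For the trivial part with an inner horn, one refines the shuffle filtration so that each attaching map is a pushout of an inner horn inclusion; this shows $(\partial\Omega[S]\to\Omega[S])\mathbin{\square}(\Lambda^e[T]\to\Omega[T])$ is inner anodyne, hence a trivial cofibration. The case of the remaining interval--type generators is handled the same way, or deduced from the inner--anodyne case together with the fact that tensoring $\eta\to J$ with a normal monomorphism yields a trivial cofibration, using the description of the weak equivalences in Theorem~\ref{thm:cmcdendsets}.

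I expect the pushout--product of a boundary inclusion with an inner horn inclusion to be the main obstacle: organizing the shuffles of $S$ and $T$ so that the filtration of $i\mathbin{\square}j$ proceeds through pushouts of inner horns requires the delicate combinatorics of trees — percolation of vertices through shuffles, and careful bookkeeping of which faces are genuinely new — exactly as in the analogous, and already non-trivial, statement for simplicial sets. All the other verifications, including the reduction to generators and the formal closure properties of saturated classes, are routine.
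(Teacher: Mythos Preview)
The paper gives no proof of this proposition: it is simply quoted from \cite[Proposition~2.6]{dend3}, and the argument lives entirely in that reference. Your sketch is essentially the strategy carried out there --- the pushout--product of two normal monomorphisms is a normal monomorphism, and the pushout--product of a normal monomorphism with an inner anodyne extension is inner anodyne, both via the shuffle decomposition of $\Omega[S]\otimes\Omega[T]$.

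One point where your outline diverges from the actual argument in \cite{dend3}: your reduction of the trivial part to ``inner horns plus a countable family built from $J$'' presumes an explicit generating set of trivial cofibrations, which is not how \cite{dend3} proceeds. There, having established the inner--anodyne pushout--product statement, one passes to the adjoint formulation: for every normal $A$ and every $\infty$-operad $X$, the internal Hom $\sHom(A,X)$ is again an $\infty$-operad, and for every normal monomorphism $A\to B$ the induced map $\sHom(B,X)\to\sHom(A,X)$ is an isofibration between $\infty$-operads (hence a fibration, by the last clause of Theorem~\ref{thm:cmcdendsets}). The monoidal model axiom then follows formally from this characterization of fibrations between fibrant objects, so the separate ``interval--type generators'' case you anticipate is bypassed rather than confronted head-on.
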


\begin{paragr}\label{def:Homininftyoperads}
If $X$ is a dendroidal set, we will refer to the elements of
the set $X_0=\Hom_\dset(\eta,X)$ as \emph{objects of $X$}.

Let $X$ be an $\infty$-operad.
For an $(n+1)$-tuple of objects $(x_1,\ldots,x_n,x)$ in $X$, $n\geq 0$,
the space of maps $X(x_1,\ldots,x_n;x)$ is obtained
by the pullback below, in which the map $p$ is
the map induced by the inclusion $\partial\Omega[C_n]=\eta\amalg\cdots\amalg\eta\To\Omega[C_n]$
(with $n+1$ copies of $\eta$, corresponding to the $n+1$ objects of the corolla $C_n$).
\begin{equation}\label{eq:def:Homininftyoperads}\begin{split}\xymatrix{
X(x_1,\ldots,x_n;x)\ar[r]\ar[d]&\sHom(\Omega[C_n],X)\ar[d]^p\\
\eta\ar[r]_(.45){(x_1,\ldots,x_n,x)}&X^{n+1}
}\end{split}\end{equation}
Under the equivalence $\sset=\dset/\eta$, we consider
$X(x_1,\ldots,x_n;x)$ as a simplicial set.
Observe that $X(x_1,\ldots,x_n;x)$ is actually a Kan complex; see \cite[Proposition 6.13]{dend3}.
This space of operations may be described with a more homotopy theoretic flavour as follows.
\end{paragr}

\begin{prop}\label{prop:charspacesoperationsMap}
Let $X$ be an $\infty$-operad, and $(x_1,\ldots,x_n,x)$ an $(n+1)$-tuple of
objects in $X$, $n\geq 0$. Then there is a canonical homotopy pullback square
$$\xymatrix{
X(x_1,\ldots,x_n;x)\ar[r]\ar[d]&\Map(\Omega[C_n],X)\ar[d]\\
\Delta[0]\ar[r]_(.38){(x_1,\ldots,x_n,x)}&\Map(\eta,X)^{n+1}
}$$
in the category of Kan complexes.
\end{prop}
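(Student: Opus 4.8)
The plan is to exhibit the square \eqref{eq:def:Homininftyoperads} defining $X(x_{1},\ldots,x_{n};x)$ as the pullback of a fibration, transport it along the right Quillen functor $i^{*}$, and then pass to maximal Kan complexes. First I would note that the boundary inclusion $\partial\Omega[C_{n}]\to\Omega[C_{n}]$ is a normal monomorphism, hence a cofibration of $\dset$, and that $X$, being an $\infty$-operad, is fibrant for the model structure of Theorem \ref{thm:cmcdendsets}. As $\dset$ is a symmetric monoidal model category, the pushout--product axiom shows that
$$p\colon\sHom(\Omega[C_{n}],X)\longrightarrow\sHom(\partial\Omega[C_{n}],X)=X^{n+1}$$
is a fibration between fibrant objects; here $\sHom(\partial\Omega[C_{n}],X)=X^{n+1}$ because $\partial\Omega[C_{n}]$ is a coproduct of $n+1$ copies of the unit $\eta$. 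Thus \eqref{eq:def:Homininftyoperads} presents $X(x_{1},\ldots,x_{n};x)$ as the strict pullback of $p$ along $(x_{1},\ldots,x_{n},x)\colon\eta\to X^{n+1}$.

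Next I would apply $i^{*}\colon\dset\to\sset$. Under the identification $\sset=\dset/\eta$ the model structure of Theorem \ref{thm:cmcdendsets} restricts to the Joyal model structure, so $i^{*}$ is right Quillen; being a right adjoint it preserves limits, and $i^{*}\eta=\Delta[0]$. Writing $\mathcal{E}=i^{*}\sHom(\Omega[C_{n}],X)$ and $\mathcal{B}=i^{*}(X^{n+1})=(i^{*}X)^{n+1}$, we get $\infty$-categories, an isofibration $i^{*}p\colon\mathcal{E}\to\mathcal{B}$ (a fibration between fibrant objects of the Joyal structure), and a strict pullback square in $\sset$ with vertices $X(x_{1},\ldots,x_{n};x),\ \mathcal{E},\ \Delta[0],\ \mathcal{B}$ and $i^{*}p$ as one leg. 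Recall that, for an $\infty$-operad $X$ and a normal dendroidal set $A$, the mapping space $\Map(A,X)$ is the maximal Kan complex $k\bigl(i^{*}\sHom(A,X)\bigr)$ contained in the $\infty$-category $i^{*}\sHom(A,X)$; in particular $\Map(\Omega[C_{n}],X)=k(\mathcal{E})$ and $\Map(\eta,X)^{n+1}=\bigl(k(i^{*}X)\bigr)^{n+1}=k(\mathcal{B})$, using that $k$ commutes with products. Applying $k$ to the pullback square: $k$ preserves this pullback — an edge of a pullback formed over an isofibration is invertible exactly when its two projections are — and $k$ carries isofibrations of $\infty$-categories to Kan fibrations, so $k(i^{*}p)\colon k(\mathcal{E})\to k(\mathcal{B})$ is a Kan fibration. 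This yields a cartesian square of Kan complexes
$$\xymatrix{
k\bigl(X(x_{1},\ldots,x_{n};x)\bigr)\ar[r]\ar[d]&\Map(\Omega[C_{n}],X)\ar[d]\\
\Delta[0]\ar[r]_(.38){(x_{1},\ldots,x_{n},x)}&\Map(\eta,X)^{n+1}
}$$
whose right-hand leg is a Kan fibration.

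Finally, $X(x_{1},\ldots,x_{n};x)$ is itself a Kan complex by \cite[Proposition 6.13]{dend3}, so $k\bigl(X(x_{1},\ldots,x_{n};x)\bigr)=X(x_{1},\ldots,x_{n};x)$, and the square above is the one in the statement; a strict pullback of a Kan fibration between Kan complexes is a homotopy pullback, which finishes the argument. The main obstacle is the interplay between the Joyal and Kan--Quillen model structures: one must know that $i^{*}$ is right Quillen for the Joyal structure and that the maximal Kan complex functor turns $i^{*}p$ into a Kan fibration while preserving the relevant pullback; and it is precisely the a priori fact that $X(x_{1},\ldots,x_{n};x)$ is already a Kan complex, not merely an $\infty$-category, that lets us identify the strict fibre of $i^{*}p$ with its core, hence with the homotopy fibre.
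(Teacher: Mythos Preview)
Your proof is correct and follows essentially the same route as the paper's: start from the defining pullback \eqref{eq:def:Homininftyoperads}, pass to simplicial sets via $i^*$, apply the maximal Kan subcomplex functor $k$, and use the identification $\Map(A,X)=k(i^*\sHom(A,X))$. The paper organises the same computation slightly differently, working inside $\dset$ via $i_!i^*$ rather than applying $i^*$ directly, and it cites specific results from \cite{dend3} (Corollary~6.8 for the compatibility of $k$ with the pullback, Proposition~6.7 for the Kan fibration) and \cite{dend6} (Proposition~3.3 for the identification of $\Map$), where you argue these points by hand.

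One small caution: your justification ``an edge of a pullback formed over an isofibration is invertible exactly when its two projections are'' is not obviously true for arbitrary pullbacks along isofibrations (inverses in the two factors need not match on the nose in the base). What makes it work here is precisely the special case $A=\Delta[0]$: for an inner fibration $p$, an edge in a fibre $\mathcal{E}_b$ is an equivalence in $\mathcal{E}_b$ iff it is one in $\mathcal{E}$; this is the content behind \cite[Corollary~6.8]{dend3}. You do acknowledge this at the end, and combined with the fact that the fibre is already a Kan complex the argument goes through, but the intermediate general claim should be stated for fibres rather than for arbitrary pullbacks.
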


\begin{proof}
Using the identification $\sset=\dset/\eta$, one sees that
the pullback square \eqref{eq:def:Homininftyoperads} can be seen as the
composition of the two following pullback squares.
$$\xymatrix{
X(x_1,\ldots,x_n;x)\ar[r]\ar[d]&
i_!i^*\sHom(\Omega[C_n],X)\ar[r]\ar[d]&\sHom(\Omega[C_n],X)\ar[d]^p\\
\eta\ar[r]_(.35){(x_1,\ldots,x_n,x)}&
i_!i^*\sHom(\eta,X)\ar[r]& X^{n+1}
}$$
On the other hand, \cite[Corollary 6.8]{dend3} means that the left hand square
above is itself the composition of two pullback squares
$$\xymatrix{
X(x_1,\ldots,x_n;x)\ar[r]\ar[d]&i_!k(i^*\sHom(\Omega[C_n],X))\ar[r]\ar[d]&
i_!i^*\sHom(\Omega[C_n],X)\ar[d]\\
\eta\ar[r]_(.35){(x_1,\ldots,x_n,x)}&i_!k(i^*\sHom(\eta,X))\ar[r]&
i_!i^*\sHom(\eta,X)
}$$
in which, for an $\infty$-category $C$, $k(C)$ is the maximal Kan subcomplex of $C$
in the sense of Joyal \cite[Corollary 1.5]{joyal}, while \cite[Proposition 6.7]{dend3}
asserts that the middle vertical map is a Kan
fibration. By virtue of \cite[Proposition 3.3]{dend6}, we know that, for any normal
dendroidal set $A$ and any $\infty$-operad $X$, we have a canonical identification
$\Map(A,X)=k(i^*\sHom(A,X))$, whence this proposition.
\end{proof}

\begin{definition}\label{def:equivinftyoperd}
Let $f:X\To Y$ be a morphism of $\infty$-operads.

The map $f$ is \emph{fully faithful} if, for any
$(n+1)$-tuple of objects $(x_1,\ldots,x_n,x)$ in $X$,
the morphism
$$X(x_1,\ldots,x_n;x)\To Y(f(x_1),\ldots,f(x_n);f(x))$$
is a simplicial homotopy equivalence.

The map $f$ is \emph{essentially surjective} if
the morphism of operads $\tau_d(X)\To \tau_d(Y)$ is essentially surjective.
\end{definition}

\begin{rem}
For an $\infty$-operad $X$, the set of isomorphism classes of objects in the
category $j^*\, \tau_d(X)\simeq \tau\, i^* (X)$ is canonically in bijection with the
set of connected components of the Kan complex $\Map(\eta,X)$;
see \cite[4.1]{dend3}. Therefore, using Proposition \ref{prop:charspacesoperationsMap},
we see that the notions of essentially surjective map and of fully faithful map
have a purely homotopic meaning in the model category of dendroidal sets.
In fact, we have:
\end{rem}

\begin{thm}[{\cite[Theorem 3.11]{dend6}}]\label{thm:charwedsetfullfaithesssurj}
A morphism between $\infty$-operads is a weak equivalence if and only if it is
fully faithful and essentially surjective.
\end{thm}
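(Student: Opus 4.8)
The plan is to prove the two implications separately, reducing the harder ``if'' direction to the fact, already available as Theorem~\ref{thm:cmcdendsets}(b), that a trivial fibration between $\infty$-operads is a weak equivalence.

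\emph{Necessity.} Suppose $f\colon X\to Y$ is a weak equivalence between $\infty$-operads. Since $\tau_d$ preserves weak equivalences (Proposition~\ref{prop:adjquillendsetoper}), $\tau_d(f)$ is a weak equivalence of operads, in particular essentially surjective, so $f$ is essentially surjective. For full faithfulness I would first note that, for any normal dendroidal set $A$, the functor $-\otimes A$ is left Quillen (monoidal model structure plus cofibrancy of $A$), so its right adjoint $\sHom(A,-)$ preserves weak equivalences between $\infty$-operads; composing with $i^{*}$ (right Quillen) and with the maximal Kan subcomplex functor $k$ (which sends categorical equivalences to homotopy equivalences), and using the identification $\Map(A,-)=k(i^{*}\sHom(A,-))$ recalled in the proof of Proposition~\ref{prop:charspacesoperationsMap}, one gets that $\Map(A,f)$ is a weak equivalence of Kan complexes for every normal $A$; in particular for $A=\eta$ and $A=\Omega[C_n]$. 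Naturality in $X$ of the homotopy cartesian square of Proposition~\ref{prop:charspacesoperationsMap} then gives a map of homotopy cartesian squares three of whose corners are mapped by weak equivalences, whence the fourth map $X(x_1,\dots,x_n;x)\to Y(f(x_1),\dots,f(x_n);f(x))$ is a weak equivalence of Kan complexes, i.e.\ a simplicial homotopy equivalence. Thus $f$ is fully faithful.

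\emph{Sufficiency.} Let $f\colon X\to Y$ be fully faithful and essentially surjective. Factor $f=p\,s$ with $s\colon X\to Z$ a trivial cofibration and $p\colon Z\to Y$ a fibration; then $Z$ is an $\infty$-operad, $p$ is an isofibration, and $s$, being a weak equivalence between $\infty$-operads, is fully faithful and essentially surjective by the necessity part. A two-out-of-three argument shows $p$ is still fully faithful and essentially surjective: essential surjectivity of $\tau_d(p)$ follows from that of $\tau_d(f)=\tau_d(p)\tau_d(s)$, and full faithfulness follows from the two-out-of-three property of homotopy equivalences of Kan complexes together with the fact that, up to homotopy equivalence, the spaces of operations of an $\infty$-operad depend only on the isomorphism classes in $\tau_d$ of the chosen objects (which lets one reduce to objects in the image of $s$). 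It therefore suffices to prove the following claim: an isofibration $p\colon Z\to Y$ between $\infty$-operads which is fully faithful and essentially surjective is a trivial fibration. Granting this, $p\in W$ by Theorem~\ref{thm:cmcdendsets}(b), and then $f=p\,s\in W$ by two-out-of-three.

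To prove the claim I would verify the right lifting property of $p$ against the generating cofibrations $\varnothing\to\eta$ and $\partial\Omega[T]\to\Omega[T]$, $T\in\trees$. Lifting against $\varnothing\to\eta$ is surjectivity on objects: for $y\in Y_0$, essential surjectivity gives $z'\in Z_0$ and an isomorphism $p(z')\to y$ in $\tau_d(Y)$, which lifts along the isofibration $\tau_d(p)$ to an isomorphism with target $z$ satisfying $p(z)=y$ (recall that a dendroidal set and its homotopy category have the same objects). Lifting against $\partial\Omega[C_n]\to\Omega[C_n]$ amounts, given objects $z_1,\dots,z_n,z$ of $Z$ and a $0$-simplex of $Y(pz_1,\dots,pz_n;pz)$, to lifting it to a $0$-simplex of $Z(z_1,\dots,z_n;z)$: since $p$ is an inner Kan fibration, the induced map on these spaces of operations is a Kan fibration (as in \cite[Proposition~6.7]{dend3}), and being also a weak equivalence by full faithfulness it is a trivial fibration, hence surjective on simplices. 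The remaining boundary inclusions $\partial\Omega[T]\to\Omega[T]$ for trees with at least two vertices are treated by induction on the number of vertices, using the spine (Segal core) inclusions --- which are inner anodyne and so have the left lifting property against $p$ --- to reduce any such lifting problem to the already-settled corolla and edge cases, following the analysis of trivial fibrations between $\infty$-operads in \cite[Section~6]{dend3}. The main obstacle is exactly this last step: upgrading the purely homotopy-theoretic data (maps on operation spaces are weak equivalences, objects are hit up to isomorphism) to the strict surjectivity required by the lifting problems. This is where the isofibration hypothesis is essential --- both to lift isomorphisms on objects and to promote the maps of operation spaces to \emph{trivial} Kan fibrations --- and where the inductive unravelling of $\partial\Omega[T]$ via inner anodyne extensions carries the real weight of the proof.
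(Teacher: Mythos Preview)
This theorem is not proved in the present paper; it is simply quoted from \cite[Theorem~3.11]{dend6}, so there is no in-paper proof to compare against. I will therefore assess your argument on its own terms.

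Your necessity argument is correct and standard: since the model structure is monoidal, $\Map(A,-)$ sends weak equivalences between fibrant objects to weak equivalences of Kan complexes for any normal $A$, and the homotopy pullback square of Proposition~\ref{prop:charspacesoperationsMap} then transfers this to the spaces of operations.

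Your sufficiency argument has the right architecture --- factor as a trivial cofibration followed by a fibration, and show that the resulting isofibration is a trivial fibration --- and your treatment of the cases $T=\eta$ and $T=C_n$ is fine. The gap is exactly where you flag it. You propose to handle $\partial\Omega[T]\to\Omega[T]$ for a tree with at least two vertices ``by induction on the number of vertices, using the spine inclusions''; but a lift of the restriction $\spine[T]\to Z$ along the inner anodyne map $\spine[T]\to\Omega[T]$ is only constrained to agree with the given boundary datum on $\spine[T]$, not on all of $\partial\Omega[T]$. The same obstruction appears if one lifts along an inner horn $\Lambda^e[T]\to\Omega[T]$: the resulting map to $Z$ need not restrict to the prescribed dendrex on the missing face $\Omega[T/e]$, and the inductive hypothesis (right lifting against boundaries of smaller trees) only gives \emph{existence} of lifts, not the uniqueness-up-to-adjustment one would need to repair the discrepancy. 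Your deferral to ``the analysis of trivial fibrations between $\infty$-operads in \cite[Section~6]{dend3}'' is too vague to close this: if there is a precise statement there characterizing trivial fibrations via operation spaces, you should cite it and verify its hypotheses; otherwise the argument is incomplete at precisely the step you yourself identify as carrying the real weight.
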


\section{Adding free cells}\label{sect:3}

This section is quite technical: we will study some specific pushouts
in the model category of dendroidal sets. This will be used later to produce a Quillen
equivalence between the homotopy theories of Segal operads and of simplicial
operads (see the proof of Lemma \ref{lemma:pushoutnervepreoper}).
Our calculations will also have implications for the computation of
homotopy pushouts in the model category of simplicial operads
(see Theorem \ref{thm:sigmacofleftproper} and its corollaries).

\begin{paragr}\label{def:addfreecell}
Let $m\geq 0$ be a fixed integer, and write $f$ for
the operation corresponding to the unique vertex of the corolla $C_m$.
We find it convenient to write $\partial C_m$
for the operad $\eta\amalg\ldots\amalg\eta$ ($m+1$ copies of $\eta$),
so that we have a morphism $\partial C_m\to C_m$ which is bijective on objects.
Given an operad $P$ (in the category of sets) as well as an $(m+1)$-tuple of objects
$(x_0,\ldots,x_m,x)$ in $P$, we define the operad $P[f]$ as the pushout below.
\begin{equation}\label{eq:addfreecell1}
\begin{split}
\xymatrix{
\partial C_m\ar[rr]^{(x_0,\ldots,x_m,x)}\ar[d]&&P\ar[d]\\
C_m\ar[rr]&&P[f]
}
\end{split}
\end{equation}
We also define the dendroidal set $\nerf_d(P)[f]$ as the following pushout.
\begin{equation}\label{eq:addfreecell2}
\begin{split}
\xymatrix{
\partial \Omega[C_m]\ar[rr]^{(x_0,\ldots,x_m,x)}\ar[d]&&\nerf_d(P)\ar[d]\\
\Omega[C_m]\ar[rr]&&\nerf_d(P)[f]
}
\end{split}
\end{equation}
We have a canonical comparison map
\begin{equation}\label{compmap:addfreecell}
\nerf_d(P)[f]\to\nerf_d(P[f])
\end{equation}
which happens to be bijective on objects.
\end{paragr}

\begin{prop}\label{prop:descriptionofPf}
If the operad $P$ is $\Sigma$-cofibrant, then the morphism \eqref{compmap:addfreecell}
is a normal monomorphism between normal dendroidal sets.
\end{prop}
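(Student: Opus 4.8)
The plan is to exhibit $\nerf_d(P)[f]$ as an explicit sub-dendroidal-set of $\nerf_d(P[f])$ and to identify the missing dendrices as those trees that genuinely use the new operation $f$ in an essential way. First I would analyze the pushout \eqref{eq:addfreecell1} at the level of operads, exactly as in the (tedious but elementary) bookkeeping of Lemma \ref{lemma:pushoutstrictfullyfaithful}: the objects of $P[f]$ are those of $P$ (no new object is added, since $C_m$ and $\partial C_m$ have the same objects), and an operation of $P[f]$ is a formal composite built from operations of $P$ together with copies of the generator $f\colon(x_0,\ldots,x_m)\to x$, subject only to the relations coming from composition in $P$ and the $\Sigma$-action. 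The $\Sigma$-cofibrancy hypothesis is what makes this description clean: it guarantees that the symmetric group actions on the operations of $P$ (and hence on the composites appearing in $P[f]$) are free on the relevant orbits, so that no unexpected identifications occur and, crucially, the $\mathrm{Aut}(T)$-action on the new dendrices is free — which is precisely the combinatorial criterion (\cite[Proposition 1.5]{dend3}) for a monomorphism of dendroidal sets to be a normal monomorphism.

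Next I would describe a $T$-point of $\nerf_d(P[f])$, i.e. a map of operads from the free operad on the tree $T$ into $P[f]$. Decorating the edges of $T$ by objects of $P$ and each vertex by an operation of $P[f]$, and then expanding each such operation into its normal form as a composite of operations of $P$ and copies of $f$, one sees that a $T$-point of $\nerf_d(P[f])$ is the same as a $T'$-point of $\nerf_d(P)$ where $T'$ is obtained from $T$ by ``grafting in a copy of $C_m$'' at each place where an $f$ was used, together with the data of how these copies sit inside $T'$. The sub-dendroidal-set $\nerf_d(P)[f]\subseteq\nerf_d(P[f])$ is then identified with those $T$-points that use \emph{at most one} copy of $f$, coming from the single corolla $C_m$ in the pushout \eqref{eq:addfreecell2}; this matches the universal property of that pushout, since $\Omega[C_m]$ contributes exactly one new nondegenerate operation and $\partial\Omega[C_m]\to\Omega[C_m]$ glues it along its boundary objects $(x_0,\ldots,x_m,x)$. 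Concretely, the comparison map \eqref{compmap:addfreecell} is the unique map out of the pushout $\nerf_d(P)[f]$ induced by $\nerf_d$ applied to \eqref{eq:addfreecell1}, and the analysis above shows it is injective on dendrices.

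It then remains to check two things. First, that \eqref{compmap:addfreecell} is a monomorphism: this follows from the injectivity just described, once one verifies that distinct nondegenerate dendrices of $\nerf_d(P)[f]$ map to distinct dendrices of $\nerf_d(P[f])$, which reduces to the normal-form uniqueness in $P[f]$ that $\Sigma$-cofibrancy provides. Second, that both dendroidal sets are normal and that the inclusion is a normal monomorphism: since $\nerf_d(P)$ is normal iff $P$ is $\Sigma$-cofibrant (the nerve of a $\Sigma$-cofibrant operad has free $\mathrm{Aut}(T)$-actions on each $\Omega[T](T)$), and since $\partial\Omega[C_m]\to\Omega[C_m]$ is a normal monomorphism, the pushout $\nerf_d(P)[f]$ is normal; and $P[f]$ is again $\Sigma$-cofibrant (a pushout of a $\Sigma$-cofibration of operads along the cofibration $\partial C_m\to C_m$), so $\nerf_d(P[f])$ is normal as well. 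A monomorphism between normal dendroidal sets is automatically a normal monomorphism, which finishes the proof.

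The main obstacle is the first step: proving the normal-form description of the operations of $P[f]$ and verifying that $\Sigma$-cofibrancy forces the $\mathrm{Aut}(T)$-actions on the new dendrices to be free. This is the genuinely combinatorial heart of the argument — the rest is a formal comparison of universal properties — and it is exactly the kind of explicit-pushout bookkeeping already carried out for categories in Lemma \ref{lemma:pushoutstrictfullyfaithful}, now with the added complication of higher-arity operations and their symmetric-group symmetries.
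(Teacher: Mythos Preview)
Your description of $\nerf_d(P)[f]$ as ``those $T$-points that use at most one copy of $f$'' is incorrect, and this is where the argument breaks. The pushout $\nerf_d(P)[f]$ is the union of $\nerf_d(P)$ and the image of $\Omega[C_m]$; the latter contributes only the single corolla dendrex $f\in\nerf_d(P[f])_{C_m}$ together with its degeneracies. But a dendrex on a two-vertex tree $T$ with one vertex decorated by $f$ and the other by an operation of $P$ has multiplicity $m(q)=1$ and does \emph{not} lie in $\nerf_d(P)[f]$: such a tree admits no map to $C_m$, so this dendrex cannot come from $\Omega[C_m]$. In the paper's later notation, your candidate is $\nerf_d(P[f])^{(1)}$, whereas $\nerf_d(P)[f]=A^{(0)}_1$ is strictly smaller. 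Consequently your injectivity argument, as written, is checking the wrong inclusion.

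The paper avoids all of this bookkeeping by a one-line categorical observation: to show that the comparison map \eqref{compmap:addfreecell} is a monomorphism, it suffices to show that the square
\[
\xymatrix{
\partial\Omega[C_m]\ar[r]\ar[d] & \nerf_d(P)\ar[d]\\
\Omega[C_m]\ar[r] & \nerf_d(P[f])
}
\]
is cartesian (since then the pushout of the left mono embeds in the lower-right corner). But $\nerf_d$ preserves limits, so this reduces to checking that the operad square \eqref{eq:addfreecell1} is cartesian, which is immediate. Your remaining observations --- that $P\to P[f]$ is a $\Sigma$-cofibration between $\Sigma$-cofibrant operads and hence a monomorphism, that the nerve of a $\Sigma$-cofibrant operad is normal, and that any monomorphism between normal dendroidal sets is a normal monomorphism --- are all correct and are exactly what the paper uses to conclude once injectivity is in hand. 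What your approach would buy, if carried out with the correct description of $\nerf_d(P)[f]$, is essentially the filtration analysis the paper performs \emph{after} this proposition (to show the map is inner anodyne); but for the proposition itself the cartesian-square argument is both shorter and requires no normal-form analysis of $P[f]$.
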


\begin{proof}
As the map $P\to P[f]$ is a $\Sigma$-cofibration between $\Sigma$-cofibrant operads,
it is a monomorphism\footnote{The proof of this fact can be found in
\cite[Proposition 5.1]{BMOp} in the case of an operad with one object. The case with several objects
is proved similarly; see also the Appendix of \cite{BMOp2}, where a little mistake
of \cite{BMOp} is corrected.}.
Therefore, as the dendroidal nerve of a $\Sigma$-cofibrant operad is normal,
the dendroidal nerve of this map is a normal monomorphism
between normal dendroidal sets. It is thus sufficient to check that
the morphism \eqref{compmap:addfreecell} is a monomorphism.
For this purpose, we just have to see that the commutative square
$$\xymatrix{
\partial \Omega[C_m]\ar[rr]^{(x_0,\ldots,x_m,x)}\ar[d]&&\nerf_d(P)\ar[d]\\
\Omega[C_m]\ar[rr]&&\nerf_d(P[f])
}$$
is cartesian. As the dendroidal nerve functor preserves pullbacks, this follows from the fact
that the commutative square of operads \eqref{eq:addfreecell1} is cartesian.
\end{proof}

\begin{paragr}
The main result of this section is that the map \eqref{compmap:addfreecell}
is an inner anodyne extension (\ref{prop:addfreecellweakequiv}). This will require further preparation.

Given a tree $T$, an element of $\nerf_d(P[f])_T=\Hom(T,P[f])$ is completely determined by a
labelling of the edges of $T$ by objects of $P$ together with a (compatible) labelling of the
vertices of $T$ by operations of $P[f]$ (here, we are just reformulating that $\nerf_d(P[f])$ is
a strict inner Kan complex, and thus satisfies the strict dendroidal Segal condition; see
\cite[Corollary 2.7]{dend6}). Now, an operation in $P[f]$ may be represented by
another tree $S$ whose vertices are labelled by operations of $P$ or by the letter $f$; such a representation
will be said \emph{canonical} if no inner edge of $S$ has both vertices labelled by elements of $P$
(otherwise we may compose them) and if no unary vertex of $S$ is labelled by an identity of $P[f]$.
If $q$ is an operation in $P[f]$, represented by a canonical
labelling of a tree $S$, we will write $m(q)$ for the \emph{multiplicity
of $f$ in $q$}, that is the number of vertices of $S$ which are labelled by the freely added operation $f$.
Note that the number $m(q)$ does not depend on the choice of a canonical presentation of $q$.
For a tree $T$ and an element $q$ of $\nerf_d(P[f])_T$, we define $m(q)=\sum_{v\in V(T)}m(q_v)$,
where $V(T)$ denotes the set of vertices of $T$, while, for each $v\in V(T)$, the letter $q_v$
stands for the operation of $P[f]$ attached to $v$.

For an integer $k\geq 0$ and for any tree $T$, we define the set $\nerf_d(P[f])^{(k)}_T$ as
\begin{equation}\label{eq:deffiltrationaddfreecell}
\nerf_d(P[f])^{(k)}_T=\{q\in\nerf_d(P[f])_T\, | \, m(q)\leq k\}\, .
\end{equation}
This defines a dendroidal subcomplex $\nerf_d(P[f])^{(k)}\subset\nerf_d(P[f])$.
Therefore, the dendroidal set $\nerf_d(P[f])$ admits an increasing filtration
\begin{equation}\label{eq:filteraddfreecell}
\nerf_d(P)=\nerf_d(P[f])^{(0)}\subset \nerf_d(P[f])^{(1)}\subset\ldots\subset\nerf_d(P[f])^{(k)}\subset\ldots
\end{equation}
such that $\nerf_d(P[f])=\bigcup_{k\geq 0}\nerf_d(P[f])^{(k)}$.
The filtration above can be refined as
\begin{equation}\label{eq:filteraddfreecellrefined}
\nerf_d(P[f])^{(k)}=A^{(k)}_0\subset A^{(k)}_1\subset\ldots A^{(k)}_n\subset\ldots \subset \nerf_d(P[f])^{(k+1)}
\end{equation}
in which, for each integer $n\geq 0$, $A^{(k)}_n$ is the dendroidal subcomplex of $\nerf_d(P[f])^{(k+1)}$
defined as follows:
\begin{equation}\label{eq:deffilteraddfreecellrefined}
A^{(k)}_n=\nerf_d(P[f])^{(k)}\cup \bigcup_{q\in U^{(k)}_n} \text{image of $q$}\, ,
\end{equation}
where $U^{(k)}_n$ is the set of all non-degenerate dendrices $q:\Omega[T]\to\nerf_d(P[f])^{(k+1)}$ such that
$T$ has at most $n+k$ vertices and such that, for any vertex $v$ of $T$, the corresponding
decoration $q_v$ satisfies $m(q_v)=0$ or is a corolla decorated by an isolated occurrence of $f$.
Remark that, if $T$ is a tree with at least one vertex,
any non-degenerate element $q$ in $\nerf_d(P[f])_T$ may be obtained as the inner face (along $T\to T'$)
of an element of $\nerf_d(P[f])_{T'}$ which belongs to $U_{n'}$ for a big enough integer $n'$.
In other words, we have that \smash{$\nerf_d(P[f])^{(k+1)}=\bigcup_{n\geq 0}A^{(k)}_n$}.
Note that we have an identification
\begin{equation}\label{eq:notefilteraddfreecellrefined}
A^{(0)}_1=\nerf_d(P)[f]\, .
\end{equation}
\end{paragr}

\begin{lemma}\label{lemma:suffcondinnanod}
Let $X\to Y$ be a normal monomorphism of dendroidal sets, and $n\geq 1$ an integer.
Assume that any non-degenerate dendrex $s:\Omega[T]\to Y$ which does not factor
through $X$ has the following properties.
\begin{itemize}
\item[(i)] The tree $T$ has exactly $n+1$ vertices.
\item[(ii)] The map $\Omega[T]\times_Y X\to \Omega[T]$ is an inner anodyne
extension.
\end{itemize}
Then the map $X\to Y$ is inner anodyne.
\end{lemma}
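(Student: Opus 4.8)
The plan is to prove the lemma by exhibiting $X\to Y$ as a transfinite composition of pushouts of inner horn inclusions, organized skeletally. First I would reduce to the case where $Y$ is obtained from $X$ by attaching the non-degenerate dendrices that do not factor through $X$; since all such dendrices are indexed by trees with exactly $n+1$ vertices (hypothesis (i)), one can attach them all simultaneously (they form an antichain under the face relation among the relevant skeleta, as there are no non-degenerate dendrices strictly between). Concretely, I would form the pushout
\begin{equation*}
\xymatrix{
\coprod_s \bigl(\Omega[T_s]\times_Y X\bigr)\ar[r]\ar[d]&X\ar[d]\\
\coprod_s \Omega[T_s]\ar[r]&Y
}
\end{equation*}
where $s$ runs over (a set of representatives for the orbits of $\mathrm{Aut}(T_s)$ acting on) the non-degenerate dendrices $\Omega[T_s]\to Y$ not factoring through $X$, using normality to handle the automorphism actions correctly. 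The left vertical map is, by hypothesis (ii), a coproduct of inner anodyne extensions $\Omega[T_s]\times_Y X\to \Omega[T_s]$, hence itself inner anodyne; and inner anodyne extensions are stable under pushout, so $X\to Y$ is inner anodyne.

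The key step that requires care is verifying that the square above really is a pushout — equivalently, that every non-degenerate dendrex of $Y$ either factors through $X$ or is a face of one of the chosen top-dimensional dendrices $\Omega[T_s]\to Y$, and that the only identifications among the attached cells are those forced by the fibre products $\Omega[T_s]\times_Y X$. Here I would use that a normal monomorphism admits a skeletal decomposition: $Y$ is built from $X$ by successively attaching the non-degenerate dendrices of $Y\smallsetminus X$ along their boundaries, grouped by the number of vertices of the representing tree. Hypothesis (i) says all of these live at a single stage — every non-degenerate dendrex of $Y$ not in $X$ has a representing tree with exactly $n+1$ vertices — so the skeletal filtration collapses to one step, and the attaching is along $\Omega[T_s]\times_Y X$ rather than $\partial\Omega[T_s]$ precisely because some proper faces of $s$ may themselves fail to factor through $X$ (but any such face, having fewer than $n+1$ vertices after contraction, would contradict (i) unless it becomes degenerate — so in fact $\Omega[T_s]\times_Y X$ contains $\partial\Omega[T_s]$ and the extra part consists of degeneracies, which is exactly the content making (ii) sensible). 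I should double-check this interplay: $\Omega[T]\times_Y X$ is the union of $\partial\Omega[T]$ with the images of those faces $\Omega[S]\to\Omega[T]$ whose composite to $Y$ does factor through $X$.

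I expect the main obstacle to be the bookkeeping around \emph{degenerate} dendrices and the automorphism group actions: one must be sure that $Y(T)\smallsetminus X(T)$ is freely acted on by $\mathrm{Aut}(T)$ for every tree $T$ (this is normality, invoked from \cite[Proposition 1.5]{dend3}), so that choosing orbit representatives $s$ and taking the coproduct of the corresponding pushouts genuinely reconstructs $Y$ without over- or under-counting. Modulo that, the proof is a formal consequence of: (a) the skeletal/cellular structure of normal monomorphisms, (b) the single-stage collapse forced by (i), (c) the identification of the attaching map with $\Omega[T]\times_Y X\to\Omega[T]$, and (d) stability of inner anodyne extensions under coproducts, pushouts, and transfinite composition. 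So I would present the argument in that order, spending most of the writing on step (a)--(c) and dispatching (d) by citing the saturation properties already in place.
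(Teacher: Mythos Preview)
Your approach is essentially the paper's: both exhibit $X\to Y$ as a pushout of $\coprod_{s\in\Sigma}(\Omega[T_s]\times_Y X)\to\coprod_{s\in\Sigma}\Omega[T_s]$ (with $\Sigma$ a set of orbit representatives for the non-degenerate dendrices of $Y$ outside $X$) and then conclude via hypothesis~(ii) and the closure of inner anodyne extensions under sums and pushouts. The only presentational difference is that the paper reaches this square in two steps rather than one: it first invokes the standard cellular decomposition \cite[Lemme~8.1.34]{Ci3} to write $Y$ as the pushout of $\coprod_{s\in\Sigma}\partial\Omega[T_s]\to\coprod_{s\in\Sigma}\Omega[T_s]$ along the attaching map into $X\cup\mathit{Sk}_n(Y)$, observes that this square is also cartesian (being a pushout of a monomorphism in a presheaf category), and then identifies it with your square---which is where hypothesis~(i) does its work, forcing $\mathit{Sk}_n(Y)\subset X$.
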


\begin{proof}
There exists a set $\Sigma$ of non-degenerate dendrices $s:\Omega[T]\to Y$
which do not factor through $X$, the trees $T$ having exactly
$n+1$ vertices, such that the commutative square
$$\xymatrix{
\coprod_{s\in\Sigma}\partial\Omega[T]\ar[r]\ar[d]&X\cup\mathit{Sk}_{n}(Y)\ar[d]\\
\coprod_{s\in\Sigma}\Omega[T]\ar[r]&Y
}$$
is cocartesian (apply \cite[Lemme 8.1.34]{Ci3} for $A=\trees$).
As the left vertical map of this square is a monomorphism, this square is also
cartesian. This implies that the cartesian square
$$\xymatrix{
\coprod_{s\in\Sigma}\Omega[T]\times_X Y\ar[r]\ar[d]&X\ar[d]\\
\coprod_{s\in\Sigma}\Omega[T]\ar[r]&Y
}$$
is cocartesian, and ends the proof of the lemma, for the class
of inner anodyne extensions is closed under small sums and pushouts.
\end{proof}

\begin{lemma}\label{lemma:inneranodynefreecell}
If $P$ is $\Sigma$-cofibrant, then, for any integers $n,k\geq 0$
such that $n+k$ is positive, the inclusion
$A^{(k)}_n\subset A^{(k)}_{n+1}$ is an inner anodyne extension.
\end{lemma}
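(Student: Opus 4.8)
The plan is to realise the inclusion $A^{(k)}_n\subseteq A^{(k)}_{n+1}$ as a transfinite composition of pushouts of \emph{generalised inner horn inclusions}. First, since $P$ is $\Sigma$-cofibrant, so is $P[f]$, and therefore $\nerf_d(P[f])$ is a normal dendroidal set; as $A^{(k)}_n$ and $A^{(k)}_{n+1}$ are subobjects of $\nerf_d(P[f])$, the inclusion between them is a monomorphism of normal dendroidal sets, hence a normal monomorphism. Unravelling the definitions, a non-degenerate dendrex $q:\Omega[T]\to\nerf_d(P[f])$ whose image lies in $A^{(k)}_{n+1}$ but not in $A^{(k)}_n$ has multiplicity $m(q)=k+1$ (otherwise its image would be contained in $\nerf_d(P[f])^{(k)}\subseteq A^{(k)}_n$) and factors through the image of some element of $U^{(k)}_{n+1}$; such an element can moreover be chosen to have exactly $n+k+1$ vertices, since one with fewer vertices would already belong to $U^{(k)}_n$ and hence have image in $A^{(k)}_n$. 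Thus the \emph{maximal} new dendrices are exactly the non-degenerate maps $q:\Omega[T]\to\nerf_d(P[f])$ in which $T$ has $n+k+1$ vertices, $k+1$ of them decorated by the freely added operation $f$ and the remaining $n$ by operations of $P$ of multiplicity $0$. Let $\Sigma$ be a set of representatives, under the automorphism groups $\mathrm{Aut}(T)$, for these maximal dendrices; for $q\in\Sigma$ with underlying tree $T$, let $E_q$ be the set of inner edges of $T$ incident to an $f$-decorated vertex---it is nonempty, because $T$ has at least two vertices and at least one $f$-decorated vertex---and let $\Lambda^{E_q}[T]\subseteq\Omega[T]$ be the union of all the faces of $\Omega[T]$ other than the inner faces $\partial_e$, $e\in E_q$.

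The key computation is that for each $q\in\Sigma$ one has $\Omega[T]\times_{\nerf_d(P[f])}A^{(k)}_n=\Lambda^{E_q}[T]$. Indeed, an outer face of $q$, or a face obtained by contracting an inner edge both of whose endpoints are decorated by operations of multiplicity $0$, has at most $n+k$ vertices and is again of the form parametrised by $U^{(k)}_n$ (or has multiplicity at most $k$), hence lies in $A^{(k)}_n$; whereas contracting an inner edge incident to an $f$-decorated vertex produces a vertex decorated by a genuine composite which involves $f$, has multiplicity at least $1$, and is not equal to $f$, so---using that every operation of $P[f]$ has an essentially unique canonical presentation---such a face lies neither in $\nerf_d(P[f])^{(k)}$ nor in the image of any element of $U^{(k)}_n$, and therefore not in $A^{(k)}_n$. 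The same uniqueness of canonical presentations, together with the normality of $\nerf_d(P[f])$ (which makes the actions of the groups $\mathrm{Aut}(T)$ on the sets of new dendrices free), shows that the images of distinct elements of $\Sigma$ meet only inside $A^{(k)}_n$, and that every dendrex of $A^{(k)}_{n+1}$ which does not factor through $A^{(k)}_n$ is a face of a translate of some $q\in\Sigma$. Packaging this as in the proof of Lemma~\ref{lemma:suffcondinnanod} (applying \cite[Lemme 8.1.34]{Ci3} for $A=\trees$), one concludes that, after a well-ordering of $\Sigma$, the inclusion $A^{(k)}_n\to A^{(k)}_{n+1}$ is a transfinite composition of pushouts of the maps $\Lambda^{E_q}[T]\to\Omega[T]$ with $q\in\Sigma$.

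The final ingredient is the standard fact that, for any tree $T$ with at least one inner edge and any nonempty set $E$ of inner edges of $T$, the map $\Lambda^{E}[T]\to\Omega[T]$ is inner anodyne. One proves this by induction on the cardinality of $E$ (and on the size of $T$): for $|E|=1$ it is a generating inner horn inclusion, and for $|E|\geq 2$, choosing $e\in E$, the subobject $\Lambda^{E\setminus\{e\}}[T]$ is obtained from $\Lambda^{E}[T]$ by pushing out the inclusion $\Lambda^{E}[T]\cap\partial_e\Omega[T]\to\partial_e\Omega[T]\cong\Omega[T/e]$, which is again a generalised inner horn inclusion of the smaller tree $T/e$ and hence inner anodyne by the inductive hypothesis. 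Since the class of inner anodyne extensions is closed under coproducts, pushouts, and transfinite composition, it follows that $A^{(k)}_n\to A^{(k)}_{n+1}$ is inner anodyne.

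The main obstacle is the bookkeeping in the second paragraph: one must read off, directly from the definitions of the filtration and of the sets $U^{(k)}_n$, exactly which faces of a maximal new dendrex already lie in $A^{(k)}_n$---the upshot being that the ``missing'' faces are precisely the inner faces along edges touching an $f$-decorated vertex---and one must use the hypothesis that $P$ is $\Sigma$-cofibrant, through the normality of $\nerf_d(P[f])$ and the uniqueness of canonical presentations in $P[f]$, both to compute the pullbacks above and to guarantee that the resulting squares are genuinely cocartesian.
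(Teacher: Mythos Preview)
Your proof is correct and follows essentially the same approach as the paper: both identify the maximal new non-degenerate dendrices as the elements of $U^{(k)}_{n+1}$ on trees with exactly $n+k+1$ vertices, compute that the pullback $\Omega[T]\times_{A^{(k)}_{n+1}}A^{(k)}_n$ equals the generalised inner horn $\Lambda^{E_q}[T]$ (your $E_q$ is the paper's $I_q$), and conclude via the mechanism of Lemma~\ref{lemma:suffcondinnanod}. The only minor differences are that you give a self-contained inductive proof that generalised inner horn inclusions are inner anodyne (the paper cites \cite[Lemma~5.1]{dend2}), and that the paper's verification of the pullback identity treats an arbitrary injective map $\partial:\Omega[S]\to\Omega[T]$---i.e.\ a composition of inner faces contracting a set $J$ of edges---rather than only codimension-one faces; your single-edge argument extends to this case verbatim, since contracting any nonempty $J\subseteq E_q$ still produces a vertex with a composite decoration of positive multiplicity that is not an isolated $f$.
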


\begin{proof}
We already know that this map is a normal monomorphism (Proposition \ref{prop:descriptionofPf}).
In order to simplify the notations, we will write \smash{$A^{(k)}_n=A_n$}
and, similarly, \smash{$U^{(k)}_n=U_n$}. Henceforth, we assume that $n+k>0$.
Let $q:\Omega[T]\to A_{n+1}$ be a non-degenerate dendrex which does not factor through $A_n$.
This means that $T$ must have exactly $n+1+k$ vertices and that $q\in U_{n+1}$.
Let $I_q$ be the set of inner edges of $T$ which are adjacent to at least one of the vertices
labelled by $f$, and write $\Lambda^{I_q}[T]$ for the union in $\Omega[T]$ of the images
of all the outer face maps as well as of the inner face maps
$\partial_e:\Omega[T/e]\to\Omega[T]$, for $e\notin I_q$.
By virtue of \cite[Lemma 5.1]{dend2}, the inclusion of $\Lambda^{I_q}[T]$ into $\Omega[T]$
is an inner anodyne extension (the set $I_q$ is not empty because we assumed $n+k$ to be positive).
Therefore, by Lemma \ref{lemma:suffcondinnanod}, it is sufficient to prove that we have a cartesian
square of the form below.
$$\xymatrix{
\Lambda^{I_q}[T]\ar[r]\ar[d]&A_n\ar[d]\\
\Omega[T]\ar[r]^q&A_{n+1}
}$$
As any map of $\trees$ factors as a (split) epimorphism followed by a monomorphism, it is
sufficient to prove that, for any injective map $\partial:\Omega[S]\to\Omega[T]$,
the composed morphism
$$\Omega[S]\xrightarrow{ \ \partial \ }\Omega[T]\xrightarrow{ \ q \ } A_{n+1}$$
factors through $A_n$ if and only if $\partial$ factors through $\Lambda^{I_q}[T]$.

Assume first that $\partial :S\to T$ is an outer face.
We then have that $m(\partial(q))\leq k+1$ and that $S$ has at most $n+k$ vertices.
In fact, one easily sees that, if $q\partial$ does not belong to $\nerf_d(P[f])^{(k)}$,
then it must be in $U_n$ (because an outer face does not affect compositions).
To finish the proof of the lemma, it is now sufficient
to prove that, for any injective map $\partial:\Omega[S]\to\Omega[T]$ which does not
factor through an outer face of $T$, the composed morphism $q\partial$ factors through $A_n$
if and only if $\partial$ factors through $\Lambda^{I_q}[T]$.
The assumption that $\partial$ does not factor through an outer face of $T$ means that
$\partial$ is obtained as a finite composition of inner face maps, i.e. is a map obtained by
contracting a finite family $J$ of inner edges of $T$. The number of vertices of $S$ is then
$n+1+k-j$, where $j$ is the number of elements in the set $J$.
We also have that $m(q\partial)=k+1$ (because going from $T$ to $S$ via inner faces
only involves composition of some operations in $P$,
so that the global number of occurrences of $f$ doesn't change).
Therefore, the map $q\partial$ cannot factor through $\nerf_d(P[f])^{(k)}$.
This implies that $q\partial$ factors through $A_n$ if and only if $q\partial$ is
a face of an element of $U_n$.
This latter property may be reformulated as the
existence of an inner edge $e\notin I_q$, such that $q\partial$
factors through the composition
of $q$ with the inner face $\partial/e:\Omega[T/e]\To\Omega[T]$ (so that
$q(\partial/e)$ still has $k+1$ isolated occurrences of $f$ and only $n+k$ vertices). 
The dendrex $q$ being non-degenerate, $q\partial$ factors through
$q(\partial/e)$ if and only if $\partial$ factors through $\partial/e$, which
precisely means that $e\in J$.
This shows that $\Omega[T]\times_{A_{n+1}}A_n\simeq \Lambda^{I_q}[T]$, and
completes the proof.
\end{proof}

\begin{prop}\label{prop:addfreecellweakequiv}
If the operad $P$ is $\Sigma$-cofibrant, then the morphism \eqref{compmap:addfreecell}
is an inner anodyne extension.
\end{prop}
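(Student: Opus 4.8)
The plan is to prove that the comparison map $\nerf_d(P)[f]\to\nerf_d(P[f])$ is inner anodyne by assembling the filtration constructed in the paragraph preceding the statement and checking that each elementary inclusion in it is inner anodyne. Recall that \eqref{eq:notefilteraddfreecellrefined} identifies $\nerf_d(P)[f]$ with $A^{(0)}_1$, while $\nerf_d(P[f])=\bigcup_{k\geq 0}\nerf_d(P[f])^{(k)}$ with each level $\nerf_d(P[f])^{(k+1)}=\bigcup_{n\geq 0}A^{(k)}_n$ and $\nerf_d(P[f])^{(k)}=A^{(k)}_0$. Since the class of inner anodyne extensions is saturated — in particular closed under transfinite composition — it suffices to show that every inclusion $A^{(k)}_n\subset A^{(k)}_{n+1}$ with $n+k>0$ is inner anodyne; this is precisely Lemma~\ref{lemma:inneranodynefreecell}, which applies because $P$ is assumed $\Sigma$-cofibrant.

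First I would record that, by Proposition~\ref{prop:descriptionofPf}, the map \eqref{compmap:addfreecell} is a normal monomorphism between normal dendroidal sets, so that all the intermediate objects $A^{(k)}_n$ are normal and all the inclusions between them are normal monomorphisms; this legitimizes the filtration arguments. Then I would observe that starting from $A^{(0)}_1 = \nerf_d(P)[f]$ we can list the inclusions in order: for fixed $k\geq 0$, the chain $A^{(k)}_1\subset A^{(k)}_2\subset\cdots$ (together with $A^{(k)}_0\subset A^{(k)}_1$ when $k\geq 1$) has union $\nerf_d(P[f])^{(k+1)}$, and concatenating over $k\geq 0$ gives a transfinite composite from $A^{(0)}_1$ to $\nerf_d(P[f])$. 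Each step in this composite with positive index sum is inner anodyne by Lemma~\ref{lemma:inneranodynefreecell}, and the step $A^{(k)}_0 = \nerf_d(P[f])^{(k)}\subset A^{(k)}_1$ for $k\geq 1$ is the case $n=0$, $k\geq 1$ of the same lemma. Hence the whole composite is inner anodyne, being a transfinite composition of inner anodyne extensions.

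The only subtlety — and the place I would be careful rather than "the main obstacle," since the hard work is already packaged into Lemma~\ref{lemma:inneranodynefreecell} — is bookkeeping: one must check that the various inclusions genuinely fit together into a single (countable) transfinite composite whose colimit is $\nerf_d(P[f])$, i.e. that $\bigcup_k\nerf_d(P[f])^{(k)}=\nerf_d(P[f])$ (already noted after \eqref{eq:filteraddfreecell}) and that within each level the refined filtration \eqref{eq:filteraddfreecellrefined} exhausts $\nerf_d(P[f])^{(k+1)}$ (already noted after \eqref{eq:deffilteraddfreecellrefined}, using that any non-degenerate dendrex is an inner face of one lying in some $U^{(k)}_{n'}$). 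With these exhaustion statements in hand, the proof is simply: the map \eqref{compmap:addfreecell} equals the composite of the chain $A^{(0)}_1\subset A^{(0)}_2\subset\cdots\subset A^{(1)}_0\subset A^{(1)}_1\subset\cdots$, every term of which is inner anodyne by Lemma~\ref{lemma:inneranodynefreecell}, and the saturated class of inner anodyne extensions is closed under such composites, which finishes the argument.
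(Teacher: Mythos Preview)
Your proof is correct and follows essentially the same approach as the paper's: invoke the identification \eqref{eq:notefilteraddfreecellrefined}, apply Lemma~\ref{lemma:inneranodynefreecell} to each inclusion $A^{(k)}_n\subset A^{(k)}_{n+1}$ with $n+k>0$, and conclude by closure of inner anodyne extensions under countable composition. The paper states this in two lines, while you have (helpfully) spelled out the bookkeeping about how the double filtration assembles into a single countable chain.
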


\begin{proof}
The class of inner anodyne extensions is closed under countable composition.
Therefore, this proposition follows straight away from Lemma \ref{lemma:inneranodynefreecell} and from
the identification \eqref{eq:notefilteraddfreecellrefined}.
\end{proof}

\begin{paragr}\label{paragr:addeverelfreeells}
Let $P$ be a $\Sigma$-cofibrant operad, $(n_i)_{i\in I}$ a small family of integers, and
$u_i:\partial C_{n_i}\to P$, $i\in I$ a family of maps. We get the following comparison map of pushouts
\begin{equation}\label{eq:addeverelfreeells}
\big(\coprod_{i\in I}\Omega[C_{n_i}]\big)\amalg_{\big(\coprod_{i\in I}\partial\Omega[C_{n_i}]\big)}\nerf_d(P)
\to \nerf_d\Big(\big(\coprod_{i\in I}C_{n_i}\big)\amalg_{\big(\coprod_{i\in I}\partial C_{n_i}\big)} P\Big)\, .
\end{equation}
\end{paragr}

\begin{cor}\label{cor:addeverelfreeells}
The map \eqref{eq:addeverelfreeells} is an inner anodyne extension.
\end{cor}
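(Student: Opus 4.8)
The plan is to add the free cells one at a time, comparing at each stage with the corresponding pushout of dendroidal corollas onto $\nerf_d(P)$, so as to bootstrap from the single-cell statement, Proposition \ref{prop:addfreecellweakequiv}.

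First I would well-order $I$, identifying it with an ordinal $\mu$, and for each $j\leq\mu$ let $Q_j$ be the operad obtained from $P$ by performing the pushout \eqref{eq:addfreecell1} for the corollas $C_{n_i}$, $i<j$; so $Q_0=P$, $Q_{j+1}=Q_j[f_j]$ is obtained by attaching $C_{n_j}$ along $\partial C_{n_j}\xrightarrow{u_j}P\to Q_j$, $Q_\lambda=\varinjlim_{j<\lambda}Q_j$ at limit ordinals, and $Q_\mu=\big(\coprod_{i\in I}C_{n_i}\big)\amalg_{\coprod_{i\in I}\partial C_{n_i}}P$. By the same argument as in the footnote to Proposition \ref{prop:descriptionofPf}, iterated (pushouts and transfinite composites of $\Sigma$-cofibrations are again such, with $\Sigma$-cofibrant target), each $Q_j$ is $\Sigma$-cofibrant and each $Q_j\to Q_{j+1}$ is a monomorphism. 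On the dendroidal side I set $R_j=\nerf_d(P)\amalg_{\coprod_{i<j}\partial\Omega[C_{n_i}]}\coprod_{i<j}\Omega[C_{n_i}]$, so that $R_0=\nerf_d(P)$, $R_{j+1}=R_j\amalg_{\partial\Omega[C_{n_j}]}\Omega[C_{n_j}]$ (attaching map through $\nerf_d(u_j)$), $R_\lambda=\varinjlim_{j<\lambda}R_j$, and $R_\mu$ is the source of \eqref{eq:addeverelfreeells}. Writing $\phi_j:R_j\to\nerf_d(Q_j)$ for the canonical comparison map, which is the identity for $j=0$ and is compatible with all the attaching maps, the map \eqref{eq:addeverelfreeells} is $\phi_\mu$.

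Two facts then do the work. First, a tree has finitely many vertices, hence is a finitely presentable operad, so $\nerf_d$ preserves filtered colimits and $\nerf_d(Q_\lambda)=\varinjlim_{j<\lambda}\nerf_d(Q_j)$ for $\lambda$ a limit ordinal. Second, pasting the pushout square defining $R_{j+1}$ onto the one defining $\nerf_d(Q_j)[f_j]$ (their attaching maps agree through $\phi_j$) yields a natural identification $\nerf_d(Q_j)[f_j]\simeq\nerf_d(Q_j)\amalg_{R_j}R_{j+1}$, and, since $Q_j$ is $\Sigma$-cofibrant, Proposition \ref{prop:addfreecellweakequiv} tells us that $\theta_j:\nerf_d(Q_j)[f_j]\to\nerf_d(Q_{j+1})$ is inner anodyne. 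Now I would introduce $T_j=R_\mu\amalg_{R_j}\nerf_d(Q_j)$ for $j\leq\mu$, the pushout of $\phi_j$ along $R_j\to R_\mu$. Then $T_0=R_\mu$, $T_\mu=\nerf_d(Q_\mu)$, and the structural map $T_0\to T_\mu$ is exactly $\phi_\mu$. Using the first fact, $\varinjlim_{j<\lambda}T_j=R_\mu\amalg_{R_\lambda}\nerf_d(Q_\lambda)=T_\lambda$ at limit ordinals; using the second together with pasting of pushouts, $T_j\simeq R_\mu\amalg_{R_{j+1}}\nerf_d(Q_j)[f_j]$, so the transition map $T_j\to T_{j+1}=R_\mu\amalg_{R_{j+1}}\nerf_d(Q_{j+1})$ is a pushout of $\theta_j$, hence inner anodyne. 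Thus $\phi_\mu:T_0\to T_\mu$ is a transfinite composition of inner anodyne extensions, hence inner anodyne.

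The point where care is needed is precisely the limit ordinals: one should \emph{not} try to prove directly by transfinite induction that each $\phi_j$ is inner anodyne, because the transition maps $R_j\to R_{j+1}$ — pushouts of the boundary inclusions $\partial\Omega[C_{n_j}]\to\Omega[C_{n_j}]$ — are not inner anodyne, so the naive induction has no limit step. The auxiliary tower $(T_j)$ reorganizes the situation into a genuine transfinite composition of pushouts of the $\theta_j$; verifying that each $T_j$ is computed by the expected iterated pushout, and that $\nerf_d$ interacts correctly with the filtered colimits involved, is the only genuinely delicate verification.
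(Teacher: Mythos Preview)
Your proof is correct and rests on the same core idea as the paper's --- reduce to the single-cell case (Proposition~\ref{prop:addfreecellweakequiv}) by means of the pushout identification $\nerf_d(Q_j)[f_j]\simeq R_{j+1}\amalg_{R_j}\nerf_d(Q_j)$ --- but the organization differs in a way worth noting. The paper proceeds by transfinite induction directly on the statement ``$\phi_J$ is inner anodyne'': for $j\in J$ it exhibits $\phi_J$ as the composite of a pushout of $\phi_{J-\{j\}}$ with the single-cell comparison map, and then simply says ``by transfinite induction''. The limit step is left implicit. Your criticism of this naive induction is well taken, and your auxiliary tower $T_j=R_\mu\amalg_{R_j}\nerf_d(Q_j)$ is precisely the device that makes the limit step rigorous: it reorganizes the comparison map as a genuine transfinite composition of pushouts of the $\theta_j$, so that closure of inner anodyne maps under pushout and transfinite composition applies directly. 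In effect, your $(T_j)$ argument is what one would insert into the paper's proof to justify its limit stages; you have chosen instead to bypass the intermediate $\phi_j$ altogether, which is cleaner.
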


\begin{proof}
Given a subset $J$ of $I$, write
$$A_J=\big(\coprod_{i\in J}\Omega[C_{n_i}]\big)\amalg_{\big(\coprod_{i\in J}\partial\Omega[C_{n_i}]\big)}\nerf_d(P)
\quad\text{and}\quad
B_J=(\big(\coprod_{i\in J}C_{n_i}\big)\amalg_{\big(\coprod_{i\in J}\partial C_{n_i}\big)} P\, .$$
We proceed by transfinite induction to prove that the comparison map $A_J\to\nerf_d(B_J)$
is an anodyne extension for all subsets $J$ of $I$.
For this purpose, we notice that, for a subset $J\subset I$ and an element $j\in J$,
if $f_j$ denotes the unique non trivial operation of $C_{n_j}$,
we have the following pushout.
$$\xymatrix{A_{J-\{j\}}\ar[r]\ar[d]& \nerf_d(B_{J-\{j\}})\ar[d]\\
A_J\ar[r]&\nerf_d(B_{J-\{j\}})[f_j]}$$
Therefore, by transfinite induction, the map $A_J\to \nerf_d(B_{J-\{j\}})[f_j]$
is an inner anodyne extension. On the other hand, by virtue of
Proposition \ref{prop:addfreecellweakequiv}, the map
$$\nerf_d(B_{J-\{j\}})[f_j]\to \nerf_d(B_{J-\{j\}}[f_j])=\nerf_d(B_J)$$
is an inner anodyne extension, which ends the proof of this corollary.
\end{proof}

\section{The Boardman-Vogt construction}\label{sect:4}

\begin{paragr}\label{paragr:recallBMtrees}
Recall from \cite{BV,BMOp0,BMOp1} the Boardman-Vogt construction
$$\bvoper(\mathcal P)\to\mathcal P$$
of a simplicial operad $\mathcal P$ (with respect to the interval ${\Delta[1]}$).
The map $\bvoper(\mathcal P)\to\mathcal P$ is a weak equivalence, while the $\bvoper(\mathcal P)$-algebras
are the ``$\mathcal P$-algebras up to homotopy'' in the usual Quillen model category of simplicial sets.
We will only need to understand the Boardman-Vogt construction for the operads freely
generated by trees; see \cite[pages 819--820]{BMOp0} and \cite[Remarks 7.2 and 7.3]{dend2}
for a very precise explicit description in this case. We recall the main properties of this construction below.

Let $T$ be a tree, seen as an operad. By virtue of \cite[Theorem 3.5]{BMOp1},
the \emph{Boardman-Vogt resolution of $T$}
gives a morphism of operads $\varepsilon:\bvoper(T)\to T$ which is functorial in $T$ (seen as an object of the category $\trees$)
and such that:
\begin{itemize}
\item[(a)] the simplicial operad $\bvoper(T)$ is cofibrant in the model category structure
of Theorem \ref{thm:cmcsoper};
\item[(b)] the map $\varepsilon: \bvoper(T)\to T$ is bijective on objects and
a weak equivalence (in particular, the map $\varepsilon$
induces a natural isomorphism of operads $\pi_0(\bvoper(T))\simeq T$).
\end{itemize}
\end{paragr}

\begin{paragr}\label{paragr:explicitBVtrees}
As this will be needed later, we now recall the explicit description of the Board\-man-Vogt construction
for trees.

Let $T$ be a tree, seen as an operad. It admits the following description.
Its objects are the edges of $T$. For an $(n+1)$-tuple of edges $(e_1,\ldots,e_n,e)$, $n\geq 0$,
the set of operations $T(e_1,\ldots,e_n;e)$ is the set of subtrees $V$ of $T$
such that $e_1,\ldots,e_n$ are exactly the input edges of $V$, while $e$ is the root of $V$
(whence there is at most one such operation).
The simplicial set of operations from $(e_1,\ldots,e_n)$ to $e$ in $\bvoper(T)$ is
\begin{equation}\label{eq:defBVtrees}
\bvoper(T)(e_1,\ldots,e_n;e)=\coprod_{V\in T(e_1,\ldots,e_n;e)}{\Delta[1]}^{i(V)}
\end{equation}
where, for a subtree $V\subset T$, $\mathit{i}(V)$ denotes the set of inner edges of $V$
(in particular, this Kan complex is either empty, or isomorphic to the power ${\Delta[1]}^{i(V)}$).
The composition in $\bvoper(T)$ is defined by grafting trees, where the newly arising internal edges
are assigned length $1$; we refer the reader to \cite[Remark 7.3]{dend2} for a precise description
of the composition law.

We will now study the functoriality of this construction (we will consider successively
the case of a bijection, of an inner face, of an outer face, and of a degeneracy).

Let $\varphi:S\to T$ be a morphism of trees (remark that, once a planar structure is chosen for $T$, there is
a unique planar structure on $S$ such that $\varphi$ preserves the planar structures).
Consider an $(n+1)$-tuple of edges $(d_1,\ldots,d_n,d)$ of $S$, and write $(e_1,\ldots,e_n,e)$
for its image by $\varphi$. We want to understand the morphism of simplicial sets
\begin{equation}\label{eq:deffunctBVtrees}
\bvoper(\varphi):\bvoper(S)(d_1,\ldots,d_n;d)\to \bvoper(T)(e_1,\ldots,e_n;e)\, .
\end{equation}
If $S(d_1,\ldots,d_n;d)=\varnothing$, there is nothing to say.
From now on, we will denote by $U$ an element of $S(d_1,\ldots,d_n;d)$
(which we presume to exist),
and write $V$ for the element of $T(e_1,\ldots,e_n;e)$.

If $\varphi$ is a bijection, then it maps $U$ isomorphically onto $V$, and the
map \eqref{eq:deffunctBVtrees} is the isomorphism associated to the
induced bijection $i(U)\simeq i(V)$.

If $\varphi$ is an inner face, say $\varphi=\partial_t:T/t\to T$, for $t$ an inner edge of $T$,
then there are two cases. Either $t\notin V$, and then $U=V$, so that the map
\eqref{eq:deffunctBVtrees} is the identity, or $t\in V$, and then
$U$ is obtained from $V$ by contracting the edge $t$.
In the latter case, the map \eqref{eq:deffunctBVtrees}
is obtained as the product of the identity of ${\Delta[1]}^{i(U)}$ with the inclusion
$\{0\}\to {\Delta[1]}$, composed with the canonical isomorphism
${\Delta[1]}^{i(U)}\times {\Delta[1]}\simeq {\Delta[1]}^{i(V)}$
provided by the identification $i(U)=i(V)-\{t\}$.

If $\varphi$ is an outer face obtained by removing a vertex $v$ in $T$
(necessarily with the property that all the edges incident to $v$ are outer),
then $v$ is not in $V$, and we must have $U=V$ (because
we already assume that the leaves $e_1,\ldots,e_n$ and root $e$
of $V$ belong to $S$ as $d_1,\ldots,d_n$ and $d$, respectively),
so that the map \eqref{eq:deffunctBVtrees} is the identity.

If $\varphi$ is a degeneracy, say $\varphi=\sigma_s$, for a unary vertex $s$ in $S$,
then there are again two cases. Either $s$ is not in $U$, and then $\varphi$ maps $U$
isomorphically onto $V$, so that  the map \eqref{eq:deffunctBVtrees} is the isomorphism
associated to the induced bijection $i(U)\simeq i(V)$, or $s$ occurs in $U$.
In the latter case, $V$ is obtained from $U$ by identifying the two edges, say $a_1$ and $a_2$,
on either side of $s$. Let us call $a$ the image of $a_1$ and $a_2$ in $V$.
The map \eqref{eq:deffunctBVtrees} is the morphism
$${\Delta[1]}^{i(U)}\simeq {\Delta[1]}^{i(U)-\{a_1,a_2\}}\times {\Delta[1]}^2
\to  {\Delta[1]}^{i(V)-\{a\}}\times {\Delta[1]}\simeq  {\Delta[1]}^{i(V)}$$
obtained as the product of the isomorphism induced by the bijection
$i(U)-\{a_1,a_2\}\simeq i(V)-\{a\}$ with the map
${\Delta[1]}^2\to{\Delta[1]}$ given by the formula $(x,y)\longmapsto \mathrm{max}\{x,y\}$.
\end{paragr}

\begin{paragr}\label{paragr:defWdendsets}
We denote by
\begin{equation}\label{eq:defWdendsets}
\bv:\dset\to\soper
\end{equation}
the colimit preserving functor whose 
composition with the Yoneda embedding $\trees\to\dset$
coincides with the Boardman-Vogt construction on trees; i.e., for any tree $T$, one has
$$\bv(\Omega[T])=\bvoper(T)\, .$$
This functor can also be described as the left adjoint to the homotopy coherent nerve functor
\begin{equation}\label{eq:defhnerve}
\hnerf=W^*:\soper\to\dset
\end{equation}
defined by
$$\hnerf(\mathcal P)_T=\Hom_\soper(\bvoper(T),\mathcal P)$$
for any simplicial operad $\mathcal P$ and any tree $T$.

The naturality of the map $\varepsilon:\bvoper(T)\to T=\pi_0(\bvoper(T))$
now extends to a natural morphism
of simplicial operads
\begin{equation}\label{eq:BVtruncation}
\bv(X)\to\tau_d(X)\, .
\end{equation}
\end{paragr}

\begin{prop}\label{prop:truncateBV}
For any dendroidal set $X$, the natural map $\pi_0\bv(X)\to\tau_d(X)$
is an isomorphism of operads.
\end{prop}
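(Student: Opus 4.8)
The plan is to exploit that all three functors $\pi_0$, $\bv$ and $\tau_d$ are left adjoints, so that both $\pi_0\bv$ and $\tau_d$ are colimit-preserving functors from the presheaf category $\dset$ to $\oper$, and then to invoke the fact that a natural transformation between cocontinuous functors out of a presheaf category is invertible as soon as it is invertible on representables.

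First I would record that the map in the statement is the natural transformation obtained from the morphism of simplicial operads \eqref{eq:BVtruncation}, $\bv(X)\to\tau_d(X)$, by applying $\pi_0$ (equivalently, by adjoint transposition along the adjunction $\pi_0\dashv\iota$ of \eqref{eq:adjsoperoper}); this is visibly natural in $X$. Now $\bv$ preserves small colimits by construction (\ref{paragr:defWdendsets}), $\pi_0$ preserves colimits as a left adjoint (\eqref{eq:adjsoperoper}), and $\tau_d$ preserves colimits as a left adjoint (\eqref{eq:nerfdend}); hence $\pi_0\bv$ and $\tau_d$ are both cocontinuous. Since every dendroidal set is canonically the colimit of the representables mapping to it, and a colimit of isomorphisms is an isomorphism, it is enough to check that the comparison map is an isomorphism when $X=\Omega[T]$ for a tree $T$, naturally in $T\in\trees$.

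Next I would identify both sides on such a representable. Because the dendroidal nerve restricts to the Yoneda embedding on $\trees\subset\oper$, we have $\Omega[T]=\nerf_d(T)$, and therefore $\tau_d(\Omega[T])=\tau_d\nerf_d(T)\simeq T$ via the counit of \eqref{eq:nerfdend}, which is invertible since $\nerf_d$ is fully faithful. On the other hand $\pi_0\bv(\Omega[T])=\pi_0\bvoper(T)$, and property (b) of \ref{paragr:recallBMtrees} says precisely that $\varepsilon\colon\bvoper(T)\to T$ induces an isomorphism $\pi_0\bvoper(T)\simeq T$. Finally, the value of \eqref{eq:BVtruncation} on $\Omega[T]$ is, by its very definition, the morphism $\varepsilon\colon\bv(\Omega[T])=\bvoper(T)\to T=\tau_d(\Omega[T])$, so that applying $\pi_0$ yields exactly $\pi_0(\varepsilon)$; thus the comparison map is the isomorphism of (b), and we are done.

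I do not expect a serious obstacle: the argument is formal once the three cocontinuity statements and the density of representables in $\dset$ are in hand. The only point demanding a little care is the bookkeeping in the last step — verifying, under the identifications $\bv(\Omega[T])=\bvoper(T)$ and $\tau_d(\Omega[T])=T$, that the transpose of \eqref{eq:BVtruncation} at $\Omega[T]$ is literally $\pi_0(\varepsilon_T)$, and that all the isomorphisms in sight are natural in $T\in\trees$, so that the reduction to representables in the second step is legitimate.
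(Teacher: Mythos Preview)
Your proof is correct and follows essentially the same approach as the paper: reduce to representables using that both $\pi_0\bv$ and $\tau_d$ preserve small colimits, then invoke property (b) of \ref{paragr:recallBMtrees}. The paper's proof is a two-line version of what you wrote, omitting the bookkeeping about naturality that you (rightly) flagged as the only point needing care.
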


\begin{proof}
As both functors $\pi_0\bv$ and $\tau_d$ commute with small colimits,
it is sufficient to check this when $X$ is of the form $\Omega[T]$ for a tree $T$, in which
case this follows from property (b) of \ref{paragr:recallBMtrees}.
\end{proof}

\begin{prop}\label{prop:BVpreservescofandinneranod}
The functor $\bv:\dset\to\soper$ sends normal monomorphisms to
cofibrations and inner anodyne extensions to trivial cofibrations.
\end{prop}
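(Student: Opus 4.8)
The plan is to reduce to the generating maps and then analyse the Boardman--Vogt construction on a single representable dendroidal set. Since $\bv$ is a left adjoint it preserves colimits, and the class $\overline{\mathcal C}$ of cofibrations and the class $\overline{\mathcal A}$ of trivial cofibrations of $\soper$ are saturated (they are defined as such in \ref{paragr:defcofandtrivcof}); as the normal monomorphisms (respectively the inner anodyne extensions) are by definition the saturation of the boundary inclusions $\partial\Omega[T]\to\Omega[T]$ (respectively of the inner horn inclusions $\Lambda^e[T]\to\Omega[T]$), it suffices to check that $\bv$ carries each boundary inclusion into $\overline{\mathcal C}$ and each inner horn inclusion into $\overline{\mathcal A}$.

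So fix a tree $T$ with at least one vertex; write $\ell_1,\dots,\ell_k$ for its leaves, $r$ for its root, and $i(T)$ for its set of inner edges (the case $T=\eta$ is trivial: then $\partial\Omega[T]=\varnothing$, $\bv(\eta)=\eta$, and $\varnothing\to\eta$ is the map (C1)). Using the explicit description of $\bvoper(T)=\bv(\Omega[T])$ and of its functoriality recalled in \ref{paragr:explicitBVtrees}, I would identify $\bv(\partial\Omega[T])$ with the sub-simplicial-operad $\B_T\subset\bvoper(T)$ obtained by deleting the interior of the ``top cube'' ${\Delta[1]}^{i(T)}$, which is the unique summand of the operation space $\bvoper(T)(\ell_1,\dots,\ell_k;r)$. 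The point is that a codimension-one face of $T$ is either an inner face $T/t$, whose top cell lands by the inner-face case of \ref{paragr:explicitBVtrees} on the $\{t=0\}$-face of the top cube, or a proper subtree $V\subsetneq T$, whose top cell lands on the summand ${\Delta[1]}^{i(V)}$; moreover, for each inner edge $t$, the $\{t=1\}$-face of the top cube is obtained inside the sub-operad $\bv(\partial\Omega[T])$ as a composite of the top cells of the two proper subtrees into which $t$ cuts $T$. This shows that the image of $\bv(\partial\Omega[T])\to\bvoper(T)$ is $\B_T$; that the map is also injective follows by inspecting the colimit $\bv(\partial\Omega[T])=\varinjlim\bvoper(S)$ that computes it, using that any operation occurring there already comes from the unique minimal face of $T$ carrying it, so that any two representatives of it are identified along that face. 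The same analysis identifies $\bv(\Lambda^e[T])$ with the sub-operad $\B^{(e)}_T\subset\B_T$ in which one deletes in addition the interior of the $\{e=0\}$-face of the top cube --- the face that is the image of the excluded inner face $T/e\to T$.

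It then remains to recognize the two inclusions $\B_T\subset\bvoper(T)$ and $\B^{(e)}_T\subset\bvoper(T)$ as pushouts of maps of the form $C_k[-]$ applied to cofibrations of simplicial sets. Because the inputs $\ell_1,\dots,\ell_k$ of the $k$-ary operation that fills in the top cube are leaves of $T$ and its output $r$ is the root, that operation has no non-trivial composite with any operation of $\bvoper(T)$; hence filling in the top cube along its boundary, respectively along the cubical horn $\sqcap\subset{\Delta[1]}^{i(T)}$ obtained by removing the open $\{e=0\}$-face, yields exactly the pushouts of $C_k[\partial({\Delta[1]}^{i(T)})]\to C_k[{\Delta[1]}^{i(T)}]$, respectively of $C_k[\sqcap]\to C_k[{\Delta[1]}^{i(T)}]$, along the evident maps. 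Now $\partial({\Delta[1]}^{i(T)})\to{\Delta[1]}^{i(T)}$ is a monomorphism of simplicial sets, so the first of these maps lies in $\overline{\mathcal C}$ (as observed in \ref{paragr:defcofandtrivcof}); and $\sqcap\to{\Delta[1]}^{i(T)}$ is the pushout-product of the endpoint inclusion $\{1\}\to\Delta[1]$ in the $e$-coordinate with the boundary inclusion $\partial({\Delta[1]}^{i(T)\setminus\{e\}})\to{\Delta[1]}^{i(T)\setminus\{e\}}$, hence a trivial cofibration of simplicial sets, so the second lies in $\overline{\mathcal A}$ (again by \ref{paragr:defcofandtrivcof}). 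Composing, $\bv(\partial\Omega[T])\to\bvoper(T)$ lies in $\overline{\mathcal C}$ and $\bv(\Lambda^e[T])\to\bvoper(T)$ lies in $\overline{\mathcal A}$, which is what we needed.

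The main difficulty is the identification performed in the second paragraph: showing that $\bv$ applied to $\partial\Omega[T]$ (respectively to $\Lambda^e[T]$) really is, on the nose, the predicted sub-simplicial-operad of $\bvoper(T)$. This splits into a surjectivity part --- a somewhat delicate bookkeeping of which face of $T$ accounts for which face of the cube ${\Delta[1]}^{i(T)}$, organised along the four cases (bijection, inner face, outer face, degeneracy) of the functoriality statement of \ref{paragr:explicitBVtrees} --- and an injectivity part, namely that no collapsing is introduced in passing to the colimit computing $\bv(\partial\Omega[T])$. Everything else is formal.
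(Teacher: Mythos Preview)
Your proof is correct and follows the same strategy as the paper's: reduce to the generating boundary and inner horn inclusions, identify $\bv(\partial\Omega[T])$ and $\bv(\Lambda^e[T])$ with the sub-operads of $\bvoper(T)$ that agree with it on every operation space except the ``top'' one $\bvoper(T)(\ell_1,\dots,\ell_k;r)$, where the inclusion is $\partial\Delta[1]^{i(T)}\to\Delta[1]^{i(T)}$ (resp.\ the cubical horn $\Delta[1]^{i(T)-\{e\}}\times\{1\}\cup\partial\Delta[1]^{i(T)-\{e\}}\times\Delta[1]\to\Delta[1]^{i(T)}$), and conclude. The only difference is cosmetic, in that final step: you realise the inclusion as a pushout of $C_k[\partial\Delta[1]^{i(T)}]\to C_k[\Delta[1]^{i(T)}]$ (resp.\ of $C_k$ applied to the cubical horn), whereas the paper argues directly that the inclusion has the left lifting property with respect to local trivial fibrations (resp.\ local fibrations) and then invokes Propositions~\ref{prop:trivfibrationsoper} and~\ref{prop:fibrationsoper}; both packagings rest on the same structural observation that the leaves and root bound the top cell, so its interior participates in no nontrivial composition.
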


\begin{proof}
In order to prove that the functor $\bv$ sends normal monomorphisms to
cofibrations, it is sufficient to check that it sends the generating normal
monomorphisms to cofibrations. In other words, it is sufficient to check that,
for any tree $T$, the map
$$i:\bv(\partial\Omega[T])\to\bv(\Omega[T])$$
is a cofibration. If $T=\eta$, then the map
$\bv(\partial\Omega[\eta])=\varnothing\to\bv(\Omega[\eta])=\eta$
is known to be a cofibration. Therefore, we may assume that $T$ has at least one
vertex. In this case, the map $i$ is bijective on objects.
For an $(n+1)$-tuple of edges $(e_1,\ldots,e_n,e)$ in $T$, $n\geq 0$,
one checks, by inspection of the definition, that, unless $\{e_1,\ldots e_n\}$
is exactly the set of input edges of $T$ and $e=r$ is the root of $T$, the map
$$\bv(\partial\Omega[T])(e_1,\ldots,e_n;e)\to\bv(\Omega[T])(e_1,\ldots,e_n;e)$$
is the identity. In the case when this is not the identity, this map
can be identified with the inclusion
$$\partial {\Delta[1]}^d\to {\Delta[1]}^d\, ,$$
where $d$ is the number of inner edges of $T$, and where,
for an integer $m\geq 0$, one defines $\partial {\Delta[1]}^m$ by induction
as follows: $\partial {\Delta[1]}^0=\varnothing$, $\partial\Delta[1]=\{0\}\amalg\{1\}$,
and, for $m>1$,
$$\partial {\Delta[1]}^m=
\partial {\Delta[1]}^{m-1}\times {\Delta[1]}\cup {\Delta[1]}^{m-1}\times\partial {\Delta[1]}
\subset {\Delta[1]}^{m-1}\times {\Delta[1]}\simeq {\Delta[1]}^m\, .$$
We deduce that, in the case where $T$ has at least one vertex,
the morphism $i$ has the left lifting property with respect
the class of local trivial fibrations. Therefore, by virtue of
Proposition \ref{prop:trivfibrationsoper}, the morphism $i$ must be a cofibration for any tree $T$.

Similarly, in order to prove that the functor $\bv$ sends inner anodyne
extensions to trivial cofibrations, it is sufficient to prove that, for
any tree $T$ with a given inner edge $t$, the map
$$j:\bv(\Lambda^t[T])\to\bv(\Omega[T])$$
is a trivial cofibration. As before, one sees that this map is always
bijective on objects. For any $(n+1)$-tuple of edges $(e_1,\ldots,e_n,e)$ in $T$, $n\geq 0$,
unless $\{e_1,\ldots e_n\}$ is exactly the set of input edges of $T$ and $e=r$ is the root of $T$, the map
$$\bv(\Lambda^t[T])(e_1,\ldots,e_n;e)\to\bv(\Omega[T])(e_1,\ldots,e_n;e)$$
is the identity. Otherwise it can be identified canonically with the trivial cofibration
of simplicial sets:
$${\Delta[1]}^{i(T)-\{t\}}\times \{1\}\cup \partial {\Delta[1]}^{i(T)-\{t\}}\times {\Delta[1]}\to {\Delta[1]}^{i(T)}\, .$$
This implies that the morphism $j$ has the left lifting property with respect to
the class of local fibrations. Proposition \ref{prop:fibrationsoper} thus implies that $j$
must be a trivial cofibration.
\end{proof}

\begin{cor}\label{cor:hcnervefibrant}
For any fibrant simplicial operad $\mathcal P$, its homotopy coherent nerve
$\hnerf(\mathcal P)$ is an $\infty$-operad.
\end{cor}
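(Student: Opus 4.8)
The plan is to deduce this corollary directly from the adjunction property of $\bv$ and $\hnerf$ together with Proposition \ref{prop:BVpreservescofandinneranod}. Recall that $\hnerf(\mathcal P)$ is an $\infty$-operad precisely when the map $\hnerf(\mathcal P)\to\ast$ to the terminal dendroidal set has the right lifting property with respect to all inner anodyne extensions. So I would fix an inner anodyne extension $A\to B$ and a lifting problem
$$\xymatrix{
A\ar[r]\ar[d]&\hnerf(\mathcal P)\ar[d]\\
B\ar[r]&\ast
}$$
and transpose it across the adjunction $\bv\dashv\hnerf$ to the lifting problem of finding a dotted arrow in
$$\xymatrix{
\bv(A)\ar[r]\ar[d]&\mathcal P\ar[d]\\
\bv(B)\ar[r]&\ast\rlap{\,.}
}$$

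First I would invoke Proposition \ref{prop:BVpreservescofandinneranod}, which tells us that $\bv$ sends the inner anodyne extension $A\to B$ to a trivial cofibration $\bv(A)\to\bv(B)$ in $\soper$. Then I would use the hypothesis that $\mathcal P$ is fibrant: by definition (\ref{def:mainclassesofmapssimpoper}) this means the map $\mathcal P\to\ast$ is an isofibration, hence a fibration for the model structure of Theorem \ref{thm:cmcsoper}. Since fibrations have the right lifting property with respect to all trivial cofibrations, the transposed lifting problem admits a solution, and transposing back gives the desired lift in the original square. Hence $\hnerf(\mathcal P)\to\ast$ has the right lifting property against every inner anodyne extension, i.e. $\hnerf(\mathcal P)$ is an $\infty$-operad.

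There is no real obstacle here: this is a formal consequence of the Quillen-pair-type behaviour established in Proposition \ref{prop:BVpreservescofandinneranod}, and the only thing to be careful about is the bookkeeping with the terminal objects (the terminal dendroidal set on the dendroidal side versus the terminal simplicial operad on the operad side, which corresponds under $\bv$ since $\bv$ preserves colimits only — so one should note instead that $\mathcal P\to\ast$ being an isofibration is exactly what is needed, rather than trying to identify $\bv$ of the terminal dendroidal set). In fact this corollary is just the observation that, already knowing $\bv$ carries inner anodyne extensions to trivial cofibrations, the right adjoint $\hnerf$ automatically carries fibrant objects to objects with the lifting property characterising $\infty$-operads.
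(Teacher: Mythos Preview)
Your argument is correct and is precisely the standard adjunction argument that makes this statement an immediate corollary of Proposition~\ref{prop:BVpreservescofandinneranod}; the paper in fact gives no explicit proof, treating it as obvious from the preceding proposition. One small simplification: the proof of Proposition~\ref{prop:BVpreservescofandinneranod} actually shows that $\bv$ sends inner horn inclusions to maps with the left lifting property against all \emph{local fibrations}, and since $\mathcal P$ fibrant means exactly that $\mathcal P\to\ast$ is a local fibration, you do not even need to invoke the full model structure of Theorem~\ref{thm:cmcsoper}.
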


\begin{rem}
The preceding corollary is \cite[Theorem 7.1]{dend2} in the case where the underlying
model category is the one of simplicial sets. Note that the proof
of Proposition \ref{prop:BVpreservescofandinneranod} is just a slightly more precise
version of the proof of \emph{loc. cit}. By this, we mean that the analog
of Proposition \ref{prop:BVpreservescofandinneranod} is true
(with essentially the same proof) if we replace
the category of simplicial operads by the category of operads in an
adequate symmetric monoidal model category.
\end{rem}

\begin{prop}\label{prop:truncatehcnerve}
For any fibrant simplicial operad $\mathcal P$, there is
a canonical isomorphism $\tau_d(\hnerf(\mathcal P))\simeq\pi_0(\mathcal P)$.
\end{prop}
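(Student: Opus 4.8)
The plan is to deduce this from the adjunction $\tau_d \dashv \nerf_d$ together with Proposition \ref{prop:truncateBV} and the already-established left properties of $\bv$. First I would recall that, for a fibrant simplicial operad $\mathcal P$, Corollary \ref{cor:hcnervefibrant} tells us that $\hnerf(\mathcal P)$ is an $\infty$-operad, so $\tau_d(\hnerf(\mathcal P))$ is a genuine operad. The key observation is that the functors $\tau_d$ and $\nerf_d$ behave like an adjunction between operads and dendroidal sets (see \eqref{eq:nerfdend}), while $\bv$ and $\hnerf$ form an adjunction between dendroidal sets and simplicial operads; composing, we get that $\tau_d \circ \hnerf$ and $\nerf_d \circ \iota$ (or rather the relevant composite) should be adjoint. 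More precisely, since $\pi_0 \dashv \iota$ (adjunction \eqref{eq:adjsoperoper}) and $\iota\nerf_d \cong \hnerf\circ(\text{nerve-type embedding})$\ldots

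Actually the cleanest route is direct. For any operad $R$ one has a chain of natural bijections
$$\Hom_\oper(\tau_d(\hnerf(\mathcal P)),R)\cong \Hom_\dset(\hnerf(\mathcal P),\nerf_d(R))\cong \Hom_\soper(\bv\nerf_d(R),\mathcal P)$$
using the adjunction $\tau_d\dashv\nerf_d$ and then $\bv\dashv\hnerf$. On the other hand, by Proposition \ref{prop:truncateBV}, $\pi_0\bv(\nerf_d(R))\cong\tau_d(\nerf_d(R))\cong R$ (the second isomorphism because $\nerf_d$ is fully faithful). Hence, using the adjunction $\pi_0\dashv\iota$ of \eqref{eq:adjsoperoper} and the fact that $\mathcal P$ is already a simplicial operad, one would want
$$\Hom_\soper(\bv\nerf_d(R),\mathcal P)\cong \Hom_\oper(\pi_0\bv\nerf_d(R),\pi_0\mathcal P)\cong\Hom_\oper(R,\pi_0(\mathcal P)).$$
The middle isomorphism is the point requiring care: $\Hom_\soper(\bv\nerf_d(R),\mathcal P)\cong\Hom_\oper(\pi_0\bv\nerf_d(R),\pi_0\mathcal P)$ is \emph{not} formal in general, but it holds because $\nerf_d(R)$ is a nerve of an operad, so $\bv\nerf_d(R)$ maps to $\pi_0\bv\nerf_d(R)=R$ via a map which is an isomorphism after applying $\pi_0$, and because $\hnerf(\mathcal P)$ being an $\infty$-operad means $\mathcal P$ is fibrant so that maps out of $\bv\nerf_d(R)$ into $\mathcal P$ are insensitive to passing to $\pi_0$ on the source (the space of operations of $\bv\nerf_d(R)$ is a coproduct of cubes $\Delta[1]^{i(V)}$ by \eqref{eq:defBVtrees}, each of which is contractible, and a map from a cube into a Kan complex is determined up to homotopy by its value at a vertex — but here we need an actual equality of hom-sets, not homotopy classes). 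I would instead argue that the counit map $\bv\nerf_d(R)\to R$ induces a bijection $\Hom_\soper(R,\mathcal P)\to\Hom_\soper(\bv\nerf_d(R),\mathcal P)$ directly by inspecting that $\bv\nerf_d(R)$ is freely generated over $R$ in a suitable sense, or more robustly, by checking the claim on representables $R=T$ a tree: there $\bv\nerf_d(T)=\bv(\Omega[T])=\bvoper(T)$, and one verifies from the explicit description in \ref{paragr:explicitBVtrees} that a simplicial operad map $\bvoper(T)\to\mathcal P$ into a fibrant $\mathcal P$ is the same as an operad map $T\to\pi_0(\mathcal P)$.

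The hard part is precisely this last equivalence: showing that $\Hom_\soper(\bvoper(T),\mathcal P)$ is naturally isomorphic (not merely weakly equivalent) to $\Hom_\oper(T,\pi_0\mathcal P)$ when $\mathcal P$ is fibrant. This is where fibrancy of $\mathcal P$ is essential — it is false for general $\mathcal P$. My expectation is that the right way to see it is the following: by Proposition \ref{prop:truncateBV} the natural map $\bvoper(T)\to T$ realizes $T$ as $\pi_0\bvoper(T)$, so any operad map $T\to\pi_0\mathcal P$ pulls back to a map $\bvoper(T)\to\iota\pi_0\mathcal P$, and one then needs to lift this uniquely through the fibrant replacement-type map $\mathcal P\to\iota\pi_0\mathcal P$ — but that map goes the wrong way. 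Rather, one uses that $\bvoper(T)$ is a cofibrant resolution built from contractible cubes, together with the identity $\pi_0\bvoper(T)=T$ and a direct combinatorial check: a morphism $\bvoper(T)\to\mathcal P$ assigns to each subtree-indexed cube $\Delta[1]^{i(V)}$ a map into the relevant space of operations of $\mathcal P$, compatibly with the grafting/composition structure, and because the cube $\Delta[1]^{i(V)}$ is a cocommutative counital comonoid (the diagonal) any such family is, by the rigidity of the $\ex^\infty$-style fibrant structure, equivalent to its restriction along $\{1\}^{i(V)}\hookrightarrow\Delta[1]^{i(V)}$ — and those restrictions are exactly the data of an operad map $T\to\pi_0\mathcal P$. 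I would carry out this combinatorial verification using the explicit composition law of \ref{paragr:explicitBVtrees} (assigning length $1$ to newly created inner edges is what makes the restriction-to-$\{1\}$ a homomorphism), and then bootstrap from representables to all operads $R$ via the colimit-preservation of $\bv$ and $\nerf_d$, yielding the claimed natural isomorphism $\tau_d(\hnerf(\mathcal P))\cong\pi_0(\mathcal P)$ by Yoneda.
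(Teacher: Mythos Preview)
Your Yoneda approach has a genuine error at the very first step. In the chain
\[
\Hom_\oper(\tau_d(\hnerf(\mathcal P)),R)\cong \Hom_\dset(\hnerf(\mathcal P),\nerf_d(R))\cong \Hom_\soper(\bv\nerf_d(R),\mathcal P)
\]
the first bijection is fine, but the second is not an instance of the adjunction $\bv\dashv\hnerf$: that adjunction reads $\Hom_\dset(X,\hnerf(\mathcal P))\cong\Hom_\soper(\bv X,\mathcal P)$, with $\hnerf(\mathcal P)$ as the \emph{target}. Here $\hnerf(\mathcal P)$ sits in the source, so you cannot transpose it this way. There is no formal manipulation that turns $\Hom_\dset(\hnerf(\mathcal P),\nerf_d(R))$ into a hom-set with $\mathcal P$ on the right.

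The fallback claim you try to verify on representables is also false, not merely hard. You assert that for a fibrant $\mathcal P$ and a tree $T$ one has a bijection $\Hom_\soper(\bvoper(T),\mathcal P)\cong\Hom_\oper(T,\pi_0\mathcal P)$. Take $T$ to be the linear tree with two vertices and inner edge $e$. Then $\bvoper(T)(e_1,\ldots;r)=\Delta[1]$ for the full subtree, and a morphism $\bvoper(T)\to\mathcal P$ includes the choice of an actual $1$-simplex in a hom-space of $\mathcal P$ whose endpoint at $1$ is a prescribed composite. Even when $\mathcal P$ is fibrant there are in general many such $1$-simplices, so the hom-set on the left is strictly larger than $\Hom_\oper(T,\pi_0\mathcal P)$. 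Your intuition that ``a map from a contractible cube into a Kan complex is determined by a vertex'' is only true up to homotopy, and you need an equality of sets here, as you yourself noted.

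The paper does not attempt a Yoneda argument. Instead it builds the comparison map directly: the quotient $\mathcal P\to\pi_0(\mathcal P)$ induces $\hnerf(\mathcal P)\to\hnerf(\pi_0(\mathcal P))\simeq\nerf_d(\pi_0(\mathcal P))$, whence by adjunction a map $\tau_d(\hnerf(\mathcal P))\to\pi_0(\mathcal P)$, which is trivially bijective on objects. Bijectivity on operations is then checked \emph{one corolla at a time}: since $\bvoper(C_n)=C_n$, the $0$-simplices of $\hnerf(\mathcal P)(x_1,\ldots,x_n;x)$ coincide with those of $\mathcal P(x_1,\ldots,x_n;x)$; because $\hnerf(\mathcal P)$ is an $\infty$-operad, the results of \cite{dend2} describe $\tau_d(\hnerf(\mathcal P))(x_1,\ldots,x_n;x)$ as the quotient of these $0$-simplices by the relation $\sim$ coming from the tree $C^+_n$. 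The remaining work is an explicit identification of maps $\bvoper(C^+_n)\to\mathcal P$ (with a degeneracy condition on the extra edge) with paths in $\mathcal P(x_1,\ldots,x_n;x)$, showing that $\sim$ is exactly ``same connected component''. Note that this argument never needs bijections of the form $\Hom_\soper(\bvoper(T),\mathcal P)\cong\Hom_\oper(T,\pi_0\mathcal P)$ for general $T$; it only unwinds the equivalence relation defining $\tau_d$ on an $\infty$-operad, which involves the single tree $C^+_n$.
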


\begin{proof}
It follows from property (b) of \ref{paragr:recallBMtrees} that, for any operad $P$
in the category of sets, there is a natural isomorphism
$$\nerf_d(P)\simeq \hnerf(P)\, .$$
indeed, for any tree $T$, one has
$$\Hom_\oper(T,P)=\Hom_\oper(\pi_0(\bvoper(T)),P)\simeq\Hom_\soper(\bvoper(T),P)\, .$$
If $\mathcal P$ is a simplicial operad, then the map $\mathcal P\to\pi_0(\mathcal P)$
induces a morphism
$$\hnerf(\mathcal P)\to\hnerf(\pi_0(\mathcal P))\simeq\nerf_d(\pi_0(\mathcal P))\, ,$$
from which we obtain, by adjunction, a canonical morphism
$$\tau_d(\hnerf(\mathcal P))\to \pi_0(\mathcal P)\, .$$
Note that the functors $\tau_d$, $\hnerf$ and $\pi_0$ don't affect objects.
Therefore, this morphism is bijective on objects.
Let $(x_1,\ldots,x_n,x)$ an $(n+1)$-tuple of objects of $\mathcal P$, with
$n\geq 0$. We have to prove that, if $\mathcal P$ is fibrant, the induced map
$$\tau_d(\hnerf(\mathcal P))(x_1,\ldots,x_n;x)\to \pi_0(\mathcal P(x_1,\ldots,x_n;x))$$
is bijective. Now, we remark that $\bvoper(T)=T$ whenever $T$ is a tree with no inner edges
(e.g. $T=\eta$ or $T=C_n$).
This implies that the set of $0$-simplices of $\hnerf(\mathcal P)(x_1,\ldots,x_n;x)$ can be identified with
the set of $0$-simplices of the simplicial set $\mathcal P(x_1,\ldots,x_n;x)$.
As $\hnerf(\mathcal P)$ is an $\infty$-operad,
by virtue of \cite[Proposition 6.3, Lemma 6.4 and Proposition 6.10]{dend2},
we know that the set of operations $\tau_d(\hnerf(\mathcal P))(x_1,\ldots,x_n;x)$ is the quotient
of the set of $0$-simplices of $\hnerf(\mathcal P)(x_1,\ldots,x_n;x)$ by the
equivalence relation $\sim$ defined as follows.
Write $C^+_n$ for the tree obtained from the corolla $C_n$ by grafting a new edge under the root.
We have two outer faces $u,v:C_n\to C^+_n$: the face $u$ misses the new root (whence is an
outer face), while the face $v$ misses the old one (and is thus an inner face).
There is also the outer face $d:C_1\to C^+_n$ which corresponds
to the map from the old root to the new one in $C^+_n$.
For two elements
$$p,q\in \hnerf(\mathcal P)(x_1,\ldots,x_n;x)\subset\hnerf(\mathcal P)_{C_n}
\simeq\Hom_\soper(C_n,\mathcal P)$$
we say that $p\sim q$ if there exists a morphism $h:\Omega[C^+_n]\to \hnerf(\mathcal P)$
such that $hu=p$ and $hv=q$, and such that $hd$ is degenerate (i.e. factors through $\eta$).
To finish the proof, it is thus sufficient to prove that, if two $0$-simplices $p$ and $q$ of the Kan complex
$\mathcal P(x_1,\ldots,x_n;x)$ are in the same connected component, then $p\sim q$.
For this, we have to understand the elements $h$ of the set
$$\hnerf(\mathcal P)_{C^+_n}=\Hom_\soper(\bvoper(C^+_n),\mathcal P)$$
such that $hd$ is degenerate.
Let $a_1,\ldots,a_n$ be the input edges of $C^+_n$, and $r$ be the root edge,
while $r'$ is the remaining edge of $C^+_n$.
Let $f:\{a_1,\ldots,a_n,r',r\}\to \ob\mathcal P$ be the map defined by $f(a_i)=x_i$
and $f(r)=f(r')=x$. We then have an identification
$$\bvoper(C^+_n)(a_1,\ldots,a_n;r)={\Delta[1]}\, ,$$
which induces a map
$$\Hom_{\soper_{ \{a_1,\ldots,a_n,r',r\}}}(\bvoper(C^+_n),f^*\mathcal P)\to
\Hom_{\sset}({\Delta[1]},\mathcal P(x_1,\ldots,x_n;x))\, .$$
Let \smash{$\Hom_{\soper_{ \{a_1,\ldots,a_n,r',r\}}}(\bvoper(C^+_n),f^*\mathcal P)_{\mathit{deg}}$}
be the set of maps $\varphi:\bvoper(C^+_n)\to f^*\mathcal P$ in \smash{$\soper_{ \{a_1,\ldots,a_n,r',r\}}$}
which send the unique map $r'\to r$ in $\bvoper(C^+_n)$ to the identity of $x$.
Then the restricted map
$$\Hom_{\soper_{ \{a_1,\ldots,a_n,r',r\}}}(\bvoper(C^+_n),f^*\mathcal P)_{\mathit{deg}}\to
\Hom_{\sset}({\Delta[1]},\mathcal P(x_1,\ldots,x_n;x))\, .$$
is easily seen to be bijective.
If $p$ and $q$ are in the same connected component of the Kan
complex $\mathcal P(x_1,\ldots,x_n;x)$, then we can choose a path
$$H:{\Delta[1]} \to \mathcal P(x_1,\ldots,x_n;x)$$
such that $H(0)=q$ and $H(1)=p$. This morphism extends uniquely
to a morphism of simplicial operads $h:\bvoper(C^+_n) \to \mathcal P$
which sends $a_i$ to $x_i$ and which sends the map $r'\to r$ to the identity of $x$,
such that the map
$$\bvoper(C^+_n)(a_1,\ldots,a_n;r)\to \mathcal P(x_1,\ldots,x_n;x)$$
is $H$. Thus we must have the relations $hu=p$ and $hv=q$ (see \ref{paragr:explicitBVtrees}).
In other words: $p\sim q$.
\end{proof}

\begin{prop}\label{prop:BVleftQuillen}
The functor $\bv:\dset\to\soper$ is a left Quillen functor.
\end{prop}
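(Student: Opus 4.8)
The plan is to use the criterion that a left adjoint is a left Quillen functor as soon as it preserves cofibrations and trivial cofibrations. The first half is already available: by Proposition~\ref{prop:BVpreservescofandinneranod}, $\bv$ carries normal monomorphisms --- the cofibrations of $\dset$ for the model structure of Theorem~\ref{thm:cmcdendsets} --- to cofibrations of $\soper$. Since $\bv$ preserves cofibrations, it is then enough to prove that $\bv$ carries weak equivalences of $\dset$ to weak equivalences of $\soper$: a trivial cofibration, being at once a cofibration and a weak equivalence, is then carried to a cofibration and a weak equivalence, i.e.\ to a trivial cofibration.

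To prove that $\bv$ preserves weak equivalences I would invoke the description, given in Theorem~\ref{thm:cmcdendsets}, of the weak equivalences of $\dset$ as the \emph{smallest} class $W$ satisfying: (a)~two-out-of-three in commutative triangles; (b)~every trivial fibration between $\infty$-operads lies in $W$; (c)~every inner anodyne extension lies in $W$. Let $\mathcal W$ be the class of morphisms $u$ of $\dset$ with $\bv(u)$ a weak equivalence of $\soper$; it suffices to check that $\mathcal W$ has properties (a), (b), (c). Property~(a) is clear, since $\bv$ preserves composition and the weak equivalences of $\soper$ satisfy two-out-of-three (Proposition~\ref{prop:2outof3soper}); property~(c) is the second half of Proposition~\ref{prop:BVpreservescofandinneranod}, as trivial cofibrations of $\soper$ are weak equivalences. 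Everything thus reduces to property~(b): for a trivial fibration $p\colon X\to Y$ between $\infty$-operads, one must show that $\bv(p)$ is fully faithful and essentially surjective in the sense of~\ref{def:mainclassesofmapssimpoper}.

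Essential surjectivity comes for free: by Proposition~\ref{prop:truncateBV}, $\pi_0\bv(p)$ is identified with $\tau_d(p)$, and a trivial fibration between $\infty$-operads, being in particular a weak equivalence between $\infty$-operads, is essentially surjective by Theorem~\ref{thm:charwedsetfullfaithesssurj} and Definition~\ref{def:equivinftyoperd} --- which is exactly the statement that $\tau_d(p)$ is essentially surjective. The full faithfulness of $\bv(p)$ --- that for every tuple $(z_0,\dots,z_k,z)$ of objects of $X$ the induced map $\bv(X)(z_0,\dots,z_k;z)\to\bv(Y)(pz_0,\dots,pz_k;pz)$ of simplicial sets of operations is a weak equivalence --- is the substantive point, and I expect it to be the main obstacle. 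I would in fact aim for the stronger assertion that $\bv$ carries \emph{every} trivial fibration of $\dset$ to a trivial fibration of $\soper$, which yields (b) at once; failing that, it would suffice to show $\bv$ sends trivial fibrations of $\dset$ to weak equivalences of $\soper$ and then apply Ken Brown's lemma, concluding that $\bv$ preserves weak equivalences between fibrant objects, hence between $\infty$-operads.

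This step has to be carried out by hand, since deducing it from formal properties of the adjunction $\bv\dashv\hnerf$ would be circular; the available tools are the preservation of cofibrations just used, together with the explicit description of the Boardman--Vogt construction on trees in~\ref{paragr:explicitBVtrees} and the fact that $\bv$ is a colimit over $\trees$. Concretely, for a trivial fibration $q\colon E\to F$ of $\dset$ one first notes that $\bv(q)$ is surjective on objects, because $q$ is surjective on $0$-simplices (lift $\varnothing\to\eta$ against $q$) and $\bv$ does not change the set of objects; by Proposition~\ref{prop:trivfibrationsoper} it then remains to show that each map $\bv(E)(z_0,\dots,z_k;z)\to\bv(F)(qz_0,\dots,qz_k;qz)$ on spaces of operations is a trivial fibration of simplicial sets. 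This I would establish by unwinding the colimit computing $\bv(E)$ into the cubes $\Delta[1]^{i(V)}$ attached to the subtrees $V$ of the trees mapping to $E$ (glued according to the relations recorded by $E$, as in~\ref{paragr:explicitBVtrees}), and checking that a lifting problem against $\partial\Delta[m]\to\Delta[m]$ can be solved cell by cell using the corresponding liftings for the trivial fibration $q$. Granting this, property~(b) holds, and the proof is complete.
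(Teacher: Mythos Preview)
Your strategy diverges from the paper's and runs into a real gap at property~(b). The paper does \emph{not} attempt to show that $\bv$ preserves all weak equivalences; instead it uses the standard criterion (cited as \cite[Proposition~7.15]{JT}) that, given preservation of cofibrations, it suffices to check that the right adjoint $\hnerf$ sends fibrations between fibrant objects to fibrations. This is tractable because $\hnerf(\mathcal P)_T=\Hom_{\soper}(\bvoper(T),\mathcal P)$ only involves the explicitly described objects $\bvoper(T)$: Proposition~\ref{prop:BVpreservescofandinneranod} gives that $\hnerf(f)$ is an inner Kan fibration, and then Proposition~\ref{prop:truncatehcnerve} identifies $\tau_d\hnerf(\mathcal P)\simeq\pi_0(\mathcal P)$, so the isofibration condition on $\hnerf(f)$ reduces to the isofibration condition on $\pi_0(f)$, which holds by definition of fibrations in $\soper$.

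Your route, by contrast, requires understanding $\bv(X)$ for an arbitrary $\infty$-operad $X$, and this is where the argument breaks down. The spaces of operations in $\bv(X)$ are computed by a colimit over the category of elements of $X$, and there is no simple description of them in terms of the cubes $\Delta[1]^{i(V)}$ ``glued according to the relations recorded by $X$'': the gluing is a genuine colimit of simplicial operads, and its effect on operation spaces is not transparent. In particular, your ``stronger assertion'' that $\bv$ sends trivial fibrations to trivial fibrations is not something you should expect to hold (left Quillen functors need not preserve trivial fibrations), and the proposed cell-by-cell lifting argument does not go through because trivial fibrations are not stable under the kind of colimits involved. Establishing that $\bv(p)$ is fully faithful for a trivial fibration $p$ between $\infty$-operads is essentially equivalent to identifying the mapping spaces of $\bv(X)$ with those of $X$ --- in the simplicial case this is the content of the Dugger--Spivak necklace theory, itself a substantial piece of work, and it is precisely what the paper's approach is designed to avoid.
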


\begin{proof}
As we already know that $\bv$ preserves cofibrations (Proposition \ref{prop:BVpreservescofandinneranod}),
it is sufficient to check that its right adjoint $\hnerf$ sends fibrations
between fibrant objects of Theorem \ref{thm:cmcreedysegaloperads} to fibrations; see \cite[Proposition 7.15]{JT}.
Let $f:\mathcal P\to \mathcal Q$ be a fibration between fibrant simplicial operads.
It follows from Proposition \ref{prop:BVpreservescofandinneranod} that $\hnerf(f)$
is an inner Kan fibration between $\infty$-operads.
Therefore, the map $\hnerf(f)$ is a fibration if and only if its image by the
functor $\tau_d$ is an isofibration (\ref{thm:cmcdendsets}).
But, by definition of fibrations of simplicial operads, we already know that $\pi_0(f)$ is an isofibration
of operads. Proposition \ref{prop:truncatehcnerve} ends the proof.
\end{proof}

\section{Segal operads: the normal model category structure}\label{sect:5}

This section is a reminder of the homotopy theory of Segal operads and of complete dendroidal Segal spaces,
as developed in our second paper \cite{dend6}.

\begin{paragr}\label{paragr:preliminariessdsets}
The category of simplicial dendroidal sets (i.e. of simplicial objects in $\dset$)
is denoted by $\sdset$. We will view the category $\dset$ of dendroidal sets and the
category $\sset$ of simplicial sets as full subcategories of $\sdset$ in the obvious way
(through the constant presheaf functors). In particular, if we have a simplicial set $K$
and a dendroidal set $A$, the cartesian product $K\times A$ will mean the cartesian
product of $K$ and $A$, seen as simplicial dendroidal sets; in other words, for an integer $n\geq 0$
and a tree $T$, we have
\begin{equation}
(K\times A)_{n,T}=K_n\times A_T\, .
\end{equation}

A map between simplicial dendroidal sets $X\to Y$ is a \emph{normal monomorphism} if, for any integer $n\geq 0$,
the morphism of dendroidal sets $X_n\to Y_n$ is a normal monomorphism.

If $X$ is a simplicial dendroidal set, we denote by
\begin{equation}\label{eq:defexponentialsegoper}
\begin{split}
\begin{aligned}
\dset& \to \op{\sset}\\
A& \longmapsto X^A
\end{aligned}
\end{split}
\end{equation}
the colimit preserving functor such that, for any tree $T$, one has the identification
\begin{equation}
X^{\Omega[T]}=X_T\, .
\end{equation}

The Boardman-Vogt tensor product of dendroidal sets (\ref{def:dendroidaltensorproduct})
induces a closed symmetric monoidal
structure on the category of simplicial dendroidal sets, for which the tensor product of
two objects $X$ and $Y$ is defined by the formula
\begin{equation}\label{eq:deftensorsimpldendsets}
(X\otimes Y)_{n}=X_n\otimes Y_n \ , \quad \text{for any $n\geq 0$.}
\end{equation}
\end{paragr}

\begin{paragr}\label{pargar:defadjsdsetpreoper}
The category of \emph{preoperads} is the full subcategory of $\sdset$
whose objects are the simplicial dendroidal sets $X$ such that the simplicial set $X_\eta$
is a discrete simplicial set (which we will view as a set). In this case, we will call $X_\eta$
the set of \emph{objects of $X$}. We write $\preoper$ for the category of preoperads.
The inclusion functor of the category of preoperads into the category of simplicial dendroidal sets
is denoted by
\begin{equation}\label{eq:inclusionpreopersdset}
\gamma^*:\preoper\to\sdset\, .
\end{equation}
It admits a left adjoint
\begin{equation}\label{eq:leftadjinclusionpreopersdset}
\gamma_!:\sdset\to\preoper
\end{equation}
as well as a right adjoint
\begin{equation}\label{eq:rightadjinclusionpreopersdset}
\gamma_*:\sdset\to\preoper
\end{equation}
(see \cite[7.2]{dend6}). We will need the explicit description
of the left adjoint $\gamma_!$, so we recall it here.
Let $X$ be a simplicial dendroidal set. Given a tree $T$, the simplicial
set $\gamma_!(X)_T$ is simply $X_T$ in the case where there does not exist
any map from $T$ to $\eta$; in the case where $T$ is linear, i.e.
admits such a map $\varepsilon:T\to\eta$
(which is necessarily unique, since $\eta$ is a terminal object in $\Omega/\eta=\Delta$),
there is the following pushout square
\begin{equation}\label{eq:explicitleftadjgamma}
\begin{split}\xymatrix{
X_\eta\ar[r]^{{\varepsilon^*}}\ar[d]&X_T\ar[d]\\
\pi_0(X_{\eta})\ar[r]&\gamma_!(X)_T
}\end{split}\end{equation}
in the category of simplicial sets.

A morphism between preoperads is a \emph{normal monomorphism} if it is
a normal monomorphism in $\sdset$.

The tensor product of simplicial dendroidal sets \eqref{eq:deftensorsimpldendsets}
preserves preoperads, so the category $\preoper$ is a closed symmetric monoidal
category. Moreover, the inclusion functor of $\dset$ into $\sdset$ factors through
the category of preoperads, and defines a colimit preserving symmetric monoidal functor
\begin{equation}\label{eq:embeddsetintopreoper}
\dset\to\preoper \ , \quad X\longmapsto X
\end{equation}
\end{paragr}

\begin{paragr}\label{def:segalcore}
Let $T$ be a tree. Let us denote by $I(T)$ the set of subobjects of $\Omega[T]$
of shape $\Omega[S]$, where $S$ is a subtree with at most one vertex
(i.e. $S$ must be either a corolla, or a copy of $\eta$). The set $I(T)$ is naturally
endowed with the structure of partially ordered set induced by the relation of inclusion
of subobjects. Another way to describe the set $I(T)$ is
\begin{equation}
I(T)=\{\text{edges of $T$}\}\amalg \{\text{vertices of $T$}\}\, .
\end{equation}
As subtrees, an edge of $T$ correspond to a map $\eta\to T$, while a vertex $v$
correspond to a map $C_{n}\to T$ which send the unique vertex of $C_{n}$ to $v$.
The partial order on $I(T)$ can be described by the property that $e<v$ if and only if
$v$ is a vertex of $T$ and $e$ is either an input edge of $v$, either the output edge of $v$.

The \emph{Segal core} of $T$ is the colimit
\begin{equation}\label{eq:defsegalcore}
\spine[T]=\varinjlim_{\Omega[S]\in I(T)}\Omega[S]
\end{equation}
in the category of dendroidal sets.
The natural map
\begin{equation}\label{eq:inclusionsegalcore}
\spine[T]\to\Omega[T]
\end{equation}
is a (normal) monomorphism: in the case where $T$ has at most vertex, we just get
$\spine[T]=\Omega[T]$, while, if $T$ has at least one inner edge,
$\spine[T]$ is the union in $\Omega[T]$ of all the
corollas $\Omega[C_{n_v}]\subset\Omega[T]$, where $v$ runs over the set of vertices of $T$,
while $C_{n_v}\to T$ denotes the corolla at $v$.

Note that, although the Segal core is not a functor, for any morphism
of trees $S\to T$ which is a composition of outer faces, there is a
canonical commutative square
\begin{equation}\label{eq:functspines}
\begin{split}\xymatrix{
\spine[S]\ar[r]\ar[d]&\Omega[S]\ar[d]\\
\spine[T]\ar[r]&\Omega[T]
}\end{split}
\end{equation}
which has the nice property of being cartesian.
\end{paragr}

\begin{prop}[{\cite[Proposition 2.4]{dend6}}]\label{prop:segcoreinneranod}
For any tree $T$, the map \eqref{eq:inclusionsegalcore} is an inner anodyne extension.
\end{prop}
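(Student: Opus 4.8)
The plan is to argue by induction on the number of vertices of $T$. If $T$ has at most one vertex then $\spine[T]=\Omega[T]$ and \eqref{eq:inclusionsegalcore} is the identity, which is inner anodyne; so suppose $T$ has at least two vertices. Choose a top vertex $v$ of $T$ (one all of whose input edges are leaves of $T$); since $T$ has a second vertex, the output edge $e$ of $v$ is an inner edge of $T$. Removing $v$ gives an outer face $T'\to T$, with $e$ a leaf of $T'$, so that $T$ is the grafting of $T'$ and the corolla $C_{n_v}$ along $e$; moreover $T'$ has one vertex less than $T$, and, as subobjects of $\Omega[T]$,
\begin{equation*}
\spine[T]=\Omega[C_{n_v}]\amalg_{\eta}\spine[T']\ \subseteq\ \Omega[C_{n_v}]\amalg_{\eta}\Omega[T']=:Y\ \subseteq\ \Omega[T]\, ,
\end{equation*}
the pushouts being taken along the edge $e$ (identified with $\eta$). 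Since inner anodyne extensions form a saturated class, it suffices to prove each of the two inclusions is inner anodyne. The first, $\spine[T]\hookrightarrow Y$, is the pushout of $\spine[T']\hookrightarrow\Omega[T']$ along the canonical map $\spine[T']\to\spine[T]$ (colimits commuting with colimits), so it is inner anodyne by the induction hypothesis applied to $T'$. Hence everything reduces to showing that $Y\hookrightarrow\Omega[T]$ is inner anodyne.

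For this last step one cannot argue formally: grafting is not a pushout in $\dset$ (already for the linear tree with two vertices one has $\Delta[2]\neq\Delta[1]\amalg_{\Delta[0]}\Delta[1]$), so the dendrices of $\Omega[T]$ missing from $Y$ must be filled in by inner horns. The map $Y\hookrightarrow\Omega[T]$ is a monomorphism between normal dendroidal sets, hence a normal monomorphism, and the non-degenerate dendrices of $\Omega[T]$ not factoring through $Y$ are precisely those whose associated face of $T$ involves $v$ together with some vertex of $T'$; these come in pairs $(\sigma,\partial_{e}\sigma)$, where $e$ occurs as an inner edge of $\sigma$ and $\partial_{e}\sigma$ is the contraction of $\sigma$ along $e$. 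Ordering these pairs by the number of vertices of $\sigma$ produces a filtration $Y=Y_{0}\subseteq Y_{1}\subseteq\cdots$ with $\bigcup_{j}Y_{j}=\Omega[T]$, which is a transfinite composition of pushouts of inner horn inclusions $\Lambda^{e}[\sigma]\hookrightarrow\Omega[\sigma]$: one checks that each face of $\sigma$ other than $\partial_{e}\sigma$ either already factors through $Y$ or is the ``$\sigma$-part'' of a pair with strictly fewer vertices, hence already lies in the current stage, whereas $\partial_{e}\sigma$ has not yet been adjoined. Here one invokes the skeletal-filtration formalism of \cite[Lemme 8.1.34]{Ci3} exactly as in the proof of Lemma \ref{lemma:suffcondinnanod}. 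Closure of inner anodyne extensions under pushout and transfinite composition then finishes the argument.

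The hard part is this second step: the real content is to check that the attaching map of each $\sigma$ factors through the previous stage of the filtration and that the horn one has to fill is always an \emph{inner} horn — i.e.\ that one is never forced to adjoin an outer face or to contract an outer edge — and it is this which dictates organising the induction around the grafting edge $e$. One can alternatively run the filtration over all grafting edges at the root vertex of $T$ simultaneously; this is more symmetric but requires the generalized inner horn inclusions $\Lambda^{I}[\sigma]\hookrightarrow\Omega[\sigma]$, with $I$ a non-empty set of inner edges (inner anodyne by \cite[Lemma 5.1]{dend2}), in place of ordinary inner horns, just as in Lemma \ref{lemma:inneranodynefreecell}.
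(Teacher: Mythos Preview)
The paper does not prove this proposition but cites \cite[Proposition~2.4]{dend6}; your argument --- induction on the number of vertices, decomposing $T$ as the grafting of a top corolla onto a smaller tree $T'$ along an edge $e$, then handling the grafting inclusion $\Omega[C_{n_v}]\amalg_\eta\Omega[T']\hookrightarrow\Omega[T]$ by the inner-horn filtration you describe --- is essentially the proof given there (the last step being the grafting lemma, which goes back to \cite{dend2}). Your sketch of the filtration, pairing each face $\sigma$ with $e$ inner against $\partial_e\sigma$ and attaching by vertex count via pushouts of $\Lambda^e[\sigma]\hookrightarrow\Omega[\sigma]$, is correct.
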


\begin{definition}
A \emph{Segal operad} is a preoperad $X$ such that, for any tree $T$ with
at least one inner edge, the morphism of simplicial sets
$$X^{\Omega[T]}\to X^{\spine[T]}$$
induced by the inclusion \eqref{eq:inclusionsegalcore} is a
simplicial weak equivalence.

A Segal operad $X$ is \emph{Reedy fibrant} if, for any tree $T$, the morphism of
simplicial sets
$$X^{\Omega[T]}\to X^{\partial\Omega[T]}$$
is a Kan fibration.
\end{definition}

\begin{prop}\label{prop:charreedysegaloperads}
Let $X$ be a preoperad such that, for any tree $T$, the map
$X^{\Omega[T]}\to X^{\partial\Omega[T]}$ is a Kan fibration.
The following conditions are equivalent:
\begin{itemize}
\item[(i)] the preoperad $X$ is a Segal operad;
\item[(ii)] the preoperad $X$ is a Reedy fibrant Segal operad;
\item[(iii)]  for any tree $T$ with at least one inner edge, the map
$X^{\Omega[T]}\to X^{\spine[T]}$
is a trivial fibration;
\item[(iv)] for any tree $T$ with a given inner edge $e$, the map
$X^{\Omega[T]}\to X^{\Lambda^e[T]}$
is a trivial fibration of simplicial sets.
\end{itemize}
\end{prop}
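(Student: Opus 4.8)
The plan is to prove the chain of implications $(i)\Rightarrow(iv)\Rightarrow(iii)\Rightarrow(ii)\Rightarrow(i)$, using the standing hypothesis that $X^{\Omega[T]}\to X^{\partial\Omega[T]}$ is a Kan fibration for every $T$. The implication $(ii)\Rightarrow(i)$ is trivial since a Reedy fibrant Segal operad is in particular a Segal operad; and $(i)\Rightarrow(ii)$ is immediate once we observe that the standing hypothesis \emph{is} precisely Reedy fibrancy (the condition that $X^{\Omega[T]}\to X^{\partial\Omega[T]}$ be a Kan fibration), so the two notions coincide here. The real content lies in relating the spine maps, the horn maps, and the boundary maps, so that ``weak equivalence'' can be upgraded to ``trivial fibration''.

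\textbf{From (i) to (iii).} Assume $X$ is a Segal operad. First I would argue that under the standing hypothesis, the map $X^{\Omega[T]}\to X^{\spine[T]}$ is automatically a Kan fibration, not merely a weak equivalence: the inclusion $\spine[T]\hookrightarrow\Omega[T]$ is a normal monomorphism between normal dendroidal sets, so it can be written as a transfinite composite of pushouts of boundary inclusions $\partial\Omega[S]\to\Omega[S]$, and the functor $A\mapsto X^A$ turns these colimits into limits; the standing hypothesis then gives that $X^{\Omega[T]}\to X^{\spine[T]}$ is a Kan fibration. Combined with the Segal hypothesis that it is a weak equivalence, it is a trivial fibration, which is (iii). (For trees with at most one inner edge one treats the degenerate cases $\spine[T]=\Omega[T]$ directly, or uses the inner horn.) The passage between (iii) and (iv) goes through Proposition~\ref{prop:segcoreinneranod} and the observation that an inner horn $\Lambda^e[T]$ sits between $\spine[T]$ and $\Omega[T]$: by a two-out-of-three argument for trivial fibrations among the maps $X^{\Omega[T]}\to X^{\Lambda^e[T]}\to X^{\spine[T]}$ (using that the composite $\spine[T]\hookrightarrow\Lambda^e[T]\hookrightarrow\Omega[T]$ realizes the spine inclusion up to the cartesian squares \eqref{eq:functspines}, and that all three maps are Kan fibrations by the argument above), conditions (iii) and (iv) are interderivable.

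\textbf{From (iv) back to (i).} Finally, assuming (iv), I would deduce that $X$ is a Segal operad. Fix a tree $T$ with at least one inner edge. Using Proposition~\ref{prop:segcoreinneranod}, the inclusion $\spine[T]\hookrightarrow\Omega[T]$ is inner anodyne, hence a transfinite composite of pushouts of inner horn inclusions $\Lambda^e[S]\to\Omega[S]$; applying $X^{(-)}$, the map $X^{\Omega[T]}\to X^{\spine[T]}$ becomes a transfinite composite of pullbacks of maps of the form $X^{\Omega[S]}\to X^{\Lambda^e[S]}$, each of which is a trivial fibration by hypothesis~(iv). Since trivial fibrations are stable under pullback and transfinite composition, $X^{\Omega[T]}\to X^{\spine[T]}$ is a trivial fibration, in particular a weak equivalence, so $X$ is a Segal operad. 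This closes the circle.

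\textbf{Main obstacle.} The step I expect to require the most care is the interplay in the equivalence of (iii) and (iv): one must make sure that the inner horn $\Lambda^e[T]$ genuinely factors the spine inclusion in a way compatible with $X^{(-)}$, i.e. that $\spine[T]\subseteq\Lambda^e[T]\subseteq\Omega[T]$ as subobjects of $\Omega[T]$, and that all the relevant maps after applying $X^{(-)}$ are Kan fibrations so that the two-out-of-three principle for trivial fibrations applies. The cartesianness in \eqref{eq:functspines} and Proposition~\ref{prop:segcoreinneranod} are exactly the tools that make this work, but tracking which maps are fibrations (as opposed to merely weak equivalences) throughout is the delicate bookkeeping.
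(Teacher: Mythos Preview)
Your argument is essentially the paper's, and most of it is sound: the equivalence $(i)\Leftrightarrow(ii)$ is indeed just the observation that the standing hypothesis is Reedy fibrancy; your argument that this hypothesis forces $X^B\to X^A$ to be a Kan fibration for every normal monomorphism $A\to B$ (whence $(i)\Leftrightarrow(iii)$) is exactly what the paper does; and your proof of $(iv)\Rightarrow(i)$ via the inner anodyne decomposition of $\spine[T]\hookrightarrow\Omega[T]$ is correct and in fact yields $(iv)\Rightarrow(iii)$.

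The genuine gap is in $(iii)\Rightarrow(iv)$. Your two-out-of-three argument for the factorization
\[
X^{\Omega[T]}\longrightarrow X^{\Lambda^e[T]}\longrightarrow X^{\spine[T]}
\]
requires that the second map be a weak equivalence, and you have not established this. Knowing that all three maps are Kan fibrations and that the composite is a trivial fibration tells you nothing about the first factor unless you already control the second. What you would need is that $\spine[T]\hookrightarrow\Lambda^e[T]$ is itself inner anodyne, but Proposition~\ref{prop:segcoreinneranod} only gives that $\spine[T]\hookrightarrow\Omega[T]$ is inner anodyne, and the cartesian squares \eqref{eq:functspines} concern outer faces, not the inner horn. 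Inner anodyne maps do not satisfy two-out-of-three, so you cannot simply cancel. This is precisely why the paper defers the equivalence $(iii)\Leftrightarrow(iv)$ to \cite[Corollary~5.6]{dend6}, which supplies exactly this missing combinatorial input (that spine inclusions and inner horn inclusions generate the same saturated class relative to normal monomorphisms, or an equivalent statement). You correctly flagged this step as the delicate one, but the obstacle is not bookkeeping about fibrations versus weak equivalences; it is a substantive fact about dendroidal sets that must be proved or cited.
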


\begin{proof}
The equivalence between conditions (i) and (ii) holds by definition, and
is stated here only for the record.
The boundary inclusions $\partial\Omega[T]\to\Omega[T]$
generate the whole class of normal monomorphisms of dendroidal sets.
Therefore, under our assumption on $X$, for any normal monomorphism
$A\to B$ in the category of dendroidal sets, the induced morphism of simplicial
sets $X^B\to X^A$ is a Kan fibration. This immediately implies that
conditions (ii) and (iii) are equivalent. The equivalence between conditions (iii)
and (iv) follows from \cite[Corollary 5.6]{dend6}.
\end{proof}

We now can summarize \cite[Theorems 8.13 \& 8.17, Remark 8.18]{dend6} as follows.

\begin{thm}\label{thm:cmcreedysegaloperads}
The category of preoperads is endowed with a left proper cofibrantly generated
symmetric monoidal model category structure for which the cofibrations are the normal
monomorphisms and the fibrant objects are the Reedy fibrant Segal operads.
Furthermore, the inclusion functor $\dset\to\preoper$ preserves and detects
weak equivalences and is a left Quillen equivalence.
\end{thm}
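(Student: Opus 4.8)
Since the statement merely repackages \cite[Theorems 8.13 \& 8.17, Remark 8.18]{dend6}, the proof consists of assembling those results; let me indicate the shape of the argument. The plan is to obtain the model structure by left Bousfield localization. First I would equip $\preoper$ with the auxiliary model structure whose cofibrations are the normal monomorphisms and whose fibrant objects are the \emph{Reedy fibrant} preoperads, i.e.\ those $X$ for which $X^{\Omega[T]}\to X^{\partial\Omega[T]}$ is a Kan fibration for every tree $T$; this is a left proper, cofibrantly generated model structure, obtained by transporting along $\gamma_!$ the corresponding Reedy structure on $\sdset$. Then I would form the left Bousfield localization of this structure at the set of Segal core inclusions $\{\spine[T]\to\Omega[T]\}$, with $T$ ranging over trees having at least one inner edge. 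By Proposition \ref{prop:charreedysegaloperads}, a Reedy fibrant preoperad is local for this set precisely when it is a Reedy fibrant Segal operad, so the fibrant objects are as asserted; the cofibrations are unchanged, and left properness and cofibrant generation pass to the localization.

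For the symmetric monoidal structure, one verifies the pushout--product axiom for the tensor product \eqref{eq:deftensorsimpldendsets}. The unit $\eta$ is normal, hence cofibrant, and the Boardman--Vogt tensor product of two normal monomorphisms is again a normal monomorphism (checked degreewise, reducing to the corresponding fact for dendroidal sets). The substantive point is the interaction with the localization: by the standard reduction it suffices to see that the pushout--product of a generating normal monomorphism $\partial\Omega[T]\to\Omega[T]$ with a Segal core inclusion $\spine[S]\to\Omega[S]$ is a weak equivalence of the localized structure. Since $\spine[S]\to\Omega[S]$ is inner anodyne (Proposition \ref{prop:segcoreinneranod}) and inner anodyne extensions are stable under pushout--product with normal monomorphisms, this map is inner anodyne; and every inner anodyne extension is a weak equivalence of the localized structure, because its fibrant objects are local for the inner horn inclusions by condition (iv) of Proposition \ref{prop:charreedysegaloperads}.

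Finally, the inclusion functor \eqref{eq:embeddsetintopreoper} sends a normal monomorphism of dendroidal sets to a degreewise normal monomorphism of preoperads, so it preserves cofibrations, and it carries inner anodyne extensions to inner anodyne extensions, hence to trivial cofibrations of the localized structure; together with the fact that its right adjoint sends fibrations between fibrant objects to fibrations, this makes it a left Quillen functor. To promote this to a Quillen equivalence, I would check that the right adjoint detects weak equivalences between fibrant objects and that the derived unit is a weak equivalence: for a normal dendroidal set $X$, a fibrant replacement of its image in $\preoper$ is a Reedy fibrant Segal operad whose underlying dendroidal set is, by Proposition \ref{prop:charreedysegaloperads}, an $\infty$-operad, and Theorem \ref{thm:charwedsetfullfaithesssurj} lets one recognize that $X$ maps to it by a weak equivalence. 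This last point --- that the fibrant replacement in $\preoper$ genuinely rectifies $\infty$-operads back to the dendroidal model --- is the main obstacle, and it is the heart of \cite{dend6}; here I would simply invoke \cite[Theorems 8.13 \& 8.17, Remark 8.18]{dend6}.
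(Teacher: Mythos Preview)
Your proposal is correct and matches the paper's treatment: the paper gives no proof at all for this theorem, merely introducing it with ``We now can summarize \cite[Theorems 8.13 \& 8.17, Remark 8.18]{dend6} as follows,'' and your proposal likewise bottoms out in that citation. The sketch you add (Reedy structure on $\preoper$, then left Bousfield localize at the Segal core inclusions, verify the pushout--product axiom, and finally invoke \cite{dend6} for the Quillen equivalence) is a reasonable outline of what happens in \cite{dend6}, though the paper itself supplies nothing beyond the bare reference.
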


\begin{rem}\label{rem:genreedycofpreoper}
An explicit set of generators for the class of normal
monomorphisms of preoperads consists of the inclusion $\varnothing\to\eta$
together with all the maps of shape
$$\gamma_!(\partial\Delta[n]\times\Omega[T]\cup\Delta[n]\times\partial\Omega[T])\to
\gamma_!(\Delta[n]\times\Omega[T])$$
for any integer $n\geq 0$ and any tree $T$ \emph{with at least one vertex};
see \cite[Proposition 7.5]{dend6}.
\end{rem}

\begin{thm}\label{thm:dendroidalrezkcmc}
The category $\sdset$ of simplicial dendroidal sets
admits a model category structure, called the dendroidal Rezk model structure,
which is completely characterized by
the following properties.
\begin{itemize}
\item[(i)] The class of cofibrations is the class of (termwise)
normal monomorphisms.
\item[(ii)] If $X\to Y$ is a morphism of simplicial dendroidal sets
such that, for any integer $n\geq 0$, the morphism of dendroidal sets
$X_n\to Y_n$ is a weak equivalence of the model structure of Theorem \ref{thm:cmcdendsets},
then it is a weak equivalence.
\item[(iii)] If $X\to Y$ is a morphism of simplicial dendroidal sets
such that, for any tree $T$, the morphism of simplicial sets
$X_T\to Y_T$ is a weak equivalence of the usual Quillen model structure,
then it is a weak equivalence.
\item[(iv)] The inclusion functor $\preoper\to\sdset$ preserves and detects
weak equivalences and is a left Quillen equivalence.
\item[(v)] Any preoperad $X$ is canonically isomorphic in $\ho(\sdset)$
to the simplicial dendroidal set
$$T\longmapsto\Map(\Omega[T],X)$$
(where $\Map(-,X)$ denotes the mapping space functor
obtained by choosing a simplicial frame of a fibrant
resolution of $X$ in the sense of \cite[5.2.7]{Ho}, for instance).
\end{itemize}
\end{thm}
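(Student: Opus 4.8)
The plan is to realise the dendroidal Rezk model structure as a left Bousfield localization of a Reedy-type model structure on $\sdset$, to read off properties (i)--(iii) from the construction, and to deduce (iv), (v) and the uniqueness clause from the homotopy theory of preoperads recalled above. First, regarding $\sdset$ as the category of simplicial objects in $\dset$, I would form the Reedy model structure relative to the model structure of Theorem~\ref{thm:cmcdendsets} on $\dset$, so that a map is a cofibration (a weak equivalence) if and only if all its latching maps (all the maps $X_n\to Y_n$, respectively) are cofibrations (weak equivalences) of that structure. Since the model structure of Theorem~\ref{thm:cmcdendsets} is left proper and combinatorial, with cofibrations generated by the boundary inclusions $\partial\Omega[T]\to\Omega[T]$, this Reedy structure exists and is again left proper and combinatorial; and its cofibrations are precisely the termwise normal monomorphisms, because the latching maps of a termwise normal monomorphism are again normal (the automorphism group of each tree already acts freely on the non-degenerate simplices lying outside the subobject) and, conversely, every Reedy cofibration is a termwise monomorphism for which the same freeness holds in each degree. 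Since a left Bousfield localization changes neither the cofibrations nor the inclusion of the old weak equivalences into the new class, properties (i) and (ii) will hold, (ii) being the very definition of the Reedy weak equivalences.

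Next I would left Bousfield localize this Reedy structure at a small set $S$ of maps --- containing a completeness map together with suitable \emph{horizontal} maps (such as the inclusions $(\Lambda^k[n]\times\Omega[T])\cup(\Delta[n]\times\partial\Omega[T])\to\Delta[n]\times\Omega[T]$) forcing the invariance required in (iii) --- chosen so that the $S$-local objects are exactly the complete dendroidal Segal spaces; the existence of the localization follows from the general theory of left Bousfield localizations of left proper combinatorial model categories, and the Segal condition needs no separate generator, since the Segal-core inclusions are already Reedy weak equivalences (Proposition~\ref{prop:segcoreinneranod}). Unwinding the mapping-space condition defining $S$-locality, one checks that a fibrant object $Z$ is Reedy fibrant, satisfies the dendroidal Segal and completeness conditions, and has the property that each $Z_T$ is a Kan complex and that $T\mapsto Z_T$ carries termwise Kan equivalences to weak equivalences. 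Property (iii) then follows: a map that is a weak equivalence of simplicial sets in each tree induces a weak equivalence on derived mapping spaces into every fibrant object (reduce to a cofibration between cofibrant objects and compute with $T\mapsto Z_T$), hence is a weak equivalence of the localized structure.

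For property (iv), the functor $\gamma^*\colon\preoper\to\sdset$ preserves normal monomorphisms and carries the generating trivial cofibrations of the model structure of Theorem~\ref{thm:cmcreedysegaloperads} (those of Remark~\ref{rem:genreedycofpreoper}, together with the Segal-core, horizontal and completeness maps) to maps inverted in the dendroidal Rezk structure; hence $\gamma^*\dashv\gamma_*$ is a Quillen pair with $\gamma^*$ left Quillen. That $\gamma^*$ preserves and detects weak equivalences and is a Quillen equivalence I would prove by verifying that the derived unit and counit of $\gamma^*\dashv\gamma_*$ are weak equivalences: the unit, because $\gamma^*$ is fully faithful; and, for a fibrant $Y$, the counit $\gamma^*\gamma_*(Y)\to Y$, because the completeness of $Y$ permits one to rigidify its simplicial set of objects to a discrete set without changing the homotopy type --- which is exactly what $\gamma_*$ records. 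I expect this last step --- checking that the localizations of $\preoper$ and of $\sdset$ are compatible so that $\gamma^*$ remains a Quillen equivalence, with the essential use of completeness --- to be the main obstacle; it is the heart of the comparison results of \cite{dend6}.

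Finally, property (v) is a reformulation of how mapping spaces are computed. By the definition of the functor $A\mapsto X^A$ recalled in~\ref{paragr:preliminariessdsets} one has $X^{\Omega[T]}=X_T$, and for normal (hence cofibrant) $A$ this functor computes, up to homotopy, the mapping space $\Map(A,-)$ into a fibrant object; hence a fibrant resolution $\widehat X$ of a preoperad $X$ satisfies $\widehat X_T\simeq\Map(\Omega[T],X)$. Since $X\to\widehat X$ is a weak equivalence, $X$ is canonically isomorphic in $\ho(\sdset)$ to $T\mapsto\Map(\Omega[T],X)$. For the uniqueness assertion in the statement: in any model structure on $\sdset$ satisfying (i)--(v), property (i) pins down the cofibrations, while (ii), (iii) and the Quillen equivalence (iv) --- together with (v) --- force the fibrant objects to be exactly the complete dendroidal Segal spaces, and hence the weak equivalences to coincide with the $S$-local equivalences constructed above; so the model structure is uniquely determined.
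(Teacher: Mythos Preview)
Your overall strategy is sound, and in spirit it matches what is actually done in the companion paper \cite{dend6}: the dendroidal Rezk model structure is indeed obtained as a left Bousfield localization of a Reedy-type structure on $\sdset$, with the normal monomorphisms as cofibrations. The present paper's own proof is not a construction at all; it simply cites \cite[Theorem 6.6]{dend6} for (i)--(iii) and the uniqueness clause, \cite[Definition 8.1 and Theorem 8.15]{dend6} for (iv), and a combination of \cite[Propositions 3.3, 6.13, 8.8 and Remark 8.18]{dend6} for (v), reducing the last to the case of a dendroidal set via the Quillen equivalence $\dset\subset\preoper$ of Theorem~\ref{thm:cmcreedysegaloperads}.

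There is one genuine difference worth noting. You take the Reedy structure over $\Delta$ with values in the model category of dendroidal sets (so that (ii) describes the unlocalized weak equivalences and the Segal condition comes for free from Proposition~\ref{prop:segcoreinneranod}), and then localize at horizontal and completeness maps. The construction in \cite{dend6}, as the proof of Proposition~\ref{prop:wesegoperclosedfiltcolim} in this paper makes explicit, goes the other way: one starts from the \emph{generalized} Reedy structure over $\Omega$ with values in $\sset$ (so that (iii) describes the unlocalized weak equivalences), and then localizes at the Segal-core and completeness maps. Both routes yield the same model structure, but your choice of direction buys you the Segal condition without localizing, at the cost of having to add the ``horizontal'' maps to the localizing set; the published route buys (iii) for free at the cost of adding the Segal maps. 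Your identification of Reedy cofibrations with termwise normal monomorphisms is correct, though the justification you give is a bit quick; the cleanest argument goes through the explicit generating set analogous to Remark~\ref{rem:genreedycofpreoper}.

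One point deserves more care. Your argument for (iv) treats ``$\gamma^*$ preserves and detects weak equivalences'' as something to be proved from the Quillen adjunction, but in \cite{dend6} this is in fact the \emph{definition} of the weak equivalences in $\preoper$ (see \cite[Definition 8.1]{dend6}); the substantial content of (iv) is then the Quillen equivalence assertion, for which your sketch (derived unit and counit via completeness) is along the right lines and is indeed where the main work of \cite{dend6} lies. Finally, for the uniqueness clause the paper asserts (via \cite[Theorem 6.6]{dend6}) that already (i)--(iii) suffice; your argument invoking (iv) and (v) as well is not wrong, but it is more than is needed.
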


\begin{proof}
Properties (i), (ii) and (iii) (as well as the fact that these determine the
model structure) are summarized in \cite[Theorem 6.6]{dend6}, while
the first part of property (iv) holds by definition \cite[Definition 8.1]{dend6},
and the second part by \cite[Theorem 8.15]{dend6}.
As for property (v), we remark that,
using the Quillen equivalence $\dset\subset\preoper$ (\ref{thm:cmcreedysegaloperads}),
it is sufficient to prove it in the case of a dendroidal set; but then, this is a translation of
\cite[Propositions 3.3, 6.13 and 8.8, and Remark 8.18]{dend6}.
\end{proof}

The following two sufficient conditions for a morphism of preoperads to be a weak
equivalence will be used several times.

\begin{cor}\label{cor:termwiseweglobalwepreoper}
Let $X\to Y$ be a morphism of preoperads, and assume that one of the two
conditions below is satisfied.
\begin{itemize}
\item[(a)] For any integer $n\geq 0$, the morphism of dendroidal sets $X_n\to Y_n$
is a weak equivalence of dendroidal sets.
\item[(b)] For any tree $T$, the morphism of simplicial sets $X_T\to Y_T$
is a weak equivalence of the usual Quillen model category structure.
\end{itemize}
Then the morphism $X\to Y$ is a weak equivalence of preoperads.
\end{cor}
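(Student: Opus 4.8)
The plan is to reduce immediately to the dendroidal Rezk model structure on $\sdset$, where conditions (a) and (b) are built into the very definition of a weak equivalence. First I would compose $X\to Y$ with the inclusion functor $\gamma^*:\preoper\to\sdset$ of \eqref{eq:inclusionpreopersdset}, obtaining a morphism of simplicial dendroidal sets. Under hypothesis (a), for every integer $n\geq 0$ the morphism of dendroidal sets $X_n\to Y_n$ is a weak equivalence for the model structure of Theorem \ref{thm:cmcdendsets}, so property (ii) of Theorem \ref{thm:dendroidalrezkcmc} guarantees that $\gamma^*(X)\to\gamma^*(Y)$ is a weak equivalence in the dendroidal Rezk model structure. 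Under hypothesis (b), the same conclusion follows from property (iii) of Theorem \ref{thm:dendroidalrezkcmc}, applied to the morphisms of simplicial sets $X_T\to Y_T$.

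It then remains to transport this back along the inclusion. By property (iv) of Theorem \ref{thm:dendroidalrezkcmc}, the functor $\gamma^*:\preoper\to\sdset$ not only preserves but also \emph{detects} weak equivalences; hence, since $\gamma^*(X)\to\gamma^*(Y)$ is a weak equivalence of simplicial dendroidal sets, the original map $X\to Y$ is a weak equivalence of preoperads, as desired.

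There is really no genuine obstacle here: the statement is a formal consequence of the characterisation of the dendroidal Rezk structure recorded in Theorem \ref{thm:dendroidalrezkcmc}. The only point requiring the slightest care is to invoke the ``detects'' half of property (iv) --- it is essential that $\gamma^*$ reflects weak equivalences, and not merely that it preserves them --- together with keeping the two relevant model structures straight (the structure of Theorem \ref{thm:cmcdendsets} on $\dset$ for hypothesis (a), versus the usual Quillen structure on $\sset$ for hypothesis (b)) when checking the hypotheses of (ii) and (iii) respectively.
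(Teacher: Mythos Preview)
Your argument is correct and is exactly the paper's proof: the paper's own proof simply reads ``This follows from assertions (ii), (iii) and (iv) of the preceding theorem,'' and you have spelled out precisely how those three assertions combine.
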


\begin{proof}
This follows from assertions (ii), (iii) and (iv) of the preceding theorem.
\end{proof}

\begin{prop}\label{prop:wesegoperclosedfiltcolim}
The class of weak equivalences is closed under filtered colimits
in the category of preoperads.
\end{prop}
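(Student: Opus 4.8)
The plan is to deduce this from the fact that the model structure of Theorem~\ref{thm:cmcreedysegaloperads} is \emph{finitely generated}, i.e.\ that its cofibrations and its trivial cofibrations are generated by sets of maps whose domains and codomains are finitely presentable preoperads. For the cofibrations this is the content of Remark~\ref{rem:genreedycofpreoper}: each generator $\gamma_!(\partial\Delta[n]\times\Omega[T]\cup\Delta[n]\times\partial\Omega[T])\to\gamma_!(\Delta[n]\times\Omega[T])$, as well as $\varnothing\to\eta$, is a finite colimit of representables, hence finitely presentable. For the trivial cofibrations, the model category of preoperads is obtained in \cite{dend6} as a localization of a finitely generated model structure on $\preoper$ at the Segal core inclusions $\spine[T]\to\Omega[T]$ of Proposition~\ref{prop:segcoreinneranod}; since these are normal monomorphisms between finitely presentable preoperads, the usual description of the generating trivial cofibrations of such a localization provides, together with the finitely presentable generating trivial cofibrations of the underlying Reedy-type structure, a generating set of trivial cofibrations for $\preoper$ again consisting of maps between finitely presentable objects.

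Two consequences will be used. First, the small object argument run with these generators yields functorial factorizations of any morphism of preoperads into a trivial cofibration followed by a fibration, and into a cofibration followed by a trivial fibration, which moreover \emph{commute with filtered colimits}: each stage of the construction is assembled from pushouts and from hom-functors out of finitely presentable objects, and the construction stabilises after $\omega$ steps. Second, for any set $J$ of maps between finitely presentable objects of a locally presentable category, the saturation $\overline{J}$ is closed under filtered colimits in the category of arrows, in the sense that if all the objects of a filtered diagram of arrows belong to $\overline{J}$, then so does the colimit of that diagram. This is a standard property of $\omega$-accessible, accessibly embedded subcategories of arrow categories, and may also be checked directly by a transfinite cell-counting argument. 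We apply it both to the generating cofibrations and to the generating trivial cofibrations.

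Now let $(f_i\colon X_i\to Y_i)_{i\in I}$ be a filtered diagram of weak equivalences of preoperads, with colimit $f\colon X\to Y$. Using the functorial factorization, write $f_i$ as $X_i\xrightarrow{a_i}Z_i\xrightarrow{b_i}Y_i$ with $a_i$ a trivial cofibration and $b_i$ a fibration; since $f_i$ is a weak equivalence, $b_i$ is a trivial fibration by two-out-of-three. Because the factorization commutes with filtered colimits, $f$ factors as $X\xrightarrow{a}Z\xrightarrow{b}Y$ with $a=\varinjlim_i a_i$ and $b=\varinjlim_i b_i$. By the second consequence above, $a$ is a trivial cofibration, in particular a weak equivalence. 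As for $b$: a morphism of preoperads is a trivial fibration exactly when it has the right lifting property with respect to the generating cofibrations, and since the latter have finitely presentable domain and codomain, any such lifting problem posed against $b=\varinjlim_i b_i$ factors through, and is solved at, some finite stage $b_i$; hence $b$ is again a trivial fibration, and in particular a weak equivalence. By two-out-of-three, $f$ is a weak equivalence.

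The main obstacle is the finite-generation input used at the start: one has to go back to the construction of \cite{dend6} and check that the generating trivial cofibrations may be chosen between finitely presentable preoperads, the crucial point being (via Proposition~\ref{prop:segcoreinneranod}) that the maps at which one localizes are the Segal core inclusions, which are finitely presentable. An alternative to the factorization argument, closer in spirit to Proposition~\ref{prop:wesoperfiltcolim}, would be to first replace the given diagram by a diagram of weak equivalences between normal preoperads via a cofibrant replacement functor commuting with filtered colimits, and then observe that the set of objects and the spaces of operations of a normal preoperad are computed by constructions that commute with filtered colimits and detect weak equivalences; but this variant still rests on the same finite-presentability input.
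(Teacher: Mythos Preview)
Your argument hinges on the assertion that the model structure of Theorem~\ref{thm:cmcreedysegaloperads} is finitely generated, and in particular that its \emph{trivial} cofibrations are generated by maps between finitely presentable preoperads. You justify this by saying that the model structure is a left Bousfield localization of a finitely generated one at the Segal core inclusions, and that ``the usual description of the generating trivial cofibrations of such a localization'' then produces finitely presentable generators. This is the gap: there is no such usual description. The set one writes down (the old generating trivial cofibrations together with horns on the localizing maps) detects fibrant objects and fibrations \emph{between fibrant objects}, but in general it does not generate the class of trivial cofibrations of the localized model structure; the existence of a small generating set for the latter is typically obtained by a cardinality argument (Smith's theorem) which gives no control on the size of the generators. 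Without this input, neither your factorization-commutes-with-filtered-colimits step nor your claim that $\overline{\ta}$ is closed under filtered colimits of arrows is available, and the proof does not go through. Your closing alternative sketch inherits the same problem, as you yourself note.

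The paper's proof avoids this difficulty entirely by not appealing to generating trivial cofibrations at all. One passes to $\sdset$ via Theorem~\ref{thm:dendroidalrezkcmc}~(iv), observes that the dendroidal Rezk model structure on $\sdset$ is a left Bousfield localization of the generalized Reedy structure whose weak equivalences are the termwise simplicial weak equivalences, and then uses only the elementary fact that simplicial weak equivalences are closed under filtered colimits: this forces the comparison map from the homotopy colimit to the colimit of any filtered diagram to be a termwise simplicial weak equivalence, hence a Rezk weak equivalence, and the conclusion follows since homotopy colimits preserve weak equivalences. The point is that closure of weak equivalences under filtered colimits is inherited by any left Bousfield localization once one knows it for the unlocalized structure, because cofibrant replacements (and hence homotopy colimits) do not change under localization. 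This bypasses any question about the size of generating trivial cofibrations.
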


\begin{proof}
There are many ways to prove this. For instance, one can
use the fact that a morphism of preoperads
is a weak equivalence if and only if it is a weak
equivalence of simplicial dendroidal sets for the
dendroidal Rezk model category structure;
see Theorem \ref{thm:dendroidalrezkcmc}~(iv).
But the latter is a left Bousfield localization of the generalized
Reedy model structure on $\sdset$ whose class of weak equivalences
is the class of (dendroidally) termwise simplicial weak equivalences;
see \cite[Definition 6.2]{dend6}.
As the class of simplicial weak equivalences is closed under filtered colimits,
this implies that the class of weak equivalences of the dendroidal Rezk
model structure is closed under filtered colimits as well: for any
filtered diagram $D$ of simplicial dendroidal sets, the map
from the homotopy colimit of $D$ to the colimit of $D$
is a termwise simplicial weak equivalence.
\end{proof}

\section{Weak equivalences between Segal operads}\label{sect:6}

\begin{paragr}\label{def:segalnerve}
The adjunction between the category of dendroidal sets and the
category of operads \eqref{eq:nerfdend} extends naturally
to an adjunction between the category of simplicial dendroidal sets
and the category of simplicial objects in the category of operads
(which must be distinguished from the category of simplicial operads):
\begin{equation}\label{eq:dendnervesimplicialobjects}
\tau_d:\sdset\rightleftarrows\oper^{\op{\Delta}}:\nerf_d\, .
\end{equation}
The category of simplicial operads is the full subcategory of
$\oper^{\op{\Delta}}$ which consists of presheaves $\mathcal P:\op{\Delta}\to\oper$
such that the simplicial set $\ob \mathcal P$ is discrete,
which means precisely that $\nerf_d(\mathcal P)$ is a preoperad.
Moreover, if $X$ is a preoperad, the simplicial object $\tau_d(X)$
is clearly a simplicial operad. Therefore, the
adjunction \eqref{eq:dendnervesimplicialobjects} restricts to an adjunction
\begin{equation}\label{eq:dendnervepreoperads}
\tau_d:\preoper\rightleftarrows\soper:\nerf_d
\end{equation}
in which the right adjoint $\nerf_d$ still has the property of being fully faithful.
However, the adjunction \eqref{eq:dendnervepreoperads} is not a Quillen pair
because the functor $\tau_d$ does not send normal preoperads to cofibrant simplicial operads
(in fact, if $\mathcal P$ is a $\Sigma$-cofibrant, then $X=\nerf_d(\mathcal P)$ is a normal preoperad,
but $\tau_d(X)=\mathcal P$ might not be cofibrant).
Our purpose, in this section, is to correct this defect of the model
category structure of Theorem \ref{thm:cmcreedysegaloperads} by shrinking the class of
cofibrations as much as we can. But before doing this, we have to study the notion of Segal
operad a little more closely.
\end{paragr}

\begin{rem}\label{rem:segalcondsimploper}
The functor $\nerf_d:\soper\to \preoper$ is fully faithful and
its essential image consists of the preoperads $X$
satisfying the dendroidal (strict) Segal conditions; this means that,
for any tree $T$ (with at least one inner edge), the map
$$X^{\Omega[T]}\to X^{\spine[T]}$$
is an isomorphism (this is an easy translation of \cite[Corollary 2.7]{dend6}).
In particular, for any simplicial operad $\mathcal P$, the preoperad
$\nerf_d(\mathcal P)$ is in fact a Segal operad.
\end{rem}

\begin{paragr}\label{def:operationsinsegaloper}
Let $X$ be a preoperad. Given an $(n+1)$-tuple of objects $(x_1,\ldots,x_n,x)$
of $X$ (i.e. an element of $X_\eta^{n+1}$), $n\geq 0$, we define the
simplicial set $X(x_1,\ldots,x_n;x)$ by the following pullback square
in the category of simplicial sets.
\begin{equation}
\begin{split}\xymatrix{
X(x_1,\ldots,x_n;x)\ar[r]\ar[d]&X_{C_n}\ar[d]\\
\Delta[0]\ar[r]_(.45){(x_1,\ldots,x_n,x)}&X^{n+1}_\eta
}\end{split}
\end{equation}
We remark that, as $X_\eta$ is discrete, the map
\begin{equation}\label{eq:decomposesegoperations}
\coprod_{(x_1,\ldots,x_n,x)\in X^{n+1}_\eta}X(x_1,\ldots,x_n;x)\to X_{C_n}
\end{equation}
is an isomorphism of simplicial sets.

We define a functor
\begin{equation}\label{eq:defsegaltruncation}
\pi_0:\preoper\to\dset
\end{equation}
by the formula: $\pi_0(X)_T=\pi_0(X_T)$ for any preoperad $X$ and any tree $T$.
\end{paragr}

\begin{lemma}\label{lemma:segalconditionlevelzero}
For any preoperad $X$ and any tree $T$, the natural map
$$\pi_0(X^{\spine[T]})\to\Hom_\dset(\spine[T],\pi_0(X))$$
is bijective.
\end{lemma}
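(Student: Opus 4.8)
The plan is to identify both sides with the same explicit coproduct. Since the functor $A\mapsto X^A$ turns colimits in $\dset$ into limits in $\sset$, the definition \eqref{eq:defsegalcore} of the Segal core gives $X^{\spine[T]}=\varprojlim_{\Omega[S]\in I(T)}X_S$ in $\sset$; likewise, as $\pi_0(X)$ is the dendroidal set with $\pi_0(X)_S=\pi_0(X_S)$, one has $\Hom_\dset(\spine[T],\pi_0(X))=\varprojlim_{\Omega[S]\in I(T)}\pi_0(X_S)$, and the map of the lemma is the canonical comparison morphism $\pi_0\big(\varprojlim_{I(T)}X_S\big)\to\varprojlim_{I(T)}\pi_0(X_S)$. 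If $T=\eta$ has no vertex, then $\spine[T]=\eta$ and the assertion is the tautology $\pi_0(X_\eta)=\pi_0(X)_\eta$, so we may assume $T$ has at least one vertex.

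First I would make the limit over $I(T)$ explicit. Recall that $I(T)=\{\text{edges of }T\}\amalg V(T)$, the edges being the minimal elements, each edge $e$ lying below precisely the vertices incident to it, and that for a vertex $v$ the restriction map $X_{C_{n_v}}\to X_\eta^{\,n_v+1}$ (induced by $\partial\Omega[C_{n_v}]\to\Omega[C_{n_v}]$) records the labels of the edges at $v$. Since $X$ is a preoperad, $X_\eta$ is a \emph{discrete} simplicial set, so \eqref{eq:decomposesegoperations} decomposes $X_{C_{n_v}}$ as the coproduct of its fibres over the (discrete) set $X_\eta^{\,n_v+1}$ of edge-labels. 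The only compatibility that the limit over $I(T)$ imposes between simplices $\xi_v\in X_{C_{n_v}}$ is that their images in $X_\eta$ agree along shared edges — a condition living in a discrete simplicial set. Hence the limit splits as a coproduct indexed by the global edge-labellings $\lambda\colon\{\text{edges of }T\}\to X_\eta$:
$$X^{\spine[T]}\;\cong\;\coprod_{\lambda}\ \prod_{v\in V(T)}X\big(\lambda(e^v_1),\dots,\lambda(e^v_{n_v});\lambda(e^v_0)\big),$$
where $e^v_1,\dots,e^v_{n_v}$ (resp. $e^v_0$) are the input edges (resp. the output edge) of $v$.

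Next I would run the same computation with $\pi_0(X)$ in place of $X$: one has $\pi_0(X)_\eta=\pi_0(X_\eta)=X_\eta$ (still discrete) and $\pi_0(X)_{C_{n_v}}=\pi_0(X_{C_{n_v}})=\coprod_\mu\pi_0(X(\mu))$, so the identical splitting gives
$$\Hom_\dset(\spine[T],\pi_0(X))\;\cong\;\coprod_{\lambda}\ \prod_{v\in V(T)}\pi_0\Big(X\big(\lambda(e^v_1),\dots,\lambda(e^v_{n_v});\lambda(e^v_0)\big)\Big).$$
Since $T$ is a finite tree the set $V(T)$ is finite, and $\pi_0$ commutes with arbitrary coproducts and with finite products of simplicial sets; applying $\pi_0$ to the first isomorphism therefore yields exactly the second, and this identification is the comparison morphism above, whence the claim. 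The one point requiring care — and the only place where the preoperad hypothesis is used — is the coproduct decomposition of the limit over $I(T)$: it is precisely the discreteness of $X_\eta$ that turns the gluing condition into a set-level choice of edge-labelling, so that the limit becomes a coproduct over labellings of products over the finitely many vertices, on which $\pi_0$ behaves well.
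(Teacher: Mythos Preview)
Your proof is correct and follows the same approach as the paper's: the paper's own proof is a one-liner invoking the decomposition \eqref{eq:decomposesegoperations} together with the fact that $\pi_0$ commutes with small sums and finite products, and your argument is precisely a careful unpacking of this, making explicit the coproduct over edge-labellings $\lambda$ and the finite product over vertices.
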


\begin{proof}
This is an easy consequence of the isomorphism \eqref{eq:decomposesegoperations}
and of the fact that the functor $\pi_0:\sset\to\set$ commutes with small sums
as well as with finite products.
\end{proof}

\begin{prop}\label{prop:segalconditionlevelzero}
If $X$ is a Segal operad, then $\pi_0(X)$ is isomorphic the dendroidal nerve of an operad.
\end{prop}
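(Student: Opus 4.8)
The plan is to verify that the dendroidal set $\pi_0(X)$ satisfies the strict dendroidal Segal condition, and then to invoke the characterization of the essential image of the dendroidal nerve functor.

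The key input is Lemma~\ref{lemma:segalconditionlevelzero}: for every tree $T$, the canonical comparison map
$$\pi_0\bigl(X^{\spine[T]}\bigr)\longrightarrow\Hom_\dset\bigl(\spine[T],\pi_0(X)\bigr)$$
is a bijection. This is the only place where the hypothesis that $X_\eta$ is discrete really enters: it gives the decomposition~\eqref{eq:decomposesegoperations} and lets one use that the functor $\pi_0\colon\sset\to\set$ commutes with small sums and with finite products.

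Now fix a tree $T$ and let $r_T\colon\pi_0(X)_T\to\Hom_\dset(\spine[T],\pi_0(X))$ be the restriction map induced by the inclusion $\spine[T]\hookrightarrow\Omega[T]$. If $T$ has no inner edge, then $\spine[T]=\Omega[T]$ and $r_T$ is trivially bijective. If $T$ has at least one inner edge, then $X^{\Omega[T]}\to X^{\spine[T]}$ is a weak equivalence of simplicial sets by the very definition of a Segal operad, hence induces a bijection on $\pi_0$; composing with the bijection of Lemma~\ref{lemma:segalconditionlevelzero} and using the naturality of $\pi_0$ together with the identification $\pi_0(X^{\Omega[T]})=\pi_0(X_T)=\pi_0(X)_T$, one identifies $r_T$ with this composite, which is therefore a bijection. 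Thus $r_T$ is bijective for every tree $T$, i.e. $\pi_0(X)$ satisfies the strict dendroidal Segal condition.

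Finally, a dendroidal set satisfying the strict Segal condition is isomorphic to the dendroidal nerve of an operad: viewing $\pi_0(X)$ as a preoperad which is constant in the simplicial direction, it is a strict Segal operad in the sense of Remark~\ref{rem:segalcondsimploper}, hence isomorphic to $\nerf_d(\mathcal P)$ for some simplicial operad $\mathcal P$ (by \cite[Corollary~2.7]{dend6}); since $\pi_0(X)$ is simplicially constant, so is $\mathcal P$, i.e. $\mathcal P$ is an ordinary operad, and $\pi_0(X)\cong\nerf_d(\mathcal P)$ in $\dset$. (Alternatively one combines the inner-horn description of the essential image of $\nerf_d$ recalled in Section~\ref{sect:2} with the comparison between Segal cores and inner horns, \cite[Corollary~5.6]{dend6}.) The one step that is not purely formal is the identification of $r_T$ with the abstract composite bijection in the previous paragraph; this is a short diagram chase, but it is the point to write out with a little care.
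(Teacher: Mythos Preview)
Your proof is correct and follows essentially the same route as the paper: use the Segal condition to get a bijection $\pi_0(X^{\Omega[T]})\to\pi_0(X^{\spine[T]})$, compose with the bijection of Lemma~\ref{lemma:segalconditionlevelzero}, and conclude via the strict Segal characterization of the image of $\nerf_d$ from \cite[Corollary~2.7]{dend6}. Your final paragraph is more roundabout than necessary---the paper simply cites \cite[Corollary~2.7]{dend6} directly for dendroidal sets rather than passing through constant preoperads---but your alternative (in parentheses) is exactly what the paper does, and the diagram chase you flag as needing care is just naturality of the Segal-core comparison under $\pi_0$.
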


\begin{proof}
We know that a dendroidal set $Y$ is isomorphic to the dendroidal nerve
of an operad if and only if it satisfies the strict dendroidal Segal condition; see \cite[Corollary 2.7]{dend6}.
If $X$ is a Segal operad, then, for any tree $T$ with at least one inner edge,
the map
$$\pi_0(X_T)=\pi_0(X^{\Omega[T]})\to\pi_0(X^{\spine[T]})$$
is bijective. Lemma \ref{lemma:segalconditionlevelzero} thus implies that
the dendroidal set $\pi_0(X)$ satisfies the strict dendroidal Segal condition.
\end{proof}

\begin{cor}\label{cor:truncationsegoper}
For any Segal operad $X$, one has canonical isomorphisms of dendroidal sets:
$\pi_0(X)\simeq\nerf_d\, \pi_0\, \tau_d(X)\simeq \nerf_d\, \tau_d\, \pi_0(X)$.
\end{cor}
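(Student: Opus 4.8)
The plan is to reduce everything to Proposition~\ref{prop:segalconditionlevelzero} by a purely formal adjunction argument, so almost nothing needs to be computed by hand. The core step I would establish first is that, \emph{for every} preoperad $X$ (Segal or not), there is a natural isomorphism of operads $\pi_0\,\tau_d(X)\simeq\tau_d\,\pi_0(X)$. To see this I would argue by uniqueness of adjoints. The functor $\pi_0\,\tau_d:\preoper\to\oper$ is a composite of the left adjoint $\tau_d:\preoper\to\soper$ of \eqref{eq:dendnervepreoperads} (right adjoint $\nerf_d$) followed by the left adjoint $\pi_0:\soper\to\oper$ of \eqref{eq:adjsoperoper} (right adjoint the constant embedding $\iota$), so it is itself left adjoint to $\nerf_d\circ\iota:\oper\to\preoper$. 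Likewise $\tau_d\,\pi_0:\preoper\to\oper$ is the composite of $\pi_0:\preoper\to\dset$ from \eqref{eq:defsegaltruncation} — which is left adjoint to the constant embedding $c:\dset\to\preoper$ of \eqref{eq:embeddsetintopreoper}, since a map $X\to c(Z)$ of preoperads is, treewise, a map of simplicial sets into a constant one, i.e. a map $\pi_0(X)\to Z$ of dendroidal sets — followed by $\tau_d:\dset\to\oper$ (right adjoint $\nerf_d$), hence is left adjoint to $c\circ\nerf_d:\oper\to\preoper$. It then remains only to identify $\nerf_d\circ\iota$ with $c\circ\nerf_d$, which is immediate: for an operad $P$, both functors produce the constant simplicial dendroidal set $T\longmapsto\Hom_\oper(T,P)$, because the nerve functor \eqref{eq:dendnervepreoperads} on simplicial objects is computed degreewise and $\iota(P)$ is the constant simplicial operad on $P$. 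By uniqueness of adjoints this gives $\pi_0\,\tau_d\simeq\tau_d\,\pi_0$.

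Granting this, applying $\nerf_d$ yields a canonical isomorphism $\nerf_d\,\pi_0\,\tau_d(X)\simeq\nerf_d\,\tau_d\,\pi_0(X)$ for every preoperad $X$; this already links the two right-hand expressions of the corollary. For the remaining isomorphism $\pi_0(X)\simeq\nerf_d\,\tau_d\,\pi_0(X)$ I would invoke Proposition~\ref{prop:segalconditionlevelzero}: when $X$ is a Segal operad, $\pi_0(X)$ is isomorphic to the dendroidal nerve of an operad, i.e. it lies in the essential image of the fully faithful functor $\nerf_d:\oper\to\dset$. Since for a fully faithful right adjoint the counit $\tau_d\,\nerf_d\to\mathrm{id}$ is invertible, the triangle identities force the unit map $\pi_0(X)\to\nerf_d\,\tau_d\,\pi_0(X)$ to be an isomorphism. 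Composing, $\pi_0(X)\simeq\nerf_d\,\tau_d\,\pi_0(X)\simeq\nerf_d\,\pi_0\,\tau_d(X)$, which is the statement.

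I do not expect a real obstacle: the corollary is essentially formal once Proposition~\ref{prop:segalconditionlevelzero} is in hand. The only points that require a little care — but are entirely routine — are the verification that $\pi_0:\preoper\to\dset$ is indeed left adjoint to the constant embedding (i.e. that a morphism of preoperads into a constant one factors uniquely through the treewise $\pi_0$), and the bookkeeping that the dendroidal nerve on simplicial objects is degreewise, so that the two candidate right adjoints $\nerf_d\circ\iota$ and $c\circ\nerf_d$ literally agree rather than merely being isomorphic.
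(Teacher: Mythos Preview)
Your proof is correct and follows essentially the same approach as the paper: the paper's proof says the natural isomorphism $\tau_d(\pi_0(X))\simeq\pi_0(\tau_d(X))$ holds ``by comparing the corresponding universal properties'', which is exactly your uniqueness-of-adjoints argument spelled out in full, and then concludes via Proposition~\ref{prop:segalconditionlevelzero} and the full faithfulness of $\nerf_d$, just as you do. The only difference is that you have made explicit what the paper leaves as a one-line remark.
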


\begin{proof}
By comparing the corresponding universal properties, we see that
we obviously have a natural isomorphism of operads $\tau_d(\pi_0(X))\simeq\pi_0(\tau_d(X))$,
which implies this corollary because of Proposition \ref{prop:segalconditionlevelzero}
(and using the fully faithfulness of the dendroidal nerve functor).
\end{proof}

\begin{definition}\label{def:fullyfaithfulesssurjsegoper}
Let $f:X\to Y$ be a morphism between preoperads.

The map $f$ is said to be \emph{fully faithful} if, for any $(n+1)$-tuple $(x_1,\ldots,x_n,x)$ of
objects of $X$, $n\geq 0$, the induced morphism of simplicial sets
$$X(x_1,\ldots,x_n;x)\to Y(f(x_1),\ldots,f(x_n);f(x))$$
is a weak equivalence.

The map $f$ is said to be be \emph{essentially surjective} if the morphism of operads $\pi_0(\tau_d(f))$
is essentially surjective.
\end{definition}

\begin{rem}\label{rem:truncationsegoper}
Using (the proof of) Proposition \ref{prop:segalconditionlevelzero},
it is easy to explicitly describe the operad $\pi_0(\tau_d(X))$ associated
to a given Segal operad $X$: its objects are those of $X$ (i.e. the elements of the set $X_\eta$), while
the sets of operations are given by the formula
$$\pi_0(\tau_d(X))(x_1,\ldots,x_n;x)\simeq\pi_0(X(x_1,\ldots,x_n;x))\, .$$
In particular, if a morphism between Segal operads $X\to Y$ is
fully faithful, then the morphism of operads $\pi_0(\tau_d(X))\to\pi_0(\tau_d(Y))$
is fully faithful as well. Therefore, if a morphism between Segal operads $X\to Y$ is
fully faithful and essentially surjective, then the morphism of operads $\pi_0(\tau_d(X))\to\pi_0(\tau_d(Y))$
is a weak equivalence of the model category of Theorem \ref{thm:folkcmcoperads}.
\end{rem}

\begin{prop}\label{prop:segalcoreswecorollas}
Let $f:X\to Y$ be a morphism of preoperads. The following conditions are equivalent.
\begin{itemize}
\item[(i)] The morphism $X_\eta\to Y_\eta$ is bijective, and for any $n\geq 0$,
the morphism of simplicial sets $X_{C_n}\to Y_{C_n}$ is a weak equivalence.
\item[(ii)] The morphism $f$ is fully faithful, and the morphism $X_\eta\to Y_\eta$ is bijective.
\item[(iii)] For any tree $T$, the morphism of simplicial sets $X^{\spine[T]}\to Y^{\spine[T]}$
is a weak equivalence.
\end{itemize}
\end{prop}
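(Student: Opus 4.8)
The plan is to prove the three conditions equivalent by establishing (i)$\,\Leftrightarrow\,$(ii) directly, then (ii)$\,\Rightarrow\,$(iii) and (iii)$\,\Rightarrow\,$(i). For (i)$\,\Leftrightarrow\,$(ii), I would invoke the isomorphism \eqref{eq:decomposesegoperations}: since $X_\eta$ is discrete it writes $X_{C_n}$ as the coproduct $\coprod_{(x_1,\ldots,x_n,x)\in X_\eta^{n+1}}X(x_1,\ldots,x_n;x)$, and likewise for $Y$. Once $X_\eta\to Y_\eta$ is a bijection, the map $X_{C_n}\to Y_{C_n}$ becomes, under these identifications, the coproduct over the common indexing set of the maps $X(x_1,\ldots,x_n;x)\to Y(f(x_1),\ldots,f(x_n);f(x))$; as a coproduct of maps of simplicial sets is a weak equivalence if and only if each summand is one, this map is a weak equivalence for all $n\geq 0$ exactly when $f$ is fully faithful.

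For (iii)$\,\Rightarrow\,$(i), I would simply specialise condition (iii) to $T=\eta$ and to $T=C_n$. By \ref{def:segalcore} one has $\spine[\eta]=\Omega[\eta]=\eta$, so (iii) yields a simplicial weak equivalence $X_\eta\to Y_\eta$, hence a bijection since both sides are discrete; and the corolla $C_n$ has a single vertex, so $\spine[C_n]=\Omega[C_n]$ and $X^{\spine[C_n]}=X_{C_n}$, whence (iii) forces $X_{C_n}\to Y_{C_n}$ to be a weak equivalence for every $n$. This is precisely (i).

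The substantial implication is (ii)$\,\Rightarrow\,$(iii). Let $T$ be a tree (writing $E(T)$, $V(T)$ for its sets of edges and vertices); the case $V(T)=\varnothing$, i.e. $T=\eta$, is trivial, so assume $V(T)\neq\varnothing$. Using the colimit \eqref{eq:defsegalcore} of $\spine[T]$ over the poset $I(T)$ of edges and vertices of $T$, together with the fact that $A\mapsto X^A$ sends colimits of dendroidal sets to limits of simplicial sets, one identifies $X^{\spine[T]}$ with the limit over $\op{I(T)}$ of the diagram with value $X^{\Omega[\eta]}=X_\eta$ at each edge, value $X^{\Omega[C_{n_v}]}=X_{C_{n_v}}$ at each vertex $v$, and structure maps the restrictions $X_{C_{n_v}}\to X_\eta$ along the edges incident to $v$. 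Because $X_\eta$ is discrete, this (strict) limit decomposes componentwise over the edge-labellings and, by \eqref{eq:decomposesegoperations} once more, splits as
$$X^{\spine[T]}\simeq\coprod_{\varphi\colon E(T)\to X_\eta}\ \prod_{v\in V(T)}X\big(\varphi(e^v_1),\ldots,\varphi(e^v_{n_v});\varphi(e^v)\big)\,,$$
where $e^v_1,\ldots,e^v_{n_v}$ and $e^v$ are the input edges and the output edge of $v$, and likewise for $Y$. The bijection $X_\eta\to Y_\eta$ matches the two indexing sets, and under it each $\varphi$-component of $X^{\spine[T]}\to Y^{\spine[T]}$ is a \emph{finite} product (indexed by $V(T)$) of the maps $X(x_1,\ldots,x_n;x)\to Y(f(x_1),\ldots,f(x_n);f(x))$, each a weak equivalence by full faithfulness. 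Since finite products and arbitrary coproducts of simplicial weak equivalences are again weak equivalences, $X^{\spine[T]}\to Y^{\spine[T]}$ is a weak equivalence, which proves (iii).

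The main obstacle — really the only point needing care — is the identification of $X^{\spine[T]}$ with the displayed coproduct of products: one must unwind the poset $I(T)$, check that the maps out of the vertex-entries are the edge-restrictions, and observe that because the edge-entries $X_\eta$ are discrete the strict limit decomposes componentwise over edge-labellings with no derived correction needed. Everything else — the closure of simplicial weak equivalences under finite products and coproducts, and the one-line specialisations of (iii) to $\eta$ and $C_n$ — is routine.
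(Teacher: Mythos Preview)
Your proof is correct and follows essentially the same approach as the paper's own proof, which is a one-liner invoking the isomorphism \eqref{eq:decomposesegoperations} and the closure of simplicial weak equivalences under small sums, finite products, and retracts. You have simply unpacked this: the decomposition of $X^{\spine[T]}$ as a coproduct over edge-labellings of finite products over vertices is exactly what the paper leaves implicit, and your cycle (i)$\Leftrightarrow$(ii), (ii)$\Rightarrow$(iii), (iii)$\Rightarrow$(i) is the natural way to organise it.
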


\begin{proof}
This follows right away from the isomorphism \eqref{eq:decomposesegoperations}:
everything comes down to the fact that the class
of weak equivalences of simplicial sets is closed under small sums, under
finite products, and under retracts.
\end{proof}

\begin{cor}\label{cor:chartermwisewesegaloper}
A morphism $X\to Y$ between Segal operads is fully faithful and
induces a bijection on objects $X_\eta\simeq Y_\eta$ if and only if, for any
tree $T$, the morphism of simplicial sets $X_T\to Y_T$ is a weak equivalence.
\end{cor}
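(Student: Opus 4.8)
The plan is to read this off directly from Proposition~\ref{prop:segalcoreswecorollas} combined with the defining property of a Segal operad, the only extra ingredient being the two-out-of-three property of weak equivalences of simplicial sets.

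First I would treat the \emph{only if} direction, which in fact does not use that $X$ and $Y$ are Segal operads. Assume that $X_T\to Y_T$ is a weak equivalence of simplicial sets for every tree $T$. Taking $T=\eta$ and recalling that $X_\eta$ and $Y_\eta$ are discrete simplicial sets (this is part of the definition of a preoperad), the map $X_\eta\to Y_\eta$ is a weak equivalence between discrete simplicial sets, hence a bijection. Taking $T=C_n$ for each integer $n\geq 0$ shows that condition~(i) of Proposition~\ref{prop:segalcoreswecorollas} is satisfied, hence so is condition~(ii), i.e.\ $f$ is fully faithful (and bijective on objects).

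For the \emph{if} direction, assume $f$ is fully faithful and induces a bijection $X_\eta\simeq Y_\eta$. By Proposition~\ref{prop:segalcoreswecorollas} (condition~(ii) implies condition~(iii)), the map $X^{\spine[T]}\to Y^{\spine[T]}$ is a weak equivalence of simplicial sets for every tree $T$. Recall the identification $X_T=X^{\Omega[T]}$. If $T$ has at most one vertex, then $\spine[T]=\Omega[T]$ and there is nothing more to prove. If $T$ has at least one inner edge, I would apply the exponential functors to the inclusion $\spine[T]\to\Omega[T]$ to get a commutative square
\begin{equation*}
\xymatrix{
X^{\Omega[T]}\ar[r]\ar[d]&Y^{\Omega[T]}\ar[d]\\
X^{\spine[T]}\ar[r]&Y^{\spine[T]}
}
\end{equation*}
in which the two vertical maps are weak equivalences because $X$ and $Y$ are Segal operads, and the bottom horizontal map is a weak equivalence by the previous step; hence the top horizontal map $X_T\to Y_T$ is a weak equivalence by two-out-of-three. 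Since every tree either has at most one vertex or has at least one inner edge, this covers all cases.

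I do not expect any real obstacle: the genuine content has already been packaged into Proposition~\ref{prop:segalcoreswecorollas} and into the definition of a Segal operad, so the corollary is essentially bookkeeping. The one point to be careful about is precisely the dichotomy just mentioned — trees with at most one vertex (for which the Segal core coincides with the representable) versus trees with at least one inner edge (for which the Segal condition applies) — and the observation that these two classes exhaust all trees, so that the two-out-of-three argument can be invoked exactly when the Segal condition is available.
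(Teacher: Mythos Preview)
Your argument is correct and matches the paper's intended approach: the corollary is an immediate consequence of Proposition~\ref{prop:segalcoreswecorollas} together with the Segal condition and two-out-of-three, and the paper states it without proof for exactly this reason. One small slip: you have interchanged the labels ``if'' and ``only if'' (your first paragraph assumes the termwise weak equivalence condition and deduces full faithfulness plus bijectivity on objects, which is the \emph{if} direction, and conversely for the second paragraph); the mathematics in each paragraph is fine, only the headers are swapped.
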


\begin{cor}\label{cor:segaluptowe}
Let $X\to Y$ be  a morphism of preoperads. Assume that, for any tree $T$, the morphism
of simplicial sets $X_T\to Y_T$ is a weak equivalence. Then $X$ is a Segal operad
if and only if $Y$ is a Segal operad.
\end{cor}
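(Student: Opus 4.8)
The plan is to reduce the statement, by a single application of the $2$-out-of-$3$ property, to Proposition~\ref{prop:segalcoreswecorollas}. Recall that, by definition, $X$ is a Segal operad precisely when, for every tree $T$ with at least one inner edge, the map $X^{\Omega[T]}\to X^{\spine[T]}$ induced by the inclusion $\spine[T]\to\Omega[T]$ is a simplicial weak equivalence, and likewise for $Y$. Hence it suffices to show that, for each such $T$, the map $X^{\Omega[T]}\to X^{\spine[T]}$ is a weak equivalence if and only if $Y^{\Omega[T]}\to Y^{\spine[T]}$ is.

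First I would observe that the hypothesis already forces $X_\eta\to Y_\eta$ to be bijective: taking $T=\eta$, the map $X_\eta\to Y_\eta$ is a simplicial weak equivalence between \emph{discrete} simplicial sets (discreteness of $X_\eta$ and $Y_\eta$ being part of the definition of a preoperad), hence a bijection. Moreover, for every $n\geq 0$ the corolla $C_n$ is a tree, so $X_{C_n}\to Y_{C_n}$ is a weak equivalence by hypothesis. Thus condition~(i) of Proposition~\ref{prop:segalcoreswecorollas} is satisfied, and therefore so is its condition~(iii): for every tree $T$, the map $X^{\spine[T]}\to Y^{\spine[T]}$ is a weak equivalence of simplicial sets.

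Now fix a tree $T$ with at least one inner edge and consider the commutative square
$$\xymatrix{
X^{\Omega[T]}\ar[r]\ar[d]&Y^{\Omega[T]}\ar[d]\\
X^{\spine[T]}\ar[r]&Y^{\spine[T]}
}$$
obtained by applying the functoriality of $A\longmapsto X^A$ (and of $A\longmapsto Y^A$) to the inclusion $\spine[T]\to\Omega[T]$. The top horizontal map is $X_T\to Y_T$, a weak equivalence by assumption; the bottom horizontal map is a weak equivalence by the previous paragraph. By the $2$-out-of-$3$ property of weak equivalences of simplicial sets, the left vertical map is a weak equivalence if and only if the right one is. Since $T$ was an arbitrary tree with at least one inner edge, this is exactly the assertion that $X$ is a Segal operad if and only if $Y$ is, which completes the argument. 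There is no real obstacle here: the proof is entirely formal, and the only point that needs a moment's care is the discreteness observation in the second step, which upgrades the simplicial weak equivalence $X_\eta\to Y_\eta$ to a bijection so that Proposition~\ref{prop:segalcoreswecorollas} applies.
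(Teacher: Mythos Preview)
Your proof is correct and follows essentially the same approach as the paper: the corollary is stated without an explicit proof, being an immediate consequence of Proposition~\ref{prop:segalcoreswecorollas} (specifically the implication (i)$\Rightarrow$(iii)) together with the 2-out-of-3 property applied to the evident commutative square. Your write-up spells out exactly this argument, including the small but necessary observation that the hypothesis forces $X_\eta\to Y_\eta$ to be a bijection.
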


\begin{lemma}\label{lemma:reedyfibreplacesegop}
For any Segal operad $X$, there exists a functorial morphism
of preoperads $X\to R(X)$ such that:
\begin{itemize}
\item[(a)] the preoperad $R(X)$ is a Reedy fibrant Segal operad;
\item[(b)] for any tree $T$, the morphism of simplicial sets $X_T\to R(X)_T$
is a weak equivalence.
\end{itemize}
\end{lemma}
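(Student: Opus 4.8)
The plan is to produce $R(X)$ by a small object argument carefully arranged so as to stay inside the category of preoperads. Let $\mathcal{J}$ be the set of maps of simplicial dendroidal sets of the form
$$\Omega[T]\times\Lambda^k[m]\cup\partial\Omega[T]\times\Delta[m]\To\Omega[T]\times\Delta[m]\, ,$$
where $T$ ranges over the trees \emph{having at least one vertex}, $m\geq 1$, and $0\leq k\leq m$; here the products are formed in $\sdset$ as in \ref{paragr:preliminariessdsets}. Two elementary observations drive the argument. First, since $\partial\Omega[T]\subset\Omega[T]$, each map in $\mathcal{J}$ is, evaluated at any tree $S$, a trivial cofibration of simplicial sets (a coproduct of horn inclusions $\Lambda^k[m]\to\Delta[m]$, together with an identity map). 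Second, evaluated at $\eta$, each map in $\mathcal{J}$ is an \emph{identity}: every edge of a tree $T$ with at least one vertex lies in a proper subtree, so $\partial\Omega[T]_\eta=\Omega[T]_\eta$, and the extra factor $\Lambda^k[m]$ disappears.

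Next I would apply the small object argument to $\mathcal{J}$, factoring the canonical map from $X$ to the terminal preoperad functorially as $X\xrightarrow{\,j_X\,}R(X)\to\ast$, with $j_X$ a transfinite composition of pushouts of maps in $\mathcal{J}$ and $R(X)\to\ast$ having the right lifting property against $\mathcal{J}$. Because colimits in $\sdset$ are computed treewise, the first observation shows that $X_T\to R(X)_T$ is a trivial cofibration of simplicial sets for every tree $T$; this is assertion (b). The second observation shows that $j_X$ is the identity on the $\eta$-component, so $R(X)_\eta=X_\eta$ is discrete, $R(X)$ is again a preoperad, and $j_X$ is a morphism of preoperads. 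Corollary \ref{cor:segaluptowe} then gives that $R(X)$ is a Segal operad, since $X$ is one and $X_T\to R(X)_T$ is a weak equivalence for every $T$.

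It remains to see that $R(X)$ is Reedy fibrant, i.e. that $R(X)^{\Omega[T]}\to R(X)^{\partial\Omega[T]}$ is a Kan fibration for every tree $T$. For $T$ with at least one vertex, the adjunction between $(-)\times\Delta[m]$ and $(-)^{\Delta[m]}$, together with the defining identity $R(X)^{\Omega[T]}=R(X)_T$ of \ref{paragr:preliminariessdsets}, translates the right lifting property of $R(X)$ against the maps of $\mathcal{J}$ attached to $T$ into precisely this Kan fibration condition. For $T=\eta$ we have $R(X)^{\partial\Omega[\eta]}=R(X)^{\varnothing}=\Delta[0]$ while $R(X)^{\Omega[\eta]}=R(X)_\eta$ is a discrete simplicial set, hence a Kan complex, so the map is trivially a Kan fibration. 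Thus $R(X)$ is a Reedy fibrant Segal operad, which is (a), and functoriality of $X\mapsto R(X)$ is built into the small object argument. The one genuinely delicate point — and the reason for the precise shape of $\mathcal{J}$ — is the twofold role of $\eta$: one must \emph{not} include the generating trivial cofibration attached to the tree $\eta$, since it would turn $X_\eta$ into a non-discrete Kan complex and destroy the preoperad property; what makes this omission harmless is exactly that a discrete simplicial set is already fibrant, so nothing is lost for Reedy fibrancy at $\eta$.
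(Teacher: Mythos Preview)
Your proof is correct and follows the same strategy as the paper: a small object argument with respect to the Reedy-type generating trivial cofibrations, followed by Corollary~\ref{cor:segaluptowe} to transport the Segal condition. The one difference is in how the preoperad condition is maintained. The paper pushes the generators through $\gamma_!$ so as to work entirely inside $\preoper$, and then must invoke the explicit pushout description~\eqref{eq:explicitleftadjgamma} of $\gamma_!$ together with the cube lemma to see that the maps are levelwise simplicial weak equivalences. You instead work directly in $\sdset$, restrict to trees with at least one vertex, and observe that the generators are identities at $\eta$ (because $\partial\Omega[T]_\eta=\Omega[T]_\eta$ for such $T$); this keeps $R(X)_\eta$ discrete without ever touching $\gamma_!$, and gives the levelwise trivial cofibration property by a direct coproduct-of-horns computation. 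Your handling of the residual case $T=\eta$ for Reedy fibrancy (discrete implies Kan) is exactly the point the paper leaves implicit. The two arguments are equivalent in spirit; yours is marginally more self-contained.
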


\begin{proof}
We factor the morphism from $X$ to the terminal
preoperad by applying the small object argument
to the set $I$ of maps
$$\gamma_!(\Lambda^k[n]\times\Omega[T]\cup\Delta[n]\times\partial\Omega[T])\to
\gamma_!(\Delta[n]\times\Omega[T])$$
for $n\geq 1$, $0\leq k \leq n$, and any tree $T$.
This gives a morphism of shape $X\to R(X)$ such that the map
from $R(X)$ to the terminal preoperad has the right lifting property with respect to $I$.
Note that all the elements $A\to B$ of $I$ are normal monomorphisms
(this follows from \cite[Lemma 7.4]{dend6}), and have the property that the morphisms
$A_S\to B_S$ are weak equivalences for any tree $S$ (this latter property
comes from the explicit description of the functor $\gamma_!$ \eqref{eq:explicitleftadjgamma},
and from the cube lemma \cite[Lemma 5.2.6]{Ho}).
In particular, for any tree $T$, the morphism $R(X)^{\Omega[T]}\to R(X)^{\partial\Omega[T]}$
is a Kan fibration. The fact that $R(X)$
is a Reedy fibrant Segal operad now comes from Proposition \ref{prop:charreedysegaloperads}
and from Corollary \ref{cor:segaluptowe}.
\end{proof}

\begin{prop}\label{charwesegoperads}
A morphism between Segal operads is a weak equivalence of the model category
of Theorem \ref{thm:cmcreedysegaloperads} if and only if it is fully faithful
and essentially surjective.
\end{prop}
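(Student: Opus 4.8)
The plan is to reduce to the case of Reedy fibrant Segal operads, to recognise such an object through the $\infty$-operad formed by its $0$-simplices, and then to transport the characterisation of weak equivalences of dendroidal sets (Theorem~\ref{thm:charwedsetfullfaithesssurj}) across the Quillen equivalence between $\dset$ and $\preoper$.

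First I would reduce to the fibrant case. Given $f\colon X\to Y$ between Segal operads, apply the functorial Reedy fibrant replacement $R$ of Lemma~\ref{lemma:reedyfibreplacesegop}. By Lemma~\ref{lemma:reedyfibreplacesegop}(b), Corollary~\ref{cor:termwiseweglobalwepreoper}(b) and Corollary~\ref{cor:chartermwisewesegaloper}, the maps $X\to R(X)$ and $Y\to R(Y)$ are weak equivalences of preoperads that are fully faithful and bijective on objects; by Remark~\ref{rem:truncationsegoper} they therefore induce isomorphisms of operads $\pi_0\tau_d(X)\cong\pi_0\tau_d(R(X))$ and $\pi_0\tau_d(Y)\cong\pi_0\tau_d(R(Y))$, compatibly with $f$ and $R(f)$. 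Hence $f$ is a weak equivalence (resp.\ fully faithful, resp.\ essentially surjective in the sense of Definition~\ref{def:fullyfaithfulesssurjsegoper}) if and only if $R(f)$ is, so we may assume that $X$ and $Y$ are Reedy fibrant Segal operads.

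Next I would pass to dendroidal sets. For a Reedy fibrant Segal operad $X$, Proposition~\ref{prop:charreedysegaloperads}(iv) provides, for every tree $T$ with an inner edge $e$, a trivial fibration $X^{\Omega[T]}\to X^{\Lambda^e[T]}$; being a trivial fibration it is surjective on $0$-simplices, and since $(X^A)_0=\Hom_\dset(A,X_0)$ for any dendroidal set $A$, this says precisely that the dendroidal set $X_0$ of $0$-simplices of $X$ is an $\infty$-operad. The inclusion functor $\dset\to\preoper$ of \eqref{eq:embeddsetintopreoper} is a left Quillen equivalence that preserves and detects weak equivalences (Theorem~\ref{thm:cmcreedysegaloperads}); its right adjoint is evaluation at simplicial degree zero, $Z\mapsto Z_0$ (the right adjoint of a constant-diagram functor being the limit, here $\varprojlim_{\op{\Delta}}Z=Z_0$). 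Since $X$ is fibrant, the derived counit at $X$ is a weak equivalence, whence, after replacing $X_0$ by a normal resolution, the canonical comparison $X_0\to X$ is a weak equivalence of preoperads. Applying two-out-of-three to the square relating $X_0\to X$, $Y_0\to Y$, $f$ and $f_0\colon X_0\to Y_0$, and using that $\dset\to\preoper$ detects weak equivalences, one gets that $f$ is a weak equivalence of preoperads if and only if $f_0$ is a weak equivalence of dendroidal sets.

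Finally I would match the invariants and conclude. By Theorem~\ref{thm:charwedsetfullfaithesssurj}, $f_0$ is a weak equivalence if and only if it is fully faithful and essentially surjective in the sense of Definition~\ref{def:equivinftyoperd}, and it remains to identify this with $f$ being fully faithful and essentially surjective in the sense of Definition~\ref{def:fullyfaithfulesssurjsegoper}. The object sets agree, $X_\eta=(X_0)_\eta$, since $X_\eta$ is discrete. For the spaces of operations, $X(x_1,\dots,x_n;x)$ as defined in \ref{def:operationsinsegaloper} is the fibre over $(x_1,\dots,x_n,x)$ of the Kan fibration $X_{C_n}=X^{\Omega[C_n]}\to X^{\partial\Omega[C_n]}=X_\eta^{\,n+1}$ (Reedy fibrancy of $X$), hence a homotopy fibre; comparing this with the homotopy pullback square of Proposition~\ref{prop:charspacesoperationsMap} computing the corresponding space of the $\infty$-operad $X_0$ --- the remaining corners being identified through the mapping-space description of Theorem~\ref{thm:dendroidalrezkcmc}(v) --- produces a weak equivalence $X(x_1,\dots,x_n;x)\simeq X_0(x_1,\dots,x_n;x)$, natural in $f$; this yields the equivalence of the two notions of full faithfulness, and, combined with Corollary~\ref{cor:truncationsegoper}, Remark~\ref{rem:truncationsegoper}, the Remark following Proposition~\ref{prop:charspacesoperationsMap} and the weak equivalence $\Map(\eta,X)\simeq\Map(\eta,X_0)$, the equivalence of the two notions of essential surjectivity as well (both being surjectivity on isomorphism classes of objects). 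The main difficulty in the whole argument lies in this last step together with the weak equivalence $X_0\to X$ of the previous paragraph --- that is, in genuinely identifying a Reedy fibrant Segal operad with the $\infty$-operad of its $0$-simplices --- where the mapping-space results of \cite{dend6} behind Theorem~\ref{thm:dendroidalrezkcmc} are doing the essential work.
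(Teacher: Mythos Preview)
Your reduction to the Reedy fibrant case is exactly what the paper does: embed $f$ in a square with $R(f)$ via Lemma~\ref{lemma:reedyfibreplacesegop}, and use Corollaries~\ref{cor:termwiseweglobalwepreoper}, \ref{cor:chartermwisewesegaloper} and Remark~\ref{rem:truncationsegoper} to transfer both ``weak equivalence'' and ``fully faithful and essentially surjective'' across the horizontal maps. For the Reedy fibrant case the paper then simply cites \cite[Theorem~8.11]{dend6}, whereas you attempt an independent argument via the $\infty$-operad $X_0$ and Theorem~\ref{thm:charwedsetfullfaithesssurj}.

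Your detour is an interesting idea and the first moves are sound: Proposition~\ref{prop:charreedysegaloperads}(iv) does make $X_0$ an $\infty$-operad, and the Quillen equivalence of Theorem~\ref{thm:cmcreedysegaloperads} (with right adjoint $\mathrm{ev}_0$) does give a weak equivalence $X_0\to X$ for $X$ fibrant, whence $f$ is a weak equivalence iff $f_0$ is. The difficulty is the last step, and there your invocation of Theorem~\ref{thm:dendroidalrezkcmc}(v) does not deliver what you need. That statement gives an isomorphism in $\ho(\sdset)$ for the \emph{Rezk} model structure, and Rezk weak equivalences are not termwise simplicial weak equivalences; so it does not by itself produce equivalences $X_{C_n}\simeq\Map_{\dset}(\Omega[C_n],X_0)$ and $X_\eta\simeq\Map_{\dset}(\eta,X_0)$ of individual simplicial sets. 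In fact the second of these is typically false: $X_\eta$ is discrete, while $\Map_{\dset}(\eta,X_0)=k(i^*X_0)$ has one component per isomorphism class of objects, not per object. One can still compare the two homotopy fibres, but this requires constructing a compatible comparison of the two pullback squares and controlling the base change --- which is precisely the content packaged in \cite[Theorem~8.11]{dend6}. So your route does not genuinely bypass that citation; it reorganises the dependence on \cite{dend6} without removing it, and as written the bridge from $f_0$ being fully faithful/essentially surjective in the sense of Definition~\ref{def:equivinftyoperd} to $f$ having those properties in the sense of Definition~\ref{def:fullyfaithfulesssurjsegoper} is not closed.
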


\begin{proof}
Let $f:X\to Y$ be a morphism of preoperads.
Using Corollaries \ref{cor:termwiseweglobalwepreoper} and
\ref{cor:chartermwisewesegaloper}, as well as Remark \ref{rem:truncationsegoper}
and Lemma \ref{lemma:reedyfibreplacesegop},
we can embed $f$ into a commutative square
$$\xymatrix{
X\ar[r]\ar[d]_f &R(X)\ar[d]^{R(f)}\\
Y\ar[r]&R(Y)
}$$
whose horizontal maps are weak equivalences, are fully faithful and bijective
on objects, and induce equivalences of operads
$$\tau_d\pi_0(X)\simeq \tau_d\pi_0(R(X))\quad \text{and}\quad 
\tau_d\pi_0(Y)\simeq \tau_d\pi_0(R(Y))\, .$$
This implies that $f$ is a weak equivalence (is fully faithful and essentiall surjective)
if and only if $R(f)$ has the same property. In other words, it is sufficient
to prove that a morphism between Reedy fibrant Segal operads is a weak equivalence
if and only if it is fully faithful and essentially surjective.
But this is already known: see \cite[Theorem 8.11]{dend6}.
\end{proof}

\begin{thm}\label{thm:truncpreoperleftquillen}
The functor $\tau_d\pi_0=\pi_0\tau_d:\preoper\to\oper$ is
a left Quillen functor which preserves weak equivalences.
\end{thm}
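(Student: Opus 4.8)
The plan is to verify the two halves of the statement separately: that the adjunction $\tau_d\pi_0 : \preoper \rightleftarrows \oper : \iota\nerf_d$ (where $\iota$ is the inclusion of $\oper$ into constant simplicial operads and then $\nerf_d$ into preoperads — equivalently, the right adjoint sends $P$ to the preoperad $\nerf_d(P)$ viewed with discrete object set) is a Quillen pair for the model structure of Theorem~\ref{thm:cmcreedysegaloperads} on $\preoper$ and the model structure of Theorem~\ref{thm:folkcmcoperads} on $\oper$; and that $\tau_d\pi_0$ sends weak equivalences of preoperads to weak equivalences of operads. The equality $\tau_d\pi_0 = \pi_0\tau_d$ has already been established (it is implicit in Corollary~\ref{cor:truncationsegoper} and the proof of Proposition~\ref{prop:segalconditionlevelzero}, via comparison of universal properties), so I may use either description as convenient.

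First I would check that $\tau_d\pi_0$ is left Quillen. Since the cofibrations of $\preoper$ are generated by the explicit set described in Remark~\ref{rem:genreedycofpreoper} (the map $\varnothing\to\eta$ together with the maps $\gamma_!(\partial\Delta[n]\times\Omega[T]\cup\Delta[n]\times\partial\Omega[T])\to\gamma_!(\Delta[n]\times\Omega[T])$ for trees $T$ with at least one vertex), it suffices to show $\tau_d\pi_0$ carries each generating cofibration to a cofibration of operads, i.e. to a map injective on objects. The map $\varnothing\to\eta$ clearly goes to a cofibration. For the generators built from a tree $T$ with a vertex: applying $\pi_0:\preoper\to\dset$ and then $\tau_d:\dset\to\oper$, one uses that $\pi_0$ commutes with the relevant colimits and with finite products (Lemma~\ref{lemma:segalconditionlevelzero} is the key computation of this type), and that $\tau_d$ is a left adjoint, so the pushout defining $\gamma_!$ is preserved; a direct inspection shows the resulting map of operads is the identity on objects when $n\geq 1$ and injective on objects in general. (Alternatively, since $\tau_d\pi_0$ is a left adjoint, it is enough to check that its right adjoint $P\mapsto\nerf_d(P)$ sends fibrations to fibrations and trivial fibrations to trivial fibrations; a trivial fibration of operads is in particular a local trivial fibration which is surjective on objects, and $\nerf_d$ of such a map has the right lifting property against the generating normal monomorphisms by the characterization of the essential image of $\nerf_d$ together with Proposition~\ref{prop:segcoreinneranod}, while for isofibrations one uses that $\nerf_d$ preserves isofibrations by Proposition~\ref{prop:adjquillendsetoper}.) Either route establishes the Quillen pair.

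Second, I would show $\tau_d\pi_0$ preserves weak equivalences. Every weak equivalence of preoperads can be compared, using Lemma~\ref{lemma:reedyfibreplacesegop} and the two-out-of-three property, to a weak equivalence between Reedy fibrant Segal operads; more precisely, it is enough to treat the case of a weak equivalence $f:X\to Y$ of preoperads which is a weak equivalence between Segal operads (every object of $\preoper$ maps by a weak equivalence, via the small-object argument of Lemma~\ref{lemma:reedyfibreplacesegop}, to a Reedy fibrant Segal operad, and this replacement is levelwise a simplicial weak equivalence, hence preserved after $\pi_0$ up to weak equivalence — here one invokes Corollary~\ref{cor:chartermwisewesegaloper} and Corollary~\ref{cor:segaluptowe}). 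By Proposition~\ref{charwesegoperads}, a weak equivalence between Segal operads is fully faithful and essentially surjective in the sense of Definition~\ref{def:fullyfaithfulesssurjsegoper}. Then Remark~\ref{rem:truncationsegoper} says exactly that $\pi_0\tau_d(f)$, which has objects $X_\eta$ and operation sets $\pi_0(X(x_1,\ldots,x_n;x))$, is fully faithful (since each $X(x_1,\ldots,x_n;x)\to Y(\ldots)$ is a simplicial weak equivalence, hence a $\pi_0$-bijection) and essentially surjective by definition; that is, $\tau_d\pi_0(f)$ is a fully faithful and essentially surjective morphism of operads, i.e. a weak equivalence for the model structure of Theorem~\ref{thm:folkcmcoperads}.

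**Main obstacle.** The only delicate point is the compatibility of the replacement $X\mapsto R(X)$ of Lemma~\ref{lemma:reedyfibreplacesegop} with the reduction to Segal operads: I need that the functor $\pi_0\tau_d$ sends a general weak equivalence of preoperads to a weak equivalence, but Proposition~\ref{charwesegoperads} and Remark~\ref{rem:truncationsegoper} only directly analyze morphisms between Segal operads. Bridging this requires choosing, functorially, a weak equivalence from an arbitrary preoperad to a Reedy fibrant Segal operad that is moreover levelwise (over each tree) a simplicial weak equivalence, so that the induced map after applying $\pi_0\tau_d$ is controlled; Lemma~\ref{lemma:reedyfibreplacesegop} provides precisely such a replacement for Segal operads, and one must first pass from an arbitrary preoperad to a Segal operad — but note a \emph{cofibrant} replacement followed by a fibrant replacement in the model structure of Theorem~\ref{thm:cmcreedysegaloperads} lands among Reedy fibrant Segal operads, and the composite is a weak equivalence, so by two-out-of-three and naturality the claim for general $f$ reduces to the Segal-operad case handled above. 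Making this last reduction precise (i.e. that a weak equivalence between fibrant Segal operads obtained this way is indeed levelwise simplicial, via Corollary~\ref{cor:chartermwisewesegaloper}) is the part that needs care, but it uses no new ideas beyond the results already assembled in Sections~\ref{sect:5} and~\ref{sect:6}.
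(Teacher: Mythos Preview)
Your treatment of the left Quillen part and of weak equivalences between Segal operads is correct and matches the paper's argument (the paper in fact handles cofibrations in one line: $\tau_d\pi_0$ does not change object sets, and normal monomorphisms are injective on $\eta$-components). The genuine gap is in your reduction from arbitrary preoperads to Segal operads, and you have correctly located it under ``Main obstacle'' but not resolved it.

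The difficulty is circularity. You propose to replace a weak equivalence $f:X\to Y$ by $\tilde f:\tilde X\to\tilde Y$ between Reedy fibrant Segal operads via cofibrant/fibrant replacement, then invoke two-out-of-three. But two-out-of-three requires knowing that $\tau_d\pi_0$ sends the replacement maps $X\to\tilde X$ and $Y\to\tilde Y$ to weak equivalences---and these are exactly weak equivalences of preoperads whose source is \emph{not} a Segal operad, which is what you are trying to prove. The replacement of Lemma~\ref{lemma:reedyfibreplacesegop} does not help here either: it is built from the maps of type $(\alpha)$ only, and while $X\to R(X)$ is levelwise a simplicial weak equivalence (hence sent to an isomorphism by $\tau_d\pi_0$), Corollary~\ref{cor:segaluptowe} then forces $R(X)$ to be a Segal operad only if $X$ already was.

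The paper breaks the circularity by a different construction: it applies the small object argument to \emph{two} classes of maps, the class $(\alpha)$ (the horn-type maps from Lemma~\ref{lemma:reedyfibreplacesegop}) together with a second class $(\beta)$ of maps $\gamma_!(\partial\Delta[n]\times\Omega[T]\cup\Delta[n]\times\Lambda^e[T])\to\gamma_!(\Delta[n]\times\Omega[T])$. The $(\beta)$-maps are what force the Segal condition, and they are inner anodyne in each simplicial degree, so $\tau_d$ sends them to isomorphisms degreewise. The resulting functorial map $X\to L(X)$ therefore lands in Reedy fibrant Segal operads \emph{and} is sent to an isomorphism by $\tau_d\pi_0$, after which your argument (and the paper's) for the Segal-operad case applies. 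This is a genuinely new ingredient, not a consequence of the results already assembled.
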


\begin{proof}
It is clear that this functor sends normal monomorphisms
to morphisms of operads which are injective on objects.
Therefore it is sufficient to prove that this functor
preserves weak equivalences. Note that, by Remark \ref{rem:truncationsegoper},
if $f:X\to Y$ is a morphism of Segal operads which is fully faithful and essentially
surjective, then its image by $\tau_d\pi_0=\pi_0\tau_d$ is
fully faithful and essentially surjective. In other words, by virtue of the preceding proposition,
we already know that this functor preserves weak equivalences between Segal operads.
To finish the proof, it is sufficient to construct, for any preoperad $X$, a functorial
weak equivalence into a Reedy fibrant Segal operad $X\to L(X)$ whose image by $\tau_d\pi_0=\pi_0\tau_d$
is an isomorphism. For this, we will consider two classes of morphisms.
The first class consists of morphisms
\begin{equation}\label{alpha}\tag{$\alpha$}
\gamma_!(\Lambda^k[n] \times\Omega[T]\cup \smp{n}\times\partial\Omega[T])\To
\gamma_!(\smp{n}\times\Omega[T]) 
\end{equation}
for any tree $T$ and any integers $n>0$, $0\leq k\leq n$,
while the second is the class of maps
\begin{equation}\label{beta}\tag{$\beta$}
\gamma_!(\bord\Delta[n] \times\Omega[T]\cup \smp{n}\times\Lambda^e[T])\To
\gamma_!(\smp{n}\times\Omega[T])
\end{equation}
for any tree $T$ with inner edge $e$, and any integer $n\geq 0$.
A preoperad $X$ is a Reedy fibrant Segal operad if and only if
the morphism from $X$ to the terminal object
has the right lifting property with respect to maps of shape
\eqref{alpha} and \eqref{beta}: this is a translation of
Proposition \ref{prop:charreedysegaloperads} by a standard adjunction
argument. We define $L(X)$ by applying the small object argument
to the class of maps of shapes \eqref{alpha} and \eqref{beta}.
The explicit description of the functor $\gamma_!$ \eqref{eq:explicitleftadjgamma},
\cite[Lemma 7.4]{dend6}, and the cube lemma \cite[Lemma 5.2.6]{Ho}, show that
any map of shape \eqref{alpha} is a normal monomorphism $A\to B$
such that, for any tree $S$, the map $A_S\to B_S$ is a trivial
cofibration of simplicial sets. The same arguments show that
any map of shape \eqref{beta} is a normal monomorphism $A\to B$
such that, for any integer $n\geq 0$, the map $A_n\to B_n$ is a trivial
cofibration of dendroidal sets (even an inner anodyne extension).
In particular, by Corollary \ref{cor:termwiseweglobalwepreoper},
any map of shape \eqref{alpha} or \eqref{beta} is a trivial cofibration of preoperads.
This imply that the map $X\to L(X)$ is indeed a fibrant resolution of $X$.
On the other hand, if a morphism of preoperad $A\to B$ has the property that,
for any tree $S$, the map $A_S\to B_S$ is a weak equivalence of simplicial sets, then
the morphism of operads $\tau_d\pi_0(A)\to\tau_d \pi_0(B)$ is an isomorphism,
while, if it has the property that, for any integer $n\geq 0$, the map $A_n\to B_n$ is a
weak equivalence of dendroidal sets, then the map $\pi_0\tau_d(A)\to\pi_0\tau_d(B)$
is an isomorphism (see Proposition \ref{prop:adjquillendsetoper}).
In particular, the functor $\tau_d\pi_0=\pi_0\tau_d$ sends any map
\eqref{alpha} or \eqref{beta} to an isomorphism. As this functor preserves colimits,
this implies that it sends any transfinite composition of pushouts of maps
of shape \eqref{alpha} or \eqref{beta}, such as $X\to L(X)$, to an isomorphism.
\end{proof}

\begin{rem}\label{rem:twotaud}
We have the following (essentially) commutative diagram of categories.
\begin{equation}\label{eq:twotauds1}
\begin{split}\xymatrix{
\dset\ar[rr]^{\text{inclusion}}\ar[dr]_{\tau_d}&&\preoper\ar[dl]^{ \quad \tau_d\pi_0=\pi_0\tau_d}\\
&\oper&
}\end{split}
\end{equation}
The embedding $\dset\subset\preoper$ is a left Quillen equivalence which preserves
weak equivalences (\ref{thm:cmcreedysegaloperads}). Therefore,
Proposition \ref{prop:adjquillendsetoper} and Theorem \ref{thm:truncpreoperleftquillen}
give rise to the following (essentially) commutative diagram of homotopy categories.
\begin{equation}\label{eq:twotauds2}
\begin{split}\xymatrix{
\ho(\dset)\ar[rr]^{\simeq}\ar[dr]_{\tau_d}&&\ho(\preoper)\ar[dl]^{\quad \tau_d\pi_0=\pi_0\tau_d}\\
&\ho(\oper)&
}\end{split}
\end{equation}
This has the following technical but useful consequence.
\end{rem}

\begin{cor}\label{cor:liftingisosegoperads}
Let $J_d$ be the dendroidal nerve of the contractible groupoid
with set of objects $\{0,1\}$. If $X$ is a Reedy fibrant Segal operad,
then any isomorphism in the underlying category of $\tau_d\pi_0(X)$ comes from a morphism
of preoperads $J_d\to X$.
\end{cor}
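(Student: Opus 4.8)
The plan is to translate the statement, via the adjunction $\tau_d\pi_0\dashv\nerf_d$ of \ref{def:segalnerve}, into a lifting problem against the canonical projection $q\colon X\to\pi_0(X)$, and then to prove that $q$ is a fibration of preoperads. An isomorphism $b\colon y\to y'$ in the underlying category of the operad $\tau_d\pi_0(X)$ is exactly, by the adjunction $j_!\dashv j^*$, a morphism of operads $\beta\colon j_!(J)\to\tau_d\pi_0(X)$, where $J$ is the contractible groupoid, identified with the operad $j_!(J)$, so that $J_d=\nerf_d(j_!(J))$. The functor $\tau_d\pi_0=\pi_0\tau_d\colon\preoper\to\oper$ is left Quillen (Theorem \ref{thm:truncpreoperleftquillen}) and left adjoint to $\oper\to\preoper$, $P\mapsto\nerf_d(P)$ (the restriction to $\oper\subset\soper$ of \eqref{eq:dendnervepreoperads}). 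Let $\bar g\colon J_d\to\nerf_d\tau_d\pi_0(X)$ be the transpose of $\beta$; by Corollary \ref{cor:truncationsegoper} there is a canonical identification $\nerf_d\tau_d\pi_0(X)\cong\pi_0(X)$, under which $\bar g$ becomes a map $J_d\to\pi_0(X)$ and the unit $X\to\nerf_d\tau_d\pi_0(X)$ becomes the projection $q\colon X\to\pi_0(X)$ induced by the maps $X_T\to\pi_0(X_T)$. A routine chase with the triangle identities shows that, for a morphism of preoperads $g\colon J_d\to X$, one has $\tau_d\pi_0(g)=\beta$ if and only if $q\circ g=\bar g$; so it is enough to lift $\bar g$ through $q$.

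To set up the lift, I would note first that the inclusion $\eta\to J_d$ of an object is a trivial cofibration of preoperads: it is a normal monomorphism (both $\eta$ and $J_d$ are normal), and it is $\nerf_d$ applied to the equivalence of categories $\eta\to j_!(J)$, so by Proposition \ref{prop:adjquillendsetoper} it is a weak equivalence of dendroidal sets, hence of preoperads (Theorem \ref{thm:cmcreedysegaloperads}). Since $\tau_d$, $\pi_0$ and $\nerf_d$ do not change objects and $q$ is a bijection on objects, the object $z:=\bar g(0)$ of $\pi_0(X)$ is an object of $X$, and the square with top $\eta\xrightarrow{z}X$, left $\eta\to J_d$, right $q$, and bottom $\bar g$, commutes. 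Hence, as soon as $q$ is a fibration, there is a diagonal filler $g\colon J_d\to X$ with $q g=\bar g$, and by the previous paragraph $\tau_d\pi_0(g)=\beta$, i.e. $b$ comes from $g$.

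It remains to prove that $q\colon X\to\pi_0(X)$ is a fibration, which is the heart of the matter. The target $\pi_0(X)=\nerf_d\tau_d\pi_0(X)$ is a Reedy fibrant Segal operad, the dendroidal nerve of an operad being levelwise discrete (whence every matching map is a Kan fibration) and strictly Segal. Since the model structure of Theorem \ref{thm:cmcreedysegaloperads} is a left Bousfield localization of a Reedy-type model structure on $\preoper$ (see \cite{dend6}), a Reedy fibration whose target is a Reedy fibrant Segal operad is automatically a fibration there; so it suffices to check that $q$ is a Reedy fibration, i.e. that for every tree $T$ the map $X^{\Omega[T]}\to\pi_0(X)^{\Omega[T]}\times_{\pi_0(X)^{\partial\Omega[T]}}X^{\partial\Omega[T]}$ is a Kan fibration. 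Here $X^{\Omega[T]}\to X^{\partial\Omega[T]}$ is a Kan fibration because $X$ is a Reedy fibrant Segal operad (so every $X^A$ with $A$ normal is a Kan complex and $X^B\to X^A$ is a Kan fibration for each normal monomorphism $A\to B$), while $\pi_0(X)^{\Omega[T]}=\pi_0(X_T)$ and $\pi_0(X)^{\partial\Omega[T]}$ are discrete; a short connectedness argument — each $\Lambda^k[m]$ and each $\Delta[m]$ is connected, so a Kan filler automatically lies in the connected component dictated by the discrete factor — then gives the claim.

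I expect the main obstacle to be this last step: verifying that $q$ is a Reedy fibration, and, more importantly, having at hand the precise description of the fibrations of the Segal operad model structure of \cite{dend6} (that between fibrant objects they are exactly the Reedy fibrations) on which the reduction rests. The rest — the adjunction bookkeeping in the first paragraph and the trivial-cofibration statement in the second — is formal.
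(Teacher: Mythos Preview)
Your strategy is different from the paper's, and the hard step you single out is indeed where the substance lies.

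The paper's proof is a two-line reduction to the case of $\infty$-operads. Since $J_d$ is cofibrant in $\preoper$ and $X$ is fibrant, it suffices to produce the map $J_d\to X$ at the level of $\ho(\preoper)$. Now the inclusion $\dset\to\preoper$ is a left Quillen equivalence (Theorem~\ref{thm:cmcreedysegaloperads}); its right adjoint is evaluation at simplicial level $0$, so the $\infty$-operad $X_0$ represents $X$ in $\ho(\dset)$, and a map $J_d\to X$ of preoperads is literally the same thing as a map $J_d\to X_0$ of dendroidal sets. The commutative triangle~\eqref{eq:twotauds2} then identifies $\tau_d\pi_0(X)$ with $\tau_d(X_0)$ in $\ho(\oper)$, and the analogous lifting statement for $\infty$-operads is \cite[Proposition~6.20]{dend3}. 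No analysis of fibrations in $\preoper$ is needed.

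Your route---showing that $q\colon X\to\pi_0(X)$ is a fibration and then lifting against the trivial cofibration $\eta\to J_d$---is perfectly sensible, and your verification that $q$ is a Reedy fibration (via the connectedness of horns and simplices) is correct. The genuine obstacle is the one you flag yourself: you need to know that, in the model structure of Theorem~\ref{thm:cmcreedysegaloperads}, a Reedy fibration between fibrant objects is already a fibration. This amounts to knowing that the Segal model structure on $\preoper$ is a left Bousfield localization of a Reedy-type model structure on $\preoper$. That is not stated in the present paper, and while it is in the spirit of \cite{dend6}, you would have to locate or prove it. Note that you cannot bypass this by working in $\sdset$: Reedy fibrant Segal operads are in general \emph{not} fibrant in the Rezk model structure on $\sdset$ (they need not be complete), so the known localization description there does not apply directly.

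There is an easy way to short-circuit this: because $J_d$ and $\pi_0(X)$ are constant simplicial objects, your lifting square in $\preoper$ is exactly the lifting square
\[
\xymatrix{\eta\ar[r]\ar[d]&X_0\ar[d]^{q_0}\\ J_d\ar[r]&\pi_0(X)}
\]
in $\dset$. At this point one is essentially back to the paper's argument: $X_0$ is an $\infty$-operad (Proposition~\ref{prop:charreedysegaloperads} gives the inner horn lifting at level $0$), and the problem becomes one about $\tau_d(X_0)$ versus $\tau_d\pi_0(X)$, resolved by diagram~\eqref{eq:twotauds2} and \cite[Proposition~6.20]{dend3}. So your approach and the paper's converge once one unwinds the constant-simplicial-object observation; the paper's packaging simply avoids the detour through fibrations of $\preoper$.
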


\begin{proof}
This immediately follows from the fact that $J_d$ is cofibrant and from the
commutative diagram \eqref{eq:twotauds2}, because the analogous
property for $\infty$-operads is known to hold; see \cite[Proposition 6.20]{dend3}.
\end{proof}

\section{Segal operads: the tame model category structure}\label{sect:7}

\begin{paragr}\label{paragr:enrichedfreeoperads}
For a simplicial set $K$ and a tree $T$, we denote by $T[K]$ the free
simplicial operad generated by the tree $T$ and decorated by $K$.
To be more precise, we define this first in the case where $T$ has at most
one vertex. For $T=\eta$, we just get $T[K]=\eta$, while, for $T=C_n$
a corolla, $T[K]=C_n[K]$ (see \ref{paragr:defcofandtrivcof}).
For a general tree $T$, this defines a functor
$$F(K,T):I(T)\to \soper \ , \quad (S\subset T)\longmapsto F(K,T)(S)=S[K]\, ,$$
where $I(T)$ is the partially ordered set of subtrees of $T$ with at most
one vertex; see \ref{def:segalcore}. The simplicial operad $T[K]$ is simply
the following colimit:
\begin{equation}\label{eq:defdecoratedtree}
T[K]=\varinjlim_{I(T)}F(K,T)\, .
\end{equation}
The latter can be described very concretely: its objects
are those of $T$ (i.e. the edges of $T$), while the spaces of operations
$T[K](e_1,\ldots,e_n;e)$ can be described by the formula
\begin{equation}\label{eq:explicitenrichedtreeK}
T[K](e_1,\ldots,e_n;e)=\coprod_{S\in T(e_1,\ldots,e_n;e)}K^{V(S)}\, ,
\end{equation}
where $T(e_1,\ldots,e_n;e)$ denotes the set of subtrees of $T$
whose input edges are exactly $e_1,\ldots,e_n$ and whose output edge is $e$,
and where $V(S)$ denotes the set of vertices of $S$. The composition
is defined by grafting trees and by the fact that, if a tree $S$ is obtained by
grafting the trees $S_1,\ldots,S_k$, then there is a canonical bijection
$V(S)\simeq V(S_1)\amalg\dots\amalg V(S_k)$, which induces an isomorphism
$$K^{V(S_1)}\times\dots\times K^{V(S_k)}\simeq K^{V(S)}\, .$$
We obtain a functor
\begin{equation}
\begin{split}
\begin{aligned}
\sset\times\Omega&\to\soper\\
(K,T)&\longmapsto T[K]\, .
\end{aligned}
\end{split}
\end{equation}

We also define a functor
\begin{equation}\label{eq:preoperadenrichment}
\begin{split}
\begin{aligned}
\sset\times\Omega&\to\preoper\\
(K,T)&\longmapsto\Omega[K,T]
\end{aligned}
\end{split}
\end{equation}
as follows. For a simplicial set $K$ and a tree $T$, the
simplicial dendroidal set $\Omega[K,T]$ fits into the following
pushout.
\begin{equation}\label{eq:defomegaKT}
\begin{split}\xymatrix{
\coprod_{e\in E(T)}K\times \eta\ar[r]\ar[d]&K\times \Omega[T]\ar[d]\\
\coprod_{e\in E(T)}\Delta[0]\times\eta\ar[r]&\Omega[K,T]
}\end{split}\end{equation}
For a \emph{$0$-connected simplicial set $K$}, we have the formula:
\begin{equation}\label{eq:newnotationgamma}
\Omega[K,T]\simeq\gamma_!(K\times\Omega[T]) \, .
\end{equation}
Therefore, if $K$ is $0$-connected, a map from $\Omega[K,T]$
to a preoperad $X$ is simply a morphism of simplicial sets $K\to X_T$.
In the general case, the maps out of $\Omega[K,T]$ are still understandable:
for a preoperad $X$, the datum of a map $\Omega[K,T]\to X$
is equivalent to a pair $(v,f)$, where $v$
is a function from the set of edges $E(T)$ to $X_\eta$,
while $f$ is a morphism of simplicial sets from $K$ to $X_T$,
such that the following square commutes.
$$\xymatrix{
K\ar[r]^f\ar[d]&X_T\ar[d]\\
\Delta[0]\ar[r]^v&X_\eta^{E(T)}
}$$

Finally, these constructions may be compared as follows.
\end{paragr}

\begin{prop}\label{prop:computenerveenrichedtrees}
For any simplicial set $K$ and any tree $T$,
there is a canonical isomorphism of operads
$$\tau_d(\Omega[K,T]) \simeq T[K]\, .$$
If moreover $T$ has at most one vertex, then
the unit map
$$\Omega[K,T]\to\nerf_d(T[K])$$
is an isomorphism.
\end{prop}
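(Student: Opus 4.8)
The plan is to prove the two assertions separately, beginning with the identification $\tau_d(\Omega[K,T])\simeq T[K]$, which should hold in full generality. The functor $\tau_d:\preoper\to\oper^{\op\Delta}$ (or rather, here, into $\oper$ since $\Omega[K,T]$ is a preoperad and $\tau_d$ of a preoperad is a simplicial operad, and we are identifying the resulting simplicial operad) is a left adjoint, hence preserves colimits; by construction so is $K\longmapsto T[K]$ for fixed $T$ in the sense that both sides of \eqref{eq:defomegaKT} and \eqref{eq:defdecoratedtree} are built from colimits. First I would apply $\tau_d$ to the pushout square \eqref{eq:defomegaKT} defining $\Omega[K,T]$. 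Since $\tau_d$ preserves colimits and $\tau_d(\Delta[0]\times\eta)=\tau_d(\eta)=\eta$, $\tau_d(\coprod_{e\in E(T)}K\times\eta)=\coprod_{e\in E(T)}\tau_d(K\times\eta)$, and $\tau_d(K\times\Omega[T])$, the image of that square is again a pushout in $\oper$. It then remains to identify $\tau_d(K\times\Omega[T])$, and to check that the resulting pushout coincides with $T[K]$. For the first point, one reduces (by colimit-preservation in $K$, writing $K$ as a colimit of its simplices, and using that $\tau_d$ of a simplicial set times $\Omega[T]$ is computed degreewise) to understanding $\tau_d(\Delta[n]\times\Omega[T])$; alternatively, and more cleanly, I would verify the isomorphism by checking that both $\tau_d(\Omega[K,T])$ and $T[K]$ corepresent the same functor on $\oper$, namely $P\longmapsto \{(v,f)\}$ with $v:E(T)\to\ob P$ and $f$ a compatible morphism $K\to \nerf_d(P)_T$ — using the explicit description of maps out of $\Omega[K,T]$ given just before the Proposition, together with the adjunction $(\tau_d,\nerf_d)$. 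That $T[K]$ corepresents this same functor follows from its defining colimit \eqref{eq:defdecoratedtree} over $I(T)$ and the Segal-core presentation $\spine[T]=\varinjlim_{I(T)}\Omega[S]$: a map $T[K]\to P$ is a compatible family of maps $S[K]\to P$ for $S\in I(T)$, i.e. on edges a coloring and on each corolla-subtree a map $K\to P(\ldots)$, glued along the edges.

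For the second assertion, assume $T$ has at most one vertex, so $T=\eta$ or $T=C_n$. The case $T=\eta$ is immediate: $\Omega[K,\eta]=\eta$ by \eqref{eq:defomegaKT} (the square becomes $K\times\eta=K\times\eta$ on the top and collapses), and $\eta[K]=\eta=\nerf_d(\eta)$. For $T=C_n$ I would argue that the unit map $\Omega[K,C_n]\to\nerf_d(\tau_d(\Omega[K,C_n]))=\nerf_d(C_n[K])$ is an isomorphism by inspecting both sides levelwise. By the first part and the defining colimit, $\tau_d(\Omega[K,C_n])=C_n[K]$, whose only nonidentity operations in simplicial degree $m$ are $C_n[K]_m(1,\ldots,n;0)=K_m$ (see \eqref{eq:defreprhomoper1}); one then checks that $\nerf_d$ applied to such a ``corolla-shaped'' operad reproduces exactly $\Omega[K,C_n]$, because $\nerf_d(C_n[K])_S=\Hom_\oper(S,C_n[K])$ and a map from a tree $S$ into an operad all of whose nonidentity operations are $n$-ary from the inputs to the output forces $S$ to be a corolla (or a copy of $\eta$, or a degeneracy thereof), matching the pushout \eqref{eq:defomegaKT} describing $\Omega[K,C_n]_S$. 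Concretely: $\nerf_d(C_n[K])_{C_m}$ is $K$ when $C_m=C_n$ (any labelling of the $n+1$ edges is determined, and the vertex must be labelled by an element of $K$) and is a disjoint union of copies of $\Delta[0]$ indexed by edge-labellings otherwise, which is precisely what \eqref{eq:defomegaKT} yields.

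The main obstacle, I expect, is the bookkeeping in the first assertion: verifying that applying $\tau_d$ to \eqref{eq:defomegaKT} really produces $T[K]$ requires an honest identification of $\tau_d(K\times\Omega[T])$ and of how the gluing along the edges $E(T)$ interacts with the free-operad structure — in other words, that the pushout in $\oper$ of $\coprod_{e}\tau_d(K\times\eta)\leftarrow\tau_d(K\times\Omega[T])$ along $\coprod_e\eta$ is the colimit \eqref{eq:defdecoratedtree} over $I(T)$. This is essentially the statement that $\tau_d$ commutes with the Segal-core decomposition up to collapsing the redundant edge-copies, and is plausibly where a small but genuine computation (or a careful appeal to \cite[Corollary 2.7]{dend6} on the strict Segal condition, as used in Remark \ref{rem:segalcondsimploper}) is needed. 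Everything else is formal adjunction and colimit manipulation, plus the elementary observation about which trees admit maps into a ``corolla operad''.
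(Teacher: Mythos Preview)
Your approach is correct in spirit but takes a longer route than the paper's, and the difference is instructive. You tackle the general identification $\tau_d(\Omega[K,T])\simeq T[K]$ first and then the special case; the paper reverses this order, and thereby dissolves exactly the ``main obstacle'' you flag at the end. Concretely, the paper first proves the second assertion by the same kind of explicit levelwise computation you sketch for $T=\eta$ and $T=C_n$ (though your description of $\nerf_d(C_n[K])_S$ is incomplete: $S$ need not be a corolla or $\eta$ or a degeneracy thereof --- it can be any tree obtained by grafting linear trees onto $C_n$, and the linear case also requires care when $n=1$). With the at-most-one-vertex case in hand, the general case is then a one-liner: since $\tau_d$ sends the Segal core inclusion $\spine[T]\hookrightarrow\Omega[T]$ to an isomorphism and preserves colimits, one has
\[
\tau_d(\Omega[K,T])\ \simeq\ \varinjlim_{S\in I(T)}\tau_d(\Omega[K,S])\ \simeq\ \varinjlim_{S\in I(T)}S[K]\ =\ T[K]\,,
\]
the middle isomorphism being the already-established special case. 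This entirely bypasses the need to identify $\tau_d(K\times\Omega[T])$ directly or to run a corepresentability argument. Your corepresentability route would also go through (it amounts to the strict Segal condition for $\nerf_d(P)$, which is exactly the content of Remark~\ref{rem:segalcondsimploper}), but the paper's ordering makes the whole proof essentially free once the corolla case is done.
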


\begin{proof}
Let us prove first that, for any tree $T$ with at most one vertex, we have a canonical isomorphism
$\Omega[K,T]\simeq \nerf_d(T[K])$.
If $T=\eta$, there is nothing to check, while, if $T=C_n$ is a corolla ($n\geq 0$),
we proceed as follows. Given a tree $S$, we see (by considering first the case where $K$
is a set with a single element, and then by using the pushout square \eqref{eq:defomegaKT}) that
$$\Omega[K,C_n]_S=\begin{cases}
K\quad \text{if $S$ is obtained by grafting linear trees with $C_n$ and $n\neq 1$,}\\
\ob C_n\amalg(K\times \{\text{surjections $S\to C_n$}\}) \quad \text{if $S$ is a linear tree,}\\
\varnothing \quad \text{otherwise.}
\end{cases}$$
It is easy to see that the set $\nerf_d(C_n[K])_S$ admits the same description, which
means that we have natural isomorphisms $\Omega[K,C_n]\simeq \nerf_d(C_n[K])$.
The inclusions $\spine[T]\to\Omega[T]$ being sent by $\tau_d$ to isomorphims,
we see that we have a natural isomorphism
$$\varinjlim_{S\in I(T)} \tau_d(\Omega[K,S])\simeq\tau_d(\Omega[K,T])\, ,$$
which ends the proof, by construction of $T[K]$ \eqref{eq:defdecoratedtree}.
\end{proof}

\begin{paragr}\label{paragr:enrichedsegalcores}
For a simplicial set $K$ and a tree $T$, we define the \emph{Segal core of $\Omega[K,T]$}
as the following colimit.
\begin{equation}\label{eq:defenrichedsegalcore}
\spine[K,T]=\varinjlim_{S\in I(T)}\Omega[K,S]
\end{equation}
We immediately see that we have the following canonical pushout
of simplicial dendroidal sets.
\begin{equation}\label{eq:pushoutspineKT}
\begin{split}\xymatrix{
\coprod_{e\in E(T)}K\times \eta\ar[r]\ar[d]&K\times \spine[T]\ar[d]\\
\coprod_{e\in E(T)}\Delta[0]\times\eta\ar[r]&\spine[K,T]
}\end{split}\end{equation}
With this description, it follows that the canonical map
\begin{equation}\label{eq:embedenrichedsegalcores}
\spine[K,T]\to\Omega[K,T]
\end{equation}
is a monomorphism between normal preoperads.
For any monomorphism of simplicial sets $K\to L$, one obtains
a commutative square
\begin{equation}\label{eq:fctembedenrichedsegalcores}
\begin{split}
\xymatrix{
\spine[K,T]\ar[r]\ar[d]&\Omega[K,T]\ar[d]\\
\spine[L,T]\ar[r]&\Omega[L,T]
}\end{split}
\end{equation}
which is easily seen to be cartesian. This induces a map
\begin{equation}\label{eq:fctembedenrichedsegalcoresbis}
\spine[L,T]\cup\Omega[K,T]\to\Omega[L,T]
\end{equation}
which is also a monomorphism between normal preoperads.
\end{paragr}

\begin{prop}\label{prop:enrichedsegalcorestrivialcof}
The maps \eqref{eq:embedenrichedsegalcores}
and \eqref{eq:fctembedenrichedsegalcoresbis} are
trivial cofibrations.
\end{prop}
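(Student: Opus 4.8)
The plan is to deduce both assertions from Corollary~\ref{cor:termwiseweglobalwepreoper}(a), using that the Segal core inclusion $\spine[T]\to\Omega[T]$ is inner anodyne (Proposition~\ref{prop:segcoreinneranod}). I would start with~\eqref{eq:embedenrichedsegalcores}. Fix a simplicial degree $n$ and write $K_n$ for the set of $n$-simplices of $K$. Applying the colimit-preserving functor $(-)_n\colon\sdset\to\dset$ to the defining pushout squares~\eqref{eq:pushoutspineKT} and~\eqref{eq:defomegaKT} exhibits $(\spine[K,T])_n$ and $(\Omega[K,T])_n$ as the pushouts of the two spans
$$\coprod_{e\in E(T)}\eta \longleftarrow \coprod_{e\in E(T)}(K_n\times\eta)\longrightarrow K_n\times\spine[T]
\qquad\text{and}\qquad
\coprod_{e\in E(T)}\eta \longleftarrow \coprod_{e\in E(T)}(K_n\times\eta)\longrightarrow K_n\times\Omega[T]$$
respectively, the right-hand maps being induced by the edge inclusions. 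Since the two spans have the same left leg, the comparison map $(\spine[K,T])_n\to(\Omega[K,T])_n$ is canonically a cobase change of $K_n\times\spine[T]\to K_n\times\Omega[T]$, i.e. of the coproduct $\coprod_{K_n}(\spine[T]\to\Omega[T])$. As the class of inner anodyne extensions is saturated, hence closed under coproducts and pushouts, Proposition~\ref{prop:segcoreinneranod} shows that $(\spine[K,T])_n\to(\Omega[K,T])_n$ is inner anodyne, in particular a weak equivalence of dendroidal sets (Theorem~\ref{thm:cmcdendsets}). Since $\spine[K,T]$ and $\Omega[K,T]$ are preoperads, Corollary~\ref{cor:termwiseweglobalwepreoper}(a) then gives that~\eqref{eq:embedenrichedsegalcores} is a weak equivalence of preoperads; being a monomorphism between normal preoperads, it is a normal monomorphism, hence a trivial cofibration.

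For~\eqref{eq:fctembedenrichedsegalcoresbis}, I would exploit that, by the cartesian square~\eqref{eq:fctembedenrichedsegalcores}, the preoperad $\spine[L,T]\cup\Omega[K,T]$ is the pushout $\spine[L,T]\amalg_{\spine[K,T]}\Omega[K,T]$, so that there is a commutative diagram
$$\xymatrix{
\spine[K,T]\ar[r]\ar[d]&\Omega[K,T]\ar[d]\\
\spine[L,T]\ar[r]&\spine[L,T]\cup\Omega[K,T]\ar[r]&\Omega[L,T]
}$$
in which the square is cocartesian and the composite of the bottom row is the inclusion $\spine[L,T]\to\Omega[L,T]$. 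By the first part, applied both to $K$ and to $L$, the upper horizontal map and the composite of the bottom row are trivial cofibrations; hence $\spine[L,T]\to\spine[L,T]\cup\Omega[K,T]$, being a cobase change of a trivial cofibration, is a weak equivalence, and the two-out-of-three property forces $\spine[L,T]\cup\Omega[K,T]\to\Omega[L,T]$ to be a weak equivalence as well. As this last map is a monomorphism between normal preoperads, it is a normal monomorphism, hence a trivial cofibration.

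The only delicate point — and the step I would single out as the main obstacle — is the identification in the first paragraph of $(\spine[K,T])_n\to(\Omega[K,T])_n$ as a cobase change of $\coprod_{K_n}(\spine[T]\to\Omega[T])$: one must keep careful track of how the ``edge copies'' of $K$ are collapsed to points in the pushouts~\eqref{eq:pushoutspineKT} and~\eqref{eq:defomegaKT}, and verify that the two resulting pushout presentations share the same left leg $\coprod_{e}(K_n\times\eta)\to\coprod_{e}\eta$, with compatible right legs (which holds because the edge inclusions $\eta\to\spine[T]$ become the edge inclusions $\eta\to\Omega[T]$ after composing with $\spine[T]\to\Omega[T]$). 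Once this bookkeeping is done, everything else is formal. Alternatively, one can carry out the same bookkeeping once and for all in $\sdset$, showing that $\Omega[K,T]\cong\spine[K,T]\amalg_{K\times\spine[T]}(K\times\Omega[T])$ and that $K\times\spine[T]\to K\times\Omega[T]$ is a trivial cofibration of the dendroidal Rezk model structure (via Theorem~\ref{thm:dendroidalrezkcmc}(ii)), and then invoke the fact that the inclusion $\preoper\to\sdset$ detects weak equivalences.
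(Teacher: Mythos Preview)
Your proof is correct and follows essentially the same approach as the paper. The paper's argument is exactly your ``alternative'' paragraph: it applies the cube lemma (Hovey, Lemma~5.2.6) to the morphism of pushout squares~\eqref{eq:pushoutspineKT}~$\to$~\eqref{eq:defomegaKT}, the nontrivial component being $K\times\spine[T]\to K\times\Omega[T]$, and concludes via Proposition~\ref{prop:segcoreinneranod} and Corollary~\ref{cor:termwiseweglobalwepreoper}; the paper also reduces~\eqref{eq:fctembedenrichedsegalcoresbis} to~\eqref{eq:embedenrichedsegalcores} by exactly the two-out-of-three argument you give. Your main route (the degreewise identification of $(\spine[K,T])_n\to(\Omega[K,T])_n$ as a cobase change of $\coprod_{K_n}(\spine[T]\to\Omega[T])$) is just an unwinding of the cube lemma in this particular case, and in fact yields the slightly sharper statement that each $(\spine[K,T])_n\to(\Omega[K,T])_n$ is inner anodyne rather than merely a weak equivalence.
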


\begin{proof}
It is clearly sufficient to treat the case of the map \eqref{eq:embedenrichedsegalcores}.
By applying the cube lemma \cite[Lemma 5.2.6]{Ho} to the obvious
morphism of squares from \eqref{eq:defomegaKT} to \eqref{eq:pushoutspineKT},
this proposition becomes a direct consequence of
Proposition \ref{prop:segcoreinneranod} and Corollary \ref{cor:termwiseweglobalwepreoper}.
\end{proof}

\begin{rem}\label{rem:taudenrichedsegalcores}
The functor $\tau_d$ sends the map \eqref{eq:embedenrichedsegalcores} to
an isomorphism of simplicial operads $\tau_d\spine[K,T]\simeq\tau_d\Omega[K,T]=T[K]$:
this follows from Proposition \ref{prop:computenerveenrichedtrees}
and from formula \eqref{eq:defdecoratedtree}. Therefore, the map
\eqref{eq:fctembedenrichedsegalcoresbis} is also sent by $\tau_d$ to an isomorphism.
\end{rem}

\begin{prop}\label{prop:niceenrichedsegalcores}
For any tree $T$ any integer $n\geq 0$, there is
a canonical isomorphism
$$\gamma_!(\Delta[n]\times\spine[T]\cup\partial\Delta[n]\times\Omega[T])\simeq
\spine[\Delta[n],T]\cup\Omega[\partial\Delta[n],T]\, .$$
\end{prop}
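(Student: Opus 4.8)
The plan is to unwind both sides to the same colimit by exploiting that $\gamma_!$ is a left adjoint, hence preserves colimits, together with the explicit pushout presentations of $\Omega[K,T]$ and $\spine[K,T]$ given in \eqref{eq:defomegaKT} and \eqref{eq:pushoutspineKT}. The key point is that the awkward-looking objects $\Omega[K,T]$ are, when $K$ is $0$-connected, literally $\gamma_!(K\times\Omega[T])$ by \eqref{eq:newnotationgamma}, but for a general $K$ --- and in particular for $\partial\Delta[n]$, which is not $0$-connected --- one has to correct by collapsing the edge-factors, which is exactly what the pushout squares do. So the strategy is: apply $\gamma_!$ to the monomorphism $\Delta[n]\times\spine[T]\cup\partial\Delta[n]\times\Omega[T]\to\Delta[n]\times\Omega[T]$ of simplicial dendroidal sets, and then recognize the source as the indicated union by a pushout manipulation.

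First I would write the source $\Delta[n]\times\spine[T]\cup\partial\Delta[n]\times\Omega[T]$ in $\sdset$ as the pushout of $\Delta[n]\times\spine[T]\leftarrow\partial\Delta[n]\times\spine[T]\rightarrow\partial\Delta[n]\times\Omega[T]$, using that the square \eqref{eq:functspines} relating $\spine$ and $\Omega$ along outer faces is cartesian, so that $\Delta[n]\times\spine[T]\cap\partial\Delta[n]\times\Omega[T]=\partial\Delta[n]\times\spine[T]$ inside $\Delta[n]\times\Omega[T]$. Applying $\gamma_!$ (a left adjoint) turns this into a pushout in $\preoper$:
\begin{equation*}
\gamma_!(\Delta[n]\times\spine[T]\cup\partial\Delta[n]\times\Omega[T])\simeq
\gamma_!(\Delta[n]\times\spine[T])\amalg_{\gamma_!(\partial\Delta[n]\times\spine[T])}\gamma_!(\partial\Delta[n]\times\Omega[T])\, .
\end{equation*}
Now $\Delta[n]$ is $0$-connected, so $\gamma_!(\Delta[n]\times\Omega[T])=\Omega[\Delta[n],T]$ and likewise $\gamma_!(\Delta[n]\times\spine[T])=\spine[\Delta[n],T]$ by \eqref{eq:newnotationgamma} applied termwise in $I(T)$ (using that $\gamma_!$ commutes with the colimit defining $\spine$). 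For $\partial\Delta[n]$ I would instead compute $\gamma_!$ directly from its defining formula \eqref{eq:explicitleftadjgamma}: on a linear tree the effect is to replace $(\partial\Delta[n])_\eta$-labelled edges by $\pi_0(\partial\Delta[n])$; comparing with the pushout \eqref{eq:defomegaKT} defining $\Omega[\partial\Delta[n],T]$ (which collapses each edge-factor $\partial\Delta[n]\times\eta$ to $\Delta[0]\times\eta$), one reads off $\gamma_!(\partial\Delta[n]\times\Omega[T])\simeq\Omega[\partial\Delta[n],T]$ and similarly $\gamma_!(\partial\Delta[n]\times\spine[T])\simeq\spine[\partial\Delta[n],T]$. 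Substituting, the right-hand side becomes $\spine[\Delta[n],T]\amalg_{\spine[\partial\Delta[n],T]}\Omega[\partial\Delta[n],T]$, and since the square \eqref{eq:fctembedenrichedsegalcores} (with $K=\partial\Delta[n]$, $L=\Delta[n]$) is cartesian and all maps involved are monomorphisms between normal preoperads, this pushout is precisely the union $\spine[\Delta[n],T]\cup\Omega[\partial\Delta[n],T]$ computed inside $\Omega[\Delta[n],T]$, which is what \eqref{eq:fctembedenrichedsegalcoresbis} names.

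The step I expect to be the main obstacle is the verification that $\gamma_!$ applied to the non-$0$-connected factor $\partial\Delta[n]\times(-)$ produces exactly the "corrected" objects $\Omega[\partial\Delta[n],T]$ and $\spine[\partial\Delta[n],T]$ --- i.e. matching the edge-collapsing in \eqref{eq:explicitleftadjgamma} with the edge-collapsing pushouts \eqref{eq:defomegaKT} and \eqref{eq:pushoutspineKT}. This requires a careful bookkeeping of what $\gamma_!$ does on each tree $S$ (distinguishing linear $S$, trees obtained by grafting linear trees onto a corolla, and all others), exactly in the spirit of the case analysis already carried out in the proof of Proposition \ref{prop:computenerveenrichedtrees}; once that identification is in place, the rest is formal manipulation of pushouts and the cartesianness of \eqref{eq:functspines} and \eqref{eq:fctembedenrichedsegalcores}. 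One should also check compatibility of all these isomorphisms with the structure maps so that the final identification is canonical, but this is routine given that every isomorphism used is itself canonical.
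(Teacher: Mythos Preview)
Your approach has a genuine gap at exactly the step you flag as the main obstacle: the identification $\gamma_!(\partial\Delta[n]\times\Omega[T])\simeq\Omega[\partial\Delta[n],T]$ is \emph{false} when $\partial\Delta[n]$ is not $0$-connected, i.e.\ for $n\in\{0,1\}$. Concretely, evaluate both sides at $\eta$. From \eqref{eq:explicitleftadjgamma} one has $\gamma_!(K\times\Omega[T])_\eta=\pi_0(K\times E(T))=\pi_0(K)\times E(T)$, whereas the pushout \eqref{eq:defomegaKT} forces $\Omega[K,T]_\eta=E(T)$ for \emph{any} $K$. For $K=\partial\Delta[1]$ these are $\{0,1\}\times E(T)$ versus $E(T)$; for $K=\partial\Delta[0]=\varnothing$ they are $\varnothing$ versus $E(T)$. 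So the two edge-collapsing mechanisms you propose to match --- $\gamma_!$'s $\pi_0$ on the $\eta$-component and the pushout's collapse to $\Delta[0]$ --- are different operations, and no amount of bookkeeping in the spirit of Proposition~\ref{prop:computenerveenrichedtrees} will make them agree.

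For $n\geq 2$ your argument is fine (and coincides with what the paper does): $\partial\Delta[n]$ is $0$-connected, so \eqref{eq:newnotationgamma} applies to all three pieces and the pushout manipulation goes through. The paper treats $n=0$ and $n=1$ separately. For $n=0$ one observes that $\Omega[\varnothing,T]\subset\spine[\Delta[0],T]$, reducing to $\gamma_!(\spine[T])\simeq\spine[\Delta[0],T]$, which follows from \eqref{eq:newnotationgamma}. For $n=1$ the paper does not attempt to identify the pieces individually: instead it uses the canonical comparison map into $\Omega[\Delta[1],T]\simeq\gamma_!(\Delta[1]\times\Omega[T])$ to see that the map in question is a monomorphism (since the left vertical map of the evident square is mono by \cite[Lemma~7.4]{dend6}), and then checks surjectivity by hand, noting that $\Omega[T]\amalg\Omega[T]\to\Omega[\partial\Delta[1],T]$ is onto. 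You will need a similar ad hoc treatment of these two small cases.
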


\begin{proof}
This is clear whenever $n\geq 2$ by Formula \ref{eq:newnotationgamma}.
If $n=0$, then $\Omega[\partial\Delta[n]]$ is contained in $\spine[\Delta[n],T]$,
and we are reduced to prove that we have an isomorphism
of shape $\gamma_!((\Delta[n]\times\spine[T])\simeq\spine[\Delta[n],T)$,
which is true again by \ref{eq:newnotationgamma}.
It remains to understand the case where $n=1$.
We have a canonical commutative square
$$\xymatrix{
\gamma_!(\Delta[n]\times\spine[T]\cup\partial\Delta[n]\times\Omega[T])\ar[r]\ar[d]&
\spine[\Delta[n],T]\cup\Omega[\partial\Delta[n],T]\ar[d]\\
\gamma_!(\Delta[n]\times\Omega[T])\ar[r]&\Omega[\Delta[n],T]
}$$
The left vertical map is
a monomorphism (this follows from \cite[Lemma 7.4]{dend6}, for instance). 
On the other hand, the lower horizontal map is known to be
an isomorphism (\ref{eq:newnotationgamma} again).
Therefore, the upper horizontal map is a monomorphism, and it remain to
check that it is an epimorphism for $n=1$. In other words, we have to check that
we have a surjection of shape $\Omega[T]\amalg\Omega[T]\to\Omega[\partial\Delta[1],T]$,
which is obvious.
\end{proof}

\begin{paragr}\label{paragr:deftamecof}
We now define four new classes of morphisms of preoperads.
The class $\tc$ contains:
\begin{itemize}
\item[(TC1)] the map $\varnothing\to\eta$;
\item[(TC2)] the inclusions $\Omega[\partial\Delta[n],T]\to\Omega[\Delta[n],T]$
for any integer $n\geq 0$ and any tree $T$ with a unique vertex;
\item[(TC3)] the  maps of shape $\spine[\Delta[n],T]\cup\Omega[\partial\Delta[n],T]\to\Omega[\Delta[n],T]$
for any integer $n\geq 0$ and any tree $T$.
\end{itemize}
The class $\otc$ is the saturation of $\tc$. The class $\ta$ contains:
\begin{itemize}
\item[(TA1)] the maps of shape $\eta\to H$ which
are weak equivalences, and such that $H$ is countable, with set of objects $H_\eta=\{0,1\}$,
while the map $\{0\}\amalg\{1\}\to H$ is in $\otc$
(a preoperad $X$ is countable if each of the sets $X_{n,T}$ is countable);
\item[(TA2)] the inclusions $\Omega[\Lambda^k[n],T]\to\Omega[\Delta[n],T]$
for $n\geq 1$, $0\leq k\leq n$,  and any tree $T$ with a unique vertex;
\item[(TA3)] the  maps of shape $\spine[\Delta[n],T]\cup\Omega[\partial\Delta[n],T]\to\Omega[\Delta[n],T]$
for any integer $n\geq 0$ and any tree $T$.
\end{itemize}
The class $\ota$ is the saturation of the class $\ta$.

Note that condition (TA3) is identical to condition (TC3).
\end{paragr}

\begin{definition}
A morphism of preoperads is a \emph{tame cofibration} if it belongs to the class $\otc$.

A morphism of preoperads is an \emph{isofibration} if it has the right lifting property with respect to
$\ota$.

A preoperad $X$ is a \emph{fibrant Segal operad} if the morphism from $X$ to the terminal object
is an isofibration.
\end{definition}

\begin{prop}\label{prop:analysetameRLP}
Let $f:X\to Y$ be a morphism of preoperads.
\begin{itemize}
\item[(a)] The morphism $f$ has the right lifting property with respect to the
inclusion $\varnothing\to\eta$ if and only if it is surjective on objects.
\item[(b)] Given a morphism of simplicial sets $u:K\to L$, the map $f$
has the right lifting property with respect to the induced maps
$\Omega[K,T]\to\Omega[L,T]$, for any tree $T$ with a unique vertex,
if and only if, for any $(n+1)$-tuple $(x_1,\ldots,x_n,x)$ of objects of $X$, $n\geq 0$,
the morphism
$$X(x_1,\ldots,x_n;x)\to Y(f(x_1),\ldots,f(x_n);f(x))$$
has the right lifting property with respect to the map $u$.
\item[(c)] The morphism $f$ has the right lifting property with
respect to the inclusions $\spine[\Delta[n],T]\cup\Omega[\partial\Delta[n],T]\to\Omega[\Delta[n],T]$,
for any integer $n\geq 0$ and any tree $T$, if and only if, the morphism
$X^{\Omega[T]}\to Y^{\Omega[T]}\times_{Y^{\spine[T]}}X^{\spine[T]}$
is a trivial fibration for any tree $T$ with at least one inner edge.
\end{itemize}
\end{prop}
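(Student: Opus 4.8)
The plan is to treat each of the three statements by the same recipe: recognize the relevant class of maps in $\preoper$ as the image under a colimit‑preserving functor of a familiar class of maps in $\sset$ (or in $\dset$), identify what object represents the corresponding ``hom'' presheaf, and then read off the lifting property as a lifting property at the level of those spaces of operations. For (a), the map $\varnothing\to\eta$ is the generator (TC1), and since $\eta$ is the dendroidal set with a single object and no higher structure, a map $\eta\to X$ is the same as choosing an object of $X$, i.e. an element of $X_\eta$; the right lifting property of $f:X\to Y$ against $\varnothing\to\eta$ is thus exactly the statement that $X_\eta\to Y_\eta$ is surjective, that is, $f$ is surjective on objects. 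This is essentially immediate.

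For (b), the key point is the analysis of maps out of $\Omega[K,T]$ carried out in \ref{paragr:enrichedfreeoperads}: when $T$ has a unique vertex (so $T=C_n$ for some $n\geq 0$, the case $T=\eta$ being degenerate), a map $\Omega[K,C_n]\to X$ amounts to a choice of $(n+1)$‑tuple of objects $(x_1,\ldots,x_n,x)$ of $X$ together with a morphism of simplicial sets $K\to X(x_1,\ldots,x_n;x)$ — this uses the pullback decomposition \eqref{eq:decomposesegoperations}, which identifies $X_{C_n}$ with $\coprod_{(x_1,\ldots,x_n,x)} X(x_1,\ldots,x_n;x)$. Given $u:K\to L$, a commutative square from $u:K\to L$ to $f$ relative to $C_n$ therefore picks out a single $(n+1)$‑tuple (the objects are forced to agree, as $X_\eta$ is discrete) and a square from $u$ to $X(x_1,\ldots,x_n;x)\to Y(f(x_1),\ldots,f(x_n);f(x))$; a diagonal filler for the former is precisely a diagonal filler for the latter. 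Ranging over all $n$ and all tuples gives the stated equivalence. I would write this out by directly unwinding the adjunction $\Hom_{\preoper}(\Omega[K,C_n],X)\simeq\Hom_{\sset}(K,X_{C_n})$ together with \eqref{eq:decomposesegoperations}.

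For (c), the map $\spine[\Delta[n],T]\cup\Omega[\partial\Delta[n],T]\to\Omega[\Delta[n],T]$ is, by Proposition \ref{prop:niceenrichedsegalcores}, the image under the left adjoint $\gamma_!$ of the pushout–product $\gamma_!(\Delta[n]\times\spine[T]\cup\partial\Delta[n]\times\Omega[T])\to\gamma_!(\Delta[n]\times\Omega[T])$; hence the right lifting property of $f$ against it, for all $n\geq 0$ simultaneously, translates (by the standard adjunction between pushout–products and pullback–powers, using the closed monoidal exponentials $X^{\Omega[T]}$ of \ref{paragr:preliminariessdsets} and the fact that $\gamma^*$ is fully faithful) into the statement that for every tree $T$ the map $X^{\Omega[T]}\to X^{\spine[T]}\times_{Y^{\spine[T]}}Y^{\Omega[T]}$ has the right lifting property against all boundary inclusions $\partial\Delta[n]\to\Delta[n]$, i.e. is a trivial fibration of simplicial sets. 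When $T$ has no inner edge, $\spine[T]=\Omega[T]$ and the condition is vacuous, so only trees with at least one inner edge matter; this accounts for the restriction in the statement. The main obstacle I anticipate is purely bookkeeping: carefully matching up $\gamma_!$, the exponential functor $X\mapsto X^{(-)}$, and the Segal‑core pushout so that Proposition \ref{prop:niceenrichedsegalcores} applies cleanly in the boundary cases $n=0,1$ — for $n\geq 2$ the formula \eqref{eq:newnotationgamma} makes everything transparent, but the low‑dimensional cases need the explicit pushout description \eqref{eq:pushoutspineKT}. No genuinely hard step is expected; the content is the representability facts already established.
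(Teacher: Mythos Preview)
Your proposal is correct and follows essentially the same line as the paper's own proof: part (a) is immediate, part (b) unwinds the description of maps out of $\Omega[K,T]$ given at the end of \ref{paragr:enrichedfreeoperads} together with the decomposition \eqref{eq:decomposesegoperations}, and part (c) uses Proposition \ref{prop:niceenrichedsegalcores} (and Formula \eqref{eq:newnotationgamma}) to recognize the map as $\gamma_!$ of a pushout-product, so that the lifting condition becomes the trivial-fibration condition on the pullback-power, with the case of trees having at most one vertex being vacuous since then $\spine[T]=\Omega[T]$. The bookkeeping obstacle you anticipate for $n=0,1$ in (c) is already absorbed into Proposition \ref{prop:niceenrichedsegalcores}, so no extra work is needed there.
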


\begin{proof}
Assertion (a) is obvious, while (b) follows from the construction of $\Omega[K,T]$
(see the end of \ref{paragr:enrichedfreeoperads}).
Assertion (c) follows from Formula \ref{eq:newnotationgamma}
and Proposition \ref{prop:niceenrichedsegalcores} (note that $\spine[T]=\Omega[T]$
whenever $T$ has at most one vertex).
\end{proof}

\begin{cor}\label{cor:tametrivifibsegoper}
A morphism of preoperads $f:X\to Y$ has the right lifting property with respect to
tame cofibrations if and only if the following three conditions are verified:
\begin{itemize}
\item[(i)] it is surjective on objects;
\item[(ii)] for any $(n+1)$-tuple $(x_1,\ldots,x_n,x)$ of objects of $X$, $n\geq 0$,
the morphism $X(x_1,\ldots,x_n;x)\to Y(f(x_1),\ldots,f(x_n);f(x))$ is a trivial fibration;
\item[(iii)] the map $X^{\Omega[T]}\to Y^{\Omega[T]}\times_{Y^{\spine[T]}}X^{\spine[T]}$
is a trivial fibration for any tree $T$ with at least one inner edge.
\end{itemize}
In particular, by Proposition \ref{charwesegoperads},
if $X$ and $Y$ are Segal operads, such a morphism is a weak equivalence.
\end{cor}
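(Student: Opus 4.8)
The statement to prove is Corollary~\ref{cor:tametrivifibsegoper}: a morphism $f\colon X\to Y$ of preoperads has the right lifting property (RLP) with respect to tame cofibrations if and only if conditions (i)–(iii) hold; and then, for $X,Y$ Segal operads, such a morphism is a weak equivalence. The strategy is entirely formal: since the class $\otc$ of tame cofibrations is, by definition~\ref{paragr:deftamecof}, the saturation of the small set $\tc$ of maps (TC1), (TC2), (TC3), having the RLP with respect to $\otc$ is the same as having the RLP with respect to $\tc$ (saturation preserves the class of maps having a given lifting property on the right). So I would simply analyse the RLP with respect to each of the three generating families in turn, using Proposition~\ref{prop:analysetameRLP}.

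First I would invoke Proposition~\ref{prop:analysetameRLP}(a): RLP against (TC1), the map $\varnothing\to\eta$, is precisely surjectivity on objects, which is condition~(i). Next, I would apply Proposition~\ref{prop:analysetameRLP}(b) with the morphism of simplicial sets $u$ running over the boundary inclusions $\partial\Delta[n]\to\Delta[n]$: RLP against the maps (TC2), namely $\Omega[\partial\Delta[n],T]\to\Omega[\Delta[n],T]$ for $T$ with a unique vertex, is equivalent to the statement that each space of operations map $X(x_1,\dots,x_n;x)\to Y(f(x_1),\dots,f(x_n);f(x))$ has the RLP against all boundary inclusions, i.e.\ is a trivial fibration of simplicial sets --- this is condition~(ii). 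Finally, Proposition~\ref{prop:analysetameRLP}(c) identifies RLP against the maps (TC3), $\spine[\Delta[n],T]\cup\Omega[\partial\Delta[n],T]\to\Omega[\Delta[n],T]$, with the assertion that $X^{\Omega[T]}\to Y^{\Omega[T]}\times_{Y^{\spine[T]}}X^{\spine[T]}$ is a trivial fibration for every tree $T$ with at least one inner edge --- this is condition~(iii). Putting the three equivalences together gives the first claim.

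For the last sentence, suppose $X$ and $Y$ are Segal operads and $f$ has the RLP against tame cofibrations, so (i)–(iii) hold. Condition~(ii) says in particular that each map $X(x_1,\dots,x_n;x)\to Y(f(x_1),\dots,f(x_n);f(x))$ is a trivial fibration, hence a weak equivalence, so $f$ is fully faithful in the sense of Definition~\ref{def:fullyfaithfulesssurjsegoper}. Condition~(i), surjectivity on objects, in particular forces $\pi_0(\tau_d(f))$ to be surjective on objects (recall from Remark~\ref{rem:truncationsegoper} that the objects of $\pi_0(\tau_d(X))$ are just the elements of $X_\eta$), so $f$ is essentially surjective. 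By Proposition~\ref{charwesegoperads}, a morphism between Segal operads which is fully faithful and essentially surjective is a weak equivalence of the model structure of Theorem~\ref{thm:cmcreedysegaloperads}, as desired.

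**Main obstacle.** There is essentially no obstacle here: the corollary is a bookkeeping consequence of Proposition~\ref{prop:analysetameRLP} together with the definition of saturation. The only point requiring a line of care is the step from ``RLP against each single generating family separately'' to ``RLP against the whole class $\otc$'', which is the standard fact that $\{ g : g \text{ has RLP against } S\}$ depends only on the saturation of $S$; and, in the final sentence, the translation of ``surjective on objects'' into ``$\pi_0\tau_d(f)$ essentially surjective'', which is immediate from the identification of objects under $\tau_d$ and $\pi_0$.
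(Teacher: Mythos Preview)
Your proposal is correct and follows exactly the paper's approach: the paper's proof simply observes that, by Proposition~\ref{prop:analysetameRLP}, conditions (i), (ii), (iii) correspond respectively to the RLP against maps of shape (TC1), (TC2), (TC3). Your write-up is merely more explicit, in particular spelling out the final sentence (which the paper leaves implicit in the statement itself).
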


\begin{proof}
By virtue of the preceding proposition,
conditions (i), (ii) and (iii) correspond to the right lifting property
with respect to maps of shape (TC1), (TC2) and (TC3), respectively.
\end{proof}

\begin{cor}\label{cor:charfibsegalop}
A preoperad $X$ is a fibrant Segal operad if and only if it satisfies the following two conditions:
\begin{itemize}
\item[(i)]  for any $(n+1)$-tuple $(x_1,\ldots,x_n,x)$ of objects of $X$, $n\geq 0$,
the simplicial set $X(x_1,\ldots,x_n;x)$ is a Kan complex;
\item[(ii)] for any tree $T$ with at least one inner edge, the morphism
$X^{\Omega[T]}\to X^{\spine[T]}$ is a trivial fibration.
\end{itemize}
In particular, any fibrant Segal operad is a Segal operad.
\end{cor}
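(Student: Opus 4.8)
The plan is to deduce this corollary directly from Proposition~\ref{prop:analysetameRLP} and Corollary~\ref{cor:tametrivifibsegoper} by specializing to the morphism $X\to\ast$, where $\ast$ denotes the terminal preoperad. First I would observe that the terminal preoperad has, for each $(n+1)$-tuple of objects $(y_1,\ldots,y_n,y)$, the space $\ast(y_1,\ldots,y_n;y)=\Delta[0]$, and similarly $\ast^{\Omega[T]}=\Delta[0]=\ast^{\spine[T]}$, so the pullback $\ast^{\Omega[T]}\times_{\ast^{\spine[T]}}X^{\spine[T]}$ appearing in condition~(iii) of Corollary~\ref{cor:tametrivifibsegoper} simplifies to $X^{\spine[T]}$. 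Thus, applying Corollary~\ref{cor:tametrivifibsegoper} to the map $f:X\to\ast$, the preoperad $X$ is a fibrant Segal operad (i.e.\ $X\to\ast$ has the right lifting property with respect to tame cofibrations) if and only if: condition~(i) there (surjectivity on objects) is automatic since the target is terminal; condition~(ii) there becomes the statement that $X(x_1,\ldots,x_n;x)\to\Delta[0]$ is a trivial fibration, which is exactly the assertion that $X(x_1,\ldots,x_n;x)$ is a Kan complex; and condition~(iii) there becomes the statement that $X^{\Omega[T]}\to X^{\spine[T]}$ is a trivial fibration for every tree $T$ with at least one inner edge. These are precisely conditions~(i) and~(ii) of the present corollary, so the equivalence follows.

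Next I would address the final sentence: that any fibrant Segal operad is a Segal operad. Given conditions~(i) and~(ii), I need only note that a trivial fibration of simplicial sets is in particular a simplicial weak equivalence; hence condition~(ii) says that $X^{\Omega[T]}\to X^{\spine[T]}$ is a weak equivalence for every tree $T$ with an inner edge, which, since the Segal core inclusion $\spine[T]\to\Omega[T]$ induces this map, is exactly the definition of a Segal operad (for trees with at most one inner edge there is nothing to check, as $\spine[T]=\Omega[T]$ in the trivially handled cases and the condition is imposed only when $T$ has an inner edge). One small point to record is that we use here only conditions (TC3)$=$(TA3) and (TC2)/(TA2), (TC1)/(TA1) of the definition of tame cofibration and isofibration; the relevant classes $\otc$ and $\ota$ share the generators (TC3)$=$(TA3), and the map to the terminal object automatically has the right lifting property with respect to (TC2) and (TA2) once condition~(ii) of the corollary holds, because a map of Kan complexes that is a trivial fibration has the right lifting property with respect to boundary inclusions, while a Kan complex has the right lifting property with respect to horn inclusions.

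I do not expect any serious obstacle here: the whole content has been front-loaded into Proposition~\ref{prop:analysetameRLP} and its corollary, and this statement is simply the instantiation at the terminal object together with the trivial remark that a trivial fibration is a weak equivalence. The only mild subtlety worth double-checking is the bookkeeping between the classes $\tc$, $\otc$, $\ta$, $\ota$: the definition of \emph{fibrant Segal operad} is phrased via $\ota$ (the saturation of $\ta$), whereas Corollary~\ref{cor:tametrivifibsegoper} characterizes the right lifting property with respect to $\otc$ (the saturation of $\tc$), and these two lifting properties coincide for the map $X\to\ast$ precisely because, given condition~(i) of the present corollary, the right lifting property with respect to the generators (TA1) and (TA2) of $\ta$ that are not in $\tc$ is automatic. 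Thus the two notions agree and the corollary follows.
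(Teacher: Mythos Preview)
There is a genuine gap in your first paragraph. You apply Corollary~\ref{cor:tametrivifibsegoper} to the map $X\to\ast$ and then claim that condition~(ii) there, namely that $X(x_1,\ldots,x_n;x)\to\Delta[0]$ is a \emph{trivial} fibration, ``is exactly the assertion that $X(x_1,\ldots,x_n;x)$ is a Kan complex''. This is false: a simplicial set whose map to the point is a trivial fibration is a \emph{contractible} Kan complex, not merely a Kan complex. So what you have actually shown is that $X\to\ast$ has the right lifting property against $\otc$ if and only if all the operation spaces of $X$ are contractible and condition~(ii) of the corollary holds --- a far stronger statement than being a fibrant Segal operad. This is not a bookkeeping slip; it reflects that Corollary~\ref{cor:tametrivifibsegoper} concerns $\otc$ (generated by (TC2), built from boundary inclusions $\partial\Delta[m]\to\Delta[m]$), whereas the definition of fibrant Segal operad concerns $\ota$ (generated by (TA2), built from horn inclusions $\Lambda^k[m]\to\Delta[m]$). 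The two lifting problems against the terminal object are genuinely different.

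The paper's proof avoids this by working directly with Proposition~\ref{prop:analysetameRLP} applied to the generators of $\ota$: part~(b) with $u$ a horn inclusion gives exactly condition~(i) (Kan complex), and part~(c) gives condition~(ii). The remaining generator class (TA1) is then handled by a separate argument you do not supply: any $H$ of type (TA1) satisfies $H_T=\varnothing$ for every non-linear tree $T$, so $H\simeq\eta\times H$ and the map $\eta\to H$ admits a retraction; hence every preoperad has the right lifting property against (TA1). Your final paragraph gestures at the $\otc$/$\ota$ discrepancy but does not repair the error, and your claim that lifting against (TA1) and (TA2) is ``automatic given condition~(i)'' is unjustified for (TA1) and circular for (TA2) (lifting against (TA2) \emph{is} condition~(i)).
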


\begin{proof}
Here again, conditions (i) and (ii) express the property
of the map from $X$ to the terminal object of having
the right lifting property with respect to maps of shapes
(TA2) and (TA3), respectively. Note that, in that case,
the right lifting property with respect to maps of shape
(TA1) is automatic: indeed, $H$ must have the property
that $H_T=\varnothing$ for any non linear tree $T$,
which imply that it is isomorphic to $\eta\times H$; in other words,
all the maps of shape (TA1) must have a retraction.
\end{proof}

\begin{cor}
A simplicial operad $\mathcal P$ is fibrant if and only if its dendroidal nerve $\nerf_d(\mathcal P)$
is a fibrant Segal operad.
\end{cor}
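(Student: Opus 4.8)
The plan is to compare the two notions directly by means of Corollary~\ref{cor:charfibsegalop}. First I would note that $\nerf_d(\mathcal P)$ is indeed a preoperad: since $\mathcal P$ has a constant (discrete) simplicial set of objects, $\nerf_d(\mathcal P)_\eta$ is the discrete simplicial set on $\ob\mathcal P$. Moreover, by Remark~\ref{rem:segalcondsimploper}, the preoperad $\nerf_d(\mathcal P)$ satisfies the \emph{strict} dendroidal Segal condition, so that for every tree $T$ with at least one inner edge the map
$$\nerf_d(\mathcal P)^{\Omega[T]}\to\nerf_d(\mathcal P)^{\spine[T]}$$
is an isomorphism of simplicial sets, hence in particular a trivial fibration. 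Thus condition (ii) of Corollary~\ref{cor:charfibsegalop} is automatically verified by $X=\nerf_d(\mathcal P)$, and that corollary reduces the claim ``$\nerf_d(\mathcal P)$ is a fibrant Segal operad'' to the condition ``$\nerf_d(\mathcal P)(x_1,\ldots,x_n;x)$ is a Kan complex for every $(n+1)$-tuple of objects''.

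The second step is to identify these spaces of operations with those of $\mathcal P$. Unwinding the definition from \ref{def:operationsinsegaloper}, for the corolla $C_n$ one has $\nerf_d(\mathcal P)_{C_n}=\Hom_\oper(C_n,\mathcal P)$, which decomposes levelwise as $\coprod_{(y_1,\ldots,y_n,y)}\mathcal P(y_1,\ldots,y_n;y)$ indexed by the $(n+1)$-tuples of objects of $\mathcal P$; pulling back along the vertex $(x_1,\ldots,x_n,x)\colon\Delta[0]\to\nerf_d(\mathcal P)_\eta^{n+1}$ selects exactly the corresponding summand. Hence there is a canonical isomorphism of simplicial sets
$$\nerf_d(\mathcal P)(x_1,\ldots,x_n;x)\simeq\mathcal P(x_1,\ldots,x_n;x)\, .$$

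Finally, I would recall that $\mathcal P$ is fibrant precisely when the unique morphism from $\mathcal P$ to the terminal operad is a local fibration (Definition~\ref{def:mainclassesofmapssimpoper}); since all spaces of operations of the terminal operad equal $\Delta[0]$, this amounts to saying that every $\mathcal P(x_1,\ldots,x_n;x)$ is a Kan complex. Chaining the three steps together yields the equivalences: $\mathcal P$ is fibrant $\iff$ every $\mathcal P(x_1,\ldots,x_n;x)$ is a Kan complex $\iff$ every $\nerf_d(\mathcal P)(x_1,\ldots,x_n;x)$ is a Kan complex $\iff$ $\nerf_d(\mathcal P)$ is a fibrant Segal operad. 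There is no real obstacle here; the only point deserving attention is checking that the strict Segal condition for the dendroidal nerve genuinely yields condition (ii) of Corollary~\ref{cor:charfibsegalop}, which it does since an isomorphism is a trivial fibration.
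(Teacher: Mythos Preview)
Your argument is correct and is precisely the reasoning the paper leaves implicit: the corollary is stated without proof, being an immediate consequence of Corollary~\ref{cor:charfibsegalop} together with Remark~\ref{rem:segalcondsimploper}, exactly as you unpack it. There is nothing to add.
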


\begin{paragr}\label{paragr:changeobjectspreoper}
Let $X$ be a preoperad, and $f:S\to X_\eta$
a map from a set $S$ to the set of objects of $X$.
We define the preoperad $f^*(X)$ as the one whose dendrices
are the simplicial sets obtained by the pullbacks
\begin{equation}\label{eq:defchangeobjectspreoper}
\begin{split}
\xymatrix{
f^*(X)_T\ar[r]\ar[d]&X_T\ar[d]\\
S^{E(T)}\ar[r]^f&X_\eta^{E(T)}
}\end{split}
\end{equation}
in which $E(T)$ denotes the set of edges of $T$, while
the right vertical arrow is the obvious evaluation map.
We have a canonical morphism of preoperads
\begin{equation}\label{eq:compmapchangeobjpreoper}
X\to f^*(X)\, .
\end{equation}
Any morphism of preoperads $u:X\to Y$
factors into a morphism $X\to u_\eta^*(Y)$
which is bijective on objects followed by the
canonical map $u^*_\eta(Y)\to Y$.
\end{paragr}

\begin{lemma}\label{lemma:surjchangeobjuniveq}
Let $X$ be a preoperad. For any surjective map
$f:S\to X_\eta$, the induced morphism $f^*(X)\to X$
is a universal weak equivalence (i.e. it is a weak
equivalence which remains so after any base change).
\end{lemma}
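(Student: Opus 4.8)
The plan is to reduce the statement to a question about simplicial sets, levelwise. First I would observe that the map $f \colon S \to X_\eta$ being surjective means we may choose, for each $x \in X_\eta$, a section over $x$; more precisely, since $X_\eta$ is discrete, $S$ decomposes as a disjoint union $S = \coprod_{x \in X_\eta} S_x$ with each $S_x = f^{-1}(x)$ nonempty. The key computation is the description of the dendrices of $f^*(X)$ via the pullback square \eqref{eq:defchangeobjectspreoper}: for a tree $T$, an element of $f^*(X)_T$ is a pair consisting of a labelling of the edges $E(T)$ by elements of $S$ together with a dendrex of $X_T$ whose induced edge-labelling (in $X_\eta$) is the image under $f$. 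Using the decomposition \eqref{eq:decomposesegoperations} (valid for $X$ and for $f^*(X)$, since $f^*(X)_\eta = S$ is discrete), one sees that for each tree $T$ the simplicial set $f^*(X)_T$ is the disjoint union, over all edge-labellings $\lambda \colon E(T) \to S$, of copies of $X(\ldots)$-type spaces indexed by the corollas of $T$; concretely, $f^*(X)_{C_n} \simeq \coprod_{(s_1,\ldots,s_n,s) \in S^{n+1}} X(f(s_1),\ldots,f(s_n);f(s))$.

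From this description, for each tree $T$ the map $f^*(X)_T \to X_T$ is, on each connected piece, an isomorphism: it simply relabels an $S$-valued labelling by its $f$-image, and the fibre of $X_T$ over a fixed $X_\eta$-labelling $\mu \colon E(T) \to X_\eta$ is carried isomorphically from the corresponding piece of $f^*(X)_T$ indexed by any lift of $\mu$ along $f$ (such a lift exists by surjectivity, and the piece is independent of the choice up to canonical isomorphism when $X$ is only evaluated, since distinct lifts give distinct, isomorphic, disjoint summands). In particular $f^*(X)_T \to X_T$ is a weak equivalence of simplicial sets for every tree $T$ — indeed it is even a coproduct of isomorphisms onto the summands of $X_T$, composed with the fold maps collapsing the multiple preimages — hence, by Corollary~\ref{cor:termwiseweglobalwepreoper}~(b), the map $f^*(X) \to X$ is a weak equivalence of preoperads.

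To upgrade this to a \emph{universal} weak equivalence, I would note that the construction $(-)^*$ along $f$ is itself stable under base change in the appropriate sense: given any morphism of preoperads $Y \to X$, one checks directly from the defining pullbacks \eqref{eq:defchangeobjectspreoper} that the preoperad $g^*(Y)$, where $g \colon S \times_{X_\eta} Y_\eta \to Y_\eta$ is the pullback of $f$ along $Y_\eta \to X_\eta$, is canonically isomorphic to the fibre product $Y \times_X f^*(X)$; and $g$ is again surjective (the pullback of a surjection of sets). Therefore the base change $Y \times_X f^*(X) \to Y$ is of exactly the same shape $g^*(Y) \to Y$, and the preceding paragraph applies to it verbatim, showing it is a weak equivalence. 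Since this works for an arbitrary $Y \to X$, the map $f^*(X) \to X$ is a universal weak equivalence. The main obstacle is purely bookkeeping: making precise, via the decomposition \eqref{eq:decomposesegoperations} applied tree by tree, that $f^*(X)_T \to X_T$ is a disjoint union of (shifted copies of) identity maps, so that the levelwise weak equivalence — in fact a levelwise trivial fibration — is transparent; once that is in hand, Corollary~\ref{cor:termwiseweglobalwepreoper} and the base-change identity finish the argument.
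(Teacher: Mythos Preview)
Your argument has a genuine error at the key step. You claim that for each tree $T$ the map $f^*(X)_T \to X_T$ is a weak equivalence of simplicial sets, and then invoke Corollary~\ref{cor:termwiseweglobalwepreoper}~(b). But this claim is false. As you yourself observe, the map decomposes as
\[
\coprod_{\lambda\in S^{E(T)}} X_T^{\,f\circ\lambda}\ \longrightarrow\ \coprod_{\mu\in X_\eta^{E(T)}} X_T^{\,\mu}
\]
which over each summand $X_T^{\,\mu}$ is the fold map from a disjoint union of copies of $X_T^{\,\mu}$, indexed by the (possibly large) set of lifts of $\mu$ along $f$. A fold map $\coprod_I A\to A$ is \emph{not} a weak equivalence of simplicial sets when $|I|>1$ and $A\neq\varnothing$: already $\pi_0$ fails to match. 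Concretely, take $X$ with $X_\eta=\{*\}$ and $S=\{a,b\}$; then $f^*(X)_T\cong X_T\times S^{E(T)}$, and the projection to $X_T$ multiplies $\pi_0$ by $2^{|E(T)|}$. (For the same reason this map is not a trivial fibration of simplicial sets either: the lifting problem against $\partial\Delta[1]\to\Delta[1]$ fails whenever two distinct points of $S^{E(T)}$ have the same image.)

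The paper's proof uses the \emph{other} half of Corollary~\ref{cor:termwiseweglobalwepreoper}, namely condition~(a): for each simplicial degree $n$, the morphism of \emph{dendroidal} sets $f^*(X)_n\to X_n$ is a trivial fibration, because it has the right lifting property against every boundary inclusion $\partial\Omega[T]\to\Omega[T]$. The point is that for $T\neq\eta$ these inclusions are bijective on objects, so the required lift is unique and exists automatically; for $T=\eta$ surjectivity of $f$ suffices. Trivial fibrations of dendroidal sets are weak equivalences (Theorem~\ref{thm:cmcdendsets}) and are stable under pullback, which also gives universality immediately. Your base-change reduction for universality is fine, but you need to replace the treewise simplicial argument by this degreewise dendroidal one.
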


\begin{proof}
For any integer $n\geq 0$, it is easy to see that
the morphism of dendroidal sets $f^*(X)_n\to X_n$
has the right lifting property with respect to the boundary inclusions
$\partial\Omega[T]\to\Omega[T]$ (observe that, except in the case where $T=\eta$,
these inclusions are bijective on objects, so that the lifts are in fact unique).
In other words, this map is a termwise trivial fibration
of dendroidal sets. Corollary \ref{cor:termwiseweglobalwepreoper} ends
the proof.
\end{proof}

\begin{prop}\label{prop:tametrivialfib}
Any morphism with the right lifting property with respect to
tame cofibrations is a weak equivalence.
\end{prop}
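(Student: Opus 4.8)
The plan is to deduce this from Corollary \ref{cor:tametrivifibsegoper}, which already characterizes maps with the right lifting property with respect to tame cofibrations as the morphisms $f:X\to Y$ satisfying three conditions: surjectivity on objects, that each map on spaces of operations $X(x_1,\ldots,x_n;x)\to Y(f(x_1),\ldots,f(x_n);f(x))$ is a trivial fibration, and that $X^{\Omega[T]}\to Y^{\Omega[T]}\times_{Y^{\spine[T]}}X^{\spine[T]}$ is a trivial fibration for every tree $T$ with an inner edge. The subtlety is that Corollary \ref{cor:tametrivifibsegoper} gives the conclusion that $f$ is a weak equivalence only under the extra assumption that $X$ and $Y$ are already Segal operads, whereas here $X$ and $Y$ are arbitrary preoperads. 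So the real content of the proposition is to remove that hypothesis.

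The key step will be to observe that condition (iii) forces $X$ and $Y$ to be Segal operads \emph{up to weak equivalence}, and more precisely that $f$ being a trivial fibration in the sense of the three conditions is stable enough to be analyzed after Reedy-fibrant replacement. Concretely, I would first reduce to the case where $f$ is bijective on objects: using the factorization of \ref{paragr:changeobjectspreoper}, write $f$ as $X\to f_\eta^*(Y)\to Y$, where the first map is bijective on objects and the second is the canonical map $f_\eta^*(Y)\to Y$; since $f$ is surjective on objects, Lemma \ref{lemma:surjchangeobjuniveq} shows $f_\eta^*(Y)\to Y$ is a (universal) weak equivalence, and one checks directly from the pullback description \eqref{eq:defchangeobjectspreoper} that $X\to f_\eta^*(Y)$ inherits the right lifting property with respect to tame cofibrations (the relevant lifting problems can be solved fibrewise over the objects). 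By two-out-of-three it then suffices to treat $X\to f_\eta^*(Y)$, i.e. we may assume $f$ is bijective on objects, hence a morphism of $\preoper_C$ for $C=X_\eta$.

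Once $f$ is bijective on objects, I would use the functorial Reedy fibrant Segal operad replacement. The point is that condition (iii) — that $X^{\Omega[T]}\to Y^{\Omega[T]}\times_{Y^{\spine[T]}}X^{\spine[T]}$ is a trivial fibration — together with condition (ii), says precisely that the square
$$\xymatrix{
X^{\Omega[T]}\ar[r]\ar[d]&Y^{\Omega[T]}\ar[d]\\
X^{\spine[T]}\ar[r]&Y^{\spine[T]}
}$$
is homotopy cartesian and that the maps $X^{\Omega[C_n]}\to Y^{\Omega[C_n]}$ are trivial fibrations. Using the explicit form of the Segal core and the decomposition \eqref{eq:decomposesegoperations}, condition (ii) inductively gives that $X^{\spine[T]}\to Y^{\spine[T]}$ is a weak equivalence for every $T$; combined with the homotopy cartesian squares this yields that $X^{\Omega[T]}\to Y^{\Omega[T]}$, i.e. $X_T\to Y_T$, is a weak equivalence of simplicial sets for \emph{every} tree $T$. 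By Corollary \ref{cor:termwiseweglobalwepreoper}(b), $f$ is then a weak equivalence of preoperads.

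\textbf{The main obstacle.} The delicate point is the inductive argument showing that conditions (ii) and (iii) together force $X_T\to Y_T$ to be a termwise weak equivalence for \emph{all} trees $T$ — one must run an induction on the number of vertices of $T$, at each stage using the homotopy cartesian square built from the Segal core (a colimit over the poset $I(T)$ of subtrees with at most one vertex) and checking that the relevant pushout-products of trivial fibrations along monomorphisms stay trivial fibrations. This is where the explicit combinatorics of $\spine[T]$ and formula \eqref{eq:decomposesegoperations} do the real work; the reduction to the bijective-on-objects case and the final appeal to Corollary \ref{cor:termwiseweglobalwepreoper} are comparatively formal.
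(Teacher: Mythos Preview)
Your proposal is correct and follows essentially the same route as the paper: reduce to the case where $f$ is bijective on objects via the factorization through $f_\eta^*(Y)$ and Lemma~\ref{lemma:surjchangeobjuniveq}, then show that $X_T\to Y_T$ is a weak equivalence for every tree $T$ and conclude by Corollary~\ref{cor:termwiseweglobalwepreoper}. The paper's execution of the second step is more direct than your sketch: once $f$ is bijective on objects, condition~(ii) together with the decomposition~\eqref{eq:decomposesegoperations} makes $X^{\spine[T]}\to Y^{\spine[T]}$ a trivial \emph{fibration} (as a small sum of finite products of trivial fibrations), so its pullback along $Y^{\Omega[T]}\to Y^{\spine[T]}$ is a trivial fibration, and composing with the trivial fibration from condition~(iii) gives that $X_T\to Y_T$ is a trivial fibration for every $T$ with at least one vertex; no induction on the number of vertices is needed, your mention of Reedy fibrant replacement is unused, and the ``homotopy cartesian'' framing is unnecessary once you keep track of the fact that the relevant maps are trivial fibrations rather than mere weak equivalences.
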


\begin{proof}
Let $f:X\to Y$ be a morphism with the right lifting property with respect to
tame cofibrations. In particular, $f$ is surjective on objects, so that,
by virtue of Lemma \ref{lemma:surjchangeobjuniveq}, the map $f^*_\eta(Y)\to Y$
is a weak equivalence. On the other hand, we see from Corollary \ref{cor:tametrivifibsegoper}
that the map $X\to f^*_\eta(Y)$ still has the right lifting property with respect to tame cofibrations:
the only apparently non trivial property to check is condition (iii), which follows
from the fact that the maps
$$f^*_\eta(Y)^{\Omega[T]}\times_{f^*_\eta (Y)^{\spine[T]}}X^{\spine[T]}
\to Y^{\Omega[T]}\times_{Y^{\spine[T]}}X^{\spine[T]}$$
are bijective.
In other words, we may assume that $f$ is bijective on objects. We see that, for any tree $T$
with at least one vertex, the morphism of simplicial sets $X_T\to Y_T$ is
a trivial fibration (if $T$ is a corolla, this follows from condition (ii)
of Corollary \ref{cor:tametrivifibsegoper}; this property extends to any tree $T$ with at least
one inner edge, by condition (iii) of the same corollary, and from the fact that the class
of trivial fibrations of simplicial sets is closed under small sums).
Corollary \ref{cor:termwiseweglobalwepreoper} thus implies that $f$ is a weak equivalence.
\end{proof}

\begin{prop}\label{prop:necessaryisofib}
Let $Y$ be a Segal operad. If a morphism $f:X\to Y$ has the right lifting
property with respect to maps of type (TA2) and (TA3),
it must satisfy the following three conditions:
\begin{itemize}
\item[(i)] the preoperad $X$ is a Segal operad;
\item[(ii)] for any $(n+1)$-tuple $(x_1,\ldots,x_n,x)$ of objects of $X$, $n\geq 0$,
the morphism $X(x_1,\ldots,x_n;x)\to Y(f(x_1),\ldots,f(x_n);f(x))$ is a Kan fibration;
\item[(iii)] for any tree $T$, the map $X^{\Omega[T]}\to Y^{\Omega[T]}\times_{Y^{\spine[T]}}X^{\spine[T]}$
is a trivial fibration.
\end{itemize}
\end{prop}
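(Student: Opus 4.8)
The plan is to read off conditions (ii) and (iii) directly from Proposition~\ref{prop:analysetameRLP}, and then to deduce (i) from (iii), the hypothesis that $Y$ is a Segal operad, and right properness of the usual model structure on simplicial sets. For (iii): the maps of type (TA3) are precisely the inclusions $\spine[\Delta[n],T]\cup\Omega[\partial\Delta[n],T]\to\Omega[\Delta[n],T]$ occurring in Proposition~\ref{prop:analysetameRLP}(c), so the right lifting property of $f$ against them says exactly that $X^{\Omega[T]}\to Y^{\Omega[T]}\times_{Y^{\spine[T]}}X^{\spine[T]}$ is a trivial fibration for every tree $T$ with at least one inner edge; and for a tree $T$ with at most one vertex one has $\spine[T]=\Omega[T]$, so this map is the identity, which is a trivial fibration — hence (iii) holds for all trees. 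For (ii): the maps of type (TA2) are exactly the maps $\Omega[u,T]$ with $u$ a horn inclusion $\Lambda^k[m]\to\Delta[m]$ and $T$ a tree with a unique vertex, so applying Proposition~\ref{prop:analysetameRLP}(b) to each such $u$ shows that the right lifting property of $f$ against (TA2) is equivalent to the statement that each map $X(x_1,\ldots,x_n;x)\to Y(f(x_1),\ldots,f(x_n);f(x))$ has the right lifting property against every horn inclusion, i.e.\ is a Kan fibration; this is (ii).

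To prove (i), I would first note that (ii) forces $X^{\spine[T]}\to Y^{\spine[T]}$ to be a Kan fibration for every tree $T$. Indeed, the Segal core $\spine[T]$ is obtained from the coproduct of copies of $\eta$ indexed by the edges of $T$ by a finite sequence of pushouts along the normal monomorphisms $\partial\Omega[C_{n_v}]\to\Omega[C_{n_v}]$, one for each vertex $v$ of $T$. Applying the functor $(-)^{(-)}$ turns these pushouts into pullbacks, and one argues by induction along this sequence: $X_\eta\to Y_\eta$ is a Kan fibration (a map of discrete simplicial sets), and each relative matching map $X^{\Omega[C_{n_v}]}\to Y^{\Omega[C_{n_v}]}\times_{Y^{\partial\Omega[C_{n_v}]}}X^{\partial\Omega[C_{n_v}]}$ is, by the decomposition \eqref{eq:decomposesegoperations}, the disjoint union over the tuples of objects of $X$ of the Kan fibrations $X(x_1,\ldots,x_n;x)\to Y(f(x_1),\ldots,f(x_n);f(x))$ of (ii), hence itself a Kan fibration.

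Now fix a tree $T$ with at least one inner edge. By (iii) the canonical map $X^{\Omega[T]}\to Y^{\Omega[T]}\times_{Y^{\spine[T]}}X^{\spine[T]}$ is a trivial fibration, hence a weak equivalence; since $Y$ is a Segal operad, $Y^{\Omega[T]}\to Y^{\spine[T]}$ is a weak equivalence; and by right properness of simplicial sets the pullback of this weak equivalence along the Kan fibration $X^{\spine[T]}\to Y^{\spine[T]}$, namely the projection $Y^{\Omega[T]}\times_{Y^{\spine[T]}}X^{\spine[T]}\to X^{\spine[T]}$, is again a weak equivalence. Composing these two weak equivalences recovers the map $X^{\Omega[T]}\to X^{\spine[T]}$, which is therefore a weak equivalence for every tree $T$ with an inner edge; that is, $X$ is a Segal operad. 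The only point requiring a little care is the verification that $X^{\spine[T]}\to Y^{\spine[T]}$ is a Kan fibration, where one must track the pushouts building up $\spine[T]$ corolla by corolla; I also want to stress that it is essential to pull the weak equivalence $Y^{\Omega[T]}\to Y^{\spine[T]}$ back \emph{along} the fibration $X^{\spine[T]}\to Y^{\spine[T]}$ — exactly the configuration governed by right properness — since pulling back along a general map (here $X^{\spine[T]}\to Y^{\spine[T]}$ need not be assumed a fibration a priori, and $Y$ is not assumed Reedy fibrant) would not suffice.
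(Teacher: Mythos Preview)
Your proof is correct and follows essentially the same route as the paper's: read off (ii) and (iii) from Proposition~\ref{prop:analysetameRLP}, then deduce (i) by showing $X^{\spine[T]}\to Y^{\spine[T]}$ is a Kan fibration and invoking right properness to pull back the Segal weak equivalence for $Y$. The only minor difference is that the paper establishes the Kan fibration $X^{\spine[T]}\to Y^{\spine[T]}$ in one line, by noting that this map decomposes as a coproduct (over edge-labelings) of finite products of the maps in (ii) via \eqref{eq:decomposesegoperations}, and Kan fibrations are closed under small sums and products; your inductive argument building $\spine[T]$ by pushouts of corollas reaches the same conclusion but is more work than necessary.
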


\begin{proof}
We already know from Proposition \ref{prop:analysetameRLP}
that conditions (ii) and (iii) are equivalent to the
right lifting property with respect to maps of shape (TA2) and (TA3), respectively.
Let us prove that $X$ must be a Segal operad.
Let $T$ be a tree with at least one inner edge. We then have the following
commutative square.
$$\xymatrix{
X_T\ar[r]\ar[d]&Y_T\ar[d]\\
X^{\spine[T]}\ar[r]&Y^{\spine[T]}
}$$
The bottom map is a Kan fibration: as the class of Kan
fibrations is closed under small sums and small products, this follows
from condition (ii) and isomorphisms of shape \eqref{eq:decomposesegoperations}.
Moreover, as $Y$ is a Segal operad, the right vertical map is a weak equivalence.
As the model category of simplicial sets is right proper, this implies that the projection
$Y^{\Omega[T]}\times_{X^{\spine[T]}}Y^{\spine[T]}\to X^{\spine[T]}$ is a
weak equivalence. Therefore, condition (iii) implies that the left vertical map is a weak equivalence,
or, in other words, that $X$ is a Segal operad.
\end{proof}

\begin{prop}\label{prop:isofibsegopimpliesisofiboper}
Let $Y$ be a fibrant Segal operad. If a morphism $f:X\to Y$ is an isofibration,
then the induced morphism of operads $\tau_d\, \pi_0(X)\to \tau_d\, \pi_0(Y)$ is an isofibration.
\end{prop}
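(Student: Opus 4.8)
The plan is to reduce the assertion to a single right-lifting property of $f$ and then apply the definition of an isofibration of operads. First recall from Proposition~\ref{prop:necessaryisofib} that, since $f$ is an isofibration and $Y$ a fibrant Segal operad, $X$ is again a Segal operad; hence, by Remark~\ref{rem:truncationsegoper}, the operads $\tau_d\pi_0(X)$ and $\tau_d\pi_0(Y)$ have object sets $X_\eta$ and $Y_\eta$ and spaces of operations the $\pi_0$ of the corresponding spaces of operations. Write $E$ for the contractible groupoid on $\{0,1\}$, seen as an operad. Unwinding Definition~\ref{def:folkcmcoperads}, a morphism of operads is an isofibration if and only if it has the right lifting property with respect to the inclusion $j_!\eta\to j_!E$ (a map out of $j_!\eta$ is an object, a map out of $j_!E$ is an isomorphism, and this inclusion picks out its source). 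Now $\tau_d\pi_0$ is left adjoint to the fully faithful functor $\oper\to\preoper$ sending $P$ to the constant simplicial dendroidal set $\nerf_d(P)$, so the counit gives compatible identifications $\tau_d\pi_0(\nerf_d(j_!E))\simeq j_!E$ and $\tau_d\pi_0(\eta)\simeq j_!\eta$; and $\nerf_d(j_!E)=J_d$ (the dendroidal nerve of $E$, viewed in $\preoper$) while $\nerf_d(j_!\eta)=\eta$. Thus it suffices to show that every commutative square
$$\xymatrix{
j_!\eta\ar[r]\ar[d]&\tau_d\pi_0(X)\ar[d]^{\tau_d\pi_0(f)}\\
j_!E\ar[r]^b&\tau_d\pi_0(Y)
}$$
admits a diagonal filler.

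Given such a square, its top edge is an object $x$ of $X$, that is, a morphism $\eta\to X$ of preoperads, and its bottom edge is an isomorphism $b$ of $\tau_d\pi_0(Y)$ with source $f(x)$. By Corollary~\ref{cor:liftingisosegoperads} — whose proof uses only the cofibrancy of $J_d$ and the homotopy category of preoperads, and hence applies verbatim to any fibrant Segal operad — the isomorphism $b$ is induced by a morphism of preoperads $\tilde b\colon J_d\to Y$. The two composites $\eta\to X\xrightarrow{f}Y$ and $\eta\xrightarrow{\,0\,}J_d\xrightarrow{\tilde b}Y$ both single out the object $f(x)$ of $Y$, and a morphism $\eta\to Y$ is nothing but an object of $Y$; so they coincide, giving a commutative square with left edge the inclusion $0\colon\eta\to J_d$ and right edge $f$. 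Now $0\colon\eta\to J_d$ is a weak equivalence, $J_d$ is countable with object set $\{0,1\}$, and the inclusion $\{0\}\amalg\{1\}\to J_d$ is a tame cofibration (here one uses that $J_d$ is normal, is supported on linear trees, and satisfies the strict dendroidal Segal condition, so that it is obtained from its set of objects by successively attaching cells along the generating tame cofibrations of type (TC2) and (TC3)); hence $0\colon\eta\to J_d$ is of type (TA1) and lies in $\ota$. Since $f$ is an isofibration it has the right lifting property with respect to $0\colon\eta\to J_d$, so there is $\ell\colon J_d\to X$ with $\ell\circ 0=x$ and $f\ell=\tilde b$. Applying $\tau_d\pi_0$ and the identifications above, $\tau_d\pi_0(\ell)$ is the desired filler, and $\tau_d\pi_0(f)$ is an isofibration of operads.

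The only genuinely non-formal point is the claim that $\{0\}\amalg\{1\}\to J_d$ is a tame cofibration — equivalently, that $\eta\to J_d$ is of type (TA1) — which requires a short combinatorial inspection of the cell structure of $J_d=\nerf_d(E)$ against the generators (TC2)–(TC3) of $\tc$; everything else is bookkeeping with adjunctions and with the characterization of isofibrations of operads. It is also worth noting that Corollary~\ref{cor:liftingisosegoperads}, although stated for Reedy fibrant Segal operads, does apply to $Y$, since it is really a statement about $\ho(\preoper)$, which is unchanged on passing to the tame model structure.
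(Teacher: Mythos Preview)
Your proof has a genuine gap: the claim that $\{0\}\amalg\{1\}\to J_d$ is a tame cofibration is false. Here is a clean way to see this. By Proposition~\ref{prop:taudpreservestamecof}, the functor $\tau_d:\preoper\to\soper$ sends tame cofibrations to cofibrations. Since $J_d$ is the constant simplicial object on $\nerf_d(E)$, we have $\tau_d(J_d)=E$. Thus if $J_d$ were tamely cofibrant, the contractible groupoid $E$ would be cofibrant in $\soper$ (equivalently, in $\scat_{\{0,1\}}$). But it is not: by the Dwyer--Kan description recalled in Lemma~\ref{lemma:restobjpreservecofibrants}, a cofibrant object of $\scat_{\{0,1\}}$ is a retract of a simplicial category which is degreewise free on a graph, and a free category on a graph has no non-identity isomorphisms. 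Any map $E\to A$ which is the identity on objects would have to send the isomorphism $0\simeq 1$ to a non-identity isomorphism in $A_0$, which is impossible. So $E$ is not cofibrant, and $\eta\to J_d$ is not of type~(TA1). Your parenthetical justification (normality, support on linear trees, strict Segal condition) only shows that $\eta\amalg\eta\to J_d$ is a \emph{normal} monomorphism; it does not produce a tame cell structure.

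This same obstruction undermines your appeal to Corollary~\ref{cor:liftingisosegoperads} for a merely tame-fibrant $Y$. The corollary is proved by representing a morphism in $\ho(\preoper)$ by an actual map, which requires $J_d$ cofibrant and $Y$ fibrant in the \emph{same} model structure. Since $J_d$ is not tamely cofibrant, you cannot simply switch to the tame model structure; and a fibrant Segal operad need not be Reedy fibrant. This is exactly why the paper's proof takes the longer route: it passes to a Reedy fibrant resolution $Y\to Y'$ to invoke Corollary~\ref{cor:liftingisosegoperads}, then pulls the resulting interval back to a map into $Y$ via an auxiliary fibrant Segal operad $H'$, and finally replaces $J_d$ by a genuinely tamely cofibrant interval $H$ (built by the small object argument from maps of type~(TC2) and~(TC3)) so that $\eta\to H$ really is of type~(TA1). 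Each of these steps addresses precisely the obstruction above; your shortcut bypasses them without justification.
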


\begin{proof}
Let $y_0\simeq y_1$ be an isomorphism
in the category underlying the operad $\tau_d\, \pi_0(Y)$, and let $x_0$ be an object
of $X$ such that $f(x_0)=y_0$.
By virtue of Lemma \ref{lemma:reedyfibreplacesegop},
we can choose a Reedy fibrant resolution $u:Y\to Y'$
such that $u$ is a weak equivalence which is bijective on objects,
and $Y'$ is a Reedy fibrant Segal operad. 
Let $K$ be a 
Segal operad with set of objects $K_\eta=\{0,1\}$
such that the inclusion $\eta\simeq\{0\}\to K$ is a normal monomorphism
and a weak equivalence (for instance, $K$ might be choosen to be
the nerve of the contractible groupoid with set
of objects $\{0,1\}$). As $K$ is cofibrant and $Y'$ Reedy fibrant, the isomorphism
$y_0\simeq y_1$ may be lifted to a map $K\to Y'$ sending $e$ to $y_e$
for $e=0,1$ (Corollary \ref{cor:liftingisosegoperads}).
We can apply the small object argument to the class of maps
of shapes (TA2) and (TA3) to factor the map $K\to Y$
into a weak equivalence $K\to K'$ followed by a morphism $K'\to Y'$
which is bijective on objects and has the right lifting with respect to maps of shapes (TA2) and (TA3).
Proposition \ref{prop:necessaryisofib} implies that $K'$ is a fibrant
Segal operad and that, for any tree $T$, the map $K'_T\to Y'_T$ is a Kan fibration. 
It is then time to form the following
pullback square.
$$\xymatrix{
H'\ar[r]\ar[d]&K'\ar[d]\\
Y\ar[r]&Y'
}$$
As the right hand morphism is a termwise Kan fibration,
while the lower horizontal map is a termwise simplicial
weak equivalence (\ref{cor:chartermwisewesegaloper}), the
upper horizontal map is a termwise simplicial weak equivalence,
whence a weak equivalence (\ref{cor:termwiseweglobalwepreoper}).
Let us fix a countable tame cofibration $\eta\amalg\eta\to H$ which is a bijection
on objects, and such that there exists a weak equivalence $H\to\eta$:
one obtains this by applying the small object argument to the class of
maps of shapes (TC2) and (TC3) to factor the codiagonal $\eta\amalg\eta\to\eta$;
the fact that $H$ may be assumed to be countable comes from the fact that
any map of shape (TC2) or (TC3) has a finite codomain, while the fact
that $H\to\eta$ is a weak equivalence comes from Proposition \ref{prop:tametrivialfib}.
Remark that $H'$ is a fibrant Segal operad by Proposition \ref{prop:necessaryisofib}.
This implies that the map $H'\simeq \eta\times H'\to \eta$
is fully faithul and surjective on objects. In particular, this map
has the right lifting property with respect to tame cofibrations (\ref{cor:tametrivifibsegoper}).
Therefore, we can choose a map $H\to H'$ which is bijective on objects.
Composing with $H'\to Y$, we thus obtain a map $y:H\to Y$ that lifts the isomorphism $y_0\simeq y_1$. 
Finally, we see that we have constructed a commutative square
$$\xymatrix{
\eta\ar[r]^{x_0}\ar[d]_0&X\ar[d]^f\\
H\ar[r]^y&Y
}$$
and we would like it to admit a lift from $H$ to $X$.
But, as $H$ is countable, the left vertical arrow is a map of type (TA1), so that
such a lift $h:H\to X$ exists by assumption on $f$. The map $h$ induces an isomorphism
$x_0\simeq x_1$ in $\tau_d\, \pi_0(X)$ which lifts the isomorphism $y_0\simeq y_1$.
\end{proof}

\begin{cor}\label{cor:isofibrationsfibsegopfib}
A morphism between fibrant Segal operads is both an isofibration and a weak equivalence if and only if
it has the right lifting property with respect to tame cofibrations.
\end{cor}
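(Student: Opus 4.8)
The plan is to match the two sides of the equivalence against the characterisations already established in this section. For the implication that the right lifting property with respect to tame cofibrations forces $f$ to be an isofibration and a weak equivalence, the weak equivalence part is exactly Proposition~\ref{prop:tametrivialfib}, so only the isofibration part, i.e. the right lifting property with respect to $\ota$, remains. I would deduce this from the inclusion $\ota\subseteq\otc$, obtained just as in Corollary~\ref{cor:trivcofintocof}: maps of shape (TA3) are literally those of shape (TC3); a map $\eta\to H$ of shape (TA1) factors as $\eta\to\eta\amalg\eta\to H$, the first arrow being a pushout of the map $\varnothing\to\eta$ of shape (TC1) and the second lying in $\otc$ by the hypothesis imposed on $H$ in (TA1); and a map of shape (TA2) is obtained by applying the functor $K\mapsto\Omega[K,T]$, for $T$ a tree with a unique vertex, to a horn inclusion $\Lambda^k[n]\to\Delta[n]$, which is a monomorphism of simplicial sets and hence lies in the saturation of the boundary inclusions, so that, since this functor preserves pushouts, transfinite compositions and retracts and sends boundary inclusions to maps of shape (TC2), its value on such a horn inclusion lies in $\otc$.

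For the converse, let $f\colon X\to Y$ be a morphism between fibrant Segal operads which is simultaneously an isofibration and a weak equivalence; I would verify the three conditions of Corollary~\ref{cor:tametrivifibsegoper}. Being an isofibration, $f$ in particular has the right lifting property with respect to the maps of shapes (TA2) and (TA3), so Proposition~\ref{prop:necessaryisofib} applies (since $Y$ is a Segal operad) and gives at once that $X$ is a Segal operad, that each map $X(x_1,\dots,x_n;x)\to Y(f(x_1),\dots,f(x_n);f(x))$ is a Kan fibration, and that for every tree $T$ the map $X^{\Omega[T]}\to Y^{\Omega[T]}\times_{Y^{\spine[T]}}X^{\spine[T]}$ is a trivial fibration; the last of these is precisely condition (iii) of Corollary~\ref{cor:tametrivifibsegoper}. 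Since $f$ is a weak equivalence between Segal operads, Proposition~\ref{charwesegoperads} makes it fully faithful and essentially surjective; full faithfulness says that each $X(x_1,\dots,x_n;x)\to Y(f(x_1),\dots,f(x_n);f(x))$ is a weak equivalence of simplicial sets, and a Kan fibration which is a weak equivalence is a trivial fibration, which is condition (ii).

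There remains condition (i), surjectivity on objects, and this is the only point where the fibrancy of $Y$ is used in an essential way: by Proposition~\ref{prop:isofibsegopimpliesisofiboper}, the morphism of operads $\tau_d\pi_0(f)\colon\tau_d\pi_0(X)\to\tau_d\pi_0(Y)$ is an isofibration, and it is essentially surjective because $f$ is; an essentially surjective isofibration of operads is surjective on objects (given $y$, choose $x$ with an isomorphism $\tau_d\pi_0(f)(x)\simeq y$ and lift it to an isomorphism $x\simeq x'$ with $\tau_d\pi_0(f)(x')=y$), and since $\tau_d$ and $\pi_0$ leave the sets of objects unchanged, $X_\eta\to Y_\eta$ is surjective. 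With all three conditions of Corollary~\ref{cor:tametrivifibsegoper} in hand, $f$ has the right lifting property with respect to tame cofibrations. The substantive work lies entirely in Propositions~\ref{prop:necessaryisofib} and~\ref{prop:isofibsegopimpliesisofiboper}, which are already available, so the corollary itself is essentially a matter of assembling them correctly; I expect no genuine obstacle beyond keeping track of which objects need to be fibrant Segal operads as opposed to merely Segal operads.
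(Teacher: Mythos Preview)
Your proof is correct and follows essentially the same route as the paper's own argument. The only difference is one of detail: the paper dismisses the forward direction with the word ``certainly'' (citing Corollary~\ref{cor:tametrivifibsegoper} only for the weak equivalence part), whereas you spell out the inclusion $\ota\subset\otc$ explicitly; for the reverse direction both arguments invoke Proposition~\ref{prop:necessaryisofib} for conditions (ii) and (iii) of Corollary~\ref{cor:tametrivifibsegoper}, full faithfulness (via Proposition~\ref{charwesegoperads}) to upgrade the Kan fibrations to trivial fibrations, and Proposition~\ref{prop:isofibsegopimpliesisofiboper} together with essential surjectivity for the surjectivity on objects.
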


\begin{proof}
Any morphism between Segal operads with the right lifting property
with respect to tame cofibrations is certainly an isofibration as well
a weak equivalence; see Corollary \ref{cor:tametrivifibsegoper}.
Let $f:X\to Y$ be an isofibration between fibrant Segal operads which is also
a weak equivalence. As $f$ is fully faithful, we immediately see that
$f$ satisfies conditions (ii) and (iii) of Corollary \ref{cor:tametrivifibsegoper}.
To prove that $f$ has the right lifting property with respect to tame
cofibrations, it is thus sufficient to check that it is surjective on objects.
But, by virtue of the preceding proposition,
the induced morphism of operads $\tau_d\, \pi_0(f)$ is then an isofibration,
and as it is clearly a weak equivalence, it must be surjective on objects.
As the functor $\tau_d\, \pi_0$ does not affect the objects, this means that
$f$ is surjective on objects as well.
\end{proof}

\begin{thm}\label{thm:tamemodelcat}
The category of preoperads is endowed with a left proper cofibrantly generated model
category structure whose cofibrations are the tame cofibrations, while the fibrant objects
are the fibrant Segal operads and the fibrations between Segal operads are the isofibrations.
Moreover, the identity functor is a left Quillen equivalence from this model category
to the model category of Theorem \ref{thm:cmcreedysegaloperads}.
\end{thm}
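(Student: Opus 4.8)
The plan is to deduce the model structure from a recognition principle for cofibrantly generated model categories, applying \cite[Theorem~2.1.19]{Ho} (exactly as in the proof of Theorem~\ref{thm:cmcsoper}) with $I=\tc$ as generating cofibrations, $J=\ta$ as generating trivial cofibrations, and with $\W$, the class of weak equivalences of the model structure of Theorem~\ref{thm:cmcreedysegaloperads}, as weak equivalences. Since $\preoper$ is reflective and coreflective in the presheaf category $\sdset$ (see \ref{pargar:defadjsdsetpreoper}), it is cocomplete and locally presentable, so both $\tc$ and $\ta$ (which are sets up to isomorphism, countable preoperads forming a set) permit the small object argument; and $\W$, being the weak equivalences of a combinatorial model category, satisfies two-out-of-three, is closed under retracts, and is closed under filtered colimits (Proposition~\ref{prop:wesegoperclosedfiltcolim}). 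It remains to verify Hovey's three other conditions: that $\ota$-cell complexes lie in $\W\cap\otc$; that $\tc$-injective maps lie in $\W\cap\ta$-inj; and the delicate fourth condition $\W\cap\otc\subseteq\ota$.

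For the first of these I would show that every generating tame trivial cofibration — each map in $\ta$ — is simultaneously a tame cofibration and a trivial cofibration for the model structure of Theorem~\ref{thm:cmcreedysegaloperads}. A map of type (TA1), say $\eta\to H$, factors as $\eta\to\eta\amalg\eta\to H$, the first being a pushout of $\varnothing\to\eta$ and the second lying in $\otc$ by hypothesis, so it is a tame cofibration; it is a normal monomorphism since $H$ is normal (because $\{0\}\amalg\{1\}\to H$ lies in $\otc$, hence is a normal monomorphism), and it is a weak equivalence by the very definition of (TA1). A map of type (TA3)=(TC3) lies in $\tc$, hence is a tame cofibration, and is one of the maps \eqref{eq:fctembedenrichedsegalcoresbis} (for $K=\partial\Delta[n]\to L=\Delta[n]$), hence a trivial cofibration by Proposition~\ref{prop:enrichedsegalcorestrivialcof}. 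For (TA2), with $T$ having a single vertex: the functor $K\mapsto\Omega[K,T]$ preserves colimits (clear from the pushout \eqref{eq:defomegaKT}, whose outer terms are already preoperads), hence is a left adjoint and carries the saturated class of monomorphisms of simplicial sets into the saturation of the maps $\Omega[\partial\Delta[m],T]\to\Omega[\Delta[m],T]$ of type (TC2), so $\Omega[\Lambda^k[n],T]\to\Omega[\Delta[n],T]$ is a tame cofibration; and by Proposition~\ref{prop:computenerveenrichedtrees} it is $\nerf_d$ applied to $T[\Lambda^k[n]]\to T[\Delta[n]]$, which (trivially for $T=\eta$, and by Lemma~\ref{lemma:decoratedcorollastrivcof} for $T$ a corolla) is a fully faithful morphism between Segal operads which is bijective on objects, hence a termwise weak equivalence by Corollary~\ref{cor:chartermwisewesegaloper} and a weak equivalence of preoperads by Corollary~\ref{cor:termwiseweglobalwepreoper}. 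Since $\otc$ is saturated and trivial cofibrations of a model category are stable under pushout, transfinite composition and retract, it follows that $\ota\subseteq\W\cap\otc$ and that every relative $\ta$-cell complex lies in $\W\cap\otc$. The condition $\tc\text{-inj}\subseteq\W\cap\ta\text{-inj}$ is then immediate: $\tc\text{-inj}=\otc\text{-inj}\subseteq\W$ is Proposition~\ref{prop:tametrivialfib}, and $\otc\text{-inj}\subseteq\ota\text{-inj}$ because $\ota\subseteq\otc$.

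The main obstacle is Hovey's fourth condition, $\W\cap\otc\subseteq\ota$; by a retract argument over a factorization built from $\ta$ it is equivalent to the statement that \emph{every isofibration which is a weak equivalence has the right lifting property with respect to tame cofibrations}. For morphisms between fibrant Segal operads this is exactly Corollary~\ref{cor:isofibrationsfibsegopfib}. The general case I would reduce to this one: given an isofibration and weak equivalence $f\colon X\to Y$, replace $X$ and $Y$ by fibrant Segal operads via $\ota$-cell complexes, extend $f$ (using that the cells are left orthogonal to isofibrations), factor the extension as an element of $\ota$ followed by an isofibration $p$ between fibrant Segal operads, note $p$ is a weak equivalence by two-out-of-three so Corollary~\ref{cor:isofibrationsfibsegopfib} applies to $p$, and transport the lifting property back to $f$ using two-out-of-three together with Propositions~\ref{prop:necessaryisofib} and \ref{prop:isofibsegopimpliesisofiboper} (to see, via Corollary~\ref{cor:segaluptowe} and Corollary~\ref{cor:chartermwisewesegaloper}, that the relevant targets become Segal operads and the relevant comparison maps become weak equivalences, the termwise right properness of simplicial sets entering here). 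This bookkeeping is precisely what the string of preliminary results in this section is set up to supply; I expect it to be the most laborious step. (Alternatively, one can sidestep condition four altogether by invoking a recognition theorem for combinatorial model categories of J.~Smith type, which requires only that $\tc\text{-inj}\subseteq\W$ — Proposition~\ref{prop:tametrivialfib} — and that $\W\cap\otc$ be stable under pushout and transfinite composition, both established above.)

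Finally, with the model structure in hand its cofibrations are the tame cofibrations and its trivial fibrations are the $\otc$-injective maps, so Corollaries~\ref{cor:tametrivifibsegoper} and \ref{cor:charfibsegalop} identify the fibrant objects with the fibrant Segal operads; since $\ota\subseteq\otc\cap\W$ every fibration is an isofibration, and Corollary~\ref{cor:isofibrationsfibsegopfib} with Proposition~\ref{prop:necessaryisofib} gives the converse between Segal operads. Left properness is inherited: a pushout of a weak equivalence along a tame cofibration is a pushout along a normal monomorphism, hence a weak equivalence by the left properness of the model structure of Theorem~\ref{thm:cmcreedysegaloperads}. And the identity functor into that model structure preserves cofibrations (tame cofibrations are normal monomorphisms) and weak equivalences (the two classes coincide), so it is a left Quillen functor and, inducing the identity on the common class of weak equivalences, a Quillen equivalence.
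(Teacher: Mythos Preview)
Your parenthetical alternative via Smith's theorem is exactly what the paper does, and it is the cleaner route. Your primary approach via Hovey's Theorem~2.1.19 with $J=\ta$ has a genuine gap in the fourth condition.

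The difficulty is this: you need $\W\cap\otc\subseteq\ota$, equivalently that every isofibration which is a weak equivalence has the right lifting property with respect to all tame cofibrations. Your reduction sketch proposes to replace an arbitrary such $f\colon X\to Y$ by an isofibration $p$ between fibrant Segal operads (to which Corollary~\ref{cor:isofibrationsfibsegopfib} applies) and then ``transport the lifting property back to $f$''. But lifting properties do not transport back along weak equivalences: knowing that the replacement $X'\to Y'$ has the RLP gives a lift into $X'$, not into $X$, and there is no retraction $X'\to X$ to pull it down. The retract argument would instead require factoring a given trivial tame cofibration $i\colon A\to B$ as a map in $\ota$ followed by an isofibration $q\colon C\to B$ and showing $q$ is $\otc$-injective; but $B$ is arbitrary, not a fibrant Segal operad, so none of Proposition~\ref{prop:necessaryisofib}, Proposition~\ref{prop:isofibsegopimpliesisofiboper}, or Corollary~\ref{cor:isofibrationsfibsegopfib} applies. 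The paper never establishes (and does not claim) that $\ta$ generates \emph{all} trivial cofibrations.

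The paper instead invokes Smith's theorem directly: since tame cofibrations are normal monomorphisms, $\W\cap\otc$ sits inside the trivial cofibrations of the model structure of Theorem~\ref{thm:cmcreedysegaloperads}, hence is closed under pushout, transfinite composition and retract; together with Proposition~\ref{prop:tametrivialfib} this yields the model structure. Only \emph{afterwards} does the paper identify the fibrations, and only between fibrant Segal operads: given an isofibration $f\colon X\to Y$ with $Y$ a fibrant Segal operad and a trivial tame cofibration $i\colon A\to B$ mapping to it, one first factors $B\to Y$ as an $\ota$-map $B\to B'$ followed by an isofibration $B'\to Y$, then factors $A\to B'$ likewise through $C$; now $B'$ and $C$ are fibrant Segal operads (they sit over $Y$ via isofibrations), so Corollary~\ref{cor:isofibrationsfibsegopfib} applies to $C\to B'$ and the retract argument shows $A\to B'$ lies in $\ota$, whence the lift against $f$. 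The fibrancy hypothesis on $Y$ is precisely what makes this work. Taking $Y$ terminal also shows fibrant objects coincide with fibrant Segal operads; your final paragraph's appeal to Corollary~\ref{cor:charfibsegalop} alone would not suffice under the Smith route, since a priori the trivial cofibrations could strictly contain $\ota$.
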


\begin{proof}
As any tame cofibration is a normal monomorphism, the class of
tame cofibrations which are also weak equivalences is closed
under pushout, retracts, and transfinite composition.
Therefore, the category of preoperads admits a cofibrantly generated model category
structure whose weak equivalences are the same as those of the model
category structure of Theorem \ref{thm:cmcreedysegaloperads}, and whose
cofibrations are precisely the tame cofibrations: this immediately follows
from Proposition \ref{prop:tametrivialfib} and from Jeff Smith's theorem
\cite[Theorem 1.7 and Proposition 1.18]{beke}. Moreover, it is clear that
the class $\ota$ is contained in the class of trivial cofibrations (the only
non obvious case is about maps of type (TA3), which is solved by
Proposition \ref{prop:enrichedsegalcorestrivialcof}). Therefore, any
fibrant object is a fibrant Segal operad, and any fibration is an isofibration.
We will now prove that any isofibration between fibrant Segal operads
is a fibration for this model category structure.
Consider a commutative square
$$\xymatrix{
A\ar[r]^a\ar[d]_i&X\ar[d]^f\\
B\ar[r]^b&Y
}$$
where $i$ is a tame cofibration and a weak equivalence, while $f$ is
an isofibration between fibrant Segal operads.
We want to produce a lift from $B$ to $X$. We can factor the map $b$ into a morphism
$k:B\to B'$ in $\ota$ and an isofibration $b':B'\to Y$.
It is then sufficient to prove that the commutative
square
$$\xymatrix{
A\ar[r]^a\ar[d]_{i'}&X\ar[d]^f\\
B'\ar[r]^{b'}&Y
}$$
admits a lift, where $i'=ki$. For this, it is sufficient to prove that
the map $i'$ is in the class $\ota$.
We can choose a factorization of $i'$
as a map $j:A\to C$ in $\ota$ followed by
an isofibration $q:C\to B'$. But then, the map $q$
is an isofibration between fibrant Segal operads and a weak equivalence.
Therefore, by virtue of Corollary \ref{cor:isofibrationsfibsegopfib},
the morphism $q$ has the right lifting property with respect to
any tame cofibration. This implies that the tame cofibration $i'$ is
a retract of $j$, whence is in $\ota$.
The property of left properness follows from the analogous property
for the model category structure of Theorem \ref{thm:cmcreedysegaloperads}
and from the fact that any tame cofibration is in particular a normal
monomorphism. The last assertion of the theorem is obvious.
\end{proof}

\section{Strictifications}\label{sect:8}

In this section we will prove that the homotopy theories of $\infty$-operads,
of Segal operads, and of simplicial operads are all canonically Quillen equivalent.
We will start by comparing Segal operads and simplicial operads.

\begin{prop}\label{prop:taudpreservestamecof}
The functor $\tau_d:\preoper\to\soper$ sends tame cofibrations
to cofibrations.
\end{prop}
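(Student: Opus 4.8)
The plan is to verify that $\tau_d$ sends each of the generating tame cofibrations (TC1), (TC2), (TC3) to a cofibration of simplicial operads, and then invoke the fact that $\tau_d$ preserves colimits together with the closure of cofibrations under pushout, transfinite composition and retracts. For (TC1), the map $\varnothing\to\eta$ is sent to $\varnothing\to\eta$, which is in $\mathcal C$ by construction (item (C1) of \ref{paragr:defcofandtrivcof}), hence a cofibration. For (TC2), the inclusion $\Omega[\partial\Delta[n],T]\to\Omega[\Delta[n],T]$ with $T$ a tree with a unique vertex: by Proposition \ref{prop:computenerveenrichedtrees}, applying $\tau_d$ gives $T[\partial\Delta[n]]\to T[\Delta[n]]$, and since $T$ has one vertex this is either $\eta\to\eta$ (when $T=\eta$, trivially a cofibration) or, when $T=C_m$ is a corolla, the map $C_m[\partial\Delta[n]]\to C_m[\Delta[n]]$. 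The latter is of shape (C2), hence in $\overline{\mathcal C}$, hence a cofibration.

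The only remaining case is (TC3), which coincides with (TA3): the map
$$\spine[\Delta[n],T]\cup\Omega[\partial\Delta[n],T]\to\Omega[\Delta[n],T]\, .$$
By Remark \ref{rem:taudenrichedsegalcores}, the functor $\tau_d$ sends the inclusion $\spine[K,T]\to\Omega[K,T]$ to an isomorphism for any simplicial set $K$; applying this to $K=\Delta[n]$ and $K=\partial\Delta[n]$, and using that $\tau_d$ preserves pushouts (so that $\tau_d$ of a union of subobjects is the corresponding union), one sees that $\tau_d$ sends the map (TA3) to the isomorphism $\tau_d(\Omega[\partial\Delta[n],T])\xrightarrow{\ \sim\ }\tau_d(\Omega[\Delta[n],T])=T[\Delta[n]]$ —
wait, that is too strong. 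More carefully: $\tau_d(\Omega[\partial\Delta[n],T])\to\tau_d(\Omega[\Delta[n],T])$ need not be an isomorphism, only $\tau_d(\spine[\Delta[n],T]\cup\Omega[\partial\Delta[n],T])\to\tau_d(\Omega[\Delta[n],T])$ is, because the source contains the $\spine$ part whose image already fills out $\tau_d(\Omega[\Delta[n],T])=T[\Delta[n]]$. Indeed $\tau_d\spine[\Delta[n],T]=\tau_d\Omega[\Delta[n],T]$ by Remark \ref{rem:taudenrichedsegalcores}, and since $\tau_d$ preserves the pushout defining the union $\spine[\Delta[n],T]\cup\Omega[\partial\Delta[n],T]$, the map (TA3) is sent to an isomorphism. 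An isomorphism is in particular a cofibration.

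Having checked all generators, the conclusion follows formally: the class $\overline{\mathcal C}$ of cofibrations of $\soper$ is saturated (closed under pushout, transfinite composition, retracts), $\tau_d$ is a left adjoint (hence preserves these colimit constructions and also retracts), and $\tc$ generates $\otc$ under saturation; therefore $\tau_d(\otc)\subset\overline{\mathcal C}$. I do not anticipate a genuine obstacle here — the entire content is the case-by-case analysis of the three generators, and the only subtle point is being careful in case (TC3) that it is the union with the Segal core, not $\Omega[\partial\Delta[n],T]$ alone, that gets collapsed by $\tau_d$; once Remark \ref{rem:taudenrichedsegalcores} is invoked correctly, that case becomes trivial.
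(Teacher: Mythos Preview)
Your proposal is correct and follows essentially the same approach as the paper: reduce to the generators (TC1), (TC2), (TC3) using that $\tau_d$ is a left adjoint, handle (TC1) trivially, use Proposition~\ref{prop:computenerveenrichedtrees} to identify the image of (TC2) with a map of type (C2), and use Remark~\ref{rem:taudenrichedsegalcores} to see that maps of type (TC3) are sent to isomorphisms. One minor note: in your treatment of (TC2) the case $T=\eta$ does not arise, since (TC2) requires $T$ to have a unique vertex, whereas $\eta$ has none; and in (TC3) you could simply cite Remark~\ref{rem:taudenrichedsegalcores} directly, which already records that the map \eqref{eq:fctembedenrichedsegalcoresbis} is sent to an isomorphism, rather than rederiving it.
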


\begin{proof}
It is sufficient to check that the functor $\tau_d$
sends maps of type (TC1), (TC2), or (TC3) (cf. \ref{paragr:deftamecof})
to cofibrations. The case of (TC1) is clear, while maps of type
(TC2) are sent to maps of type (C2) (this follows from Proposition \ref{prop:computenerveenrichedtrees}).
As for the maps of type (TC3), we know from Remark \ref{rem:taudenrichedsegalcores} that they become
isomorphisms in the category of simplicial operads.
\end{proof}

\begin{lemma}\label{lemma:pushoutnervepreoper}
Consider a pushout square of preoperads
$$\xymatrix{
A\ar[r]\ar[d]_i&U\ar[d]^f\\
B\ar[r]&V
}$$
in which $i$ is a map of type (TC1), (TC2), or (TC3) (see \ref{paragr:deftamecof}).
If the simplicial operad $\tau_d(U)$ is $\Sigma$-cofibrant, then
so is $\tau_d(V)$, and the comparison morphism
$$V\amalg_U\nerf_d\, \tau_d(U)\to \nerf_d\, \tau_d(V)$$
is a weak equivalence.
\end{lemma}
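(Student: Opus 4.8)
The plan is to run through the three types (TC1), (TC2), (TC3) of \ref{paragr:deftamecof} separately; the first and third are quick. If $i$ is the map $\varnothing\To\eta$ of type (TC1), then $V=U\amalg\eta$, and since $\tau_d$ preserves colimits we get $\tau_d(V)=\tau_d(U)\amalg\eta$, which is still $\Sigma$-cofibrant (adjoining an isolated colour changes nothing). As a tree is connected, any morphism from a tree to a coproduct $\mathcal R\amalg\eta$ of operads factors through one of the two summands, so $\nerf_d$ carries such a coproduct to a coproduct of dendroidal sets; hence the comparison morphism $V\amalg_U\nerf_d\tau_d(U)=\nerf_d\tau_d(U)\amalg\eta\To\nerf_d(\tau_d(U)\amalg\eta)=\nerf_d\tau_d(V)$ is an isomorphism.

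If $i:A\To B$ is of type (TC3), then by Remark \ref{rem:taudenrichedsegalcores} the map $\tau_d(i)$ is an isomorphism, hence so is $\tau_d(U)\To\tau_d(V)$ (a pushout of $\tau_d(i)$); in particular $\tau_d(V)$ is $\Sigma$-cofibrant. Writing the pushout of the lemma as $V=B\amalg_A U$, we have $V\amalg_U\nerf_d\tau_d(U)=B\amalg_A\nerf_d\tau_d(U)$, and the comparison morphism is the canonical retraction of the coprojection $j:\nerf_d\tau_d(U)\To B\amalg_A\nerf_d\tau_d(U)$. Now $j$ is a pushout of $i$, which is a trivial cofibration of preoperads by Proposition \ref{prop:enrichedsegalcorestrivialcof}, so $j$ is a weak equivalence; and the composite $\nerf_d\tau_d(U)\xrightarrow{j}B\amalg_A\nerf_d\tau_d(U)\To\nerf_d\tau_d(V)$ is $\nerf_d$ applied to the isomorphism $\tau_d(U)\To\tau_d(V)$. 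Two-out-of-three then gives the claim.

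The substantial case is (TC2), where $i$ is $\Omega[\partial\Delta[n],C_k]\To\Omega[\Delta[n],C_k]$ for a corolla $C_k$ and integers $k,n\geq 0$. By Proposition \ref{prop:computenerveenrichedtrees} this is $\nerf_d(C_k[\partial\Delta[n]])\To\nerf_d(C_k[\Delta[n]])$, so applying $\tau_d$ (which preserves pushouts) to the square of the lemma produces a pushout square in $\soper$ of the map $C_k[\partial\Delta[n]]\To C_k[\Delta[n]]$ of type (C2) (see \ref{paragr:defcofandtrivcof}) along a map $C_k[\partial\Delta[n]]\To\tau_d(U)$. Then I would argue degreewise over $\op\Delta$. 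In simplicial degree $p$, the complement of $(\partial\Delta[n])_p$ in $(\Delta[n])_p$ is the set of surjections $[p]\To[n]$, and $C_k[(\partial\Delta[n])_p]\To C_k[(\Delta[n])_p]$ is the free adjunction of one $k$-corolla for each such surjection; thus $\tau_d(V)_p$ is obtained from $\tau_d(U)_p$ by freely adjoining a finite family $\{f_\sigma\}_\sigma$ of $k$-ary operations. Since $\tau_d(U)$ is $\Sigma$-cofibrant, each $\tau_d(U)_p$ is, and hence so is each $\tau_d(V)_p$ (adjoining free operations preserves $\Sigma$-cofibrancy; cf. the beginning of the proof of Proposition \ref{prop:descriptionofPf}), whence $\tau_d(V)$ is $\Sigma$-cofibrant. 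Finally, using the pushout square \eqref{eq:defomegaKT} one identifies $\nerf_d(C_k[X\amalg Y])$ with $\big(\coprod_Y\Omega[C_k]\big)\amalg_{(\coprod_Y\partial\Omega[C_k])}\nerf_d(C_k[X])$; with this, the degree-$p$ component of the comparison morphism is exactly the comparison map of Corollary \ref{cor:addeverelfreeells} for the $\Sigma$-cofibrant operad $\tau_d(U)_p$ and the family $\{f_\sigma\}_\sigma$. By that corollary it is an inner anodyne extension, in particular a weak equivalence of dendroidal sets; since this holds for every $p$, Corollary \ref{cor:termwiseweglobalwepreoper}(a) shows that $V\amalg_U\nerf_d\tau_d(U)\To\nerf_d\tau_d(V)$ is a weak equivalence of preoperads.

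I expect the main obstacle to be the bookkeeping in case (TC2): verifying that the simplicial-degree-$p$ component of the comparison morphism really coincides with the one furnished by Corollary \ref{cor:addeverelfreeells} — which rests on the explicit description of $\nerf_d(C_k[X])$ coming from \eqref{eq:defomegaKT} and on the observation that for these very simple operads ``adjoining a corolla'' creates no new higher compositions — together with tracking $\Sigma$-cofibrancy through the successive degreewise free adjunctions. Once that identification is in place, the inner-anodyne statement, and hence the whole lemma, is imported directly from Section \ref{sect:3}.
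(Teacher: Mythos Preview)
Your proof is correct and follows the same three-case strategy as the paper; for (TC1) and (TC3) the arguments are essentially identical. In case (TC2) you actually streamline the paper's argument a little: the paper introduces auxiliary pushouts $S$ and $W$, applies Corollary~\ref{cor:addeverelfreeells} twice (once to show $S\to\Omega[\Delta[m],C_n]_q$ is inner anodyne, once to show $W\to\nerf_d\tau_d(V)_q$ is inner anodyne), and then uses a further pushout together with two-out-of-three to conclude that $\bar V_q\to\nerf_d\tau_d(V)_q$ is a weak equivalence. Your identity $\nerf_d(C_k[X\amalg Y])\simeq\big(\coprod_Y\Omega[C_k]\big)\amalg_{(\coprod_Y\partial\Omega[C_k])}\nerf_d(C_k[X])$ does hold --- for a discrete $K$ the defining pushout \eqref{eq:defomegaKT} of $\Omega[K,C_k]$ visibly splits over the summands of $K$, and Proposition~\ref{prop:computenerveenrichedtrees} identifies $\Omega[K,C_k]$ with $\nerf_d(C_k[K])$ --- and it shows that the paper's map $S\to\Omega[\Delta[m],C_n]_q$ is in fact an \emph{isomorphism}. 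Hence $\bar V_p$ coincides on the nose with the paper's $W$, and a single application of Corollary~\ref{cor:addeverelfreeells} suffices. The bookkeeping you were worried about therefore checks out.
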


\begin{proof}
We have a cocartesian square
$$\xymatrix{
\tau_d(A)\ar[r]\ar[d]_{\tau_d(i)}&\tau_d(U)\ar[d]^{\tau_d(f)}\\
\tau_d(B)\ar[r]&\tau_d(V)
}$$
in which the morphism $\tau_d(i)$ is a cofibration (by the preceding proposition),
so that the morphism $\tau_d(f)$ is a cofibration. As $\tau_d(U)$
is $\Sigma$-cofibrant, this implies that $\tau_d(f)$ is a $\Sigma$-cofibration
and that the simplicial operad $\tau_d(V)$ is $\Sigma$-cofibrant.
Let us write $\overline V=V\amalg_U\nerf_d\, \tau_d(U)$.
It remains to prove that $\overline V\to \nerf_d\, \tau_d(V)$ is a weak equivalence.

The case where $i$ is of type (TC1) is clear: the functors
$\tau_d$ and $\nerf_d$ don't affect the objects and preserve sums, so that
we must have $\overline V=\eta\amalg\nerf_d(U)\simeq\nerf_d(\eta\amalg U)=\nerf_d(V)$.
The case where $i$ is of type (TC3) is easy to understand too: in this case, we have
a commutative square of shape
$$\xymatrix{
U\ar[r]\ar[d]_f&\nerf_d\, \tau_d(U)\ar[d]^{\nerf_d\, \tau_d(f)}\\
V\ar[r]&\nerf_d\, \tau_d(V)
}$$
in which the left vertical map is a trivial cofibration
(because it belongs to $\ota$), while the right vertical map
is an isomorphism (by Remark \ref{rem:taudenrichedsegalcores}, the map $\tau_d(f)$ is
a pushout of an isomorphism). Therefore the map $\overline V\to \nerf_d\, \tau_d(V)$
has a section which is a trivial cofibration, whence it is a weak equivalence.

It remains to consider the case where $i$ is a map of type (TC2).
More precisely, we have to understand pushout squares of shape
$$\xymatrix{
\Omega[\partial\Delta[m],C_n]\ar[r]\ar[d]_i&U\ar[d]^f\\
\Omega[\Delta[m],C_n]\ar[r]&V
}$$
for $m,n\geq 0$.
By virtue of Corollary \ref{cor:termwiseweglobalwepreoper}, it is sufficient to prove that,
for any integer $q\geq 0$, the morphism of dendroidal sets
$$\overline V_q\to \nerf_d\, \tau_d(V)_q=\nerf_d\, \tau_d(V_q)$$
is a weak equivalence of the model category structure of Theorem \ref{thm:cmcdendsets}.
Let us denote by $I_q$ the set of all surjective maps $\Delta[q]\to\Delta[m]$.
We then have pushout squares of the following type in the category of operads.
$$\xymatrix{
\coprod_{f\in I_q}\partial C_n\ar[r]\ar[d]&C_n[\partial\Delta[m]]_q\ar[d]\ar[r]&\tau_d(U_q)\ar[d]\\
\coprod_{f\in I_q}C_n\ar[r]&C_n[\Delta[m]]_q\ar[r]&\tau_d(V_q)
}$$
Let us form the following cocartesian squares in the category of dendroidal sets.
$$\xymatrix{
\coprod_{f\in I_q}\partial\Omega[C_n]\ar[r]\ar[d]&\Omega[\partial\Delta[m],C_n]_q\ar[d]\ar[r]
&U_q\ar[r]&\nerf_d\, \tau_d(U_q)\ar[d]\\
\coprod_{f\in I_q}\Omega[C_n]\ar[r]&S\ar[rr]&&W
}$$
Note that, by virtue of the last assertion of Proposition \ref{prop:computenerveenrichedtrees},
we have $\Omega[K,C_n]=\nerf_d(C_n[K])$ for any simplicial set $K$.
We know that the comparison map
$$S\to \nerf_d\, \tau_d(S)\simeq \nerf_d(C_n[\Delta[m]]_q)=\Omega[\Delta[m],C_n]_q$$
is an inner anodyne extension: this follows by applying Corollary \ref{cor:addeverelfreeells} to the left hand square.
By applying Corollary \ref{cor:addeverelfreeells} to the composed square
(which makes sense because we assumed $\tau_d(U_q)$ to be $\Sigma$-cofibrant),
we obtain that the map
$$W\to \nerf_d\, \tau_d(W)\simeq \nerf_d\, \tau_d(V)_q$$
is an inner anodyne extension as well. We next observe that we have the pushout
$$\xymatrix{
S\ar[r]\ar[d]&W\ar[d]\\
\Omega[\Delta[m],C_n]_q\ar[r]&\overline V_q
}$$
from which we deduce that the map $W\to \overline V_q$ is an inner anodyne extension.
Finally, the commutative triangle
$$\xymatrix{
W\ar[dr]\ar[d]&\\
\overline V_q\ar[r]&\nerf_d\, \tau_d(V_q)
}$$
shows that the map $\overline V_q \to \nerf_d\, \tau_d(V_q)$ is a weak equivalence.
\end{proof}

\begin{prop}\label{prop:unitpreoperwe}
Let $X$ be a preoperad. If $X$ is tamely cofibrant (i.e. if the map $\varnothing\to X$
is a tame cofibration), then the unit map $X\to \nerf_d\, \tau_d(X)$ is a weak
equivalence.
\end{prop}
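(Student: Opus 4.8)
The plan is to reduce the general tamely cofibrant preoperad $X$ to the case of a transfinite composition of pushouts of the generating tame cofibrations (TC1), (TC2), (TC3) built from the initial object $\varnothing$, and then to exploit Lemma \ref{lemma:pushoutnervepreoper} cell by cell. Concretely, since $X$ is tamely cofibrant, the map $\varnothing\to X$ is a retract of a transfinite composition $\varnothing=X_0\to X_1\to\cdots\to X_\lambda=\widetilde X$ in which each $X_\alpha\to X_{\alpha+1}$ is a pushout of a map of type (TC1), (TC2) or (TC3); by the retract argument (weak equivalences, and the property ``$Y\to\nerf_d\tau_d(Y)$ is a weak equivalence'', are both stable under retracts), it suffices to treat $\widetilde X$ in place of $X$. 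So I reduce to proving the statement for such a cellular preoperad.

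Next I would prove, by transfinite induction on $\alpha$, the conjunction of two statements: (a) $\tau_d(X_\alpha)$ is $\Sigma$-cofibrant, and (b) the unit map $X_\alpha\to\nerf_d\tau_d(X_\alpha)$ is a weak equivalence. The base case $\alpha=0$ is trivial since $\tau_d(\varnothing)=\varnothing$ is $\Sigma$-cofibrant and $\varnothing\to\nerf_d(\varnothing)$ is an isomorphism. For a successor step $X_\alpha\to X_{\alpha+1}$, which is a pushout of a map $i$ of type (TC1)--(TC3), I am exactly in the situation of Lemma \ref{lemma:pushoutnervepreoper} with $U=X_\alpha$ (which is $\Sigma$-cofibrant by the inductive hypothesis (a)): that lemma gives that $\tau_d(X_{\alpha+1})$ is $\Sigma$-cofibrant, establishing (a) at stage $\alpha+1$, and that the comparison map $X_{\alpha+1}\amalg_{X_\alpha}\nerf_d\tau_d(X_\alpha)\to\nerf_d\tau_d(X_{\alpha+1})$ is a weak equivalence. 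Combining this with the inductive hypothesis (b) at stage $\alpha$ (which says $X_\alpha\to\nerf_d\tau_d(X_\alpha)$ is a weak equivalence) and the left properness of the tame model structure (Theorem \ref{thm:tamemodelcat}) — applied to the pushout of the trivial cofibration $X_\alpha\to\nerf_d\tau_d(X_\alpha)$ along the tame cofibration $X_\alpha\to X_{\alpha+1}$ — yields that $X_{\alpha+1}\to X_{\alpha+1}\amalg_{X_\alpha}\nerf_d\tau_d(X_\alpha)$ is a weak equivalence, and hence by two-out-of-three that $X_{\alpha+1}\to\nerf_d\tau_d(X_{\alpha+1})$ is a weak equivalence, which is (b) at stage $\alpha+1$.

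For a limit ordinal $\alpha$, I would pass to the colimit: $X_\alpha=\varinjlim_{\beta<\alpha}X_\beta$, and since $\tau_d$ is a left adjoint it commutes with this filtered colimit, so $\tau_d(X_\alpha)=\varinjlim_{\beta<\alpha}\tau_d(X_\beta)$, which is $\Sigma$-cofibrant as a filtered colimit (along cofibrations) of $\Sigma$-cofibrant operads. For (b), the map $X_\alpha\to\nerf_d\tau_d(X_\alpha)$ is the filtered colimit of the maps $X_\beta\to\nerf_d\tau_d(X_\beta)$ provided $\nerf_d$ commutes with these particular filtered colimits — which it does here because all the transition maps $\tau_d(X_\beta)\to\tau_d(X_{\beta+1})$ are ($\Sigma$-)cofibrations and hence injective on objects and degreewise monomorphisms, so the colimit of nerves computes the nerve of the colimit. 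Then I invoke Proposition \ref{prop:wesegoperclosedfiltcolim} (the class of weak equivalences of preoperads is closed under filtered colimits) to conclude (b) at stage $\alpha$. Taking $\alpha=\lambda$ finishes the induction, and then the retract argument finishes the proof.

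The main obstacle, I expect, is the bookkeeping at limit stages: one must check carefully that $\nerf_d$ (which is a right adjoint, hence does not commute with colimits in general) nonetheless commutes with the specific filtered colimits arising in the cellular filtration, using the fact that the transition maps are cofibrations of simplicial operads — this is where $\Sigma$-cofibrancy of the intermediate stages, propagated by Lemma \ref{lemma:pushoutnervepreoper}, really earns its keep, since without it one could not even form the comparison maps in the lemma. A secondary point requiring care is the correct use of left properness: one needs the pushout of a weak equivalence along a tame cofibration to be a weak equivalence, which is precisely left properness of the tame model structure of Theorem \ref{thm:tamemodelcat}, applied with the cofibrations being tame cofibrations; the map $X_\alpha\to\nerf_d\tau_d(X_\alpha)$ being a weak equivalence (but not necessarily a tame cofibration) is exactly the scenario left properness is designed for.
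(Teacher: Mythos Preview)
Your proof is correct and follows the same route as the paper's: retract to a cellular object, then transfinitely induct using Lemma~\ref{lemma:pushoutnervepreoper} together with left properness at successor stages and Proposition~\ref{prop:wesegoperclosedfiltcolim} at limit stages. One simplification the paper makes: it simply observes that $\nerf_d$ preserves all small filtered colimits (trees being finite operads, hence compact), so your worry at limit ordinals dissolves without any special property of the transition maps; likewise, $\Sigma$-cofibrancy of $\tau_d(X_\alpha)$ follows at once from Proposition~\ref{prop:taudpreservestamecof} rather than needing to be carried along inductively.
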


\begin{proof}
By the small object argument, we may assume that
$X$ is a retract of an object $Y$, such that $Y=\bigcup Y_i$,
where
$$\varnothing=Y_0\subset Y_1\subset \dots \subset Y_i \subset\dots \subset Y\ , \quad i\in I\, ,$$
is a sequence of maps indexed by a well ordered set $I$ with initial element $0$,
such that each of the inclusions
$Y'_i=\varinjlim_{j<i}Y_j\to Y_i$, $i\in I$,
fits into a pushout of shape
$$\xymatrix{
A_i\ar[r]\ar[d]_{u_i}&Y'_i\ar[d]\\
B_i\ar[r]&Y_{i}
}$$
where the map $u_i$ is a morphism of type (TC1), (TC2), or (TC3) (\ref{paragr:deftamecof}).
Note that the functors $\tau_d$ and $\nerf_d$ both preserve small
filtered colimits. As the class of weak equivalences is closed under retracts
and under filtered colimits (\ref{prop:wesegoperclosedfiltcolim}),
it is sufficient to prove that the unit map 
$Y_i\to \nerf_d\, \tau_d(Y_i)$
is a weak equivalence for each $i\in I$.
We do this by transfinite induction.
For $i=0$, we must have $Y'_0=\varnothing$, which implies that $u_0$
is of type (TC1). Therefore, $Y_0=\eta$, and the map
$Y_0\to \nerf_d\, \tau_d(Y_0)$ is an isomorphism.
It remains to study the case where $i>0$.
Proposition \ref{prop:taudpreservestamecof} implies that,
for any tamely cofibrant preoperad $W$, the simplicial
category $\tau_d(W)$ is cofibrant, whence $\Sigma$-cofibrant.
We deduce from Lemma \ref{lemma:pushoutnervepreoper}
that the comparison map
$Y_{i}\amalg_{Y'_{i}} \nerf_d\, \tau_d(Y'_{i})\to \nerf_d\, \tau_d(Y_i)$
is a weak equivalence. Moreover, the map $Y'_{i}\to \nerf_d\, \tau_d(Y'_{i})$
is a weak equivalence, because it is a filtered colimit of the weak equivalences
$Y_j\to \nerf_d\, \tau_d(Y_j)$, $j<i$. As the model category of
preoperads is left proper, and as the map $Y'_{i}\to Y_i$
is a cofibration, the map
$Y_i\to Y_{i}\amalg_{Y'_{i}} \nerf_d\, \tau_d(Y'_{i})$
is a weak equivalence. It follows that the composed map
$Y_i\to \nerf_d\, \tau_d(Y_i)$ is a weak equivalence as well, thus completing
the induction step and the proof.
\end{proof}

\begin{thm}\label{thm:quillenequivnervepreopersoper}
The functor $\tau_d:\preoper\to\soper$ is a left Quillen
equivalence (where $\preoper$ is endowed with the tame model
category structure (\ref{thm:tamemodelcat})). Moreover, its right adjoint
$\nerf_d:\soper\to\preoper$ preserves and detects weak
equivalences.
\end{thm}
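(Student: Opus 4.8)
The plan is to extract everything from three results proved above --- Proposition~\ref{prop:unitpreoperwe}, Proposition~\ref{charwesegoperads}, and Proposition~\ref{prop:taudpreservestamecof} --- together with two elementary identifications valid for every simplicial operad $\mathcal P$: the operation spaces match, $\nerf_d(\mathcal P)(x_1,\dots,x_n;x)=\mathcal P(x_1,\dots,x_n;x)$ (compare Definitions~\ref{def:operationsinsegaloper} and~\ref{def:mainclassesofmapssimpoper}, using that $\nerf_d(\mathcal P)_{C_n}$ is a coproduct of hom-spaces), and $\pi_0\,\tau_d\,\nerf_d(\mathcal P)\simeq\pi_0(\mathcal P)$ (since $\nerf_d$ is fully faithful, so $\tau_d\nerf_d\simeq\mathrm{id}$). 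The first thing I would record is that $\nerf_d\colon\soper\to\preoper$ \emph{both preserves and detects} weak equivalences: for any $\mathcal P$ the preoperad $\nerf_d(\mathcal P)$ is a Segal operad (Remark~\ref{rem:segalcondsimploper}), so by Proposition~\ref{charwesegoperads} a map $\nerf_d(f)$ is a weak equivalence of preoperads if and only if it is fully faithful and essentially surjective in the sense of Definition~\ref{def:fullyfaithfulesssurjsegoper}; via the two identifications above this is exactly the condition that $f$ be fully faithful and essentially surjective in the sense of Definition~\ref{def:mainclassesofmapssimpoper}, i.e.\ a weak equivalence of $\soper$ (Theorem~\ref{thm:cmcsoper}).

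Next I would show that $(\tau_d,\nerf_d)$ is a Quillen pair. Since $\tau_d$ preserves cofibrations (Proposition~\ref{prop:taudpreservestamecof}), by \cite[Proposition~7.15]{JT} it suffices to check that $\nerf_d$ carries an isofibration $p\colon\mathcal P\to\mathcal Q$ between \emph{fibrant} simplicial operads to a fibration of the tame model structure. Then $\nerf_d(\mathcal P)$ and $\nerf_d(\mathcal Q)$ are fibrant Segal operads, so by Theorem~\ref{thm:tamemodelcat} it is enough that $\nerf_d(p)$ have the right lifting property against the generating maps (TA1), (TA2), (TA3) of~\ref{paragr:deftamecof}. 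By Proposition~\ref{prop:analysetameRLP}, the lifting property against (TA3) is automatic because $\nerf_d(\mathcal P)$ and $\nerf_d(\mathcal Q)$ satisfy the \emph{strict} Segal condition, so the maps $X^{\Omega[T]}\to X^{\spine[T]}$ are isomorphisms; and the lifting property against (TA2) amounts precisely to $\mathcal P(x_1,\dots,x_n;x)\to\mathcal Q(p x_1,\dots,p x_n;p x)$ being a Kan fibration, i.e.\ to $p$ being a local fibration. Transposing the (TA1) case across the adjunction (and using $\tau_d(\eta)=\eta$), the remaining content is exactly: \emph{for every $($TA1$)$-map $\eta\to H$, the morphism $\tau_d(\eta\to H)\colon\eta\to\tau_d(H)$ is a trivial cofibration of simplicial operads.}

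This is the main obstacle, and it is where Proposition~\ref{prop:unitpreoperwe} does the work. The map $\eta\to\tau_d(H)$ is a cofibration by Proposition~\ref{prop:taudpreservestamecof}. The preoperad $H$ is tamely cofibrant: $\varnothing\to\eta\amalg\eta$ is built from maps of type (TC1), and $\eta\amalg\eta\to H$ lies in $\otc$ by the very definition of (TA1). Hence by Proposition~\ref{prop:unitpreoperwe} the unit $H\to\nerf_d\tau_d(H)$ is a weak equivalence of preoperads; since $\eta\to H$ is a weak equivalence and the unit is an isomorphism on $\eta$, the two-out-of-three property shows that $\nerf_d\tau_d(\eta\to H)$ is a weak equivalence of preoperads. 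As $\nerf_d\tau_d(H)$ is a Segal operad, the fact that $\nerf_d$ detects weak equivalences (first paragraph) gives that $\tau_d(\eta\to H)$ is a weak equivalence of $\soper$, hence a trivial cofibration; this completes the proof that $(\tau_d,\nerf_d)$ is a Quillen pair. Finally, for the Quillen equivalence I would invoke the standard criterion \cite{Ho}: $\nerf_d$ preserves weak equivalences and detects them (in particular between fibrant objects), while the unit $X\to\nerf_d\tau_d(X)$ is a weak equivalence for every cofibrant, i.e.\ tamely cofibrant, $X$ (Proposition~\ref{prop:unitpreoperwe} once more), so for cofibrant $X$ the derived unit $X\to\nerf_d\tau_d(X)\to\nerf_d R\tau_d(X)$ is a composite of weak equivalences; therefore $(\tau_d,\nerf_d)$ is a Quillen equivalence. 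The last assertion of the theorem, that $\nerf_d$ preserves and detects weak equivalences, has already been established along the way.
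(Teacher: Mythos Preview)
Your proof is correct and follows essentially the same approach as the paper's own proof: both use Proposition~\ref{prop:taudpreservestamecof} for cofibrations, establish that $\nerf_d$ preserves and detects weak equivalences via Proposition~\ref{charwesegoperads} and Remark~\ref{rem:segalcondsimploper}, invoke \cite[Proposition~7.15]{JT} for the Quillen pair, and conclude with Proposition~\ref{prop:unitpreoperwe}. The only difference is organizational: where the paper argues in one stroke that $\tau_d$ sends trivial cofibrations between cofibrant objects to trivial cofibrations (and hence sends $\ota$ to trivial cofibrations), you unpack the verification against the generating maps (TA1), (TA2), (TA3) individually---handling (TA2) and (TA3) directly on the right adjoint side via Proposition~\ref{prop:analysetameRLP} and the strict Segal condition, and reserving the unit-map argument for (TA1) alone.
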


\begin{proof}
By virtue of Proposition \ref{prop:taudpreservestamecof},
the functor $\tau_d$ preserves cofibrations.
Moreover, the functor $\nerf_d$ preserves and detects
weak equivalences: indeed the nerve of a simplicial
operad is a Segal operad, and a morphism
of simplicial operads is fully faithful and essentially
surjective if and only if its nerve is fully faithful and
essentially surjective. One deduces from
this property of the nerve and from Proposition \ref{prop:unitpreoperwe}
that the functor $\tau_d$ sends trivial cofibrations between
cofibrant objects to trivial cofibrations.
In particular, it sends the class $\ota$
into the class of trivial cofibrations.
This implies right away that the functor $\nerf_d$
sends fibrations between fibrant objects to isofibrations
between fibrant Segal operads, whence to fibrations (Theorem \ref{thm:tamemodelcat}).
Therefore, the adjunction $(\tau_d,\nerf_d)$
is a Quillen pair; see \cite[Proposition 7.15]{JT}.
Proposition \ref{prop:unitpreoperwe} and the fact that the nerve
functor $\nerf_d$ preserves and detects weak equivalences
imply that this is in fact a Quillen equivalence.
\end{proof}

\begin{rem}
The category $\precat$ of (Segal) precategories studied by Bergner
in \cite{bergnerc} is simply $\preoper/\eta$.
Therefore, the two model structures on $\preoper$ induce
two (Quillen equivalent) model category structures on $\precat$.
Similarly, the category $\scat$ of simplicial categories
can be described as $\scat=\soper/\eta$. As both functors $\tau_d$
and $\nerf_d$ preserve the object $\eta$ (which is fibrant on both
sides of the adjunction), the preceding theorem immediately implies: 
\end{rem}

\begin{cor}[{\cite[Theorem 8.6]{bergnerc}}]
The nerve adjunction
$$\tau:\precat\rightleftarrows\scat:\nerf$$
is a Quillen equivalence (where $\precat$ is endowed with the model
category structure induced by Theorem \ref{thm:tamemodelcat}).
\end{cor}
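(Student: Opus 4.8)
The plan is to deduce this from Theorem~\ref{thm:quillenequivnervepreopersoper} by passing to the slice categories over $\eta$, as the preceding remark indicates. First I would fix the identifications $\precat=\preoper/\eta$ and $\scat=\soper/\eta$, under which the tame model structure of Theorem~\ref{thm:tamemodelcat} induces the overcategory model structure on $\precat$ and the model structure of Theorem~\ref{thm:cmcsoper} induces the overcategory model structure on $\scat$ (in each case cofibrations, fibrations and weak equivalences are created by the forgetful functor to $\preoper$, resp.\ $\soper$, and the existence of such a slice model structure is standard). Since $\tau_d(\eta)\simeq\eta$ in $\soper$ and $\nerf_d(\eta)\simeq\eta$ in $\preoper$, the adjunction $\tau\dashv\nerf$ between Segal precategories and simplicial categories is literally the restriction of $\tau_d\dashv\nerf_d$: on an object $Y\to\eta$ of $\precat$ one has $\tau(Y\to\eta)=(\tau_d(Y)\to\tau_d(\eta)\simeq\eta)$, and dually $\nerf(X\to\eta)=(\nerf_d(X)\to\nerf_d(\eta)\simeq\eta)$ on an object $X\to\eta$ of $\scat$.

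Next I would invoke the general slicing principle for Quillen adjunctions: if $F\colon M\rightleftarrows N\colon G$ is a Quillen pair and $c$ is a \emph{fibrant} object of $N$, then the induced adjunction $F_c\colon M/Gc\rightleftarrows N/c\colon G_c$, with $G_c(X\to c)=(GX\to Gc)$ and $F_c(Y\to Gc)=(FY\to FGc\to c)$ (the last arrow being the counit at $c$), is again a Quillen pair, and it is a Quillen equivalence whenever $(F,G)$ is. The functor $G_c$ preserves fibrations and trivial fibrations because these are detected in $N$ and $G$ preserves them; so $(F_c,G_c)$ is a Quillen pair with no hypothesis on $c$. For the equivalence statement one uses that a cofibrant object of $M/Gc$ is a map $Y\to Gc$ with $Y$ cofibrant in $M$, while a fibrant object of $N/c$ is a map $X\to c$ that is a fibration in $N$, hence has $X$ fibrant in $N$ precisely because $c$ is fibrant; under the adjunction bijection a map $FY\to X$ over $c$ corresponds to a map $Y\to GX$ over $Gc$, and each is a weak equivalence in its slice exactly when the underlying map is a weak equivalence in $N$, resp.\ $M$. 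Thus the Quillen equivalence criterion for $(F_c,G_c)$ reduces verbatim to the one for $(F,G)$.

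Finally I would apply this with $M=\preoper$ carrying the tame model structure, $N=\soper$, $F=\tau_d$, $G=\nerf_d$, and $c=\eta$. The only point needing verification is that $\eta$ is fibrant as a simplicial operad: every simplicial set of operations of $\eta$ is either empty or a point, hence a Kan complex, so the map from $\eta$ to the terminal operad is a local fibration, and $\pi_0(\eta)=\eta$ maps to the terminal operad by an isofibration trivially; by Definition~\ref{def:mainclassesofmapssimpoper}, $\eta$ is therefore fibrant in $\soper$. Combining this with Theorem~\ref{thm:quillenequivnervepreopersoper} and the slicing principle yields that $\tau\colon\precat\rightleftarrows\scat\colon\nerf$ is a Quillen equivalence; moreover $\nerf$ preserves and detects weak equivalences, since $\nerf_d$ does (Theorem~\ref{thm:quillenequivnervepreopersoper}) and weak equivalences in both slices are created by the forgetful functors. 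There is no real obstacle here: the whole content is Theorem~\ref{thm:quillenequivnervepreopersoper}, and the only care points are the (routine) fibrancy of $\eta$ in $\soper$ and the (standard) fact that slicing a Quillen equivalence over a fibrant object is again a Quillen equivalence.
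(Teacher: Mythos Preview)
Your proposal is correct and follows exactly the approach indicated in the paper: the remark preceding the corollary already says that $\precat=\preoper/\eta$, $\scat=\soper/\eta$, that both $\tau_d$ and $\nerf_d$ preserve $\eta$, and that $\eta$ is fibrant on both sides, so the corollary is obtained from Theorem~\ref{thm:quillenequivnervepreopersoper} by slicing. You have simply spelled out the standard slicing-over-a-fibrant-object argument that the paper leaves implicit.
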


A reformulation of the proof of Proposition \ref{prop:unitpreoperwe}
gives the following result.

\begin{thm}\label{thm:sigmacofleftproper}
Consider a pushout of simplicial operads
$$\xymatrix{
\mathcal P\ar[r]^p\ar[d]_i&\mathcal P'\ar[d]^{i'}\\
\mathcal Q\ar[r]^q&\mathcal Q'
}$$
in which the map $i$ is a cofibration, while $\mathcal P$
and $\mathcal P'$ are $\Sigma$-cofibrant.
Then this square is homotopy cocartesian.
(In particular, if the map $p$ is in addition a weak equivalence, then
$q$ is a weak equivalence as well.)
\end{thm}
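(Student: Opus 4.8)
The plan is to reformulate the proof of Proposition~\ref{prop:unitpreoperwe}. The only substantial input will be Lemma~\ref{lemma:pushoutnervepreoper}, used together with the facts that the model structure of Theorem~\ref{thm:cmcreedysegaloperads} on $\preoper$ is left proper, that weak equivalences of preoperads are stable under filtered colimits (Proposition~\ref{prop:wesegoperclosedfiltcolim}), and that $\nerf_d\colon\soper\to\preoper$ preserves and detects weak equivalences (Theorem~\ref{thm:quillenequivnervepreopersoper}). I will also use repeatedly that $\nerf_d$ sends $\Sigma$-cofibrant simplicial operads to normal preoperads and sends cofibrations of simplicial operads (which are $\Sigma$-cofibrations) to normal monomorphisms, so that $\nerf_d$ applied to a cospan $\mathcal Q\xleftarrow{i}\mathcal P\to\mathcal P'$ as in the statement is a cospan of cofibrant objects of $\preoper$ one of whose legs is a cofibration; hence $\nerf_d(\mathcal Q)\amalg_{\nerf_d(\mathcal P)}\nerf_d(\mathcal P')$ is a homotopy pushout in $\preoper$. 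The key reduction is that it suffices to prove the statement $(\star)$: for every cofibration $i\colon\mathcal P\to\mathcal Q$ with $\mathcal P$ and $\mathcal P'$ $\Sigma$-cofibrant, the comparison map
\[
\nerf_d(\mathcal Q)\amalg_{\nerf_d(\mathcal P)}\nerf_d(\mathcal P')\To\nerf_d\bigl(\mathcal Q\amalg_{\mathcal P}\mathcal P'\bigr)
\]
is a weak equivalence. Indeed, applying $(\star)$ both to the given cospan and to a cofibrant resolution of it in $\soper$, the two resulting homotopy pushouts in $\preoper$ are weakly equivalent (homotopy pushouts of objectwise weakly equivalent cospans of cofibrant objects), so the comparison map from the $\soper$-homotopy-pushout to $\mathcal Q\amalg_{\mathcal P}\mathcal P'$ goes to a weak equivalence under $\nerf_d$, and therefore is one; that is exactly homotopy cocartesianness. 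The parenthetical assertion then follows formally, since in a homotopy cocartesian square a weak equivalence on one side forces one on the opposite side.

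To prove $(\star)$, the first step is to reduce to the cellular case. Since both sides of $(\star)$ and the class of weak equivalences are stable under retracts, and since every cofibration $i$ is a retract of a relative $\overline{\mathcal C}$-cell complex $\mathcal P\to\mathcal Q''$ (with $\mathcal Q''$, and hence $\mathcal Q''\amalg_{\mathcal P}\mathcal P'$, still $\Sigma$-cofibrant), I may assume that $i$ is a transfinite composition of pushouts of maps of type (C1) and (C2). Writing $\mathcal P=\mathcal P_0\to\mathcal P_1\to\cdots\to\mathcal Q=\varinjlim_\alpha\mathcal P_\alpha$ and setting $\mathcal P'_\alpha=\mathcal P'\amalg_{\mathcal P}\mathcal P_\alpha$, each $\mathcal P_\alpha$ and each $\mathcal P'_\alpha$ is $\Sigma$-cofibrant, $\mathcal Q\amalg_{\mathcal P}\mathcal P'=\varinjlim_\alpha\mathcal P'_\alpha$, and $\mathcal P_\alpha\to\mathcal P_{\alpha+1}$, $\mathcal P'_\alpha\to\mathcal P'_{\alpha+1}$ are pushouts of one and the same generating cofibration $A_\alpha\to B_\alpha$. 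I then prove by transfinite induction that the comparison map $\nerf_d(\mathcal P_\alpha)\amalg_{\nerf_d(\mathcal P)}\nerf_d(\mathcal P')\to\nerf_d(\mathcal P'_\alpha)$ is a weak equivalence. The case $\alpha=0$ is trivial; the limit step is immediate because $\nerf_d$ and pushouts commute with filtered colimits and weak equivalences of preoperads do too. The successor step is a diagram chase: one factors the relevant pushout square into two, invokes the single-cell case of $(\star)$ for $\mathcal P_\alpha\to\mathcal P_{\alpha+1}$ and for $\mathcal P'_\alpha\to\mathcal P'_{\alpha+1}$, and pushes the resulting weak equivalences and the induction hypothesis along the cofibrations $\nerf_d(A_\alpha)\to\nerf_d(B_\alpha)$ and $\nerf_d(\mathcal P_\alpha)\to\nerf_d(\mathcal P_\alpha)\amalg_{\nerf_d(\mathcal P)}\nerf_d(\mathcal P')$, using left properness of $\preoper$.

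The heart of the matter is thus the single-cell case of $(\star)$, i.e. when $\mathcal P\to\mathcal Q$ is a pushout of a single map $A\to B$ of type (C1) or (C2). For (C1), $A\to B$ is $\varnothing\to\eta$, both sides of the comparison map equal $\nerf_d(\mathcal P)\amalg\eta$, and the map is the identity. For (C2), write $A=C_n[\partial\Delta[m]]\to B=C_n[\Delta[m]]$. Since the corolla $C_n$ has a single vertex, Proposition~\ref{prop:computenerveenrichedtrees} identifies $\nerf_d(C_n[K])$ with $\Omega[K,C_n]$ for every simplicial set $K$, so that $\nerf_d(A)\to\nerf_d(B)$ is precisely the map of type (TC2) $\Omega[\partial\Delta[m],C_n]\to\Omega[\Delta[m],C_n]$; moreover $\tau_d$ preserves colimits and $\tau_d\nerf_d=\mathrm{id}$, whence $\tau_d\bigl(\nerf_d(\mathcal P)\amalg_{\nerf_d(A)}\nerf_d(B)\bigr)=\mathcal Q$. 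Lemma~\ref{lemma:pushoutnervepreoper}, applied to this pushout of preoperads along a (TC2)-map whose initial corner $\nerf_d(\mathcal P)$ has $\Sigma$-cofibrant $\tau_d$-image $\mathcal P$, then tells us that $\mathcal Q$ is $\Sigma$-cofibrant and that $\nerf_d(\mathcal P)\amalg_{\nerf_d(A)}\nerf_d(B)\to\nerf_d(\mathcal Q)$ is a weak equivalence; the same with $\mathcal P'$ and $\mathcal Q\amalg_{\mathcal P}\mathcal P'$ in place of $\mathcal P$ and $\mathcal Q$. Since $\nerf_d(A)\to\nerf_d(B)$ is a normal monomorphism, pushing the first of these weak equivalences along the cofibration $\nerf_d(\mathcal P)\to\nerf_d(\mathcal P)\amalg_{\nerf_d(A)}\nerf_d(B)$ and using left properness gives $\nerf_d(\mathcal Q)\amalg_{\nerf_d(\mathcal P)}\nerf_d(\mathcal P')\simeq\nerf_d(B)\amalg_{\nerf_d(A)}\nerf_d(\mathcal P')\simeq\nerf_d\bigl(\mathcal Q\amalg_{\mathcal P}\mathcal P'\bigr)$, compatibly with the comparison map. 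This establishes $(\star)$ and hence the theorem.

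The only genuine obstacle is Lemma~\ref{lemma:pushoutnervepreoper} itself, which has already been proved; the remaining work is the bookkeeping that feeds the generating cofibrations of $\soper$ into that lemma via the identification $\nerf_d(C_n[K])\simeq\Omega[K,C_n]$, assembles the cellular case by transfinite induction, and transports the resulting homotopy-pushout statement from $\preoper$ back to $\soper$ along the weak-equivalence-detecting functor $\nerf_d$.
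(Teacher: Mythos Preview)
Your proof is correct and follows essentially the same route as the paper's. Both reduce to the generating cofibrations (C1) and (C2) by a saturation/transfinite-induction argument, translate the question to $\preoper$ via the weak-equivalence-detecting functor $\nerf_d$ and the identification $\nerf_d(C_n[K])\simeq\Omega[K,C_n]$ of Proposition~\ref{prop:computenerveenrichedtrees}, and then invoke Lemma~\ref{lemma:pushoutnervepreoper} together with left properness of $\preoper$; the paper packages the reduction via an auxiliary class $H$ with closure properties, whereas you run the transfinite induction directly on the comparison map $(\star)$, but the content is the same.
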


\begin{proof}
Let $H$ be the class of morphisms between $\Sigma$-cofibrant simplicial operads
$i:\mathcal P\to\mathcal Q$ such that, for any
map $p:\mathcal P\to \mathcal P'$, with $\mathcal P'$
$\Sigma$-cofibrant, the pushout square
$$\xymatrix{
\mathcal P\ar[r]^p\ar[d]_i&\mathcal P'\ar[d]^{i'}\\
\mathcal Q\ar[r]^q&\mathcal Q'
}$$
is homotopy cocartesian, while $i'$ is a $\Sigma$-cofibration.
Let us first check that the class $H$ has the following properties.
\begin{itemize}
\item[(a)] For any $\Sigma$-cofibrant simplicial operad $\mathcal P$, the map
$\varnothing\to\mathcal P$ is in $H$.
\item[(b)] For any map $i:\mathcal P\to \mathcal Q$ in $H$ and any
morphism $p:\mathcal P\to \mathcal P'$ with $\mathcal P'$
$\Sigma$-cofibrant, the morphism $\mathcal P'\to \mathcal Q\amalg_{\mathcal P}\mathcal P'$
is in $H$. 
\item[(c)] For any well ordered set $I$ with smallest element $0$, and any functor $\mathcal P:I\to \soper$
such that, for any $i\in I$, the map
\smash{$\varinjlim_{j<i}\mathcal P_j\to \mathcal P_i$}
is in $H$, the map
$\mathcal P_0\to \varinjlim_{i \in I}\mathcal P_i$
is in $H$.
\item[(d)] The class $H$ is closed under retracts.
\end{itemize}
Property (a) reflects the fact that $\Sigma$-cofibrant simplicial operads are closed
under small sums as well as the stability of weak equivalences by small sums.
Property (b) comes from the fact that, for any two commutative squares of the following form in
a model category
$$\xymatrix{
X\ar[r]\ar[d]&X'\ar[r]\ar[d]&X''\ar[d]\\
Y\ar[r]&Y'\ar[r]&Y''
}$$
if the left hand square and the composed square are homotopy
cocartesian, then so is the right hand square.
Property (c) comes from the fact that homotopy cocartesian squares
are closed under homotopy colimits, because, as
weak equivalences of simplicial operads are closed under filtering colimits
(\ref{prop:wesoperfiltcolim}), any filtered colimit of simplicial operads is
weakly equivalent to the corresponding homotopy colimit.
Finally, property (d) expresses the stability of the classes of $\Sigma$-cofibrations
and of homotopy cocartesian squares under retracts.
The conclusion of this digression is that it is sufficient to prove the
theorem in the case where the map $i$ is of shape (C1) or (C2) (cf. \ref{paragr:defcofandtrivcof}):
indeed, it follows from the small object argument that any cofibration
between $\Sigma$-cofibrant simplicial operads is a retract of a transfinite
composition of pushouts of such morphisms.
As the nerve functor $\nerf_d$ is a right Quillen equivalence
which preserves weak equivalences (\ref{thm:quillenequivnervepreopersoper}),
it is sufficient to prove that the commutative square
$$\xymatrix{
\nerf_d(\mathcal P)\ar[r]^{\nerf_d(p)}\ar[d]_{\nerf_d(i)}&\nerf_d(\mathcal P')\ar[d]^{\nerf_d(i')}\\
\nerf_d(\mathcal Q)\ar[r]^{\nerf_d(q)}&\nerf_d(\mathcal Q')
}$$
is homotopy cocartesian in the model category of preoperads for which
the cofibrations are the tame cofibrations.
The last assertion of Proposition \ref{prop:computenerveenrichedtrees}
implies that $\nerf_d(i)$ is a map of shape (TC1) or (TC2) (\ref{paragr:deftamecof}),
whence, in particular, a tame cofibration. As the model category of preoperads is
left proper (\ref{thm:tamemodelcat}), we see that
Lemma \ref{lemma:pushoutnervepreoper} concludes the proof.
\end{proof}

\begin{cor}\label{cor:properoversigmacof}
For any $\Sigma$-cofibrant simplicial operad $\mathcal P$, the model
category $\soper/\mathcal P$ is proper.
\end{cor}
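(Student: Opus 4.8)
For the Corollary, the plan is to handle right and left properness separately. Right properness is essentially automatic: the forgetful functor $U\colon\soper/\mathcal P\to\soper$ creates limits, and a morphism of $\soper/\mathcal P$ is a fibration (resp. a weak equivalence, resp. a trivial fibration) if and only if $U$ sends it to one. Hence any pullback of a weak equivalence along a fibration in $\soper/\mathcal P$ is computed by $U$ inside $\soper$, where it is a weak equivalence by the right properness part of Theorem \ref{thm:cmcsoper}. No hypothesis on $\mathcal P$ intervenes here.

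For left properness, $U$ also creates colimits and detects cofibrations, so a commutative square of $\soper/\mathcal P$ is a pushout along a cofibration exactly when its image under $U$ is; it therefore suffices to show that a pushout square of simplicial operads whose left leg is a cofibration and whose top edge is a weak equivalence, and all of whose vertices carry compatible maps to $\mathcal P$, has its right leg a weak equivalence. I would first apply the usual factorization-and-gluing reduction for left properness to assume that the cofibration $i\colon\mathcal A\to\mathcal C$ has cofibrant domain; since the generating cofibrations of $\soper$ (namely $\varnothing\to\eta$ and $C_n[\partial\Delta[m]]\to C_n[\Delta[m]]$, see \ref{paragr:defcofandtrivcof}) are $\Sigma$-cofibrations, every cofibrant simplicial operad is $\Sigma$-cofibrant (as already used in the proof of Proposition \ref{prop:unitpreoperwe}), so both $\mathcal A$ and $\mathcal C$ are $\Sigma$-cofibrant. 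The remaining task is to replace the top-right vertex $\mathcal B$ of the square, which is merely weakly equivalent to the $\Sigma$-cofibrant object $\mathcal A$, by a genuinely $\Sigma$-cofibrant operad without destroying its structure map to $\mathcal P$: this is where the hypothesis that $\mathcal P$ is $\Sigma$-cofibrant enters, since it lets one perform the replacement inside the slice $\soper/\mathcal P$. Combining this with the stability of $\Sigma$-cofibrancy under pushout along cofibrations (part of the content of Theorem \ref{thm:sigmacofleftproper}), one produces a span of $\Sigma$-cofibrant operads over $\mathcal P$, with one leg a cofibration, mapping to the original span by a levelwise weak equivalence. Applying Theorem \ref{thm:sigmacofleftproper} to the pushout of the replaced span shows that it is homotopy cocartesian, hence that its right leg is a weak equivalence; comparing the two pushout squares then transfers this conclusion to the original square.

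The step I expect to be the main obstacle is precisely this comparison of the two pushout squares. Theorem \ref{thm:sigmacofleftproper} genuinely requires $\Sigma$-cofibrancy of both the source and the other upper corner of the pushout, and there is no purely formal shortcut: if the upper-right corner were already cofibrant the conclusion would follow at once from the gluing lemma, so the entire difficulty is that it is only weakly equivalent to a $\Sigma$-cofibrant object. Organizing the $\Sigma$-cofibrant replacement coherently over $\mathcal P$, and checking that the induced comparison of pushouts is a weak equivalence, is where the $\Sigma$-cofibrancy of $\mathcal P$ — as opposed to its mere role as the base of the slice — becomes indispensable; everything else in the argument is formal.
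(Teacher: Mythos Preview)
You have missed the one observation that turns this into a two-line proof: any simplicial operad admitting a map to a $\Sigma$-cofibrant simplicial operad is itself $\Sigma$-cofibrant. Indeed, $\Sigma$-cofibrancy here amounts to freeness of the symmetric group actions on the spaces of operations, and freeness passes backwards along any equivariant map (if $\sigma\cdot q=q$ in the source, then $\sigma\cdot f(q)=f(q)$ in the target, forcing $\sigma=e$). This is the exact analogue of the fact, used throughout the paper, that any dendroidal set over a normal one is normal. Once you know this, \emph{every} object of $\soper/\mathcal P$ is $\Sigma$-cofibrant, and Theorem~\ref{thm:sigmacofleftproper} applies directly to any pushout along a cofibration in the slice: the square is homotopy cocartesian, so a weak equivalence on one side gives one on the other. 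No reduction, no replacement. This is precisely the paper's argument.

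Your proposed route has a genuine gap at exactly the step you flag as the obstacle. After replacing $\mathcal B$ by a $\Sigma$-cofibrant $\mathcal B'$, the comparison map from $\mathcal D'=\mathcal C\amalg_{\mathcal A}\mathcal B'$ to $\mathcal D=\mathcal C\amalg_{\mathcal A}\mathcal B$ is itself the pushout of the weak equivalence $\mathcal B'\to\mathcal B$ along the cofibration $\mathcal B'\to\mathcal D'$. To conclude that this is a weak equivalence you would need either left properness (circular) or Theorem~\ref{thm:sigmacofleftproper} with $\mathcal B$ $\Sigma$-cofibrant---but that is exactly the observation above, and once you have it the replacement was never needed. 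You also mislocate the role of the hypothesis on $\mathcal P$: cofibrant replacement in a slice model category is always available and needs nothing of the base; the $\Sigma$-cofibrancy of $\mathcal P$ is used solely to force everything above it to be $\Sigma$-cofibrant. (As a secondary point, the ``usual factorization-and-gluing reduction'' to a cofibrant domain for the cofibration is not a standard valid step in proving left properness and would itself require justification---the comparison of pushouts it entails runs into the same circularity.)
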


\begin{proof}
The property of right properness is already known (this
comes from the right properness of the model
category of simplicial operads; see \ref{thm:cmcsoper}). As for left properness,
this immediately follows from the preceding theorem and from
the fact that any simplicial operad above a $\Sigma$-cofibrant simplicial operad is
$\Sigma$-cofibrant.
\end{proof}

\begin{cor}\label{cor:propernonsymoper}
The model category of non-symmetric simplicial operads is proper.
\end{cor}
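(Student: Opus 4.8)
The plan is to transport the whole machinery of Sections~\ref{sect:1} and \ref{sect:3}--\ref{sect:8} to the non-symmetric setting, where the rôle played by dendroidal sets (presheaves on $\trees$) is taken over by presheaves on the category of \emph{planar} trees, and to observe that in that world every hypothesis phrased in terms of $\Sigma$-cofibrancy becomes vacuous. First I would note that the model category of non-symmetric simplicial operads exists by the same argument as Theorem~\ref{thm:cmcsoper}, namely \cite[Theorem 2.1]{BMOp1} applied with the interval $\Delta[1]$ to the free non-symmetric operad monad; this monad is even more manageable than the symmetric one, since there are no symmetric group actions and hence no admissibility subtleties. Right properness then follows exactly as at the end of the proof of Theorem~\ref{thm:cmcsoper}: Kan's $\ex^\infty$ functor preserves finite limits and Kan fibrations, so it induces a finite-limit-preserving, fibration-preserving fibrant replacement functor on non-symmetric simplicial operads.

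For left properness I would establish the non-symmetric analogue of Theorem~\ref{thm:sigmacofleftproper}. The key observation is that the $\Sigma$-cofibrancy hypothesis is used in Sections~\ref{sect:3}--\ref{sect:8} only to guarantee two things: (i) that a free extension $P\to P[f]$ of the ambient operad is a monomorphism (used in Proposition~\ref{prop:descriptionofPf}), and (ii) that the nerve of $P$ is normal (used throughout, e.g.\ in Lemma~\ref{lemma:inneranodynefreecell}). In the non-symmetric world both hold for \emph{every} non-symmetric operad $P$: the explicit description of $P[f]$ by planar trees whose vertices are labelled by operations of $P$ or by the freely added operation $f$ exhibits $P$ as a subobject of $P[f]$ with no quotient by automorphisms, and the planar nerve of any non-symmetric operad is normal because planar trees have no non-trivial automorphisms. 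Consequently the planar versions of Propositions~\ref{prop:descriptionofPf} and \ref{prop:addfreecellweakequiv}, of Corollary~\ref{cor:addeverelfreeells}, of Lemma~\ref{lemma:pushoutnervepreoper}, and of Proposition~\ref{prop:unitpreoperwe} hold with their $\Sigma$-cofibrancy assumptions simply deleted. Feeding these into the proof of Theorem~\ref{thm:sigmacofleftproper} (where the argument with the class $H$ reduces everything to pushouts along the generating cofibrations (C1) and (C2)) shows that \emph{every} pushout of non-symmetric simplicial operads along a cofibration is homotopy cocartesian; in particular the pushout of a weak equivalence along a cofibration is a weak equivalence, which is left properness. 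As a by-product, exactly as in Corollary~\ref{cor:properoversigmacof} but now without any restriction, every slice of non-symmetric simplicial operads is proper.

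The principal obstacle is a bookkeeping one: one must verify that the combinatorics of Section~\ref{sect:3} and the ``tame'' model structure of Section~\ref{sect:7} survive the passage to planar trees. Concretely, in Section~\ref{sect:3} one needs the planar analogues of Lemmas~\ref{lemma:suffcondinnanod} and \ref{lemma:inneranodynefreecell} --- the filtration of $\nerf_d(P[f])$ by multiplicity of $f$, the reduction of an injective map of trees to a composite of outer and inner faces, and the inner anodyne-ness of $\Lambda^{I_q}[T]\hookrightarrow\Omega[T]$ --- together with the planar forms of the inner horns and of the skeletal filtration used in \cite[Lemme 8.1.34]{Ci3}. None of these proofs uses symmetric group actions, so each goes through verbatim (and in several places simplifies, the normality conditions now being automatic). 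Similarly, Sections~\ref{sect:5}--\ref{sect:7} must be re-run with planar Segal operads and a planar tame model structure; again the arguments are insensitive to the presence or absence of symmetries, so this is a routine, if lengthy, transcription.
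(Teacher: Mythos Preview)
Your approach is correct in principle but takes an enormously longer route than the paper does. The paper's proof is a single line: it invokes Corollary~\ref{cor:properoversigmacof} with $\mathcal P=\mathpzc{Ass}$, the associative operad. The point is that the category of non-symmetric (coloured, simplicial) operads is canonically equivalent to the slice category $\soper/\mathpzc{Ass}$, and $\mathpzc{Ass}$ is $\Sigma$-cofibrant because in each arity $n$ its set of operations is a free $\Sigma_n$-set (namely $\Sigma_n$ itself, acting on the $n!$ linear orders). Properness then follows immediately from Corollary~\ref{cor:properoversigmacof}, with no further work.

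Your plan --- to re-run Sections~\ref{sect:1} and \ref{sect:3}--\ref{sect:8} over the category of planar trees, noting that normality and $\Sigma$-cofibrancy become automatic --- would indeed yield the result, and your diagnosis of where $\Sigma$-cofibrancy enters the argument is accurate. What this buys you is an intrinsically non-symmetric development (planar dendroidal sets, planar Segal operads, and so on), which has independent interest. But as a proof of this one corollary it is a massive detour: you are proposing to rewrite the entire paper when the paper itself already proves the needed statement about $\soper/\mathpzc{Ass}$ in the symmetric setting. The moral is that the slice identification $\text{non-symmetric operads}\simeq\soper/\mathpzc{Ass}$ lets you import symmetric results for free whenever $\mathpzc{Ass}$ satisfies the relevant hypotheses, and here it does.
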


\begin{proof}
This is the preceding corollary for $\mathcal P=\mathpzc{Ass}$.
\end{proof}

\begin{cor}\label{cor:propersimpcat}
The model category of simplicial categories is proper.
\end{cor}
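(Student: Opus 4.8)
The plan is to exhibit the Bergner model category $\scat$ as a slice of $\soper$ over a $\Sigma$-cofibrant object and then quote Corollary \ref{cor:properoversigmacof}. Recall from Remark \ref{rem:fromoperadstocategories} that the fully faithful functor $j_!\colon\scat\to\soper$ identifies $\scat$ with the slice category $\soper/\eta$, where $\eta$ denotes the terminal category regarded as a (constant) simplicial operad; under this identification the model structure of Theorem \ref{thm:cmcsoper} restricts to the Bergner model structure \cite{bergnera} on $\scat$ (and, on the fixed-object pieces, to the Dwyer--Kan model structure of \cite{DK}).

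It therefore suffices to observe that $\eta$ is $\Sigma$-cofibrant. This is immediate: $\eta$ has a single object, its space of unary operations is a point, and all of its spaces of operations of arity $n\neq 1$ are empty; since $\Sigma_1$ is trivial and the empty simplicial set carries a (trivially) free action of every finite group, the symmetric group actions on the spaces of operations of $\eta$ are free, i.e.\ $\eta$ is $\Sigma$-cofibrant. (In fact the same remark shows that \emph{every} simplicial category, viewed as a simplicial operad, is $\Sigma$-cofibrant, since only unary operations occur.)

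Applying Corollary \ref{cor:properoversigmacof} with $\mathcal P=\eta$ now gives that $\soper/\eta$ is a proper model category, and transporting this along the identification $\scat=\soper/\eta$ yields the assertion. The only conceivable difficulty is the compatibility of the slice model structure on $\soper/\eta$ with the Bergner model structure, but this is exactly the content of the discussion around Remark \ref{rem:fromoperadstocategories}, so no new argument is needed; the real substance of the statement is already packaged in Theorem \ref{thm:sigmacofleftproper} and its Corollary \ref{cor:properoversigmacof}.
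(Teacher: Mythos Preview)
Your proof is correct and follows exactly the paper's approach: apply Corollary \ref{cor:properoversigmacof} with $\mathcal P=\eta$, using the identification $\scat=\soper/\eta$ from Remark \ref{rem:fromoperadstocategories}. The paper's proof is the single line ``Apply Corollary \ref{cor:properoversigmacof} to $\mathcal P=\eta$''; your additional remarks on why $\eta$ is $\Sigma$-cofibrant and on compatibility with the Bergner structure are accurate but not strictly needed.
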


\begin{proof}
Apply Corollary \ref{cor:properoversigmacof} to $\mathcal P=\eta$.
\end{proof}

\begin{cor}\label{cor:quillenequivsoperproper}
Let $E_\infty$ be any $\Sigma$-cofibrant (e.g. cofibrant) simplicial
operad which is weakly equivalent to the terminal operad.
Then the model category $\soper/E_\infty$ is proper, while the
forgetful functor $\soper/E_\infty\to\soper$ is a left Quillen
equivalence.
\end{cor}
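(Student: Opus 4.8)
The plan is to dispatch the two assertions more or less independently, each reducing to something already in hand.

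\emph{Properness.} The model structure on $\soper/E_\infty$ is the slice model structure, and properness of $\soper/E_\infty$ is precisely Corollary \ref{cor:properoversigmacof} applied to $\mathcal P=E_\infty$; this is legitimate because $E_\infty$ is $\Sigma$-cofibrant, and since every cofibrant simplicial operad is $\Sigma$-cofibrant this also covers the parenthetical ``e.g. cofibrant'' case.

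\emph{The Quillen equivalence.} First I would note that the forgetful functor $u:\soper/E_\infty\to\soper$ is automatically a left Quillen functor: cofibrations and trivial cofibrations of the slice are by definition exactly the maps sent by $u$ to a cofibration, resp. a trivial cofibration, of $\soper$, so $u$ preserves both. Its right adjoint $G$ sends a simplicial operad $\mathcal Q$ to the projection $E_\infty\times\mathcal Q\to E_\infty$, and for $f:A\to\mathcal Q$ in $\soper$ with source $u(A\to E_\infty)$ the adjoint transpose $\tilde f:A\to E_\infty\times\mathcal Q$ over $E_\infty$ satisfies $f=\mathrm{pr}_2\circ\tilde f$, where $\mathrm{pr}_2:E_\infty\times\mathcal Q\to\mathcal Q$ is the second projection. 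Since weak equivalences in the slice are created by $u$, a map $\tilde f$ is a weak equivalence in $\soper/E_\infty$ if and only if $u(\tilde f)$ is one in $\soper$; so, by two-out-of-three applied to $f=\mathrm{pr}_2\circ\tilde f$, the standard criterion for $u$ to be a left Quillen equivalence (for every cofibrant object $A\to E_\infty$ and every fibrant $\mathcal Q$, $f$ is a weak equivalence iff $\tilde f$ is) reduces to the single claim that $\mathrm{pr}_2:E_\infty\times\mathcal Q\to\mathcal Q$ is a weak equivalence whenever $\mathcal Q$ is a fibrant simplicial operad.

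This last claim is the only point requiring an argument, and it is short. Since $E_\infty$ is weakly equivalent to the terminal simplicial operad $\ast$ and $\ast$ is a terminal object, hence terminal in $\ho(\soper)$, the canonical map $\varepsilon:E_\infty\to\ast$ must itself be the weak equivalence exhibiting this equivalence. Now $E_\infty\times\mathcal Q=E_\infty\times_\ast\mathcal Q$ is the pullback of $\varepsilon$ along $\mathcal Q\to\ast$, and the latter is a fibration precisely because $\mathcal Q$ is fibrant (Definition \ref{def:mainclassesofmapssimpoper}); hence right properness of $\soper$ (Theorem \ref{thm:cmcsoper}) gives that $\mathrm{pr}_2$ is a weak equivalence. (Alternatively one checks directly that $\mathrm{pr}_2$ is fully faithful and essentially surjective, using that every operation space of $E_\infty$ is weakly contractible and that $E_\infty$ has at least one object.) The main, and indeed the only real, obstacle is this verification that $\mathrm{pr}_2$ is a weak equivalence, and the right-properness observation renders it routine.
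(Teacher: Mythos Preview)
Your proof is correct and follows essentially the same approach as the paper: properness via Corollary~\ref{cor:properoversigmacof}, and the Quillen equivalence via the fact that the projection $E_\infty\times\mathcal Q\to\mathcal Q$ is a weak equivalence. The only cosmetic difference is that the paper asserts this projection is a weak equivalence for \emph{every} simplicial operad $\mathcal Q$ (your parenthetical direct check), whereas your main argument establishes it only for fibrant $\mathcal Q$ via right properness---which is all that the Quillen equivalence criterion requires.
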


\begin{proof}
For any simplicial operad $\mathcal P$, the projection
$E_\infty\times\mathcal P\to \mathcal P$ is a weak equivalence, which
proves that the forgetful functor from $\soper/E_\infty\to\soper$ is a left Quillen
equivalence. We conclude again with Corollary \ref{cor:properoversigmacof}.
\end{proof}

\begin{rem}
We do not know if the model category of simplicial operads
is left proper or not, though.
\end{rem}

\begin{paragr}
The inclusion functor $\dset\subset\preoper$ induces an equivalence
of homotopy categories (\ref{thm:cmcreedysegaloperads})
\begin{equation}\label{eq:finaldsetpreoper}
\ho(\dset)\simeq\ho(\preoper)\, .
\end{equation}
On the other hand, we have the total right derived funtor of the homotopy coherent nerve
functor (\ref{prop:BVleftQuillen})
\begin{equation}\label{eq:finaldsetsoper}
\derR\hnerf:\ho(\soper)\to\ho(\dset)
\end{equation}
as well as the functor induced by the nerve functor (\ref{thm:quillenequivnervepreopersoper})
\begin{equation}\label{eq:finalpreopersoper}
\nerf_d:\ho(\soper)\to\ho(\preoper)
\end{equation}
(we do not need to derive the functor $\nerf_d$ because it preserves weak equivalences
on the nose).
\end{paragr}

\begin{thm}\label{thm:agreementnerves}
The composition of the nerve functor \eqref{eq:finalpreopersoper}
with the equivalence \eqref{eq:finaldsetpreoper} is canonically isomorphic to
the derived homotopy coherent nerve \eqref{eq:finaldsetsoper}.
In particular, the total right derived functor of the homotopy
coherent nerve functor is an equivalence of categories.
\end{thm}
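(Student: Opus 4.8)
The plan is to deduce the homotopy–commutativity of the square $(\ast)$ by comparing the \emph{left} adjoints, exploiting that $\dset$, $\preoper$, $\soper$ have been set up so that everything in sight is well behaved on cofibrant objects. First, recall that the inclusion $i\colon\dset\to\preoper$ preserves and detects weak equivalences and is a left Quillen equivalence (Theorem~\ref{thm:cmcreedysegaloperads}), so it induces the equivalence \eqref{eq:finaldsetpreoper}, whose quasi–inverse is $\derR((-)_0)$, the derived functor of its right adjoint $Y\longmapsto Y_0$; and by Theorem~\ref{thm:quillenequivnervepreopersoper}, $\nerf_d\colon\soper\to\preoper$ preserves weak equivalences, induces \eqref{eq:finalpreopersoper} without any derivation, and is part of a Quillen equivalence, so that $\derL\tau_d$ is a quasi–inverse to it. By uniqueness of adjoints, the assertion $\derR((-)_0)\circ\nerf_d\cong\derR\hnerf$ that we must prove is equivalent to the isomorphism of left adjoints $\derL\tau_d\circ\ho(i)\cong\derL\bv\colon\ho(\dset)\to\ho(\soper)$, and this in turn (apply the equivalence $\nerf_d$, and use $\nerf_d\circ\derL\tau_d\cong\mathrm{id}$) is equivalent to producing a natural isomorphism
\[
\nerf_d(\bv(X))\;\cong\;i(X)\qquad\text{in }\ho(\preoper),
\]
natural in the normal dendroidal set $X$. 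Once this is in place, the ``in particular'' clause is immediate, since $\derR\hnerf$ becomes the composite of the equivalences \eqref{eq:finalpreopersoper} and the inverse of \eqref{eq:finaldsetpreoper}.

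The comparison map is essentially forced. Both $\nerf_d\circ\bv$ and $i$ are colimit–preserving functors on the presheaf category $\dset$, so a natural transformation between them is determined by its restriction to the representables $\Omega[T]$, and on $\Omega[T]$ I would take the map induced by the Boardman–Vogt augmentation $\varepsilon\colon\bvoper(T)\to T$ of \ref{paragr:recallBMtrees}(b): since $\trees\subset\oper$ is full, the dendroidal nerve of the tree–operad $T$ is $\Omega[T]$ itself, so $\nerf_d(\varepsilon_T)\colon\nerf_d(\bvoper(T))\to i(\Omega[T])$ makes sense and is natural in $T$ by the description of the functoriality of $\bvoper$ in \ref{paragr:explicitBVtrees}. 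As $\varepsilon_T$ is a weak equivalence of simplicial operads and $\nerf_d$ preserves weak equivalences, this map is a weak equivalence; so the induced natural transformation $\nu\colon\nerf_d\circ\bv\Rightarrow i$ is a weak equivalence on all representables.

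The remaining work is to show $\nu_X$ is a weak equivalence for every normal $X$, by transfinite cellular induction along the boundary inclusions $\partial\Omega[T]\to\Omega[T]$. The functors $\bv$, $\nerf_d$, $i$ all preserve colimits, and weak equivalences of simplicial operads and of preoperads are closed under the relevant filtered colimits (Propositions~\ref{prop:wesoperfiltcolim} and \ref{prop:wesegoperclosedfiltcolim}), so it suffices to treat one cell attachment. Applying $\bv$ to such a pushout gives a pushout along a cofibration between $\Sigma$–cofibrant simplicial operads (Proposition~\ref{prop:BVpreservescofandinneranod}), hence a homotopy cocartesian square by Theorem~\ref{thm:sigmacofleftproper}; applying the equivalence $\nerf_d$ — whose quasi–inverse $\derL\tau_d$ reflects homotopy cocartesian squares — the resulting square in $\preoper$ is again homotopy cocartesian. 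On the other hand, applying $i$ yields a genuine pushout along a (Reedy) cofibration in the left proper model category $\preoper$, hence once more homotopy cocartesian. Since $\nu$ is a weak equivalence on the three old corners (on $\partial\Omega[T]$ and on the previous stage by induction, on $\Omega[T]$ by the previous paragraph), the cube lemma forces it to be one on the new corner, and the induction closes.

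The genuine difficulty, I expect, is entirely upstream: the whole argument stands on Theorem~\ref{thm:quillenequivnervepreopersoper} (that $\tau_d\dashv\nerf_d$ is a Quillen equivalence for the \emph{tame} structure) and, through Theorem~\ref{thm:sigmacofleftproper}, on the Section~\ref{sect:3} fundamental property that the dendroidal nerve turns the relevant pushouts of $\Sigma$–cofibrant operads into pushouts up to an inner anodyne extension (Proposition~\ref{prop:addfreecellweakequiv}, Corollary~\ref{cor:addeverelfreeells}, repackaged as Lemma~\ref{lemma:pushoutnervepreoper}). Granting those, the subtle point of the present theorem is that $\derL\tau_d$ coincides with $\tau_d$ only on \emph{tamely} cofibrant preoperads, whereas $i(X)$ is merely Reedy cofibrant; this is exactly why the comparison must be routed through $\nerf_d(\bv(X))$, and why one should invoke the abstract identity $\nerf_d\circ\derL\tau_d\cong\mathrm{id}$ — rather than any explicit description of $\derL\tau_d$ — both to perform the reduction of the first paragraph and to transport homotopy cocartesianness of squares in the cellular induction.
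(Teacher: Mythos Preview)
Your reduction via left adjoints to proving $\nerf_d(\bv(X))\cong i(X)$ in $\ho(\preoper)$ for normal $X$ is correct, and you have correctly located the heavy lifting in Theorems~\ref{thm:quillenequivnervepreopersoper} and~\ref{thm:sigmacofleftproper}. The gap is in the construction of the comparison map $\nu$. The functor $\nerf_d\circ\bv$ is \emph{not} colimit--preserving: $\nerf_d$ is a right adjoint and does not commute with pushouts. Worse, no point--set natural transformation $\nu\colon\nerf_d\bv\Rightarrow i$ exists at all. Take $X=\spine[L_2]=\Omega[C_1]\amalg_\eta\Omega[C_1]$; since $\bv$ preserves colimits and $\bvoper(C_1)=C_1$, one has $\bv(X)=C_1\amalg_\eta C_1=L_2$ (discrete), so $\nerf_d\bv(X)=\Omega[L_2]$, while $i(X)=\spine[L_2]$. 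Naturality along the two inclusions $\Omega[C_1]\hookrightarrow X$ forces $\nu_X$ to be the identity on the two edges, but there is no map $\Omega[L_2]\to\spine[L_2]$ sending the non--degenerate $2$--simplex anywhere compatible with that. Without an actual map of squares the cube--lemma step cannot run; the natural zigzag through $\varinjlim_{T\to X}\nerf_d(\bvoper(T))$ leaves you having to prove that its right leg to $\nerf_d\bv(X)$ is a weak equivalence, which is essentially the statement in question.

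The paper's proof stays on the right--adjoint side and never needs such a map. For fibrant $\mathcal P$ it uses the Quillen adjunctions $(\tau_d,\nerf_d)$ and $(\bv,\hnerf)$ to identify $\Map(\Omega[T],\nerf_d(\mathcal P))\simeq\Map_{\soper}(T,\mathcal P)$ and $\Map(\Omega[T],\hnerf(\mathcal P))\simeq\Map_{\soper}(\bvoper(T),\mathcal P)$; the weak equivalence $\varepsilon\colon\bvoper(T)\to T$ between cofibrant simplicial operads then gives a \emph{termwise} simplicial weak equivalence of the simplicial dendroidal sets $T\longmapsto\Map(\Omega[T],-)$, and properties (iii)--(v) of Theorem~\ref{thm:dendroidalrezkcmc} convert this into the desired isomorphism in $\ho(\preoper)$. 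The comparison is thus supplied directly by the Boardman--Vogt resolution on the $\soper$ side, so no cellular induction on $X$ --- and no transformation between $\nerf_d\bv$ and $i$ --- is required.
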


\begin{proof}
For any fibrant simplicial operad $\mathcal P$ and any tree $T$,
as $\nerf_d$ is a right Quillen functor, we have
$$\Map(\Omega[T],\nerf_d(\mathcal P))=\Map(T,\mathcal P)$$
(because $\tau_d(\Omega[T])=T$) and, similarly, as $\hnerf$ is a right Quillen functor, we have
$$\Map(\Omega[T],\hnerf(\mathcal P))=\Map(\bv(T),\mathcal P)\, .$$
The natural weak equivalence $\bv(T)\to T$ between cofibrant objects
thus gives a functorial homotopy equivalence between Kan complexes
$$\Map(\Omega[T],\nerf_d(\mathcal P))\to\Map(\Omega[T],\hnerf(\mathcal P))\, .$$
Properties (iii) and (v) of Theorem \ref{thm:dendroidalrezkcmc}
give that $\nerf_d(\mathcal P)$ and $\hnerf(\mathcal P)$
are naturally isomorphic in $\ho(\sdset)$. Property (iv) of Theorem \ref{thm:dendroidalrezkcmc}
implies then that they are canonically isomorphic in $\ho(\preoper)$.
The last assertion follows from Theorem \ref{thm:quillenequivnervepreopersoper}.
\end{proof}

Proposition \ref{prop:BVleftQuillen} and the preceding theorem, put together, give:

\begin{thm}\label{cor:homotcohnervequillenequiv}
The adjunction
$\bv:\dset\rightleftarrows\soper:\hnerf$
is a Quillen equivalence.
\end{thm}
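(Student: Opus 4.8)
The plan is simply to combine the two results immediately preceding the statement. By Proposition~\ref{prop:BVleftQuillen} we already know that $\bv$ is a left Quillen functor, so $(\bv,\hnerf)$ is a Quillen pair and induces an adjunction between total derived functors
$$\derL\bv:\ho(\dset)\rightleftarrows\ho(\soper):\derR\hnerf\,.$$
A Quillen pair is a Quillen equivalence as soon as the total right derived functor of its right adjoint is an equivalence of categories --- equivalently, as soon as the total left derived functor of its left adjoint is one; see, for instance, \cite[Corollary~1.3.16]{Ho}. So the only thing left to do is to verify that $\derR\hnerf$ is an equivalence.

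First I would invoke the last assertion of Theorem~\ref{thm:agreementnerves}, which states precisely that the total right derived functor of the homotopy coherent nerve is an equivalence: there $\derR\hnerf$ is identified with the composite
$$\ho(\soper)\xrightarrow{\ \nerf_d\ }\ho(\preoper)\xrightarrow{\ \sim\ }\ho(\dset)\,,$$
where the first arrow is an equivalence by Theorem~\ref{thm:quillenequivnervepreopersoper} (no derivation being needed, since $\nerf_d$ preserves all weak equivalences) and the second is the inverse of the equivalence of Theorem~\ref{thm:cmcreedysegaloperads}. Both factors being equivalences, so is $\derR\hnerf$, and hence $(\bv,\hnerf)$ is a Quillen equivalence by the criterion above, which is the assertion of the theorem. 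One can also conclude by hand: $\derR\hnerf$ being an equivalence, it reflects weak equivalences between fibrant objects, so it is enough to check that the derived unit $X\to\derR\hnerf\,\derL\bv(X)$ is an isomorphism in $\ho(\dset)$ for every cofibrant dendroidal set $X$, and this too is contained in Theorem~\ref{thm:agreementnerves}.

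No genuine obstacle remains at this stage: the substance has been front-loaded. The real work lay in Theorem~\ref{thm:quillenequivnervepreopersoper} --- which forced the replacement of the model structure on $\preoper$ of Theorem~\ref{thm:cmcreedysegaloperads} by the tame one of Theorem~\ref{thm:tamemodelcat}, so that the strict dendroidal nerve $\nerf_d$ becomes right Quillen --- and in the homotopical comparison of Theorem~\ref{thm:agreementnerves} between $\hnerf$ and $\nerf_d$, which itself rested on the functorial weak equivalence $\bv(T)\to T$ between cofibrant objects for each tree $T$ (property~(b) of~\ref{paragr:recallBMtrees}) and on properties~(iii)--(v) of Theorem~\ref{thm:dendroidalrezkcmc}.
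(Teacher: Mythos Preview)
Your proof is correct and follows exactly the same approach as the paper, which simply says that Proposition~\ref{prop:BVleftQuillen} and Theorem~\ref{thm:agreementnerves} ``put together'' give the result. You have merely made explicit the standard criterion (a Quillen pair is a Quillen equivalence once the total right derived functor is an equivalence) that the paper leaves implicit, and added helpful commentary on where the substantive work was done.
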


Both the homotopy coherent nerve functor $\hnerf$ and its left adjoint $\bv$
preserve the object $\eta$, which is fibrant (on both sides of the adjunction).
Therefore, under the identifications $\sset=\dset/\eta$
and $\scat=\soper/\eta$, we also get the following comparison result, originally
due to Joyal and Lurie:

\begin{cor}[{\cite[Theorem 2.2.5.1]{lurie}}]\label{cor:homotcohnervequillenequivsimpl}
The homotopy coherent nerve adjunction
$$\bv:\sset\rightleftarrows\scat:\mathit{hcN}$$
is a Quillen equivalence (where the category $\sset$ is endowed with
the Joyal model structure).
\end{cor}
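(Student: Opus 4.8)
The plan is to obtain this corollary by slicing the Quillen equivalence of Theorem \ref{cor:homotcohnervequillenequiv} over the unit object $\eta$, using the elementary fact that a Quillen equivalence restricts to a Quillen equivalence on slices over a fibrant object. The only real content is in checking that the hypotheses of that formal device are met, and in identifying the resulting model structures.

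First I would record the relevant properties of $\eta$. The dendroidal set $\eta=\Omega[0]$ is fibrant in $\dset$ (it is the nerve of an operad, hence an $\infty$-operad, and $\tau_d(\eta)=\eta$ is an isofibration over the point), and $\eta$ is fibrant in $\soper$ (it has one object and a one-point space of operations, while $\pi_0(\eta)=\eta$ is fibrant in $\oper$). Moreover both functors of the adjunction fix $\eta$: since $\eta$ is a tree without inner edges, $\bv(\eta)=\bvoper(\eta)=\eta$ by \ref{paragr:recallBMtrees}, and the natural isomorphism $\nerf_d(P)\simeq\hnerf(P)$ for operads $P$ established in the proof of Proposition \ref{prop:truncatehcnerve} gives $\hnerf(\eta)=\nerf_d(\eta)=\eta$; one checks that the counit $\bv\hnerf(\eta)\to\eta$ is an isomorphism.

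Next I would invoke the general principle: given a Quillen equivalence $F:\mathcal{M}\rightleftarrows\mathcal{N}:G$ and a fibrant object $b$ of $\mathcal{N}$, the induced adjunction on slices $F_b:\mathcal{M}/G(b)\rightleftarrows\mathcal{N}/b:G_b$ — where $G_b$ sends $(Y\to b)$ to $(GY\to Gb)$ and $F_b$ sends $(X\to Gb)$ to $(FX\to FGb\to b)$ — is a Quillen equivalence for the slice model structures. This is immediate from the usual criterion: cofibrations, fibrations and weak equivalences in the slices are detected by the forgetful functors, so $G_b$ is right Quillen because $G$ preserves fibrations and trivial fibrations; fibrancy of $b$ forces the total object of any fibrant object of $\mathcal{N}/b$ to be fibrant in $\mathcal{N}$; and then, for $c$ cofibrant and $d$ fibrant, a map $F_b(c)\to d$ is a weak equivalence exactly when its underlying map $FX\to Y$ is, which by the Quillen equivalence $(F,G)$ happens exactly when the adjunct $X\to GY$ is. Applying this with $\mathcal{M}=\dset$, $\mathcal{N}=\soper$, $F=\bv$, $G=\hnerf$, $b=\eta$, and using $\hnerf(\eta)=\eta$ together with the fact that the counit $\bv\hnerf(\eta)\to\eta$ is an isomorphism (so that $F_b$ is simply $X\mapsto(\bv(X)\to\eta)$), yields a Quillen equivalence $\dset/\eta\rightleftarrows\soper/\eta$.

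Finally I would translate back: under $\sset=\dset/\eta$ the induced model structure is the Joyal model structure (recorded after Theorem \ref{thm:cmcdendsets}), under $\scat=\soper/\eta$ it is the Bergner model structure (Theorem \ref{thm:cmcsoper} together with Remark \ref{rem:fromoperadstocategories}), and under these identifications the slice adjunction $(\bv,\hnerf)$ is precisely the homotopy coherent nerve adjunction $(\bv,\mathit{hcN})$. There is no genuine obstacle here: the only points requiring attention are the two identifications of model structures — already settled in the cited places — and the bookkeeping in the slicing lemma, whose single nontrivial input, namely that weak equivalences in a slice are tested on underlying objects, is part of the definition of the slice model structure.
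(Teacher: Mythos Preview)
Your proposal is correct and follows exactly the same approach as the paper: slice the Quillen equivalence of Theorem~\ref{cor:homotcohnervequillenequiv} over $\eta$, using that $\eta$ is fibrant on both sides and fixed by both functors, and then identify the induced model structures as the Joyal and Bergner ones. The paper's proof is simply the one-sentence paragraph preceding the corollary, which you have unpacked in full detail.
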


\section{Classical operads and reduced $\infty$-operads}\label{sect:9}

\begin{paragr}
We write $\rdset$ for the full subcategory of the category $\dset$ of
dendroidal sets whose objects are the presheaves $X$ on $\Omega$
such that $X_\eta$ is the set with one element.
The objects of $\rdset$ are called \emph{reduced dendroidal sets}.

The obvious forgetful functor
\begin{equation}\label{eq:pointedforget}
\rdset\to\eta/\dset
\end{equation}
has a right adjoint
\begin{equation}
r:\eta/\dset\to\rdset
\end{equation}
defined as follows: if $X$ is a dendroidal set with a given objects $x\in X_\eta$
then $r(X)$ is the dendroidal subcomplex of $X$ whose dendrices are
obtained by the following pullbacks, for any tree $T$,
\begin{equation}
\begin{split}
\xymatrix{
r(X)_T\ar[r]\ar[d]&X_T\ar[d]\\
\ast\ar[r]^(.4){(x,\ldots,x)}&X_\eta^{E(T)}
}
\end{split}
\end{equation}
where $\ast$ denotes the set with one element, while $E(T)$ is the
set of all edges of the tree $T$, and the right vertical map is
the obvious evaluation map.

Remark that, as the functor \eqref{eq:pointedforget} is fully faithful and
commutes with colimits as well as with limits. The category $\rdset$ is also
an accessible subcategory of $\eta/\dset$.
\end{paragr}

\begin{prop}\label{prop:reductionpreservesoperads}
If $X$ is an $\infty$-operad with a distinguished object $x$, the
reduced dendroidal sets $r(X)$ is an $\infty$-operad.
\end{prop}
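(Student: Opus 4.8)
The plan is to reduce the statement to the lifting problem defining an $\infty$-operad and solve it using the right-adjointness of the functor $r$ together with the fact that $r(X)$ is a fibre of $X$ over the fixed object $x$. First I would unwind the definition: to say that $r(X)$ is an $\infty$-operad is to say that $r(X)\to\ast$ has the right lifting property with respect to every inner anodyne extension, equivalently (since inner anodyne extensions are generated by inner horn inclusions) with respect to every inner horn inclusion $\Lambda^e[T]\to\Omega[T]$. So I would fix a tree $T$ with an inner edge $e$ and a map $\Lambda^e[T]\to r(X)$, and try to extend it along $\Lambda^e[T]\to\Omega[T]$.

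The key point is that $\Lambda^e[T]$ contains all the edges of $T$, i.e. the inclusion $\Lambda^e[T]\to\Omega[T]$ is bijective on objects (it is built from faces none of which drops an edge — both outer faces and the inner faces $\partial_{e'}$ for $e'\neq e$ leave the edge set unchanged, and contracting $e$ identifies two edges rather than deleting one; more simply, every edge of $T$ occurs as a subobject $\eta\to\Omega[T]$ factoring through $\Lambda^e[T]$). Hence a map $\Lambda^e[T]\to X$ automatically sends every edge of $T$ to the distinguished object $x$ as soon as the composite $\Lambda^e[T]\to r(X)\to X$ does, which it does since every $r(X)_S$ sits over the constant tuple $(x,\dots,x)$. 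Now I would compose with the inclusion $r(X)\to X$ to get $\Lambda^e[T]\to X$; since $X$ is an $\infty$-operad there is a filler $\Omega[T]\to X$. The remaining step is to check this filler factors through $r(X)$, i.e. that the induced map $\Omega[T]\to X_\eta^{E(T)}$ is the constant map at $x$. But this map is determined by where $\Omega[T]$ sends each edge of $T$ — that is, by the restriction of the filler along the edge inclusions $\eta\to\Omega[T]$ — and those edge inclusions already factor through $\Lambda^e[T]$, on which the map agrees with the original one landing in $r(X)$. So the value on each edge is $x$, and the filler lands in $r(X)$. This produces the desired lift.

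Equivalently and more cleanly, one can phrase the whole argument via the adjunction: the forgetful functor $\rdset\to\eta/\dset$ is a left adjoint and (as noted in the paragraph preceding the statement) is fully faithful and preserves limits, and $r$ is its right adjoint. A lifting problem of $r(X)$ against $\Lambda^e[T]\to\Omega[T]$ in $\rdset$ transposes, using that these maps are bijective on objects hence canonically pointed, to a lifting problem of $X$ against the same map in $\eta/\dset$, which has a solution because $X$ is an $\infty$-operad; one then checks the transpose of the solution is a genuine map of reduced dendroidal sets, which is exactly the edge-computation above.

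I do not expect a serious obstacle here: the only thing to be careful about is the assertion that $\Lambda^e[T]\to\Omega[T]$ is bijective on edges (equivalently, that $\Omega[T]$ and $\Lambda^e[T]$ have the same objects in the sense of \ref{def:Homininftyoperads}), since the whole reduction rests on it; this follows from the description of $\Lambda^e[T]$ as a union of images of the outer faces and the inner faces $\partial_{e'}$ with $e'\neq e$, each of which is surjective on edges. Given that, the extension problem for $r(X)$ is literally the extension problem for $X$ with the automatic side condition that everything lies over $x$, and that side condition is preserved by the filler because it is checked on edges, all of which already lie in the horn. Hence $r(X)$ is an $\infty$-operad.
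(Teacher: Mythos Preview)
Your proposal is correct and follows exactly the paper's approach: the paper's one-line proof simply invokes the fact that the inner horn inclusion $\Lambda^e[T]\to\Omega[T]$ is bijective on objects, and you have correctly unpacked how this fact solves the lifting problem for $r(X)$ by lifting in $X$ and observing the lift is automatically concentrated over $x$. One small slip to clean up: outer faces \emph{do} drop edges, so your parenthetical ``none of which drops an edge'' is not right; the correct justification---which you also give---is that every edge $\eta\to\Omega[T]$ factors through $\Lambda^e[T]$ (each edge lies in some codimension-one face other than $\partial_e$, and the edge $e$ itself lies in the corolla at either of its adjacent vertices).
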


\begin{proof}
This immediately follows from the fact that, for any tree $T$
with a given edge $e$, the inclusion $\Lambda^e[T]\to\Omega[T]$
is bijective on objects.
\end{proof}

\begin{prop}\label{prop:reducedfibresolution}
There is functorial inner anodyne extension $X\to R(X)$
which is bijective on objects and such that $R(X)$ is an $\infty$-operad.
\end{prop}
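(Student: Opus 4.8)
The plan is to produce $R(X)$ by the small object argument applied to the set of inner horn inclusions. Concretely, I would let $I_0=\{\Lambda^e[T]\to\Omega[T]\}$, indexed by the trees $T$ together with a choice of inner edge $e$; this set generates the class of inner anodyne extensions. Applying the functorial version of the small object argument to factor the structural map $X\to\ast$ from $X$ to the terminal dendroidal set $\ast$, one obtains a functorial factorization
$$X\xrightarrow{\ j_X\ }R(X)\xrightarrow{\ p_X\ }\ast$$
in which $j_X$ is a transfinite composition of pushouts of maps in $I_0$ (hence an inner anodyne extension), while $p_X$ has the right lifting property with respect to $I_0$. Since $p_X$ is simply the structural map of $R(X)$, this last property says precisely that $R(X)$ is an $\infty$-operad, and the construction $X\mapsto(j_X\colon X\to R(X))$ is functorial by the standard properties of the small object argument.

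The remaining point is that $j_X$ is bijective on objects, i.e.\ induces a bijection $X_\eta\xrightarrow{\sim}R(X)_\eta$. The key observation --- already used in the proof of Proposition \ref{prop:reductionpreservesoperads} --- is that each inner horn inclusion $\Lambda^e[T]\to\Omega[T]$ is bijective on objects, the inner horn $\Lambda^e[T]$ already containing every edge of $T$. Since the functor $(-)_\eta=\Hom_\dset(\eta,-)\colon\dset\to\set$ is just evaluation at $\eta\in\Omega$, it preserves all colimits, so the class of morphisms of dendroidal sets that are bijective on objects is closed under pushouts and transfinite compositions. As $j_X$ is assembled from maps in $I_0$ by exactly these operations, it is bijective on objects. (One could equally invoke the facts recalled in Section \ref{sect:2} that $\tau_d$ sends inner anodyne extensions to isomorphisms and that $Y_\eta\simeq\ob\tau_d(Y)$ naturally in $Y$, which yields the same conclusion at once; in the reduced/pointed setting this also shows $R(X)$ stays reduced, resp.\ pointed.)

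There is no serious obstacle here: the proof is the small object argument together with the bookkeeping that bijectivity on objects is inherited along pushouts and transfinite colimits. The only things requiring (routine) care are the functoriality of the chosen factorization --- part of the usual statement of the small object argument --- and the verification that the inner horn inclusions are themselves bijective on objects, which is combinatorially immediate and was already noted above.
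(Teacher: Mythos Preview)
Your argument is correct and is exactly the paper's own proof: apply the small object argument to the set of inner horn inclusions, and observe that inner horn inclusions are bijective on objects, so that this is preserved under the pushouts and transfinite compositions used to build $j_X$. The paper says this in two sentences; your additional detail (that $(-)_\eta$ preserves colimits, and the alternative route via $\tau_d$) is fine but not needed.
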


\begin{proof}
This resolution functor is obtained by applying the small
object argument to the set of inner horn inclusions.
The fact that this does not affect objects again comes
from the fact that inner horn inclusions are all bijective on objects.
\end{proof}

\begin{prop}\label{prop:cmcreduceddset}
The category $\rdset$ of reduced dendroidal sets is endowed with a
model category structure whose weak equivalences (cofibrations)
are the morphisms which are weak equivalences (normal monomorphisms)
in the category $\dset$. The fibrant objects of this model category
are precisely the reduced dendroidal sets which are also $\infty$-operads.
\end{prop}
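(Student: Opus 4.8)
The plan is to obtain this model structure by restricting the one of Theorem~\ref{thm:cmcdendsets} along the inclusion $\rdset\hookrightarrow\dset$; equivalently, it is the structure left-induced along the forgetful functor $\rdset\to\eta/\dset$, which is a left adjoint (its right adjoint being $r$). Declare a morphism of $\rdset$ to be a weak equivalence, a cofibration, or a fibration according as it is a weak equivalence, a normal monomorphism, or has the right lifting property against all maps that are both. Since the forgetful functor $\rdset\to\eta/\dset$ is fully faithful and commutes with limits and colimits while $\rdset$ is accessible, $\rdset$ is locally presentable; and the inclusion $\rdset\hookrightarrow\dset$ commutes with all colimits indexed by connected categories (it fails to preserve coproducts only, and those do not intervene below). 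Hence the two-out-of-three property and retract stability for weak equivalences, as well as stability of the normal monomorphisms and of the weak equivalences under pushout and transfinite composition inside $\rdset$, are inherited from $\dset$. The two genuine points are: to exhibit a small generating set for the normal monomorphisms of $\rdset$ along which the small object argument stays inside $\rdset$, and to know that a morphism of $\rdset$ with the right lifting property against it is a weak equivalence.

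For the first point, take as generators the \emph{reduced boundary inclusions}: for each tree $T$, push out the boundary inclusion $\partial\Omega[T]\to\Omega[T]$ along the map collapsing all the edges of $T$ to a single object. These are morphisms between reduced dendroidal sets, and they are normal monomorphisms, because collapsing the edges of a representable alters dendrices only over linear trees, where the automorphism group is trivial, so the freeness condition defining normality is untouched. A relative cell complex built from these over a reduced dendroidal set is again reduced, since attaching such a cell never creates a new object; hence the small object argument produces factorizations inside $\rdset$. By the universal property of the edge-collapse, a commutative square from a boundary inclusion $\partial\Omega[T]\to\Omega[T]$ to a morphism between reduced dendroidal sets is the same datum as a square from the corresponding reduced boundary inclusion; consequently a morphism of $\rdset$ has the right lifting property against all reduced boundary inclusions if and only if it is a trivial fibration in $\dset$, hence a weak equivalence of $\dset$ by Theorem~\ref{thm:cmcdendsets}. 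This settles the second point, and, by the recognition theorem used in the proof of Theorem~\ref{thm:tamemodelcat} (or by \cite[Theorem~2.1.19]{Ho}), yields the model structure; by construction the inclusion $\rdset\hookrightarrow\dset$, as well as $\rdset\to\eta/\dset$, is left Quillen.

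It remains to identify the fibrant objects and to record left properness. If a reduced dendroidal set $X$ is an $\infty$-operad, then it is fibrant in the model category of Theorem~\ref{thm:cmcdendsets}, so it has the right lifting property against every trivial cofibration of $\dset$, a fortiori against every trivial cofibration of $\rdset$; thus $X$ is fibrant. Conversely, let $X$ be fibrant in $\rdset$ and apply the functorial replacement of Proposition~\ref{prop:reducedfibresolution}: the map $X\to R(X)$ is a transfinite composition of pushouts of inner horn inclusions, hence a normal monomorphism (the normal monomorphisms form a saturated class) and a weak equivalence of $\dset$ (inner anodyne extensions are weak equivalences), that is, a trivial cofibration of $\rdset$; since $X$ is fibrant it admits a retraction $R(X)\to X$, and as $R(X)$ is an $\infty$-operad and the class of $\infty$-operads is stable under retracts, $X$ is a reduced $\infty$-operad (Proposition~\ref{prop:reductionpreservesoperads} records the companion fact that the coreflector $r$ preserves these fibrant objects). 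Finally, left properness is inherited from that of the model category of Theorem~\ref{thm:cmcdendsets}, since a pushout in $\rdset$ along a cofibration is computed as a pushout in $\dset$ along a normal monomorphism. The step demanding real care is the first one — checking that the edge-collapsed boundary inclusions are normal monomorphisms, that cell complexes over reduced dendroidal sets remain reduced, and that these maps detect exactly the weak equivalences; the functorial reduced fibrant replacement $R$ then makes everything else formal.
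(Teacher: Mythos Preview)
Your proof is correct and follows essentially the same route as the paper's: both apply Smith's theorem with the reduced boundary inclusions $\rho(\partial\Omega[T])\to\rho(\Omega[T])$ as generating cofibrations (noting that the right lifting property against them in $\rdset$ agrees with being a trivial fibration in $\dset$), and both identify the fibrant objects via the retract argument using Proposition~\ref{prop:reducedfibresolution}. One small caveat: your parenthetical reference to \cite[Theorem~2.1.19]{Ho} is misplaced, since that recognition theorem requires an explicit generating set of \emph{trivial} cofibrations, which you have not supplied---Smith's theorem (as in the proof of Theorem~\ref{thm:tamemodelcat}, to which you correctly point) is indeed what is needed here.
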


\begin{proof}
We will prove the existence of this model category structure
using again Jeff Smith's theorem~\cite{beke}.
Note that, as the classes of weak equivalences and of cofibrations
are detected by the forgetful functor \eqref{eq:pointedforget}, it is clear that
the class of trivial cofibrations is saturated.
The class of normal monomorphisms in $\rdset$ is generated by the
class of maps $\rho(\partial\Omega[T])\to\rho(\Omega[T])$ for any tree $T$ with
at least one vertex, where $\rho$ denotes the left adjoint of the
inclusion $\rdset\subset\dset$: for a dendroidal set $X$,
we have a pushout of the following shape.
$$\xymatrix{
\coprod_{X_\eta}\eta\ar[r]\ar[d]&X\ar[d]\\
\eta\ar[r]&\rho(X)
}$$
We see from this description that a morphism of $\rdset$
has the right lifting property with respect to normal monomorphisms in $\rdset$
if and only if it has this property in the whole category $\dset$.
In particular, such a morphism is a weak equivalence.
This achieves the proof of the existence of the model structure.

Let us prove that the fibrant objects are the reduced $\infty$-operads. It is clear that
any reduced $\infty$-operad is fibrant.
For the converse, if $X$ is a fibrant object, we have, by Proposition \ref{prop:reducedfibresolution}
an inner anodyne extension $X\to R(X)$ in $\rdset$ with $R(X)$
an $\infty$-operad. This implies that $X$ is a retract of $R(X)$ in $\rdset$,
whence is an $\infty$-operad.
\end{proof}

\begin{prop}\label{prop:embeddingclassical}
The forgetful functor $\rdset\to\eta/\dset$ induces a fully faithful functor
$\ho(\rdset)\to\ho(\eta/\dset)$.
\end{prop}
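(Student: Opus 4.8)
The plan is to realise the forgetful functor $i\colon\rdset\to\eta/\dset$ as a left Quillen functor whose left derived functor is fully faithful. The coslice category $\eta/\dset$ carries its standard model structure, in which cofibrations, fibrations and weak equivalences are exactly the maps whose image under the forgetful functor to $\dset$ has the corresponding property. Comparing this with Proposition \ref{prop:cmcreduceddset}, the functor $i$ preserves cofibrations and weak equivalences, hence also trivial cofibrations; and since $i$ admits the right adjoint $r$, the pair $(i,r)$ is a Quillen adjunction. Because $i$ preserves all weak equivalences, the left derived functor $\mathbf{L}i$ is canonically isomorphic to the functor $\ho(\rdset)\to\ho(\eta/\dset)$ induced by $i$, so it is enough to prove that $\mathbf{L}i$ is fully faithful.

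For that I would invoke the standard criterion that the left adjoint of a Quillen adjunction induces a fully faithful functor on homotopy categories precisely when the derived unit $\mathrm{id}\to\mathbf{R}r\circ\mathbf{L}i$ is a natural isomorphism. Fix $X\in\rdset$ and a cofibrant resolution $q\colon QX\to X$ in $\rdset$, so that $\mathbf{L}i(X)$ is represented by $i(QX)$. To compute $\mathbf{R}r$, apply the functorial inner anodyne extension $i(QX)\to R(i(QX))$ of Proposition \ref{prop:reducedfibresolution}: its target is an $\infty$-operad, hence fibrant in $\eta/\dset$, and since the map is bijective on objects the target is again a reduced dendroidal set, i.e. of the form $i(R(QX))$ with $R(QX)$ a reduced $\infty$-operad. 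Thus $\mathbf{R}r\,\mathbf{L}i(X)$ is represented by $r\,i(R(QX))$, which is canonically isomorphic to $R(QX)$ because the unit $\mathrm{id}_{\rdset}\to r\circ i$ is an isomorphism (the functor $i$ being fully faithful). Unwinding the identifications, the derived unit at $X$ is, up to canonical isomorphisms in $\ho(\rdset)$, the zig-zag $X\xleftarrow{q}QX\longrightarrow R(QX)$ of weak equivalences of $\rdset$, hence an isomorphism; this shows $\mathbf{L}i$ is fully faithful.

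The same conclusion can be reached directly, without the abstract criterion: for $X$ cofibrant and $Y$ fibrant in $\rdset$, a good cylinder object $X\amalg X\to\mathrm{Cyl}(X)\xrightarrow{\sim}X$ is carried by $i$ (which preserves colimits, cofibrations and weak equivalences) to a good cylinder object for $i(X)$, so full faithfulness of $i$ identifies the two left-homotopy relations, hence the two hom-sets in the homotopy categories; the general case follows by cofibrant and fibrant replacement, which $i$ preserves. I expect the only genuine obstacle to be the bookkeeping around fibrant replacement: one must know that the resolution produced by Proposition \ref{prop:reducedfibresolution} stays inside $\rdset$ \emph{and} is simultaneously a fibrant resolution in $\eta/\dset$ — which is exactly what the words ``bijective on objects'' buy us — while everything else is formal model-category manipulation.
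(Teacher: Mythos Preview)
Your proof is correct and follows essentially the same approach as the paper. The paper's proof is a one-liner observing that $i$ is a fully faithful left Quillen functor which also preserves weak equivalences and fibrant objects, and states that this immediately gives full faithfulness on homotopy categories; your argument simply unpacks this implication, either via the derived unit criterion or via cylinder objects, with the key point in both cases being that the fibrant replacement of Proposition~\ref{prop:reducedfibresolution} stays inside $\rdset$ (equivalently, that $i$ preserves fibrant objects).
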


\begin{proof}
The inclusion of $\rdset$ into $\eta/\dset$
is a fully faithful left Quillen functor which preserves
weak equivalences, cofibrations, as well as fibrant objects.
This immediately implies that the induced functor
$\ho(\rdset)\to\ho(\eta/\dset)$ is fully faithful.
\end{proof}

\begin{rem}
The forgetful functor
$\ho(\rdset)\to\ho(\dset)$ is easily seen to be full and conservative;
in particular, two reduced dendroidal sets are isomorphic in $\ho(\rdset)$
if and only if they are are isomorphic in $\ho(\dset)$. 
However, this functor is not faithful. For instance, one can consider two groups
$G$ and $H$, seen as categories with one object.
Maps from $i_!\, \nerf(G)$ to $i_!\, \nerf(H)$ in $\ho(\rdset)$
are just homomorphisms of groups $G\to H$, but the set of
maps from $i_!\, \nerf(G)$ to $i_!\, \nerf(H)$ in $\ho(\dset)$
is the set of homotopy classes of maps between the
classifying spaces $BG\to BH$: we know from Hurewicz
that this is the quotient of the set of homomorphisms $G\to H$ by the
action of the group of inner automorphisms of $H$.
\end{rem}

\begin{paragr}
We write $\roper$ for the category of simplicial operads
with a single object. The objects of $\roper$ will be
called the \emph{classical operads}.
This category is endowed with a model category structure
provided by Theorem \ref{thm:cmcfixedcoloursoper}.

As both functors $\bv$ and $\hnerf$ do not affect the sets of objects, they restrict to an adjunction
\begin{equation}\label{eq:reducedhomotopycohnerve}
\bv:\rdset\rightleftarrows\roper:\hnerf\, .
\end{equation}
\end{paragr}

\begin{prop}\label{prop:reducedquillenequivdsetsoper}
The adjunction \eqref{eq:reducedhomotopycohnerve} is a Quillen
equivalence.
\end{prop}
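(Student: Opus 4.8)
The plan is to check that the adjunction \eqref{eq:reducedhomotopycohnerve} is a Quillen pair and then to deduce, from the main theorem (Theorem \ref{cor:homotcohnervequillenequiv}), that it is a Quillen equivalence, by identifying its derived unit with that of the global adjunction. As observed just before the statement, $\bv$ and $\hnerf$ leave colours untouched, so they do restrict to an adjunction between $\rdset$ and $\roper$, and there is nothing further to say about that.

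First I would show that $\bv\colon\rdset\to\roper$ is a left Quillen functor, for the model structure of Proposition \ref{prop:cmcreduceddset} on $\rdset$ and that of Theorem \ref{thm:cmcfixedcoloursoper} (with $C=\{\ast\}$) on $\roper$. Since $\bv\colon\dset\to\soper$ is left Quillen (Proposition \ref{prop:BVleftQuillen}), and the cofibrations (resp. the trivial cofibrations) of $\rdset$ are the morphisms that are normal monomorphisms (resp. normal monomorphisms and weak equivalences) in $\dset$, it is enough to observe that a cofibration of $\soper$ between one-coloured operads is a cofibration of $\soper_{\{\ast\}}$, and similarly for trivial cofibrations. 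For cofibrations this follows from Proposition \ref{prop:trivfibrationsoper}: a morphism of $\soper_{\{\ast\}}$ has the right lifting property with respect to $\overline{\mathcal C}$ if and only if it is a local trivial fibration, i.e. a trivial fibration of $\soper_{\{\ast\}}$, and any lifting problem in $\soper_{\{\ast\}}$ that can be solved in $\soper$ is automatically solved in $\soper_{\{\ast\}}$, a lift being forced to be the identity on the unique colour. For trivial cofibrations one uses in addition that a weak equivalence of $\soper$ between one-coloured operads is exactly a weak equivalence of $\roper$, essential surjectivity being automatic (see \ref{def:mainclassesofmapssimpoper} and Theorem \ref{thm:cmcsoper}). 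Hence $(\bv,\hnerf)$ is a Quillen pair.

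To upgrade this to a Quillen equivalence I would use the standard fact that a Quillen pair is a Quillen equivalence as soon as its right adjoint reflects weak equivalences between fibrant objects and its derived unit is a weak equivalence at every cofibrant object. For the first condition, a fibrant object of $\roper$ is a one-coloured operad all of whose spaces of operations are Kan complexes, hence is fibrant in $\soper$, and its homotopy coherent nerve is an $\infty$-operad by Corollary \ref{cor:hcnervefibrant}; since the adjunction of Theorem \ref{cor:homotcohnervequillenequiv} is a Quillen equivalence, $\hnerf\colon\soper\to\dset$ reflects weak equivalences between fibrant objects, and because weak equivalences in $\rdset$ (resp. $\roper$) are by definition those detected by the forgetful functor to $\dset$ (resp. $\soper$), the reduced functor $\hnerf$ inherits the property. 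For the second condition, let $X$ be cofibrant in $\rdset$, so that $\eta\to X$ is a normal monomorphism; since $\eta$ is normal, the composite $\varnothing\to\eta\to X$ is a normal monomorphism, and therefore $X$ is cofibrant in $\dset$. Applying Kan's $\ex^\infty$ functor (Remark \ref{rem:nicefibrantreplacementsoper}) produces a weak equivalence $\bv(X)\to\ex^\infty(\bv(X))$ with fibrant target, which serves as a fibrant replacement of $\bv(X)$ both in $\roper$ and in $\soper$; consequently the derived unit $X\to\hnerf(\ex^\infty(\bv(X)))$ of the reduced adjunction is literally the derived unit at $X$ of the adjunction of Theorem \ref{cor:homotcohnervequillenequiv}, hence a weak equivalence because that adjunction is a Quillen equivalence. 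This proves the proposition.

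The only step that calls for genuine care is the dictionary between the model structure on $\roper$ provided by Theorem \ref{thm:cmcfixedcoloursoper} and the functors $\bv$ and $\hnerf$: one must make sure that cofibrations, trivial cofibrations, weak equivalences, and fibrant objects of $\roper$ are exactly the one-coloured instances of the corresponding classes in $\soper$. Once this is settled, the argument is entirely formal, resting only on Theorem \ref{cor:homotcohnervequillenequiv} and on the fibrant replacement functor $\ex^\infty$.
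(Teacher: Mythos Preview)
Your proof is correct and follows essentially the same route as the paper: the paper's proof consists of the single sentence ``This immediately follows from Theorem \ref{cor:homotcohnervequillenequiv}'', and what you have done is to spell out carefully why it does. In particular, your verification that the restricted adjunction is a Quillen pair (via the comparison of lifting problems between $\soper$ and $\soper_{\{\ast\}}$), your use of $\ex^\infty$ as a fibrant replacement that stays within one-coloured operads, and your identification of the derived unit with that of the global adjunction are exactly the checks the paper is tacitly invoking.
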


\begin{proof}
This immediately follows from Theorem \ref{cor:homotcohnervequillenequivsimpl}.
\end{proof}

\begin{rem}
By slicing over $\eta$, the adjunction \eqref{eq:reducedhomotopycohnerve} restricts to a Quillen
equivalence between the model category of simplicial monoids and the model category
of reduced simplicial sets (whose fibrant objects are the $\infty$-categories with a single object).
\end{rem}

\begin{rem}
Similar results hold for the homotopy theory of dendroidal sets
with a fixed set of objects $C$, which is Quillen equivalent to the
category of simplicial operads with the same fixed set of objects.
There are also variations with Segal operads, and so on.
We leave these as exercises for the interested readers.
\end{rem}

\providecommand{\bysame}{\leavevmode\hbox to3em{\hrulefill}\thinspace}
\providecommand{\MR}{\relax\ifhmode\unskip\space\fi MR }
\providecommand{\MRhref}[2]{%
  \href{http://www.ams.org/mathscinet-getitem?mr=#1}{#2}
}
\providecommand{\href}[2]{#2}

\end{document}